\documentclass[a4paper,11pt]{article}
\usepackage[left=25mm,right=25mm,top=24mm,bottom=25mm,headheight=15pt,headsep=7mm]{geometry}
\usepackage{amsmath,amssymb,amsthm,mathtools,mathrsfs}
\usepackage{booktabs,array,tabularx,longtable}
\usepackage[font=small,labelfont=bf]{caption}
\usepackage{enumitem,fancyhdr,needspace,tikz-cd}
\usepackage[unicode,hidelinks]{hyperref}
\usepackage{bookmark}
\hypersetup{pdftitle={Nonvanishing of 2-primary Kolyvagin classes via simultaneous quadratic twists},pdfauthor={Zhiqi Sun},pdfsubject={Manin constants, quadratic twists, and nonvanishing of 2-primary Kolyvagin classes}}
\usepackage[T1]{fontenc}
\usepackage{lmodern}
\setlist[enumerate]{leftmargin=2.4em,itemsep=.3em,topsep=.35em}
\setlist[itemize]{leftmargin=2em,itemsep=.2em,topsep=.3em}

\allowdisplaybreaks[2]
\numberwithin{equation}{section}
\theoremstyle{plain}
\newtheorem{theorem}{Theorem}[section]
\newtheorem{proposition}[theorem]{Proposition}
\newtheorem{lemma}[theorem]{Lemma}
\newtheorem{corollary}[theorem]{Corollary}
\theoremstyle{definition}
\newtheorem{definition}[theorem]{Definition}
\newtheorem*{definition*}{Definition}
\newtheorem*{conjecture}{Conjecture}
\theoremstyle{remark}
\newtheorem{remark}[theorem]{Remark}

\newcommand{\Q}{\mathbf Q}
\newcommand{\Z}{\mathbf Z}
\newcommand{\F}{\mathbf F}
\newcommand{\R}{\mathbf R}
\newcommand{\C}{\mathbf C}

\DeclareMathOperator{\Gal}{Gal}
\DeclareMathOperator{\Sel}{Sel}
\DeclareMathOperator{\Sha}{Sha}
\DeclareMathOperator{\Cl}{Cl}
\DeclareMathOperator{\Tr}{Tr}

\DeclareMathOperator{\Norm}{Norm}
\DeclareMathOperator{\res}{res}
\DeclareMathOperator{\cor}{cor}
\DeclareMathOperator{\loc}{loc}

\DeclareMathOperator{\im}{im}
\DeclareMathOperator{\Hom}{Hom}
\DeclareMathOperator{\End}{End}
\DeclareMathOperator{\GL}{GL}
\DeclareMathOperator{\rank}{rank}
\DeclareMathOperator{\corank}{corank}
\DeclareMathOperator{\ord}{ord}
\DeclareMathOperator{\tr}{tr}
\DeclareMathOperator{\Reg}{Reg}
\DeclareMathOperator{\Frob}{Frob}
\DeclareMathOperator{\disc}{disc}

\newcommand{\tors}{\mathrm{tors}}
\newcommand{\std}{\mathrm{std}}
\newcommand{\ur}{\mathrm{ur}}
\newcommand{\fin}{\mathrm f}
\newcommand{\trans}{\mathrm{tr}}
\newcommand{\intr}{\mathfrak i}
\newcommand{\fstr}{f_{\mathrm{str}}}
\newcommand{\LC}{\textup{(LC2)}}
\newcommand{\BSD}{\textup{(BSD2)}}
\newcommand{\Heegner}{\textup{(heegner)}}
\newcommand{\Surjectivity}[1]{\ensuremath{\text{\normalfont(surjectivity)}_{#1}}}
\newcommand{\Exclusion}{\textup{(exclusion)}}

\title{Nonvanishing of $2$-primary Kolyvagin classes\\via simultaneous quadratic twists}
\author{Zhiqi Sun}
\date{October 20, 2026}
\begin{document}
\maketitle
\thispagestyle{plain}
\begin{abstract}
Let $E/\Q$ be an elliptic curve, let $K=\Q(\sqrt D)$ satisfy the Heegner hypothesis with $D<-4$, and fix a modular parametrization $\varphi$ with $\varphi(\infty)=O$. Under surjectivity modulo $2^L$, with $L\ge2$, and a cohomological exclusion condition at level $4$, we construct a finite set of primes $T$ such that, writing $d_T=\prod_{q\in T}(-q)$ and $n_T=\prod_{q\in T}q$, the $2$-Selmer dimensions of $E^{d_T}$ and $E^{d_TD}$ are $0$ and $1$. The support size is bounded in terms of the initial Selmer dimensions, independently of $L$. Inspired by the work of Tian \cite{TianCongruent} and Tian--Yuan--Zhang \cite{TYZGenus} on congruent numbers, we construct a point $Q_T$ on the terminal twist and identify its Kummer class with the standard mod-$2$ Kolyvagin class. Using the low-corank $2$-converse and the $2$-primary Birch--Swinnerton-Dyer formula, we prove that its index $m_T$ in the full Mordell--Weil lattice satisfies
\[
v_2(m_T)=v_2(c_\varphi)+\intr(E,K).
\]
The optimal Manin constant theorem \cite{CLN} supplies a $2$-indivisible parametrization $\varphi_0$, for which
\[
\kappa^{\std}_{n_T,1}(\varphi_0)\ne0
\quad\Longleftrightarrow\quad
\intr(E,K)=0.
\]
When the intrinsic index vanishes, the minimum terminal support size is $R=\min\{d_2(E),d_2(E^D)\}$, and the normalized standard classes have order $2^M$ for $1\le M\le L$. Full $2$-adic Galois image then gives strong nonvanishing.
\end{abstract}
\noindent\textbf{Keywords.} Kolyvagin classes; quadratic twists; Selmer groups; genus points; local norms; Manin constants.
\par\medskip
\clearpage
\begingroup
\small\linespread{1.02}\selectfont
\setlength{\parskip}{0pt}
\tableofcontents
\endgroup
\clearpage

\section{Introduction and main results}\label{sec:intro}
\subsection{Kolyvagin's conjecture and notation}
Kolyvagin constructed Galois cohomology classes from Heegner points over ring class fields and used them to study the Selmer groups of elliptic curves. His nonvanishing conjectures extend the non-torsion assertion for the basic Heegner point to the entire system of derivative classes. Conjecture A of \cite{KolyvaginPreprint} concerns primes in $B(E)$; the stronger arbitrary-prime formulation is Conjecture~1 on printed p.~11. For the construction and further formulations, see \cite[\S\S4--5]{GrossKolyvagin} and \cite[Conjecture 1.1, \S5]{Stein}.

Let $E/\Q$ be an elliptic curve of conductor $N$ and minimal discriminant $\Delta_E$. Write $w(E)\in\{\pm1\}$ for its global root number, normalized by
\[
\Lambda(E,s)=w(E)\Lambda(E,2-s),\qquad
\Lambda(E,s)=N^{s/2}(2\pi)^{-s}\Gamma(s)L(E,s).
\]
Fix a nonconstant modular parametrization over $\Q$ associated with its normalized newform $f_E$,
\[
\varphi:X_0(N)\longrightarrow E,\qquad \varphi(\infty)=O.
\]
Let $K=\Q(\sqrt D)$. We say that $(E,K)$ satisfies \Heegner\ if $D<-4$ is a fundamental discriminant and
\begin{equation}\label{eq:heegner}
(D,N)=1,\qquad \text{every }\ell\mid N\text{ splits in }K.
\end{equation}
We assume \Heegner\ throughout. In particular, $K$ is imaginary quadratic and $\mathcal O_K^\times=\{\pm1\}$. If $2$ does not split in $K$, \Heegner\ forces $E$ to have good reduction at $2$.

Let $\mathcal E/\Z$ be the N\'eron model of $E$, with identity component $\mathcal E^0$ and special fiber $\widetilde E_\ell=\mathcal E\times_{\Z}\F_\ell$ at a prime $\ell$. For $\ell\nmid N$, the fiber $\widetilde E_\ell$ is an elliptic curve over $\F_\ell$. Write $\mathcal E^{(d)}/\Z$ for the N\'eron model of the quadratic twist $E^d$.

Put $V=E[2]$ and assume that its Galois image is $\GL_2(\F_2)\simeq S_3$. For a prime $q$ of good reduction, write
\[
a_q(E)=q+1-\#\mathcal E(\F_q).
\]
\begin{definition*}
For $M\ge1$, a prime $q\nmid2ND$ is \emph{admissible at depth $M$} if it is inert in $K$ and $2^M$ divides both $q+1$ and $a_q(E)$. Define the set of admissible conductors at depth $M$ by
\begin{equation}\label{eq:NM}
\mathcal N_M(E,K)=\left\{\begin{array}{l|l}
n\ge1&\begin{array}{l}
n\text{ is squarefree; for every }q\mid n,\ q\nmid2ND,\\ 
q\text{ is inert in }K,\text{ and }2^M\mid q+1,\ a_q(E)
\end{array}
\end{array}\right\}.
\end{equation}
The empty product $n=1$ is included. For a squarefree integer $n$, its \emph{support} and \emph{support size} are
\[
\operatorname{Supp}(n)=\{q:q\mid n\},\qquad
\nu(n)=\#\operatorname{Supp}(n).
\]
For a finite set of primes $T$, put $n_T=\prod_{q\in T}q$. Its support size is $|T|=\nu(n_T)$; the empty support has size zero.
\end{definition*}

The admissibility conditions arise from the derivative construction. Let $H_n$ be the ring class field of conductor $n$, and let $y_n\in E(H_n)$ be the Heegner point obtained from $\varphi$. For $q\mid n$, the corresponding relative Galois group is cyclic of order $q+1$. Choose a generator $\sigma_q$ and put
\[
D_q=\sum_{j=0}^{q}j\sigma_q^j,\qquad
D_n=\prod_{q\mid n}D_q.
\]
The Heegner points satisfy $\Tr_{H_n/H_{n/q}}y_n=a_q(E)y_{n/q}$, while
$(\sigma_q-1)D_q=(q+1)-\sum_{j=0}^{q}\sigma_q^j$.
The two divisibility conditions therefore make $D_ny_n$ invariant modulo $2^M$ under the relative group. Summing over representatives for $\Gal(H_1/K)$, taking a Kummer class, and descending gives the standard class \cite[Proposition 4.2, \S\S5.1--5.2]{Stein}
\[
\kappa^{\std}_{n,M}(\varphi)\in H^1(K,E[2^M]).
\]
Section~\ref{sec:cm} gives the full construction and its compatibility with changes of coefficients. For $n=1$, the class is the Kummer class of the basic trace point $y_K=\Tr_{H_1/K}y_1$.

\begin{conjecture}
For the fixed triple $(E,K,\varphi)$ above, the standard system satisfies
\begin{equation}\label{eq:conjecture}
\exists M\ge1\quad\exists n\in\mathcal N_M(E,K):\quad
\kappa^{\std}_{n,M}(\varphi)\ne0.
\end{equation}
\end{conjecture}

Both $M$ and $n$ may vary. Nonvanishing modulo $2$ means that a nonzero class already occurs at $M=1$, and is stronger than the assertion above. If $y_K$ is non-torsion, its divisibility by $2$ in the finitely generated group $E(K)$ is bounded, so $n=1$ gives a nonzero class at some depth. The Gross--Zagier formula relates the analytic rank-one case to this basic trace point \cite{GZ}; more general cases require derivative classes with $n>1$.

Kolyvagin's \emph{strong nonvanishing} formulation for arbitrary primes asks for nonzero classes with a fixed support size as the admissibility congruences become arbitrarily deep \cite[printed pp.~10--11, equation (2) and Conjecture 1]{KolyvaginPreprint}. We state this condition in Section~\ref{subsec:main-strong}.

\subsection{The intrinsic index and two low-corank results}\label{subsec:inputs}
For an elliptic curve $C/\Q$, write
\[
d_2(C)=\dim_{\F_2}\Sel_2(C/\Q),\qquad
s_2(C)=\corank_{\Z_2}\Sel_{2^\infty}(C/\Q).
\]
These are, respectively, the dimension of the finite Selmer group and the corank of the divisible part of the infinite Selmer group. We normalize $v_2$ by $v_2(2)=1$. Every Galois trace is the sum of conjugates, without division by the degree of the extension.

Choose a minimal N\'eron differential $\omega_E$. Define the Manin factor of $\varphi$ by
\[
\varphi^*\omega_E=2\pi i\,c_\varphi f_E(\tau)\,d\tau,\qquad \tau\in\mathfrak H.
\]
Let $E_0(\Q_\ell)=\mathcal E^0(\Z_\ell)\subset\mathcal E(\Z_\ell)=E(\Q_\ell)$ and $c_\ell(E)=[E(\Q_\ell):E_0(\Q_\ell)]$. Put
\begin{align}
\epsilon_\infty&=\log_2\#\pi_0(E(\R)),&
T_E&=\sum_{\ell\mid N}v_2(c_\ell(E)),\label{eq:TE}\\
t_r&=\dim_{\F_2}\mathcal E(\F_r)[2]\quad(r\mid D,\ r\text{ odd}),&
h_2(E,K)&=\dim_{\F_2}\frac{E(\Q_2)}{N_{K\otimes\Q_2/\Q_2}E(K\otimes\Q_2)}.\nonumber
\end{align}
For a split quadratic algebra, the norm adds the two point-group coordinates. Since the norm image contains $2E(\Q_2)$, the quotient defining $h_2(E,K)$ is a finite-dimensional $\F_2$-vector space. Set
\begin{equation}\label{eq:index-definition}
\delta_K=\frac{\epsilon_\infty-1+h_2(E,K)+\sum_{r\mid D,\ r\text{ odd}}t_r}{2},\qquad
\intr(E,K)=T_E+\delta_K.
\end{equation}
We suppress the fixed curve $E$ in the notation $\delta_K$. Section~\ref{subsec:parity} proves that $\delta_K\in\Z_{\ge0}$ and that $1+2\delta_K$ is the sum of the local codimensions between the Kummer conditions of $E$ and $E^D$. Thus the \emph{intrinsic $2$-adic index} $\intr(E,K)$ vanishes exactly when all original Tamagawa numbers are odd and the two sets of local conditions have total discrepancy one. In this case the initial Selmer groups are nested with dimensions differing by one, which yields the sharp support bound in the descent.

Throughout the paper, we use the low-corank $2$-converse \cite[Theorem 1.1]{OAIconverse} and the two-primary Birch--Swinnerton-Dyer formula \cite[Theorem 1.1]{OAIbsd}. For an elliptic curve $C/\Q$ with $s_2(C)\in\{0,1\}$, they give:
\begin{description}[leftmargin=1.5cm,labelwidth=1.3cm,style=nextline]
\item[\LC] The analytic and algebraic ranks of $C$ both equal $s_2(C)$, and $\Sha(C/\Q)$ is finite.
\item[\BSD] Put $r=s_2(C)=\rank C(\Q)$. With the total real period and the regulator of the full Mordell--Weil lattice specified in Section~\ref{subsec:height}, the normalized leading term is a nonzero rational number and
\begin{equation}\label{eq:BSD2-input}
v_2\!\left(\frac{L^{(r)}(C,1)}{r!\,\Omega_C\Reg_C}\right)
=v_2\!\left(\frac{\#\Sha(C/\Q)\prod_\ell c_\ell(C)}{\#C(\Q)_{\tors}^{\,2}}\right).
\end{equation}
\end{description}
The definitions in \cite[\S1.1]{OAIbsd} use the total real period and the pairing with diagonal $H=2\widehat h_{[O]}$ on the full Mordell--Weil lattice, as in Section~\ref{subsec:height} below.

\paragraph{Manin constants and divisibility of the parametrization.}
To state the dependence on the fixed parametrization, define
\begin{equation}\label{eq:bphi}
b_\varphi=\max\left\{b\in\Z_{\ge0}:\ \begin{gathered}
\varphi=[2^b]\circ\psi,\quad \psi(\infty)=O,\\ 
\psi:X_0(N)\to E\text{ is defined over }\Q
\end{gathered}\right\}.
\end{equation}
The maximum exists, since any such factorization implies $4^b\mid\deg\varphi$. We call $\varphi$ \emph{$2$-indivisible} if $b_\varphi=0$. Section~\ref{sec:normalization} uses the optimal-quotient Manin constant theorem \cite{CLN} to prove that, under residual irreducibility, $b_\varphi=v_2(c_\varphi)$ and $\varphi=[2^{b_\varphi}]\circ\varphi_0$ for a parametrization $\varphi_0$ of $E$ with odd Manin factor.

\subsection{Simultaneous twists and a criterion for nonvanishing}\label{subsec:main-finite}
Fix $L\ge2$ and write $\Gamma_L=\GL_2(\Z/2^L\Z)$.
\begin{definition*}
The curve $E$ satisfies \Surjectivity{L} if
\begin{equation}\label{eq:full-image}
\rho_{E,2^L}(G_\Q)=\Gamma_L.
\end{equation}
\end{definition*}
This condition depends only on $E$ and is imposed separately from \Heegner.

Under \Surjectivity{2}, let $\xi_E$ denote the unique nonzero class in
\[
\ker\bigl(H^1(\Q,V)\longrightarrow H^1(\Q(E[4]),V)\bigr).
\]
Section~\ref{subsec:xi} proves its existence and uniqueness.

\begin{definition*}
For a pair $(E,K)$ satisfying \Heegner\ and \Surjectivity{2}, the condition \Exclusion\ is
\begin{equation}\label{eq:exclusion}
\xi_E\notin\Sel_2(E/\Q)\cap\Sel_2(E^D/\Q).
\end{equation}
\end{definition*}
For $L\ge2$, \Surjectivity{L} implies \Surjectivity{2} and residual $S_3$ image, so the same class $\xi_E$ is used at every such depth. Condition \Exclusion\ is automatic when $\Delta_E>0$, by the real local condition. By Lemma~\ref{lem:mod-eight}, \Surjectivity{3} is equivalent to full $2$-adic image, to \Surjectivity{L} for every $L\ge2$, and to \Surjectivity{L} at unbounded depths.

Put
\[
a_i=d_2(E^{D^i})\qquad(i=0,1).
\]
Fix the prime family $\mathcal P_L$ of Definition~\ref{def:PL}. For finite $T\subset\mathcal P_L$, put
\[
n_T=\prod_{q\in T}q,\qquad d_T=\prod_{q\in T}(-q).
\]
Let $\mathscr A_L$ consist of all such supports, with the restriction that $|T|$ is even when $\Delta_E>0$ and $\intr(E,K)>0$. Define the terminal supports by
\begin{equation}\label{eq:terminal-family}
\mathscr T_L=\left\{T\in\mathscr A_L:
\{d_2(E^{d_T}),d_2(E^{d_TD})\}=\{0,1\}\right\}.
\end{equation}
The parity restriction makes $d_T$ a square at every finite place dividing $2ND$. Under intrinsic index zero, Proposition~\ref{prop:actual-pair} also permits odd supports when $\Delta_E>0$.

\begin{theorem}\label{thm:main-descent}
Assume \Heegner, \Surjectivity{L}, and \Exclusion. There is a terminal support $T\in\mathscr T_L$ with
\[
|T|\le\max(a_0,a_1),\qquad n_T\in\mathcal N_L(E,K).
\]
The support may avoid any prescribed finite set of primes. There are infinitely many nonempty terminal supports of a common size at most $\max\{a_0,a_1,2\}$.

If $\intr(E,K)=0$, put
\begin{equation}\label{eq:R0}
R=\min(a_0,a_1)=\max(a_0,a_1)-1.
\end{equation}
Then
\begin{equation}\label{eq:optimal-terminal}
\min_{T\in\mathscr T_L}|T|=R.
\end{equation}
When $R>0$, infinitely many supports attain this minimum and may avoid any prescribed finite set. When $R=0$, the empty support is terminal, and infinitely many terminal supports have size two.
\end{theorem}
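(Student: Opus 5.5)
We will prove Theorem~\ref{thm:main-descent} by a step-by-step descent on the pair of $2$-Selmer dimensions $(d_2(E^{d_T}),d_2(E^{d_TD}))$, adding one prime of $\mathcal P_L$ at a time. The key observation is that for a prime $q\in\mathcal P_L$ we have $E^{-q}[2]=E[2]=V$. Hence all the twisted Selmer groups $\Sel_2(E^{d_TD^i}/\Q)$ sit in the same space $H^1(\Q,V)$. They differ only in the local conditions at the primes of $T$, and at $2ND\infty$ when $d_T$ is not a local square there; this is why $\mathscr A_L$ imposes the parity restriction. At an admissible prime $q$ (inert in $K$, $2^L\mid q+1$ and $2^L\mid a_q$), Frobenius acts on $V$ trivially. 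The local condition of $E$ at $q$ is the unramified line pair, while that of $E^{-q}$ is the transverse (ramified) Lagrangian. So each step is a standard "change of local conditions at one place" comparison.

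First, by Poitou--Tate and the Greenberg--Wiles formula, adding $q$ changes each dimension by $0$ or $\pm1$. Moreover, the change is $-1$ exactly when the localization map $\Sel\to E(\Q_q)/2\simeq V$ is nonzero and its image is not contained in the new condition. Second, we control $\loc_q$ by Chebotarev. For a chosen nonzero class $c\in\Sel_2(E^{d_T}/\Q)$ (or in the $D$-twist), we want $q$ such that:
\begin{enumerate}
\item $\Frob_q$ in $\Gal(K(E[2^{L}])/\Q)$ lies in the class of admissible elements (complex-conjugation type with $\det\equiv-1$, trace $\equiv0 \bmod 2^L$);
\item $c(\Frob_q)\neq0$.
\end{enumerate}
This requires the restriction of $c$ to $\Q(E[2^L],\sqrt D)$ to be nonzero and linearly disjoint from the admissible set. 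Under \Surjectivity{L}, $H^1(\Gal(\Q(E[2^L])/\Q),V)$ is controlled via Lemma~\ref{lem:mod-eight}: the only obstruction is the class $\xi_E$ from Section~\ref{subsec:xi}, which dies on $\Q(E[4])$. Condition \Exclusion\ is exactly what lets us pick $c$ to avoid $\xi_E$ when $c$ must lie in both Selmer groups. We choose $c$ in the larger of the two Selmer groups, and we use the nesting/codimension relation of Section~\ref{subsec:parity} (total discrepancy $1+2\delta_K$, so parity of $a_0+a_1$ is odd) so that the step lowers the larger dimension by one while moving the smaller by at most one in the right direction. Iterating gives a terminal pair $\{0,1\}$ after at most $\max(a_0,a_1)$ primes. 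Chebotarev gives infinitely many choices at each step avoiding any finite set, and each $q$ is admissible, so $n_T\in\mathcal N_L(E,K)$. The "common size at most $\max\{a_0,a_1,2\}$" statement follows by pigeonhole on sizes, adding a pair of primes with trivial effect when the descent terminates early (e.g.\ empty terminal support, or a parity restriction when $\Delta_E>0$).

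For $\intr(E,K)=0$, Section~\ref{subsec:parity} gives nested Selmer groups with $|a_0-a_1|=1$, whence \eqref{eq:R0}. The lower bound $|T|\ge R$ holds because each added prime lowers $\min$ of the two dimensions by at most one: the two twisted Selmer groups still share the common intersection minus at most one dimension per prime. The upper bound comes from running the descent so that each step lowers both dimensions; Proposition~\ref{prop:actual-pair} handles the odd-support parity issue when $\Delta_E>0$. For $R=0$ the empty support is terminal, and size-two supports come from pairs $q_1,q_2$ whose localization effects cancel.

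The main obstacle is the Chebotarev step: we need $\Frob_q$ simultaneously to be admissible at depth $L$ and to detect the chosen Selmer class. This requires the precise computation that the restriction of $H^1(\Q,V)$ to $\Q(E[2^L],\sqrt D)$ has kernel spanned only by $\xi_E$, and that admissible Frobenius elements intersect the relevant coset nontrivially. That is where \Exclusion\ and Lemma~\ref{lem:mod-eight} enter.
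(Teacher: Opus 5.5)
There is a genuine gap, and it lies in your analysis of the local step at $q$.

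\textbf{The local step.} For $q\in\mathcal P_L$, residual Frobenius is not trivial on $V$. The prescribed element $g_L$ (complex conjugation if $\Delta_E<0$, the matrix $A_L$ if $\Delta_E>0$) is a transposition modulo $2$. Hence $\dim H^1(\Q_q,V)=2$, $E(\Q_q)/2$ is a line, and the old and new Kummer conditions are distinct lines $\alpha_q,\beta_q$. If Frobenius were trivial on $V$ (for instance of complex-conjugation type when $\Delta_E>0$), the conditions would be planes. One prime would then change the dimensions by $0$ or $\pm2$, and such primes do not lie in $\mathcal P_L$.

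More importantly, the dimension change is never $0$. Bilinear Poitou--Tate duality only shows that the localization $W_q$ of the relaxed group is a self-orthogonal line. The third line of $H^1(\Q_q,V)$ is also self-orthogonal, so duality alone allows a change of $0$. It is the quadratic refinement (theta-group forms, Brauer reciprocity, and the Klagsbrun--Mazur--Rubin parity theorem) that forces $W_q\in\{\alpha_q,\beta_q\}$. The outcome is Lemma~\ref{lem:switch}: every switch changes each dimension by exactly one. The dimension goes down if $\loc_qS\ne0$ and up otherwise, so a group your prime does not detect increases.

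\textbf{The descent step.} Because of this, choosing a single class $c$ in the larger group with $c(\Frob_q)\ne0$ does not control the other group. The claim that the smaller group moves ``by at most one in the right direction'' is unjustified; it moves up unless it is detected too. What you need is \emph{simultaneous} detection: a functional on $(S_0+S_1)/J$ that is nonzero on both images (possible because two proper subspaces cannot cover a vector space), realized by Chebotarev as in Lemma~\ref{lem:detection}. The nesting you invoke exists only when $\delta_K=0$. Even then you must detect the \emph{smaller} group, since inclusion then forces detection of the larger one; detecting a class of the larger group does not suffice.

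\textbf{The role of \Exclusion.} The class $\xi_E$ localizes to zero at every $q\in\mathcal P_L$, since $A^2\equiv1\pmod 4$ gives $\psi(A)\in\im(\bar A-1)$. So its membership in each mixed group never changes. A group containing $\xi_E$ can never reach dimension $0$, and it cannot be detected once it equals $\F_2\xi_E$. \Exclusion\ is precisely what prevents $\xi_E$ from blocking both groups. It does not help you ``pick $c$ avoiding $\xi_E$.''

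\textbf{Consequences for the construction.} When one group is $0$ or $\F_2\xi_E$, the descent needs the two-prime moves of Proposition~\ref{prop:general-terminal}: $(0,y)\to(1,y-1)\to(0,y-2)$ and $(x,1)\to(x-1,2)\to(x-2,1)$. Likewise, your ``pairs with trivial effect'' must have the form $(0,1)\to(1,2)\to(0,1)$: a first prime with both localizations zero, then a second detecting both. No single prime is neutral.

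With these corrections, the bound $|T|\le\max(a_0,a_1)$ and the even-size requirement when $\Delta_E>0$ do follow. Your lower bound at $\intr(E,K)=0$ (each dimension moves by at most one per prime) and your appeal to Proposition~\ref{prop:actual-pair} are fine.
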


The general descent is proved in Proposition~\ref{prop:general-terminal}; Proposition~\ref{prop:terminal} gives the sharper statement at intrinsic index zero. These arguments use finite Selmer groups and local Kummer conditions. The results \LC\ and \BSD\ enter in the following nonvanishing criterion.

For an explicit application, assume \Heegner, \Surjectivity{L}, odd $D$, and odd Tamagawa numbers for $E$. Then $\intr(E,K)=0$ and \Exclusion\ hold in either of the following cases:
\begin{enumerate}
\item $\Delta_E>0$, and Frobenius acts as a $3$-cycle on $E[2]$ at every prime dividing $D$;
\item $\Delta_E<0$, exactly one prime $r_0\mid D$ has transposition Frobenius, $a_{r_0}(E)\equiv2\pmod4$, and Frobenius is a $3$-cycle at every other prime dividing $D$.
\end{enumerate}
Indeed, $h_2(E,K)=0$ for odd $D$, so \eqref{eq:index-definition} gives $\intr(E,K)=0$. Exclusion follows from the real calculation in Section~\ref{subsec:xi} in the first case and Proposition~\ref{prop:odd-exclusion} in the second. In the first case with $L=2$, residual surjectivity already implies \Surjectivity{2}, as shown in Section~\ref{subsec:xi}.

\begin{theorem}\label{thm:main-finite}
Assume \Heegner, \Surjectivity{L}, and \Exclusion. Let $\varphi_0$ be a $2$-indivisible modular parametrization of $E$, and let $T\in\mathscr T_L$. Then
\begin{equation}\label{eq:residual-criterion}
\kappa^{\std}_{n_T,1}(\varphi_0)\ne0
\quad\Longleftrightarrow\quad
\intr(E,K)=0.
\end{equation}
When these equivalent conditions hold,
\begin{equation}\label{eq:terminal}
\ord\kappa^{\std}_{n_T,M}(\varphi_0)=2^M
\quad(1\le M\le L).
\end{equation}
For an arbitrary parametrization $\varphi$, one has
\[
\kappa^{\std}_{n_T,1}(\varphi)\ne0
\quad\Longleftrightarrow\quad
b_\varphi=0\ \text{and}\ \intr(E,K)=0.
\]
\end{theorem}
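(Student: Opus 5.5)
The proof follows the route announced in the abstract: construct a point on the terminal twist, identify its Kummer class with the standard class, and compute the $2$-adic valuation of its index by \BSD.

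\textbf{Setup on the terminal twist.} Fix $T\in\mathscr T_L$ and let $C$ be whichever of $E^{d_T}$, $E^{d_TD}$ has $d_2(C)=1$. The other twist has $d_2=0$, so by \LC\ it has rank $0$ and finite $\Sha$. The root-number parity inherited from \Heegner\ (the product of the two root numbers is $w(E^{d_T}/K)=-1$), together with $d_2(C)=1$, forces $s_2(C)=1$. Applying \LC\ again, $\rank C(\Q)=1$ and $\Sha(C/\Q)[2]=0$; the latter holds because the single Selmer dimension is accounted for by the rank and $E[2](\Q)=0$ under residual $S_3$ image.

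\textbf{The genus point and its Kummer class.} Following Tian and Tian--Yuan--Zhang, form the point $Q_T\in C(\Q)$ by applying $D_{n_T}$ to $y_{n_T}$, taking the trace over $\Gal(H_1/K)$, and projecting to the $\chi_{d_T}$-isotypic (genus) component. The sign $(-q)$ at each inert admissible prime is what makes this component descend to $E^{d_T}$ or $E^{d_TD}$. I will then check that the Kummer image of $Q_T$ in $H^1(\Q,C[2])\cong H^1(\Q,V)$, restricted to $K$ and transported back through the isomorphism $C[2]\cong V$, equals $\kappa^{\std}_{n_T,1}(\varphi)$. The argument is a standard compatibility of Kolyvagin's derivative with quadratic characters: since $q+1\equiv a_q\equiv0\pmod 2$, the operator $D_q$ acts on the $\chi$-part as multiplication by a unit modulo $2$, after which descent is applied as in Section~\ref{sec:cm}. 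Restriction $H^1(\Q,V)\to H^1(K,V)$ is injective because $V^{G_K}=0$. Hence $\kappa^{\std}_{n_T,1}(\varphi)\neq0$ if and only if $Q_T\notin 2C(\Q)$. Because $C(\Q)\cong\Z$ modulo odd torsion, this is equivalent to $v_2(m_T)=0$, where $m_T$ is the index of $Q_T$.

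\textbf{Index formula; this is the main obstacle.} The goal is $v_2(m_T)=v_2(c_\varphi)+\intr(E,K)$. The plan is to express the Néron–Tate height of $Q_T$ through a Gross–Zagier-type identity for the twisted genus character, namely
\[
\widehat h(Q_T)\doteq c_\varphi^2\,\frac{L(E^{d_T},1)\,L'(E^{d_TD},1)}{\text{periods}}
\]
up to explicit $2$-adic constants. Since $\Reg_C=\widehat h(Q_T)/m_T^2$, I then apply \BSD\ to both twists to rewrite $L$ and $L'$ in terms of $\#\Sha$, Tamagawa numbers and torsion. The odd parts cancel. On the $2$-part, the factor $\#\Sha[2^\infty]$ is $1$ for both twists. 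The Tamagawa numbers at primes $q\in T$ cancel against the $2$-power factors coming from $D_q$ at each $q$. The Tamagawa numbers at $\ell\mid N$ contribute $T_E$. Finally, the comparison of periods and local norm indices at $\infty$, at $2$, and at the odd $r\mid D$ yields $2\delta_K$, which becomes $\delta_K$ after taking square roots. Keeping track of every power of $2$, especially at $2$ and in the definition of $h_2$, is the delicate bookkeeping, and I expect this step to be the hardest.

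\textbf{Conclusion.} Take the parametrization $\varphi_0$ supplied by Section~\ref{sec:normalization}, so that $v_2(c_{\varphi_0})=0$. Then $v_2(m_T)=\intr(E,K)$, which proves \eqref{eq:residual-criterion}. For \eqref{eq:terminal}, the compatibility of the classes under change of coefficients gives $2^{M-1}\kappa^{\std}_{n_T,M}=\kappa^{\std}_{n_T,1}$ (up to the inclusion $V\hookrightarrow E[2^M]$, which is injective on $H^1$ by surjectivity). Hence the order of $\kappa^{\std}_{n_T,M}(\varphi_0)$ is exactly $2^M$. For an arbitrary parametrization write $\varphi=[2^{b_\varphi}]\circ\varphi_0$. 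Every class then scales by $2^{b_\varphi}$, so $\kappa^{\std}_{n_T,1}(\varphi)=0$ whenever $b_\varphi>0$; when $b_\varphi=0$ the previous case applies. This gives the last equivalence.
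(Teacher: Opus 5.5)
Your overall route is the paper's: a genus point on the terminal twist, a mod-$2$ comparison with the standard class, the index via Gross--Zagier and \BSD, then Manin normalization and coefficient compatibility. The deductions in your last paragraph are fine once the index formula is known. There are two genuine gaps, though.

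\textbf{The mod-$2$ comparison fails as you justify it.} On the $\chi_T$-component each $\sigma_q$ acts by $\chi_T(\sigma_q)=-1$. So $D_q$ acts by $\sum_{j=0}^{q}j(-1)^j=-\tfrac{q+1}{2}$, which is divisible by $2^{L-1}$ and is not a unit. Moreover, the projector $e_{\chi_T}=2^{-\nu}\prod_{q\in T}(1-\tau_q)$ is not integral, so ``the $\chi$-part modulo $2$'' has no meaning. The integral character sum $P_{\chi_T}=\prod_q(1-\tau_q)R_T$ is divisible by $2^{\nu}$, so its Kummer class modulo $2$ is zero once $\nu\ge1$. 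The $\chi_T$-projection of $W_{n_T}$ lives only in $E(H_T)\otimes\Q$ and depends on the coset representatives. With representatives in $\Gal(H_T/F_T)$ it equals $(-1)^{\nu}Z_T$, because the eigenvalue $\prod_q(-\tfrac{q+1}{2})$ of $D_{n_T}$ cancels the order of $G_{\mathrm{rel}}^2$. But nothing in your argument shows that $W_{n_T}$ is congruent to this projection modulo $2E(H_T)$, and that congruence is the real content of the step. What you need is the integral point
\[
Z_T=\sum_{S\subseteq T}(-1)^{|T\setminus S|}\Bigl(\prod_{q\in T\setminus S}\tfrac{a_q(E)}{2}\Bigr)R_S,\qquad R_S=\Tr_{H_S/F_S}y_S .
\]
Show it is anti-invariant under every $\tau_q$ using $(1+\tau_q)R_S=a_q(E)R_{S\setminus\{q\}}$; then it descends to $E^{d_T}(K)$ and $P_{\chi_T}=2^{\nu}Z_T$. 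Then prove $W_{n_T}\equiv\bigl(\prod_q\tau_q\bigr)R_T\equiv R_T\equiv Z_T\pmod{2E(H_T)}$. This uses $D_q\equiv\sigma_q\sum_{h\in\langle\sigma_q^2\rangle}h\pmod 2$, coset representatives chosen in $\Gal(H_T/F_T)$, and, in the last step, $4\mid a_q(E)$ so that every coefficient $a_q(E)/2$ is even. Your argument never uses $4\mid a_q(E)$, and this is exactly where $L\ge2$ enters.

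\textbf{The index formula is only announced, and it carries the substance of the theorem.} Your attribution of the cancellation at $q\in T$ to ``$2$-power factors coming from $D_q$'' contradicts your own claim that $D_q$ is a unit modulo $2$. In fact both twists have $c_q=2$ at each $q\in T$, contributing $2^{2\nu}$ on the \BSD\ side. This cancels $H(P_{\chi_T})=2^{2\nu}H(Q_T)$, which comes from $P_{\chi_T}=2^{\nu}\iota_{d_T}(Q_T)$: Gross--Zagier, in the Cai--Shu--Tian normalization, computes the height of the unnormalized sum $P_{\chi_T}$, not of $Q_T$. The remaining ingredients are:
\begin{itemize}
\item $\deg\varphi\,\mathcal A_E=8\pi^2c_\varphi^2(f_E,f_E)_{\Gamma_0(N)}$;
\item Lemma~\ref{lem:full-lattice}, so that $m_T$ is measured in the full lattice of $E^{d_T}(K)$ against $\Reg_B$;
\item the period identity $\Omega_E^+\Omega_E^-=2^{\epsilon_\infty}\mathcal A_E$;
\item $c_r=2^{t_r}$ for the ramified twist at odd $r\mid D$;
\item the dyadic identity $C_2(D)-e_2(D)=h_2(E,K)$ for the differential multipliers;
\item preservation of the Tamagawa numbers at $\ell\mid N$ and at $2$.
\end{itemize}
The last point is not automatic. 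It uses the local squareclass properties of the primes in $\mathcal P_L$ and the parity restriction on $|T|$ in $\mathscr A_L$, or $T_E=0$ with Lemma~\ref{lem:unramified-odd} when $\Delta_E>0$ and $|T|$ is odd. Your proposal uses nothing about $T$ beyond its Selmer dimensions. Until this bookkeeping is carried out, $v_2(m_T)=v_2(c_\varphi)+\intr(E,K)$ is unproved, and with it \eqref{eq:residual-criterion}.
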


\begin{corollary}\label{cor:main-shift}
Assume \Heegner, \Surjectivity{L}, \Exclusion, and $\intr(E,K)=0$. Let $T\in\mathscr T_L$. For any fixed parametrization $\varphi$, Proposition~\ref{prop:manin-normalization} gives $\varphi=[2^b]\circ\varphi_0$ with
$b=b_\varphi=v_2(c_\varphi)$ and $v_2(c_{\varphi_0})=0$. Then
\begin{equation}\label{eq:main-order}
\ord\kappa^{\std}_{n_T,M}(\varphi)=2^{\max\{M-b,0\}}
\quad(1\le M\le L),
\end{equation}
where the zero class has order one.
\end{corollary}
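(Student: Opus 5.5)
The corollary follows from Theorem~\ref{thm:main-finite} together with the change-of-parametrization behaviour of the standard classes. The plan has three steps.

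\textbf{Step 1 (normalization).} By Proposition~\ref{prop:manin-normalization}, residual irreducibility gives $\varphi=[2^b]\circ\varphi_0$ with $b=b_\varphi=v_2(c_\varphi)$ and $\varphi_0$ of odd Manin factor. In particular $b_{\varphi_0}=0$, so $\varphi_0$ is $2$-indivisible.

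\textbf{Step 2 (comparison of classes).} The Heegner points satisfy $y_n(\varphi)=2^b y_n(\varphi_0)$, since $\varphi(x)=2^b\varphi_0(x)$ on the Heegner divisors. Applying $D_n$ and the trace over $\Gal(H_1/K)$ commutes with multiplication by $2^b$, and so does the Kummer/descent construction of Section~\ref{sec:cm}. I would therefore invoke the compatibility of $\kappa^{\std}$ with linear changes of the point to get
\[
\kappa^{\std}_{n,M}(\varphi)=2^b\,\kappa^{\std}_{n,M}(\varphi_0)\in H^1(K,E[2^M]).
\]
This identity has to be checked at the level of cocycles, not only Kummer images. The descent from $H_n$ to $K$ uses the invertibility of restriction on $E[2^M]$ over $H_n/K$, which holds by \Surjectivity{L}. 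That injectivity is natural in the point, so multiplying the point by $2^b$ multiplies the descended class by $2^b$.

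\textbf{Step 3 (order count).} Theorem~\ref{thm:main-finite} applies because $\intr(E,K)=0$ and $T\in\mathscr T_L$, so $\kappa^{\std}_{n_T,M}(\varphi_0)$ has order exactly $2^M$ for $1\le M\le L$. In any group, an element $x$ of order $2^M$ gives $2^bx$ of order $2^{\max\{M-b,0\}}$. When $b\ge M$ this is the zero class, of order one, as stated.

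\textbf{Main obstacle.} The delicate point is Step~2: confirming that the standard class really is $\Z$-linear in the underlying Heegner system. The descended class is defined through a choice of lift of the $2^M$-torsion-invariant point, and one must ensure that the ambiguity is absorbed coherently when the point is multiplied by $2^b$. I would handle this with the explicit cocycle formula for $\kappa^{\std}_{n,M}$ in Section~\ref{sec:cm}, of the form $\sigma\mapsto(\sigma-1)\widetilde{P}/2^M$ modulo $E[2^M]$ corrections. Replacing $P$ by $2^bP$ and $\widetilde P$ by $2^b\widetilde P$ is a valid choice of lift, and well-definedness makes the choice irrelevant.
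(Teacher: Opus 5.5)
Your proposal is correct and follows the paper's own argument (given in Proposition~\ref{prop:map-shift}): linearity yields $\kappa^{\std}_{n_T,M}(\varphi)=2^b\kappa^{\std}_{n_T,M}(\varphi_0)$, Theorem~\ref{thm:main-finite} gives $\varphi_0$-classes of order exactly $2^M$, and the order count finishes. Two small remarks. The descent isomorphism \eqref{eq:unique-descent} comes from Lemma~\ref{lem:no-torsion}, which uses \Heegner\ together with residual $S_3$ image, not \Surjectivity{L} alone. The linearity you flag as delicate is immediate, since the Kummer map on $E(H_T)/2^ME(H_T)$ and the isomorphism \eqref{eq:unique-descent} are both group homomorphisms.
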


The nonvanishing and order assertions are proved in Sections~\ref{sec:index} and \ref{subsec:map-shift}.

\subsection{Strong nonvanishing and Selmer applications}\label{subsec:main-strong}
In the support notation above, strong nonvanishing means that there is a fixed $r\ge0$ such that, for every sufficiently large $k$, there exist $M\ge1$ and $n\in\mathcal N_{M+k}(E,K)$ with
\begin{equation}\label{eq:strong-intro}
\nu(n)=r,\qquad \kappa^{\std}_{n,M}(\varphi)\ne0.
\end{equation}
The least such $r$ is denoted by $\fstr$. It ranges over all admissible conductors in the standard system, whereas $R$ minimizes the support of terminal twists in $\mathcal P_L$. This is Kolyvagin's strong-system formulation for arbitrary primes \cite{KolyvaginPreprint}; see Section~\ref{sec:strong}.

\begin{theorem}\label{thm:main-bound}
Assume \Heegner, \Surjectivity{3}, \Exclusion, and $\intr(E,K)=0$. Fix a nonconstant modular parametrization $\varphi$.
Then the standard system satisfies strong nonvanishing, and
\begin{equation}\label{eq:main-strong-bound}
\fstr\le R.
\end{equation}
The coefficient depth $M=b_\varphi+1$ suffices in \eqref{eq:strong-intro}. Moreover $\fstr(\varphi)=\fstr(\varphi_0)$ under the normalization \eqref{eq:manin-normalization}.
\end{theorem}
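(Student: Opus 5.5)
The plan is to deduce Theorem~\ref{thm:main-bound} from Theorem~\ref{thm:main-descent} and Corollary~\ref{cor:main-shift}, applied at every depth $L\ge3$ at once. By Lemma~\ref{lem:mod-eight}, \Surjectivity{3} implies \Surjectivity{L} for every $L\ge2$. The hypotheses \Heegner\ and \Exclusion\ do not depend on $L$, since the class $\xi_E$ is the same at every depth. So for each $L$ the hypotheses of Theorem~\ref{thm:main-descent} hold, and with $\intr(E,K)=0$ it provides terminal supports $T_L\in\mathscr T_L$ of size exactly $R=\min(a_0,a_1)$ with $n_{T_L}\in\mathcal N_L(E,K)$. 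The integer $R$ depends only on $d_2(E)$ and $d_2(E^D)$, hence not on $L$; this uniformity is what makes the support size fixed. When $R=0$ the empty support works, giving $n=1$ and $\nu(n)=0$.

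\emph{Step 1: choosing $n$ and $M$.} Given $k$, set $M=b+1$ with $b=b_\varphi$, and take $L=M+k$, which is at least $3$ once $k\ge2$. Choose $T=T_L$ of size $R$ as above and put $n=n_T$. Then $n\in\mathcal N_{M+k}(E,K)$ and $\nu(n)=R$.

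\emph{Step 2: nonvanishing.} Corollary~\ref{cor:main-shift}, applied at depth $L$ with $1\le M\le L$, gives
\[
\ord\kappa^{\std}_{n,M}(\varphi)=2^{\max\{M-b,0\}}=2.
\]
So the class is nonzero, which is \eqref{eq:strong-intro} with $r=R$ for all large $k$. Hence strong nonvanishing holds, $\fstr\le R$, and the depth $M=b_\varphi+1$ suffices.

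\emph{Step 3: independence of the parametrization.} I will check that $\kappa^{\std}_{n,M}([2^b]\circ\varphi_0)$ is the image of $\kappa^{\std}_{n,M+b}(\varphi_0)$ under the map induced by multiplication by $2^b$, using the coefficient-change compatibility of Section~\ref{sec:cm}. Since Heegner points for $\varphi$ are $2^b$ times those for $\varphi_0$, the classes satisfy $\kappa_{n,M}(\varphi)=2^b\kappa_{n,M}(\varphi_0)$ up to the injection $E[2^{M-b}]\hookrightarrow E[2^M]$, which is injective on $H^1$ by residual irreducibility.

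Comparing the two systems then goes in two directions. First, a nonzero class for $\varphi$ at $(n,M)$ forces a nonzero class for $\varphi_0$ at $(n,M)$, with the same $n$, so $\fstr(\varphi_0)\le\fstr(\varphi)$. Second, a nonzero class for $\varphi_0$ at $(n,M)$ with $n\in\mathcal N_{M+k}$ gives a nonzero class for $\varphi$ at $(n,M+b)$, and $n\in\mathcal N_{(M+b)+(k-b)}$. Because strong nonvanishing only asks for all sufficiently large $k$, the shift by $b$ is harmless, and $\fstr(\varphi)\le\fstr(\varphi_0)$.

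The main obstacle I expect is this second direction. The nonzero class for $\varphi$ must be checked at depth $M+b$ rather than obtained by reduction. That requires the lift-and-multiply compatibility, namely that multiplication by $2^b$ carries $\kappa_{n,M}(\varphi_0)$ injectively to $\kappa_{n,M+b}(\varphi)$. I would establish it from the definition via $D_ny_n$ and the injectivity of $H^1(K,E[2^{M}])\to H^1(K,E[2^{M+b}])$, which holds because $E(K)[2]=0$ under the $S_3$ image.
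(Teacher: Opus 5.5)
Your proposal is correct and follows the paper's own proof. For each $k$ you pass to a depth $L\ge b_\varphi+1+k$ via Lemma~\ref{lem:mod-eight}, take a terminal support of size $R$ from Theorem~\ref{thm:main-descent}, and read off order $2$ at $M=b_\varphi+1$ from Corollary~\ref{cor:main-shift}; your two-direction comparison, with the precision shift by $b$ in the second direction, is exactly Proposition~\ref{prop:strong-scaling}.

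The only slip is the opening sentence of Step~3, where the levels are mismatched. The relation you actually use later, $\kappa^{\std}_{n,M+b}(\varphi)=2^b\kappa^{\std}_{n,M+b}(\varphi_0)=i_{M,M+b,*}\kappa^{\std}_{n,M}(\varphi_0)$, is the correct one.
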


The proof is completed in Section~\ref{sec:strong}. A separate result, Theorem~\ref{thm:Kolyvagin}, identifies the minimum support of a strongly nonvanishing standard system under \Heegner, \Surjectivity{3}, and $T_E=0$:
\begin{equation}\label{eq:main-corank}
\fstr=\max\{s_2(E),s_2(E^D)\}-1.
\end{equation}
Theorem~\ref{thm:full-selmer-structure} combines this identification with the usual abstract Selmer group structure. The proofs are in Appendix~\ref{app:kolyvagin-structure}.

\subsection{Relation to previous work}
Wei Zhang proves that some derived Heegner class is nonzero modulo $p$, for a good ordinary prime $p\ge5$ with $p\nmid\disc(K)$, under his surjectivity, generalized Heegner, and local ramification hypotheses \cite[Theorem 1.1]{Zhang}. Sweeting uses congruences modulo higher prime powers to obtain nonzero Kolyvagin classes under weaker residual and local conditions \cite[Theorem A and \S1.1]{Sweeting}. Under her standing condition $p\nmid2N\disc(K)$, the conclusion is nonvanishing at some prime-power precision.

Level raising modulo $2$ is available: Le Hung and Li prove simultaneous level raising with prescribed signs at the new primes and obtain arbitrary $\lambda$-Selmer dimensions for modular abelian varieties over $\Q$ under their respective hypotheses \cite[Theorems 1.1 and 1.4]{LHL}. The difficulty in adapting the odd-prime argument concerns rank lowering over the Heegner field. The condition $q\not\equiv\pm1\pmod p$ used there \cite[Theorem 1.7 and \S1.9]{LiRankLowering} is impossible for $p=2$; at an odd level-raising prime, residual Frobenius has repeated eigenvalue $1$, and congruences modulo $2$ do not distinguish the signs $+1$ and $-1$ \cite[\S1]{LHL}.

Li proves the following obstruction \cite[Assumption 4.1 and Theorem 7.1]{LiRankLowering}. Under his conductor and dyadic hypotheses, for an inert level-raising prime $q$ whose Frobenius has order two on $E[2]$, the corresponding level-raised abelian variety $A$, with prime $\lambda\mid2$ and residue field $k$, satisfies
\[
\dim_{\F_2}\Sel_2(E/K)=1
\quad\Longrightarrow\quad
\dim_k\Sel_\lambda(A/K)=2.
\]
Indeed, conjugation acts trivially on a one-dimensional $\F_2$-Selmer group. Its localization at $q$ therefore lies in $H^1_{\ur}(K_q,W)$, where $W=E[2]^{\Frob_q=1}$. After extension of scalars to $k$, this line is the intersection of the original and raised local conditions. The duality calculation then forces the dimension to increase by one \cite[Lemma 4.3 and Theorem 6.1]{LiRankLowering}. Thus the rank-one-to-zero step of this single-prime level-raising strategy over $K$ fails under these hypotheses.

Coates--Li--Tian--Zhai develop induction with quadratic twists and genus points, obtaining analytic nonvanishing and BSD results in specified families, including products of prescribed numbers of primes \cite[Theorems 1.1--1.4]{CLTZ}. Their hypotheses, notably those for $X_0(49)$ and for the cusp image in Theorem 1.1, differ from our residual $S_3$ setting. In particular, a rational torsion cusp image outside $2E(\Q)$ forces nontrivial rational $2$-torsion, which is incompatible with residual $S_3$ image.

We keep the elliptic curve, Heegner field, and parametrization fixed. The simultaneous descent selects a common twisting support within a prescribed prime family at each depth. For a $2$-indivisible parametrization, the genus-trace comparison and the lattice-index calculation characterize residual nonvanishing on these terminal supports by the vanishing of the intrinsic index. The corank identification in Section~\ref{sec:selmer-bound} is obtained separately by adapting Kolyvagin's arbitrary-prime descent, with uniform losses in the coefficient depth.

\paragraph{Outline of the proof.}
The two initial Selmer dimensions have opposite parity. When both groups have nonzero image modulo the exceptional class $\xi_E$, an auxiliary prime detects both and lowers both dimensions by one. If one group becomes zero or consists only of $\xi_E$, a two-prime step lowers the other dimension by two while restoring the first. This reaches dimensions $0$ and $1$. For $\Delta_E>0$, the final support can be chosen even so that the mixed local conditions are those of actual twists. When the intrinsic index vanishes, nesting gives the sharper minimum $R$. The argument uses the twisting method of Mazur--Rubin \cite{MR} and the parity theorem for mixed local conditions of Klagsbrun--Mazur--Rubin \cite{KMR}.

The twist construction can be carried out at every depth if the Galois representation on the $2$-adic Tate module is surjective:
\[
\rho_{E,2^\infty}:G_\Q\longrightarrow
\operatorname{Aut}_{\Z_2}(T_2E)\simeq\GL_2(\Z_2).
\]
Indeed, this implies \Surjectivity{L} for every $L\ge2$. Under \Heegner\ and \Exclusion, Theorem~\ref{thm:main-descent} therefore supplies terminal supports at every such depth, with a bound independent of $L$. When $\intr(E,K)=0$, their size can be fixed to be $R$. By Lemma~\ref{lem:mod-eight}, surjectivity modulo $8$, namely \Surjectivity{3}, suffices.

The norm relations for Heegner points \cite[Proposition 3.7(1)]{GrossKolyvagin} yield the genus-trace identity \eqref{eq:full-character}, proved in Lemma~\ref{lem:genus-point}. Proposition~\ref{prop:mod2-comparison} then identifies the mod-$2$ Kummer class of the resulting point on a terminal twist with the standard Kolyvagin class. Each auxiliary prime contributes the same power of $2$ to the character-trace height and to the Tamagawa product. The low Selmer dimensions also force the $2$-primary $\Sha$ groups of the comparison curves to vanish. The point's lattice index satisfies $v_2(m_T)=v_2(c_\varphi)+\intr(E,K)$, independently of $T$. The normalization in Section~\ref{sec:normalization} therefore gives the equivalence \eqref{eq:residual-criterion}. When the intrinsic index vanishes, coefficient compatibility gives the order formula; under full $2$-adic surjectivity, the uniform support size $R$ then gives strong nonvanishing.

\section{Quadratic twists and local Kummer conditions}\label{sec:local}
\subsection{Twists, Kummer images, and local norms}\label{subsec:norm}
We use Kramer's local norm comparison \cite[Proposition 7]{Kramer} and the local-condition framework of Mazur--Rubin \cite[\S2]{MR} to compare the Selmer groups of quadratic twists in a common cohomology group.
If $E:y^2=x^3+Ax+B$, its twist by $d\in\Q^\times/\Q^{\times2}$ may be written as
\[
E^d:y^2=x^3+d^2Ax+d^3B.
\]
Let $\chi_d(g)=g(\sqrt d)/\sqrt d$. Over $\Q(\sqrt d)$ there is an isomorphism
\[
\iota_d:E^d\longrightarrow E,\qquad (x,y)\longmapsto(x/d,y/(d\sqrt d)),
\qquad g\iota_d=[\chi_d(g)]\iota_d.
\]
This gives $E^d[2^M]\simeq E[2^M]\otimes\chi_d$. Since $[-1]$ acts trivially on $2$-torsion, there is a natural identification $E^d[2]=V$ \cite[Remark 2.4]{MR}, \cite[Lemma 5.2(i)]{KMR}.

For a local field $F$, write
\[
\alpha_F(E)=\im\bigl(E(F)/2E(F)\xrightarrow{\delta_{E,F,2}}H^1(F,E[2])\bigr).
\]
The subspaces $\alpha_F(E)$ and $\alpha_F(E^d)$ of $H^1(F,V)$ generally differ. After transporting points to $E$ via $\iota_d$, the twisted Kummer cocycle is
\begin{equation}\label{eq:twist-cocycle}
g\longmapsto\chi_d(g)gQ-Q,\qquad 2Q=P,\quad gP=\chi_d(g)P.
\end{equation}
Here $P$ is the image on $E$ of a rational point on the twist. The twisted action on a half-point $Q$ determines its Kummer class in the common cohomology group.

The Weil pairing and local Tate duality give a nondegenerate pairing on $H^1(F,V)$, for which each Kummer image is its own orthogonal complement \cite[Chapter I, \S3]{Milne}, \cite[Proposition 4.11]{PR}.

\begin{lemma}\label{lem:norm-intersection}
Let $F'/F$ be a quadratic extension, and let $E^\chi$ be the corresponding twist. Under the identification of $2$-torsion,
\begin{equation}\label{eq:intersection-norm}
\alpha_F(E)\cap\alpha_F(E^\chi)
=\delta_{E,F,2}\bigl(N_{F'/F}E(F')\bigr).
\end{equation}
In particular, the two Kummer images have equal dimension, and
\[
\dim\frac{\alpha_F(E)}{\alpha_F(E)\cap\alpha_F(E^\chi)}
=\dim\frac{E(F)}{N_{F'/F}E(F')}.
\]
\end{lemma}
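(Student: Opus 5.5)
The plan is to prove the two inclusions in \eqref{eq:intersection-norm} directly, and then read off the dimension statements from self-duality of the Kummer images. Write $F'=F(\sqrt d)$ with nontrivial automorphism $\sigma$. Identify $E^\chi(F)$ with $\{P\in E(F'):\sigma P=-P\}$ via $\iota_d$; similarly $E(F)$ is the $\sigma$-fixed part of $E(F')$. Under this identification, the twisted Kummer cocycle of $P\in E^\chi(F)$ is \eqref{eq:twist-cocycle}.

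For the inclusion $\supseteq$, take $R\in E(F')$ and $P=R+\sigma R$. Put $P^-=R-\sigma R$, a point of $E^\chi(F)$. Then $P-P^-=2\sigma R$ lies in $2E(F')$, and $P+P^-=2R$. Choose $Q$ with $2Q=R$ over $\bar F$; more conveniently, use the half-points $Q_+=R/2$-style choices as follows. Take a single $S$ with $2S=R$ and set $Q=S+\tilde\sigma S$, where $\tilde\sigma$ lifts $\sigma$. I will check on the level of cocycles that $g\mapsto gQ-Q$ and $g\mapsto \chi(g)gQ'-Q'$ agree, with $Q'=S-\tilde\sigma S$. Their difference is $g(Q-\chi(g)Q')-(Q-Q')$. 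For $g\in G_{F'}$ this is $2(g\tilde\sigma S-\tilde\sigma S)$, which equals $g(\sigma R)-\sigma R=0$. For $g\notin G_{F'}$ a similar short computation gives a coboundary, namely $g(2\tilde\sigma S)-2\tilde\sigma S$-type terms that vanish. So $\delta_E(P)=\delta_{E^\chi}(P^-)$, and this class lies in both images.

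For the inclusion $\subseteq$, I would use orthogonality rather than explicit cocycles, since that is cleaner. The quotient $\alpha_F(E)/\delta(NE(F'))$ is dual to the corresponding object for $E^\chi$. Indeed, I would invoke Kramer \cite[Proposition 7]{Kramer}: $\dim E(F)/NE(F')=\dim E^\chi(F)/NE^\chi(F')$, and the sum $\alpha_F(E)+\alpha_F(E^\chi)$ is the kernel of restriction composed appropriately. Concretely, consider restriction $H^1(F,V)\to H^1(F',V)$. Both $\alpha_F(E)$ and $\alpha_F(E^\chi)$ map into $\alpha_{F'}(E)$, since the twist becomes trivial over $F'$. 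Then $\ker(\res)=\langle\chi\rangle$-type classes $H^1(\Gal(F'/F),V^{G_{F'}})$. The plan is to show that a class $c$ in the intersection is the image of some point $P\in E(F)$ with $P=R+\sigma R$. First write $c=\delta_E(P)=\delta_{E^\chi}(P^-)$. Restricting to $F'$ gives $\delta_{E,F'}(P-P^-)=0$, so $P-P^-=2R'$ with $R'\in E(F')$. Applying $\sigma$ gives $P+P^-=2\sigma R'$. Hence $2P=2(R'+\sigma R')$, so $P\equiv N(R')$ modulo $E(F)[2]$. The correction by $E(F)[2]$ is the main obstacle. I expect to handle it by using the freedom to change $P$ by $2E(F)$ and $P^-$ by $2E^\chi(F)$, together with an analysis of $E[2](F)$ against norms. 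An alternative is the counting argument: establish $\supseteq$, and show that the codimensions agree using Kramer's count $\dim\alpha_F(E)\cap\alpha_F(E^\chi)=\dim E(F)/NE(F')$-type identity. Through the injectivity of $E(F)/NE(F')\hookrightarrow \alpha$-quotients this forces equality. I would fall back on citing \cite[Proposition 7]{Kramer} and \cite[Lemma 5.2]{KMR} for this step.

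Finally, the dimension statements. Since $E$ and $E^\chi$ have the same $2$-torsion module, $\dim\alpha_F(E)=\tfrac12\dim H^1(F,V)=\dim\alpha_F(E^\chi)$ by self-orthogonality and Tate duality. The displayed codimension then follows from \eqref{eq:intersection-norm}, because $\delta$ identifies $E(F)/2E(F)$ with $\alpha_F(E)$ and $2E(F)\subseteq NE(F')$.
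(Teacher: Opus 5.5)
Your inclusion $\supseteq$ is correct and is essentially the paper's argument. With $2S=R$, $Q=S+\tilde\sigma S$ and $Q'=S-\tilde\sigma S$ one has $Q+Q'=R$ and $Q-Q'=\sigma R$, so the two cocycles agree exactly. On the nontrivial coset the difference is $gR-\sigma R=0$; write that computation out instead of the ``$g(2\tilde\sigma S)-2\tilde\sigma S$-type'' remark. Your closing dimension count also matches the paper.

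The genuine gap is the inclusion $\subseteq$. By passing to $F'$ you keep only $\delta_{E,F'}(P-P^-)=0$. This discards the kernel $H^1(\Gal(F'/F),E(F')[2])$ of restriction, which is exactly where the $E(F)[2]$-ambiguity you isolate lives, and nothing you retain can remove it. For any $t\in E(F)[2]$, the pair $(P,P^-)=(t,t)$ satisfies $P-P^-=0\in 2E(F')$, hence every relation you derive. Yet $t$ need not be a norm. In the ramified, odd-residue-characteristic case of Lemma~\ref{lem:local-twists} one has $N_{F'/F}E(F')=2E(F)$, and a rational $2$-torsion point need not lie in $2E(F)$. So changing $P$ by $2E(F)\subseteq N_{F'/F}E(F')$ and $P^-$ by $2E^\chi(F)$ cannot close the argument.

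Your fallbacks are circular. Kramer's Proposition~7 is this lemma. The count you would invoke is equivalent to the missing inclusion, and as written it is misstated: $\dim E(F)/N_{F'/F}E(F')$ is the codimension of the intersection in $\alpha_F(E)$, not its dimension. The duality claim about $\alpha_F(E)/\delta(N_{F'/F}E(F'))$ is unproved and would not supply an independent count anyway.

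The missing idea is to use the equality of the two classes in $H^1(F,V)$ itself, including on the coset $G_F\setminus G_{F'}$. The paper does this directly:
\begin{itemize}
\item Choose $2R=P$ and $2S=\iota_\chi(P^-)$.
\item The cocycles $g\mapsto gR-R$ and $g\mapsto\chi(g)gS-S$ differ by a coboundary $g\mapsto gw-w$ with $w\in V$, so replacing $S$ by $S+w$ makes them equal.
\item Then $g(R+S)-(R+S)=2(gR-R)=0$ for $g\in G_{F'}$, while $\tilde\sigma(R+S)=R-S$.
\item Hence $R+S\in E(F')$ has norm $2R=P$.
\end{itemize}

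Your own route also closes once this information is used. Write $P=N_{F'/F}R'+t$; then $P^-=(\sigma R'-R')+t$. Your $\supseteq$ computation applied to $R'$ reduces the hypothesis to $\delta_{E,F,2}(t)=\delta_{E^\chi,F,2}(t)$. The difference of these two classes is represented by the cocycle that is $0$ on $G_{F'}$ and $t$ on its complement. If this equals $g\mapsto gv-v$, then $v\in E(F')[2]$ and $\sigma v=v+t$. Hence $t=v+\sigma v=N_{F'/F}v$, and $P=N_{F'/F}(R'+v)$.
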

\begin{proof}
This is Kramer's local norm comparison \cite[Proposition 7]{Kramer}, \cite[Lemma 2.9]{MR}. We recall the cocycle argument. Suppose $P\in E(F)$ and $P'\in E^\chi(F)$ define the same cohomology class. Choose $2R=P$ and $2S=\iota_\chi(P')$. The two cocycles differ by a coboundary, so, after modifying $S$ by a $2$-torsion point, we may arrange that
\[
gR-R=\chi(g)gS-S.
\]
For $g\in G_{F'}$, both sides are the same $2$-torsion point. Thus $g(R+S)-(R+S)=2(gR-R)=0$, and $Q=R+S\in E(F')$. If $\sigma$ lifts the nontrivial automorphism of $F'/F$, the displayed identity gives $\sigma(R+S)=R-S$, whence $NQ=2R=P$.

Conversely, for $Q\in E(F')$, put $P=Q+\sigma Q$ and $P^-=Q-\sigma Q$. Then $P\in E(F)$, while $\sigma P^-=-P^-$, so $P^-$ corresponds to a point of $E^\chi(F)$. Choose $2R=P$ and put $S=Q-R$; then $2S=P^-$. Checking separately on $G_{F'}$ and its other coset gives $gR-R=\chi(g)gS-S$. Hence the two points define the same Kummer class. This proves the intersection formula. The two Kummer images have equal dimension by their self-duality. Finally, $2E(F)\subseteq NE(F')$ and injectivity of the Kummer map give the stated codimension.
\end{proof}

For a nonarchimedean local field $F$ with residue field $k_F$, write $\widetilde E/k_F$ for the special fiber of the N\'eron model of $E/F$. Let $E_0(F)$ be the inverse image of $\widetilde E^0(k_F)$ under reduction, and let $E_1(F)$ be the kernel of reduction.

\begin{lemma}\label{lem:local-twists}
Suppose $E/F$ has good reduction.
If $F'/F$ is unramified quadratic, then $NE(F')=E(F)$ and $\alpha_F(E)=\alpha_F(E^\chi)$, including in residue characteristic two.
If the residue characteristic is odd and $F'/F$ is ramified, then
\[
NE(F')=2E(F),\qquad \alpha_F(E)\cap\alpha_F(E^\chi)=0.
\]
A twist by a local square leaves the Kummer condition unchanged.
\end{lemma}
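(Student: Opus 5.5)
The plan is to reduce all three assertions to Lemma~\ref{lem:norm-intersection} and then compute the norm index $[E(F):NE(F')]$ in each case.

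\emph{Twist by a local square.} If $\chi$ is trivial, then $F'=F\times F$ is split. The twisting isomorphism $\iota_d$ is defined over $F$ and is compatible with the identification $E^d[2]=V$. Hence it carries $\alpha_F(E^\chi)$ onto $\alpha_F(E)$, and no further argument is needed. Equivalently, in the split-algebra convention the norm is the sum of two coordinates and is surjective, so the intersection formula also gives equality.

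\emph{Unramified case.} Good reduction makes the N\'eron model an abelian scheme $\mathcal E$ over $\mathcal O_F$. I would use the filtration $E_1(F)\subset E(F)$, whose graded pieces are $\widetilde E(k_F)$ and the formal group $\widehat E(\mathfrak m_F)$. Since $F'/F$ is unramified, reduction is compatible with the norm.
\begin{enumerate}
\item On the special fiber, the norm $\widetilde E(k_{F'})\to\widetilde E(k_F)$ is surjective. This is Lang's theorem: the map $x\mapsto x+\Frob(x)$ on the connected group $\widetilde E$ over the finite field $k_F$ has surjective image on points, since $H^1$ of a finite field with coefficients in a connected group vanishes.
\item On formal groups, the norm $\widehat E(\mathfrak m_{F'})\to\widehat E(\mathfrak m_F)$ is surjective. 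I would prove this by successive approximation on the filtration $\widehat E(\mathfrak m^n)$, using that the graded pieces are $\mathfrak m^n/\mathfrak m^{n+1}\simeq k$ and that the trace $k_{F'}\to k_F$ is surjective for an unramified extension. Completeness then gives surjectivity.
\end{enumerate}
Together these show $NE(F')=E(F)$ in every residue characteristic, including two. Lemma~\ref{lem:norm-intersection} then gives $\alpha_F(E)\cap\alpha_F(E^\chi)=\alpha_F(E)$. Since the two Kummer images have equal dimension, they coincide.

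\emph{Ramified case, odd residue characteristic.} Both $E$ and $E^\chi$ have good reduction over $F'$, but $E^\chi$ typically has additive reduction over $F$; we do not need this directly. The steps are as follows.
\begin{enumerate}
\item Since $p$ is odd, multiplication by $2$ is an automorphism of $E_1(F')$ and of $E_1(F)$. Hence $E_1(F)=2E_1(F)$, which lies in both $2E(F)$ and $NE(F')$.
\item The residue fields agree, and $\sigma$ acts trivially on $\widetilde E(k)$. So for $Q\in E(F')$ the reduction of $NQ=Q+\sigma Q$ is $2\bar Q$.
\item Given $NQ$, write $\bar Q$ as the reduction of some $Q_0\in E(F)$. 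This is possible because $E(F)\to\widetilde E(k)$ is surjective by Hensel's lemma and smoothness. Then $NQ-2Q_0\in E_1(F)=2E_1(F)$, so $NQ\in 2E(F)$.
\item Conversely, $2E(F)=N(E(F))\subseteq NE(F')$.
\end{enumerate}
Hence $NE(F')=2E(F)$, and Lemma~\ref{lem:norm-intersection} gives $\alpha_F(E)\cap\alpha_F(E^\chi)=\delta(2E(F))=0$.

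The main obstacle I expect is the formal-group surjectivity in the unramified residue-characteristic-two case. There the logarithm does not converge well enough on $\mathfrak m$ to reduce directly to additive trace surjectivity, so the filtration argument must be done with care. Instead of the filtration argument, one can cite the standard fact that the Galois cohomology of an unramified extension with values in the points of an abelian scheme vanishes, $\hat H^0(\Gal(F'/F),\mathcal E(\mathcal O_{F'}))=0$ (Mazur). Either way, the graded-piece argument above goes through.
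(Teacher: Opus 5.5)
Your proposal is correct and follows essentially the same route as the paper. In the unramified case you combine Lang's theorem on the special fiber with the surjective residue trace on the formal-group graded pieces and completeness; in the odd ramified case you lift the common reduction to $E(F)$ and use that $[2]$ is invertible on $E_1(F)$; and Lemma~\ref{lem:norm-intersection} converts each norm computation into the Kummer-image statement.

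The one step you compress is the passage from Lang's vanishing of $H^1(\Gal(k_{F'}/k_F),\widetilde E(k_{F'}))$ to surjectivity of the norm. That passage uses that a finite module over a cyclic group has Herbrand quotient one, which the paper calls the ``norm formula.'' Your explicit treatment of the local-square case, via $\iota_d$ being defined over $F$, fills in a point the paper leaves implicit.
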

\begin{proof}
In the unramified case, let $k'/k$ be the residue-field extension. Lang's theorem gives $H^1(\Gal(k'/k),\widetilde E(k'))=0$. For this finite cyclic action, the norm formula then implies surjectivity of $\widetilde E(k')\to\widetilde E(k)$. On each formal-group quotient $E_j(F')/E_{j+1}(F')$, the norm is the surjective trace $\Tr_{k'/k}$. Successive lifting and completeness yield surjectivity of $E_1(F')\to E_1(F)$. The reduction exact sequence therefore gives $NE(F')=E(F)$. The argument also applies in residue characteristic two. See \cite[Lemma 2.10(v)]{MR}; equality of Kummer images follows from Lemma~\ref{lem:norm-intersection}.

In the ramified case of odd residue characteristic, the two residue fields coincide. Given $Q\in E(F')$, choose $R\in E(F)$ with the same reduction. Then $NQ-2R\in E_1(F)$. Multiplication by $2$ is invertible on the formal group $E_1(F)$, so $NQ\in2E(F)$. The reverse inclusion follows from $NP=2P$ for $P\in E(F)$. Apply \eqref{eq:intersection-norm}; see also \cite[Lemmas 2.10--2.11]{MR}.
\end{proof}

At an odd prime $q$ of good reduction, inertia acts trivially on $V$, and
\begin{equation}\label{eq:ur-line}
\alpha_q(E)=H^1_{\ur}(\Q_q,V)\simeq V/(\Frob_q-1)V.
\end{equation}
The isomorphism in \eqref{eq:ur-line} is given by evaluating cocycles at Frobenius \cite[Lemma 2.2]{MR}. If residual Frobenius is a transposition, both its invariant space and its coinvariant quotient have dimension one. Local Tate duality then gives $\dim H^1(\Q_q,V)=2$. The Kummer line of a ramified twist intersects $\alpha_q(E)$ trivially. This is the local change of line used in simultaneous descent \cite[Lemma 2.11]{MR}.

\subsection{The real place, total local discrepancy, and index zero}\label{subsec:parity}
If $\Delta_E>0$, the defining cubic has three real roots, complex conjugation acts trivially on $V$, and $\epsilon_\infty=1$. If $\Delta_E<0$, complex conjugation is a transposition and $\epsilon_\infty=0$. In particular,
\[
H^1(\R,V)=\begin{cases}V,&\Delta_E>0,\\0,&\Delta_E<0.\end{cases}
\]
Quadratic twisting preserves the sign of the discriminant: twisting multiplies it by $d^6$, and passage to a minimal model changes it by a twelfth power.

For each place $v$ of $\Q$, define the local discrepancy between the two initial curves by
\[
h_v=\dim_{\F_2}\frac{\alpha_v(E)}{\alpha_v(E)\cap\alpha_v(E^D)}.
\]
By Lemma~\ref{lem:norm-intersection}, this equals the local norm index
\[
h_v=\dim_{\F_2}E(\Q_v)/N_{K\otimes\Q_v/\Q_v}E(K\otimes\Q_v).
\]
We compute these indices place by place. At a prime of bad reduction of $E$, the Heegner hypothesis makes $D$ a local square, so $h_v=0$. Outside $2ND$, the extension is split or unramified at a place of good reduction, again giving $h_v=0$. For odd $r\mid D$, the extension is ramified and $E$ has good reduction. Lemma~\ref{lem:local-twists} gives norm image $2E(\Q_r)$, hence $h_r=t_r$. At the real place, $N_{\C/\R}E(\C)=E(\R)^0$, so $h_\infty=\epsilon_\infty$. The contribution at $2$ is $h_2(E,K)$. See \cite[\S2]{MR} for these local computations and Appendix~\ref{app:dyadic} for the dyadic values. Thus
\begin{equation}\label{eq:total-local}
\sum_vh_v=\epsilon_\infty+h_2(E,K)+\sum_{r\mid D,\ r\text{ odd}}t_r=1+2\delta_K.
\end{equation}
For odd $r\mid D$, Frobenius acts on the three nonzero $2$-torsion points as the identity, a transposition, or a $3$-cycle; the corresponding values of $t_r$ are $2,1,0$. The sign character of the $2$-division representation corresponds to $\Q(\sqrt{\Delta_E})$, so
\[
(-1)^{t_r}=\left(\frac{\Delta_E}{r}\right)=(D,\Delta_E)_r.
\]
The last equality uses the odd-prime Hilbert-symbol formula with $v_r(D)=1$ and $v_r(\Delta_E)=0$. At $2$, Lemma~\ref{lem:local-twists} in the split and unramified cases and \cite[Propositions 4--5]{Kramer} in the ramified case give $(-1)^{h_2(E,K)}=(D,\Delta_E)_2$.
At the remaining finite places, either $D$ is a square or both $D$ and $\Delta_E$ are units at an odd prime; the Hilbert symbol is therefore one. At the real place, $D<0$, and hence
\[
(D,\Delta_E)_\infty=
\begin{cases}-1,&\Delta_E<0,\\1,&\Delta_E>0\end{cases}
=(-1)^{1-\epsilon_\infty}.
\]
The product formula $\prod_v(D,\Delta_E)_v=1$ now gives
\[
h_2(E,K)+\sum_{r\mid D,\ r\text{ odd}}t_r
\equiv1-\epsilon_\infty\pmod2.
\]
Thus $\sum_vh_v$ is a nonnegative odd integer, hence at least one. Equation~\eqref{eq:total-local} yields $\delta_K\in\Z_{\ge0}$.

On the other hand, Kramer's global parity formula \cite[Theorems 1--2]{Kramer}, in the notation of \cite[Theorem 2.7]{MR}, reads
\begin{equation}\label{eq:kramer-parity}
a_0+a_1\equiv\sum_vh_v\pmod2.
\end{equation}
Here $a_0=d_2(E)$ and $a_1=d_2(E^D)$, and the right-hand side is the sum of the local codimensions between the two Kummer conditions. The formula arises by comparing two Cassels pairings on the common Selmer subgroup and applying the Kummer exact sequence; see \cite[proof of Theorem 2.7 and Remark 2.8]{MR}. Together with the local parity just established, it gives $a_0+a_1\equiv1\pmod2$.

\begin{proposition}\label{prop:nesting}
If $\delta_K=0$, then one of $\Sel_2(E/\Q)$ and $\Sel_2(E^D/\Q)$ contains the other in $H^1(\Q,V)$, and their dimensions differ by one.
\end{proposition}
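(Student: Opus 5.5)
If $\delta_K=0$, equation \eqref{eq:total-local} gives $\sum_v h_v=1$. So there is exactly one place $v_0$ with $h_{v_0}=1$, and $\alpha_v(E)=\alpha_v(E^D)$ at every other place. At $v_0$, Lemma~\ref{lem:norm-intersection} shows that the two Kummer images have equal dimension and meet in a common hyperplane $W_{v_0}$. Define the strict and relaxed local systems
\[
\mathcal S_{\mathrm{str}}:\ \alpha_v(E)\ (v\ne v_0),\ W_{v_0};\qquad
\mathcal S_{\mathrm{rel}}:\ \alpha_v(E)\ (v\ne v_0),\ \alpha_{v_0}(E)+\alpha_{v_0}(E^D).
\]
Then $\Sel_{\mathrm{str}}\subseteq\Sel_2(E/\Q),\Sel_2(E^D/\Q)\subseteq\Sel_{\mathrm{rel}}$. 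Each of these inclusions has codimension at most one, because the local quotient at $v_0$ is one-dimensional. Both Selmer groups are therefore squeezed between two global groups whose dimensions differ by at most two.

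Next I would apply Poitou--Tate / global duality, in the form used by Mazur--Rubin \cite[\S2]{MR}, to the pair $(\mathcal S_{\mathrm{str}},\mathcal S_{\mathrm{rel}})$. These systems are mutually dual, since each $\alpha_v$ is maximal isotropic and $W_{v_0}^\perp=\alpha_{v_0}(E)+\alpha_{v_0}(E^D)$. Duality gives
\[
\dim\Sel_{\mathrm{rel}}-\dim\Sel_{\mathrm{str}}=\tfrac12\dim\frac{\alpha_{v_0}(E)+\alpha_{v_0}(E^D)}{W_{v_0}}=1.
\]
This is the standard statement that relaxing at one place by a codimension-two (total) enlargement of a self-dual condition raises the dimension by exactly half of that enlargement. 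The general fact is that $\dim\Sel_{\mathcal F_1}-\dim\Sel_{\mathcal F_1^\ast}$ is half the local index, because the images of the two localization maps are orthogonal complements.

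Since $\Sel_{\mathrm{rel}}$ has dimension exactly one more than $\Sel_{\mathrm{str}}$, each of $\Sel_2(E/\Q)$ and $\Sel_2(E^D/\Q)$ equals one of these two groups. By \eqref{eq:kramer-parity} and the parity just established, $a_0+a_1$ is odd, so they cannot both equal the same one. Hence one of them is $\Sel_{\mathrm{str}}$ and the other is $\Sel_{\mathrm{rel}}$. This gives the containment, with dimensions differing by one. Alternatively, parity is not needed here: the localizations of $\Sel_{\mathrm{rel}}$ at $v_0$ span a line modulo $W_{v_0}$, which lies in exactly one of $\alpha_{v_0}(E)/W_{v_0}$ and $\alpha_{v_0}(E^D)/W_{v_0}$. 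These two lines are distinct because their intersection is $W_{v_0}$.

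The main obstacle is the duality step giving exactly $1$ rather than a value in $\{0,1,2\}$. I would prove it via the exact sequence
\[
0\to\Sel_{\mathrm{str}}\to\Sel_{\mathrm{rel}}\to\frac{\alpha_{v_0}(E)+\alpha_{v_0}(E^D)}{W_{v_0}}
\]
together with its dual sequence. The image of the last map is its own orthogonal complement under the induced nondegenerate pairing on this two-dimensional quotient, so it has dimension one. I would also check that the case $v_0=\infty$ with $\Delta_E>0$, which is the generic situation, causes no issue: $H^1(\R,V)=V$ carries the same pairing, and the argument is unchanged.
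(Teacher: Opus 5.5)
Your main argument is correct, but it takes a longer route than the paper. The paper works directly with $C=S_0\cap S_1$. Localization modulo $\alpha_v(E)\cap\alpha_v(E^D)$ embeds each $S_i/C$ into a space of total dimension $\sum_v h_v=1$, so $u_i=\dim S_i/C\in\{0,1\}$. Kramer's parity $u_0+u_1\equiv a_0+a_1\equiv1\pmod 2$ then forces $(u_0,u_1)\in\{(0,1),(1,0)\}$.

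Your $\Sel_{\mathrm{str}}$ is exactly this $C$, and your squeeze already gives $\dim S_i-\dim C\in\{0,1\}$. Combined with parity, this finishes the proof without any Poitou--Tate input. Your duality computation $\dim\Sel_{\mathrm{rel}}-\dim\Sel_{\mathrm{str}}=1$ is correct; it is the strict/relaxed formula of \cite[Lemma 3.2]{MR}, which the paper uses later in the Parity Lemma. It is not needed here, though. What it adds is that the larger Selmer group is the full relaxed group at $v_0$, equivalently that the global image at $v_0$ is one-dimensional.

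Your remark that parity can be avoided is, however, not justified. The quotient $(\alpha_{v_0}(E)+\alpha_{v_0}(E^D))/W_{v_0}$ is a plane over $\F_2$, so it contains three lines, not two. If $\bar a$ and $\bar b$ span the two Kummer lines, the third line is spanned by $\bar a+\bar b$. Nondegeneracy of the induced pairing gives $\langle\bar a,\bar a\rangle=\langle\bar b,\bar b\rangle=0$ and $\langle\bar a,\bar b\rangle\neq0$, hence $\langle\bar a+\bar b,\bar a+\bar b\rangle=0$. So the third line is also maximal isotropic for the bilinear pairing. Duality alone cannot exclude that the image of $\Sel_{\mathrm{rel}}$ at $v_0$ is this line, in which case $S_0=S_1=\Sel_{\mathrm{str}}$.

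Excluding it requires the quadratic refinement $q_{E,v}$ together with global reciprocity $\sum_v q_{E,v}(\loc_v c)=0$. The form $q_{E,v_0}$ descends to the quotient and vanishes on both Kummer lines, but $q_{E,v_0}(\bar a+\bar b)=\langle\bar a,\bar b\rangle\neq0$. That quadratic input is exactly what underlies Kramer's parity formula. So the parity step in your main argument is genuinely needed, or must be replaced by this quadratic-form argument.
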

\begin{proof}
Write $S_i=\Sel_2(E^{D^i}/\Q)$ and $C=S_0\cap S_1$. For $x\in S_i$, reduce each localization modulo the intersection of the two local conditions. The kernel of the resulting map consists exactly of the classes satisfying both sets of conditions, namely $C$. Thus localization induces an injection
\[
S_i/C\hookrightarrow\bigoplus_v
\frac{\alpha_v(E^{D^i})}{\alpha_v(E)\cap\alpha_v(E^D)}.
\]
Since the two local Kummer images have equal dimension, the right-hand side has dimension $\sum_vh_v$ for either $i$. If $\delta_K=0$, this sum is one, so $u_i:=\dim(S_i/C)\in\{0,1\}$. Moreover,
\[
u_0+u_1=\dim S_0+\dim S_1-2\dim C
\equiv a_0+a_1\equiv1\pmod2.
\]
Consequently, $(u_0,u_1)$ is either $(0,1)$ or $(1,0)$. The first possibility gives $S_0=C\subset S_1$ with codimension one; the second gives the reverse inclusion. For this combination of localization and parity, see \cite[\S3]{MR}; the general parity theorem for local conditions is \cite[Theorem 3.9]{KMR}.
\end{proof}

Under \Exclusion, the smaller group does not contain $\xi_E$; if this class belongs to their sum, it lies only in the larger group.

\begin{lemma}\label{lem:unramified-odd}
Let $F$ be a nonarchimedean local field of characteristic zero, let $F'/F$ be unramified quadratic, and let $C/F$ be an elliptic curve with odd Tamagawa number $c_F(C)$. Then
\[
N_{F'/F}C(F')=C(F),\qquad
\alpha_F(C)=\alpha_F(C^\chi),
\]
where $\chi$ is the character of $F'/F$. Moreover, unramified quadratic twisting preserves the parity of the Tamagawa number.
\end{lemma}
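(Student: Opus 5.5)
The plan is to run the same filtration argument as in Lemma~\ref{lem:local-twists}, now using the identity component of the N\'eron model. The odd Tamagawa number is what absorbs the component group. Consider the filtration
\[
C(F)\supseteq C_0(F)\supseteq C_1(F),
\qquad C(F)/C_0(F)\simeq\Phi(k_F),\quad C_0(F)/C_1(F)\simeq\widetilde C^0(k_F),
\]
where $\Phi$ is the component group of the special fiber. The same filtration exists for $C(F')$, with residue field $k'$. Formation of N\'eron models commutes with the \'etale base change $\mathcal O_F\to\mathcal O_{F'}$, so the pieces over $F'$ are $\Phi(k')$, $\widetilde C^0(k')$ and $C_1(F')$. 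The norm respects these pieces.

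\emph{Step 1: the formal group.} On each quotient $C_j(F')/C_{j+1}(F')\simeq k'$ with $j\ge1$, the norm is the surjective trace $\Tr_{k'/k}$. Successive approximation and completeness then give $N C_1(F')=C_1(F)$, exactly as in the proof of Lemma~\ref{lem:local-twists}.

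\emph{Step 2: the identity component.} $\widetilde C^0$ is a connected smooth commutative algebraic group over the finite field $k_F$. Lang's theorem therefore gives $H^1(\Gal(k'/k_F),\widetilde C^0(k'))=0$. For the cyclic group $\Gal(k'/k_F)$ acting on the finite module $\widetilde C^0(k')$, the Herbrand quotient is $1$. Hence $\hat H^0=0$, which means the norm $\widetilde C^0(k')\to\widetilde C^0(k_F)$ is surjective. Combining this with Step 1 gives $N C_0(F')=C_0(F)$.

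\emph{Step 3: the component group.} Since $N C_0(F')=C_0(F)$, the quotient $C(F)/NC(F')$ is a quotient of $C(F)/C_0(F)=\Phi(k_F)$, which has odd order $c_F(C)$. On the other hand, $NC(F')\supseteq NC(F)=2C(F)$, so this quotient is killed by $2$. A group of odd order killed by $2$ is trivial, so $NC(F')=C(F)$. Lemma~\ref{lem:norm-intersection} then gives $\alpha_F(C)\cap\alpha_F(C^\chi)=\alpha_F(C)$. Since the two Kummer images have equal dimension, they coincide.

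\emph{Step 4: parity of the Tamagawa number.} Because $\chi$ is unramified, the twist $C^\chi$ is obtained by \'etale descent along $\mathcal O_{F'}/\mathcal O_F$. Its N\'eron model is therefore the twist of $\mathcal C$ by $\chi$, with component group $\Phi^\chi=\Phi\otimes\chi$ as a $\Gal(\bar k/k_F)$-module; concretely, Frobenius acts as $-\Frob$. For a finite abelian group $A$ with an automorphism $\sigma$, the order of $A^\sigma$ is odd if and only if $A[2]^\sigma=0$: indeed, a nonzero $2$-part of $A^\sigma$ contains a fixed element of order $2$. On $\Phi[2]$ the automorphisms $\Frob$ and $-\Frob$ agree, so $\#\Phi(k_F)$ and $\#\Phi^\chi(k_F)$ have the same parity.

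The main point requiring care is Step 4, namely identifying the component group of the twist as the $\chi$-twist of $\Phi$. I would justify this by uniqueness of N\'eron models together with their compatibility with \'etale base change, transporting the descent datum through the twisted Galois action. The rest of the argument is the routine Lang's-theorem and formal-group bookkeeping.
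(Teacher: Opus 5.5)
Your proposal is correct and follows essentially the same route as the paper. Both arguments use Lang's theorem with the cyclic norm formula on $\widetilde C^0$ and the surjective residue-field trace on the formal-group quotients to get $NC_0(F')=C_0(F)$, then absorb the component group using the odd order of $C(F)/C_0(F)$, and finally note that unramified twisting replaces $\Frob$ by $-\Frob$, which agrees with $\Frob$ on $\Phi[2]$; your ``odd-order quotient killed by $2$'' phrasing is equivalent to the paper's explicit choice of $R$ with $P-2R\in C_0(F)$.
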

\begin{proof}
N\'eron models commute with unramified base change. Their identity components have connected smooth commutative special fibers over finite fields. Lang's theorem and the norm formula for finite cyclic groups give surjectivity of the norm on these fibers. On each formal-group filtration quotient, the norm is the surjective residue-field trace. Successive lifting and completeness therefore give
$NC_0(F')=C_0(F)$; see also \cite[\S6, printed p.~327]{KT}.
For $P\in C(F)$, the odd order of $C(F)/C_0(F)$ allows us to choose $R\in C(F)$ with $P-2R\in C_0(F)$. Write $P-2R=NQ$. Then $P=N(Q+R)$. Equality of Kummer images follows from Lemma~\ref{lem:norm-intersection}.

Let $\Phi$ be the geometric component group of the N\'eron model and let $\mathrm{Fr}$ be residue-field Frobenius. Unramified quadratic twisting replaces $\mathrm{Fr}$ by $-\mathrm{Fr}$, and
\[
\Phi[2]^{\mathrm{Fr}=1}=\Phi[2]^{-\mathrm{Fr}=1}.
\]
Since $c_F(C)=\#\Phi^{\mathrm{Fr}=1}$, this number is even exactly when the displayed $2$-torsion invariant space is nonzero. Thus the Tamagawa numbers before and after twisting have the same parity.
\end{proof}

If $\Delta_E>0$ and $\delta_K=0$, the real place already contributes one to \eqref{eq:total-local}. Hence $h_v=0$ at every finite place, or equivalently,
\begin{equation}\label{eq:finite-local-equal}
\alpha_v(E)=\alpha_v(E^D)\qquad(v<\infty).
\end{equation}
The two sets of local conditions differ only at the real place. If in addition $T_E=0$, all the original Tamagawa numbers are odd, and Lemma~\ref{lem:unramified-odd} permits odd support size when $\Delta_E>0$: the negative twisting parameter exchanges the two real Kummer lines while preserving the finite local conditions.

\section{The mod-$4$ obstruction and simultaneous Selmer descent}\label{sec:descent}
Assume \Heegner\ and \Surjectivity{L} throughout this section. The simultaneous descent uses the twisting method of \cite[\S\S3--5]{MR} and the mixed Selmer structures of \cite[\S\S3--5]{KMR}.

\subsection{The cohomological kernel over division fields}\label{subsec:xi}\label{subsec:exclude}
Let $\Gamma_2=\GL_2(\Z/4\Z)$. Reduction gives an exact sequence
\[
1\longrightarrow U=1+2M_2(\F_2)\longrightarrow\Gamma_2
\longrightarrow S_3\longrightarrow1.
\]
The subgroup $U$ acts trivially on $V$, and $(1+2B)(1+2C)=1+2(B+C)$. The restriction of a cocycle to $U$ is therefore a linear map $M_2(\F_2)\to V$. Compatibility with conjugation requires
\[
F(hBh^{-1})=hF(B),\qquad h\in S_3.
\]
Choose the complement $H=\langle s,t\rangle\simeq S_3$, where
\[
s=\begin{pmatrix}0&1\\1&0\end{pmatrix},\qquad
t=\begin{pmatrix}0&3\\1&3\end{pmatrix}\quad\text{over }\Z/4\Z.
\]
Direct calculation gives $s^2=t^3=(st)^2=1$, and $H$ maps isomorphically onto $S_3$ modulo $2$.

\begin{lemma}\label{lem:f}
Identify $V=\F_2^2$ using the standard basis. Then
\begin{equation}\label{eq:f}
\Hom_{S_3}(M_2(\F_2),V)=\F_2f,\qquad
f\begin{pmatrix}a&b\\c&d\end{pmatrix}=\binom{a+d+b}{a+d+c}.
\end{equation}
\end{lemma}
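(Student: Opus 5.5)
The plan is to prove the statement in two independent steps. First, show that $\Hom_{S_3}(M_2(\F_2),V)$ has dimension at most one by decomposing $M_2(\F_2)$ as an $S_3$-module. Second, check directly that the displayed $f$ is a nonzero equivariant map. Here $S_3=\GL_2(\F_2)$ acts on $M_2(\F_2)$ by conjugation. This is the action induced from the complement $H$, since $H$ maps isomorphically onto $S_3$ and $U$ acts trivially on $V$. The group acts on $V=\F_2^2$ by the standard representation.

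\textbf{Dimension count.} Conjugation identifies $M_2(\F_2)=\End(V)\simeq V\otimes V^*$. Since $V$ is self-dual as an $S_3$-module (it is the unique $2$-dimensional irreducible), we have $M_2(\F_2)\simeq V\otimes V$.

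Restricted to the Sylow $3$-subgroup $A_3=C_3$, the module $V\otimes\F_4$ splits as $\omega\oplus\omega^2$. Hence $V\otimes V$ restricts to $1\oplus1\oplus\omega\oplus\omega^2$. The $\omega\oplus\omega^2$ isotypic part is an $S_3$-submodule isomorphic to $V$, and it is a direct summand because $|C_3|$ is odd. So
\[
V\otimes V\simeq V\oplus P,
\]
where $P$ is $2$-dimensional with $C_3$ acting trivially.

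Then $\Hom_{S_3}(P,V)=0$: the image of such a map is fixed by $C_3$, and $V^{C_3}=0$ because a $3$-cycle has no nonzero fixed vector on $V$. Also $\End_{S_3}(V)=\F_2$, since $V$ is absolutely irreducible; indeed its only commuting matrices are the scalars in $\F_2$. Therefore $\dim\Hom_{S_3}(M_2(\F_2),V)\le1$.

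One could avoid the decomposition and solve the linear equations directly. A general linear map $M_2(\F_2)\to V$ has $8$ unknown coefficients, and imposing equivariance under the two generators $\bar s,\bar t$ gives a linear system whose solution space one shows is the line spanned by $f$. I would use the module argument and keep this brute-force check as a fallback.

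\textbf{Verification of $f$.} Reducing modulo $2$, the generators are $\bar s=\begin{pmatrix}0&1\\1&0\end{pmatrix}$ and $\bar t=\begin{pmatrix}0&1\\1&1\end{pmatrix}$. It suffices to check $f(hBh^{-1})=hf(B)$ for $h=\bar s,\bar t$ on the basis $E_{11},E_{12},E_{21},E_{22}$ of $M_2(\F_2)$.

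For $\bar s$, conjugation swaps $a\leftrightarrow d$ and $b\leftrightarrow c$. It therefore swaps the two coordinates of $f$, which is exactly the action of $\bar s$ on $V$.

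For $\bar t$, one computes $\bar tB\bar t^{-1}$ with $\bar t^{-1}=\begin{pmatrix}1&1\\1&0\end{pmatrix}$ and compares with $\bar t\binom{x}{y}=\binom{y}{x+y}$. This is a short $2\times2$ computation over $\F_2$.

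Finally, $f(E_{12})=\binom10\ne0$, so $f$ is nonzero, and the dimension bound gives $\Hom_{S_3}(M_2(\F_2),V)=\F_2f$.

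The main obstacle is only bookkeeping. One must get the conjugation convention $F(hBh^{-1})=hF(B)$ right, consistent with the cocycle relation on $U$. One must also track the exact matrix of $\bar t$ acting on the standard basis, since a transposed convention would replace $f$ by a different-looking map.
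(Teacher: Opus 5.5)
Your proposal is correct, but it obtains the dimension bound by a different route from the paper.

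The paper works directly with an arbitrary equivariant $F$. Since $I$ is $S_3$-fixed, $F(I)\in V^{S_3}=0$, so $F(E_{11})=F(E_{22})$. Equivariance under $\bar s$ puts this common value in $V^{\bar s}=\F_2(e_1+e_2)$. The two conjugation identities $\bar tE_{11}\bar t^{-1}=E_{21}+E_{22}$ and $\bar sE_{21}\bar s^{-1}=E_{12}$ then force $F(E_{21})=\lambda e_2$ and $F(E_{12})=\lambda e_1$, so $F=\lambda f$. Existence is the same substitution check you perform.

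The paper's argument is shorter, and it \emph{derives} the formula for $f$ instead of taking it as input. Your argument splits $M_2(\F_2)\simeq V\oplus P$ using the central idempotent $1+\bar t+\bar t^2$ of $\F_2[S_3]$. This explains structurally why the space is a line. Here $P$ is the centralizer $\F_2[\bar t]\simeq\F_4$ of a $3$-cycle, and it contains $I$. So the paper's step $F(I)=0$ is the visible part of your vanishing $\Hom_{S_3}(P,V)=0$.

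Your argument in fact gives dimension exactly one, because the complement is a $2$-dimensional irreducible, hence $\simeq V$, and $\End_{S_3}(V)=\F_2$. The equivariance check is then needed only to identify the generator with the displayed $f$.

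That check works out. The matrix $\bar tB\bar t^{-1}$ is $\left(\begin{smallmatrix}c+d&c\\ a+b+c+d&a+c\end{smallmatrix}\right)$, and both $f(\bar tB\bar t^{-1})$ and $\bar t f(B)$ equal $\binom{a+c+d}{b+c}$.
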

\begin{proof}
Since $F(I)\in V^{S_3}=0$, we have $F(E_{11})=F(E_{22})$. Conjugation by $\bar s$ shows that both values are $\lambda(e_1+e_2)$. The identities
\[
\bar tE_{11}\bar t^{-1}=E_{21}+E_{22},\qquad
\bar sE_{21}\bar s^{-1}=E_{12}
\]
then give $F(E_{21})=\lambda e_2$ and $F(E_{12})=\lambda e_1$. Linearity yields $F=\lambda f$. Conversely, direct substitution verifies equivariance of $f$ under $\bar s$ and $\bar t$.
\end{proof}

Each $A\in\Gamma_2$ has a unique expression $(1+2B)h$ with $h\in H$. Define
\begin{equation}\label{eq:psi}
\psi((1+2B)h)=f(B).
\end{equation}
The semidirect-product law and equivariance of $f$ give
\[
\psi(AA')=\psi(A)+A\psi(A'),
\]
so $\psi$ is a cocycle. The subgroup $C_3$ has odd order and $V^{C_3}=0$; inflation--restriction gives $H^1(S_3,V)=0$. Thus, after subtracting a coboundary, any $\Gamma_2$-cocycle vanishes on $H$, and its remaining values are determined by \eqref{eq:f}. Every coboundary vanishes on $U$, whereas $\psi(1+2E_{12})=e_1\ne0$. Hence
\begin{equation}\label{eq:H1Gamma}
H^1(\Gamma_2,V)=\F_2[\psi].
\end{equation}
Inflation along $\rho_{E,4}$ gives the nonzero global class $\xi_E$.

\begin{lemma}\label{lem:kernel-stable}
For every $j\ge2$, inflation induces an isomorphism
\[
H^1(\Gamma_2,V)\simeq H^1(\Gamma_j,V).
\]
If $A/\Q$ is a finite abelian extension and $F=\Q(E[2^L])A$, then
\begin{equation}\label{eq:restriction-kernel}
\ker\bigl(H^1(\Q,V)\longrightarrow H^1(F,V)\bigr)=\F_2\xi_E.
\end{equation}
\end{lemma}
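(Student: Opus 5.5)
We prove the two assertions in order. The second follows from the first by inflation–restriction, once we control the commutator/abelian part.

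\emph{Inflation isomorphism.} Let $U_j=\ker(\Gamma_j\to\Gamma_2)$, a $2$-group acting trivially on $V$. Inflation–restriction gives
\[
0\to H^1(\Gamma_2,V)\to H^1(\Gamma_j,V)\to \Hom_{\Gamma_2}(U_j,V),
\]
so injectivity is automatic. It remains to show $\Hom_{\Gamma_j}(U_j,V)=0$. Such a homomorphism factors through $U_j/U_j^2[U_j,U_j]$. I would show it factors through the quotient $U_j/U_3$ for $j\ge3$, using that $(1+4B)=(1+2B)^2$ up to higher terms; this gives $U_3\subseteq U_j^2 U_{4}$-type inclusions, and an induction on the filtration then shows every element of $U_3$ is a product of squares in $U_j$. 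Next, $U_2/U_3\simeq M_2(\F_2)$ as a $\GL_2$-module. An equivariant map to $V$ would be a nonzero multiple of $f$ from Lemma~\ref{lem:f}, restricted suitably. We must show it is killed by the $\Gamma_j$-relations, in particular $(1+2B)^2=1+4(B+B^2)$ modulo $8$. The squares of elements $1+2B$ with $B\in M_2(\Z/2^j)$ lie in $U_2$, and modulo $U_3$ they equal $B+B^2 \bmod 2$. So a homomorphism on $U_j$ restricted to $U_2$ must vanish on $\{B+B^2\}$. I expect $f(B+B^2)$ to be nonzero for suitable $B$: for instance, $B=E_{12}$ gives $B+B^2=E_{12}$ and $f=e_1\neq0$. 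That $1+2E_{12}$ has order dividing $2^{j-1}$ is irrelevant; the point is that $\varphi(1+4E_{12})=2\varphi(1+2E_{12})=0$ forces vanishing on $U_2/U_3$ up to this check. This computation is the main obstacle. The cleanest route is to use that squaring sends $U_1/U_2\to U_2/U_3$ with image spanning the $\GL_2$-module (the image contains $E_{12}$ and its conjugates, which span $M_2$). Hence any homomorphism killing $U_2$, which is forced since it has $2$-torsion target, descends to $U_1/U_2$. Finally, we identify $\Hom_{\Gamma_j}(U_1/U_2,V)$ with classes already inflated from $\Gamma_2$: the image of $H^1(\Gamma_j,V)\to\Hom(U_j,V)$ factors through the image of $U_j$ in $U_1/U_2$, which is trivial for $j\ge3$ since $U_j\subseteq U_2$. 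So restriction to $U_j$ is zero and inflation is surjective.

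\emph{Kernel computation.} Let $G=\Gal(F/\Q)$. The kernel in question is $H^1(G,V^{G_F})=H^1(G,V)$, since $V$ is fixed by $G_F$. Now $G\subseteq \Gamma_L\times\Gal(A/\Q)$ surjects onto $\Gamma_L$, with kernel $N=\Gal(F/\Q(E[2^L]))$ abelian. Inflation–restriction gives
\[
0\to H^1(\Gamma_L,V)\to H^1(G,V)\to \Hom_G(N,V).
\]
$N$ is a quotient of a subgroup of $\Gal(A/\Q)$, so $G$ acts on $N$ through conjugation factoring through the abelian group $\Gal(A/\Q)$. Indeed, commutators act trivially, and $G$ acts on $N\simeq$ a subgroup of $\Gal(A/\Q)$ trivially since $\Gal(A/\Q)$ is abelian. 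Therefore $\Hom_G(N,V)\subseteq\Hom(N,V^{G})=0$, because $V^{S_3}=0$. Hence $H^1(G,V)=H^1(\Gamma_L,V)=\F_2[\psi]$ by \eqref{eq:H1Gamma} and the first part. Its inflation to $\Q$ is $\xi_E$, since the inflation of $[\psi]$ from $\Gamma_L$ agrees with the inflation from $\Gamma_2$. For $L=1$ this would need separate care, but here $L\ge2$. Injectivity of inflation to $H^1(\Q,V)$ gives \eqref{eq:restriction-kernel}.
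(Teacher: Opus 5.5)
Your proof is essentially the paper's. For the first part, the paper also restricts a cocycle to the reduction kernel, observes that the restriction is an equivariant homomorphism and hence $0$ or $f$, and kills it using
\[
c(1+4E_{12})=c\bigl((1+2E_{12})^2\bigr)=2c(1+2E_{12})=0,\qquad f(E_{12})=e_1\neq0.
\]
The paper then inducts one level at a time: for $j\ge3$, every kernel element $1+2^jB\equiv(1+2^{j-1}B)^2$ is the square of an element acting trivially on $V$, so no equivariance is needed at the higher steps. You instead pass from $\ker(\Gamma_j\to\Gamma_2)$ to its first graded piece in one step, which is equivalent. Your second part is the same centrality argument in the fiber product as the paper's.

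Two claims in your write-up are false and should be removed.
\begin{enumerate}
\item $\Hom_{\Gamma_j}(U_j,V)$ is not zero for $j\ge3$. Homomorphisms to $V$ factor through $U_j/(1+8M_2)\simeq M_2(\F_2)$, so this group is $\F_2 f$. What you need, and what your $E_{12}$ computation actually proves, is that the restriction map $H^1(\Gamma_j,V)\to\Hom_{\Gamma_j}(U_j,V)$ is zero. That uses the cocycle relation at $1+2E_{12}$, which lies outside $U_j$.
\item The conjugates of $E_{12}$ span only the trace-zero hyperplane $\langle I,E_{12},E_{21}\rangle$ of $M_2(\F_2)$, not all of $M_2(\F_2)$. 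Indeed, by Cayley--Hamilton every $B+B^2$ has trace zero. So your ``cleanest route'' fails: $2$-torsion of the target does not by itself force vanishing on $1+4M_2$. For example, $1+4E_{11}$ has determinant $5$, which is a nonsquare in $(\Z/2^j\Z)^\times$, so it lies outside the Frattini subgroup of $\ker(\Gamma_j\to\Gamma_1)$. Equivariance, i.e.\ the reduction to a multiple of $f$, is genuinely needed, exactly as in your first version.
\end{enumerate}

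You also use $U_j$ both for $\ker(\Gamma_j\to\Gamma_2)$ and for the filtration steps $1+2^iM_2$. This makes the closing paragraph of your first part hard to parse.
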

\begin{proof}
The restriction of a cocycle to the kernel of $\Gamma_3\to\Gamma_2$ is an equivariant homomorphism, hence either zero or $f$. Take $u=1+2E_{12}$. Since $u$ acts trivially on $V$, we have $c(u^2)=2c(u)=0$. But $u^2=1+4E_{12}$ and $f(E_{12})\ne0$, ruling out the nonzero restriction. For $j\ge3$, every kernel element $1+2^jB$ satisfies
\[
1+2^jB\equiv(1+2^{j-1}B)^2\pmod{2^{j+1}},
\]
which again forces the restriction to vanish. Induction proves the first assertion.

Put $G=\Gal(F/\Q)$ and $C=\ker(G\to\Gamma_L)$. In the fiber product $G\subseteq\Gamma_L\times\Gal(A/\Q)$, the subgroup $C$ is central. The image of any equivariant homomorphism $C\to V$ is therefore contained in $V^{S_3}=0$. Hence $H^1(G,V)=H^1(\Gamma_L,V)$. Applying inflation--restriction to $G_\Q\to G$ gives \eqref{eq:restriction-kernel}.
\end{proof}

\paragraph{Local exclusion.}
Suppose $\Delta_E>0$, and let $c$ denote complex conjugation. Since $c$
acts trivially on $V$, write its action on $T_2E$ as
$I+2\widetilde B$ and put $B=\widetilde B\bmod2$.
The identities $c^2=I$ and $\tr(c)=0$ give
$\widetilde B^2=-\widetilde B$ and $\tr(\widetilde B)=-1$.
Thus $B$ is a rank-one idempotent.
Every class in $E(\R)/2E(\R)$ is represented by a point of $E(\R)[2]=V$:
indeed, $E(\R)\simeq\R/\Z\oplus\Z/2\Z$ as a Lie group. For $P\in V$,
choose $Q\in E[4]$ with $2Q=P$. Evaluation of its Kummer cocycle at $c$
gives
\[
\delta_{E,\R,2}(P)(c)=cQ-Q=BP.
\]
Since $c$ acts trivially on $V$, evaluation identifies $H^1(\R,V)$ with
$V$, and hence $\alpha_\R(E)=\im B$. The negative twist replaces $c$ by
$-c\equiv I+2(I+B)\pmod4$; applying the same calculation under the
identification $E^D[2]=V$ gives $\alpha_\R(E^D)=\im(I+B)$.
For $B=\operatorname{diag}(1,0)$, the vector $f(B)=(1,1)$ belongs to neither
line. All rank-one idempotents are conjugate, and $f$ is equivariant.
Thus the localization of $\xi_E$ lies in neither real Kummer image, so
$\xi_E$ is excluded from both Selmer groups.

For positive discriminant, residual surjectivity also forces surjectivity modulo $4$. Indeed, on $T_2E$ write $c=I+2C$ for complex conjugation. The relations $c^2=I$ and $\det c=-1$ give $C^2=-C$ and $\tr(C)\equiv1\pmod2$, so $\overline C$ is a rank-one idempotent. The reduction kernel is an $S_3$-stable subspace of $M_2(\F_2)$ containing $\overline C$. Its conjugates include
\[
E_{11},\quad E_{22},\quad E_{11}+E_{12},\quad E_{11}+E_{21},
\]
which span $M_2(\F_2)$. The kernel has order $16$, and the mod-$4$ image has order $96$.

\begin{proposition}\label{prop:odd-exclusion}
Let $r\nmid2N$ and suppose $\Frob_r$ acts on $V$ as a transposition. Then
\begin{equation}\label{eq:xi-local-test}
\loc_r\xi_E\ne0\quad\Longleftrightarrow\quad a_r(E)\equiv2\pmod4.
\end{equation}
If also $r\mid D$, the condition on the right implies $\xi_E\notin\Sel_2(E^D/\Q)$.
\end{proposition}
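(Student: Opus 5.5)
Since $r\nmid 2N$, $E$ has good reduction at $r$ and inertia acts trivially on $E[4]$. The restriction $\loc_r\xi_E$ is therefore the inflation of $\psi$ along the map $\Gal(\Q_r^{\ur}/\Q_r)\to\Gamma_2$, $\Frob_r\mapsto A:=\rho_{E,4}(\Frob_r)$. The proof reduces to a computation in the cyclic group generated by $A$. A cocycle on the procyclic group $\hat\Z$ with values in $V$ is determined by its value at the generator. Under evaluation at Frobenius, the map
\[
H^1_{\ur}(\Q_r,V)\simeq V/(\Frob_r-1)V
\]
is an isomorphism, by \eqref{eq:ur-line}. Hence $\loc_r\xi_E\neq0$ exactly when $\psi(A)\notin(\bar A-1)V$. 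Here $\bar A$ is the transposition, and $(\bar A-1)V$ is a line.

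\textbf{Step 1: normalize $A$.} Conjugating by an element of $H$ changes $\psi$ by a coboundary and moves $\bar A$ within its conjugacy class. Since $\psi$ is a class function modulo coboundaries and the criterion $\psi(A)\notin(\bar A-1)V$ is conjugation-invariant, we may assume $\bar A=\bar s$. Then $A=(1+2B)s$ with $B\in M_2(\F_2)$, and $\psi(A)=f(B)$. Now $(\bar s-1)V=\F_2(e_1+e_2)$, so $V/(\bar s-1)V$ is detected by the functional $x_1+x_2$. From \eqref{eq:f},
\[
f(B)_1+f(B)_2=b+c.
\]
Thus $\loc_r\xi_E\ne0$ if and only if $b+c=1$.

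\textbf{Step 2: relate to the trace.} Compute $\tr A$ modulo $4$. Writing $s$ with its off-diagonal entries $1$, we get
\[
(1+2B)s=s+2Bs, \qquad \tr(s)=0, \qquad \tr(2Bs)=2(b+c).
\]
Since $\tr\rho_{E,4}(\Frob_r)\equiv a_r(E)\pmod4$, this gives $a_r(E)\equiv 2(b+c)\pmod 4$. Hence $a_r\equiv2\pmod4$ exactly when $b+c=1$, which proves \eqref{eq:xi-local-test}. The only delicate point is that $B$ is determined only modulo the choice of lift; but the decomposition $A=(1+2B)h$ with $h\in H$ is unique, so $f(B)$ is well defined.

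\textbf{Step 3: the Selmer consequence for $r\mid D$.} At $r$ the extension $K/\Q$ is ramified and $E$ has good reduction. The paragraph after Lemma~\ref{lem:local-twists} gives that the Kummer line $\alpha_r(E^D)$ meets $\alpha_r(E)=H^1_{\ur}(\Q_r,V)$ trivially. The class $\loc_r\xi_E$ is unramified, and by Step 2 it is nonzero, so it lies in $\alpha_r(E)\setminus\{0\}$ and hence not in $\alpha_r(E^D)$. Therefore $\xi_E\notin\Sel_2(E^D/\Q)$.

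The main obstacle is Step 1: justifying that one may conjugate into the representative $s$ without changing the criterion. Inflation from $\Gamma_2$ commutes with conjugation only up to coboundaries, and coboundaries map to $(\bar A-1)V$ at Frobenius, so they do not affect the quotient. Failing this, one can verify the other two transpositions directly with the same trace computation.
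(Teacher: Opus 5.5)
Your proposal is correct and follows essentially the same route as the paper: you conjugate Frobenius to $(1+2B)s$, detect the coinvariant class by applying $x_1+x_2$ to $f(B)$, match the result $b+c$ with $\tr A\equiv 2(b+c)\pmod 4$, and conclude from the trivial intersection of the Kummer lines of $E$ and its ramified twist at $r$. Your treatment of the conjugation step is sound, since $\psi(gAg^{-1})=g\psi(A)+(1-gAg^{-1})\psi(g)$, so the criterion is unchanged modulo $(\overline{gAg^{-1}}-1)V$.
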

\begin{proof}
At an odd good prime, $E[4]$ is unramified, so the localization of $\xi_E$ belongs to the Kummer line of $E$. After conjugation, write Frobenius as
\[
A=(1+2B)s,\qquad B=\begin{pmatrix}u&v\\w&z\end{pmatrix}.
\]
Its cocycle value is $f(B)=(u+z+v,u+z+w)$. The nonzero functional on $V/(s-1)V$ is $(x,y)\mapsto x+y$, so the local class is nonzero exactly when $v+w=1$. Since
\[
\tr A=2(v+w)\pmod4,
\]
we obtain \eqref{eq:xi-local-test}. If $r\mid D$, the curve $E^D$ is a ramified twist, and Lemma~\ref{lem:local-twists} gives trivial intersection of the two Kummer lines. A nonzero class in the original line therefore fails the new local condition.
\end{proof}

When $\Delta_E<0$, membership of $\xi_E$ in the relevant Kummer images can be checked at finite places, in particular by Proposition~\ref{prop:odd-exclusion}. Appendix~\ref{app:squareclass} gives explicit squareclass coordinates for $\xi_E$ in the cubic algebra.

\subsection{A fixed family of deep auxiliary primes}\label{subsec:prime-family}
Put
\begin{equation}\label{eq:F0BD}
F_L^0=\Q\bigl(E[2^L],\zeta_8,\sqrt\ell:\ell\mid N,\ \ell\text{ odd}\bigr),
\qquad B_D=\Q\bigl(\sqrt r:r\mid D,\ r\text{ odd}\bigr).
\end{equation}
The field $F_L^0$ can ramify only at primes dividing $2N$, whereas every nontrivial quadratic subfield of $B_D$ ramifies at some odd $r\mid D$. Since $(D,N)=1$, we have $F_L^0\cap B_D=\Q$.

If $\Delta_E<0$, prescribe actual complex conjugation on $F_L^0$. If $\Delta_E>0$, prescribe on the division field the matrix
\[
A_L=\begin{pmatrix}1&1\\0&-1\end{pmatrix}
\]
and choose a lift to $F_L^0$ compatible with the following cyclotomic prescription. In both cases prescribe the identity on $B_D$. If $D$ is even, also prescribe $\zeta_8\mapsto\zeta_8^{-1}$. For $L\ge3$, the determinant already imposes this condition. If $D$ is even and $L=2$, put $S_N=\Q(\sqrt\ell:\ell\mid N,\ \ell\text{ odd})$. We claim that
\[
\bigl(\Q(E[4])S_N\bigr)\cap\Q(\zeta_8)=\Q(i).
\]
Indeed, the abelianization of $\Gamma_2$ is generated by the determinant and the residual permutation-sign character. In the fiber product describing $\Gal(\Q(E[4])S_N/\Q)$, the commutator subgroup is $[\Gamma_2,\Gamma_2]\times\{1\}$: any two elements of $\Gamma_2$ admit lifts, and the second coordinates of their commutator are trivial. The maximal abelian subextension is therefore $\Q(i,\sqrt{\Delta_E})S_N$. When $D$ is even, good reduction at $2$ gives $v_2(\Delta_E)=0$. Every quadratic squareclass generated by these radicands has even valuation at $2$, so $\sqrt2$ does not belong to $\Q(E[4])S_N$. Since this field contains $\Q(i)$, the asserted intersection follows. The prescribed matrix has determinant $-1$ modulo $4$, so its action on $i$ agrees with $\zeta_8\mapsto\zeta_8^{-1}$. The prescriptions consequently extend to the entire compositum, including the square roots of the odd bad primes.

\begin{definition}\label{def:PL}
Fix the resulting element $g_L$ of $\Gal(F_L/\Q)$, where $F_L=F_L^0B_D$. Let $\mathcal P_L$ be the set of primes $q\ge5$, $q\nmid ND$, for which $\Frob_q(F_L/\Q)$ is conjugate to $g_L$. Any prescribed finite set may be removed.
\end{definition}
Chebotarev's theorem gives $\mathcal P_L$ positive density. The construction ensures that
\begin{equation}\label{eq:PL-properties}
\begin{gathered}
q\equiv-1\pmod{2^L},\quad a_q(E)\equiv0\pmod{2^L},\quad
\dim_{\F_2}\mathcal E(\F_q)[2]=1,\\
q\text{ is inert in }K,\qquad
-q\in\Q_r^{\times2}\quad(r\mid D,\ r\text{ odd}).
\end{gathered}
\end{equation}
The first three properties follow from the determinant, trace, and residual fixed space of the prescribed matrix. Write $K=\Q(\sqrt{-2^\epsilon R})$ with $R$ odd and squarefree and $\epsilon\in\{0,1\}$. The element $g_L$ fixes $\sqrt R$ and, when needed, $\sqrt2$, but changes the sign of $i$. It therefore acts nontrivially on $K$, proving inertness.

For odd $r\mid D$, fixing $\sqrt r$ gives $(r/q)=1$. Since $q\equiv3\pmod4$, quadratic reciprocity yields
\begin{equation}\label{eq:reciprocity}
\left(\frac{-q}{r}\right)
=\left(\frac{-1}{r}\right)\left(\frac qr\right)
=\left(\frac rq\right)=1.
\end{equation}
As $q\ne r$, Hensel's lemma lifts a nonzero square root modulo $r$ to $\Q_r$, proving the local-square condition.

Similarly, the local squareclasses of $-q$ at the original odd bad primes are fixed, and its squareclass at $2$ is determined by the prescribed residue class modulo $8$. If $\Delta_E<0$, the choice of actual complex conjugation makes each $-q$ a square at every finite $v\mid2ND$. If $\Delta_E>0$, each $-q$ is a unit at the original odd bad primes and gives a trivial or unramified twist. At $2$, the congruence $q\equiv3\pmod4$ puts $-q$ in squareclass $1$ or $5$, again giving a trivial or unramified twist. When $D$ is even, the prescribed action on $\zeta_8$ further gives $q\equiv7\pmod8$, so $-q$ is a square in $\Q_2$. For an even support, these fixed local squareclasses cancel, so $d_T$ is a square at every finite $v\mid2ND$. At intrinsic index zero, odd supports are also allowed: Lemma~\ref{lem:unramified-odd} treats the original finite places, and Proposition~\ref{prop:actual-pair} treats the exchange at the real place.

\subsection{Simultaneous detection and changes of local lines}\label{subsec:detection}
\begin{lemma}\label{lem:detection}
Let $S_0,S_1\subset H^1(\Q,V)$ be finite-dimensional, and put $S=S_0+S_1$ and $J=S\cap\F_2\xi_E$. Away from the primes where these classes ramify, $\loc_qS\subset H^1_{\ur}(\Q_q,V)$. For $q\in\mathcal P_L$, this last space is one-dimensional and hence has a unique identification with $\F_2$. For every $\lambda\in(S/J)^*$, there are infinitely many $q\in\mathcal P_L$ such that
\[
\loc_q(c)=\lambda(c+J)\qquad(c\in S).
\]
These primes may avoid any prescribed finite set. If the images of both $S_0$ and $S_1$ in $S/J$ are nonzero, one may make both localization maps nonzero. One may also make both maps zero.
\end{lemma}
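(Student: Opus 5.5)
Since $S$ is finite-dimensional, its classes are ramified only at finitely many primes, and we remove these. For a prime $q\in\mathcal P_L$ outside this set, $q$ is an odd prime of good reduction and $\loc_qS$ lies in $H^1_{\ur}(\Q_q,V)\simeq V/(\Frob_q-1)V$ by \eqref{eq:ur-line}. The image of $g_L$ on $V$ is the reduction of $A_L$, or of complex conjugation when $\Delta_E<0$; in both cases it is a transposition. Hence $V/(\Frob_q-1)V$ is a line, and it carries a unique nonzero functional, which identifies it with $\F_2$.

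The plan is a Chebotarev argument over the compositum $F_L\cdot F_S$, where $F_S$ is the field cut out by the cocycles of $S$. Restricting $S$ to $G_{F_L}$ lands in $\Hom(G_{F_L},V)^{\Gal(F_L/\Q)}$. First we show that the kernel of $S\to\Hom(G_{F_L},V)$ is exactly $J$. By Lemma~\ref{lem:kernel-stable}, applied with $A=\Q(\zeta_8,\sqrt\ell,\sqrt r)$ abelian, the kernel of restriction to $F_L$ is $\F_2\xi_E$. So $S/J$ embeds as a space of $\Gal(F_L/\Q)$-equivariant homomorphisms $G_{F_L}\to V$. Let $W=\Gal(F_SF_L/F_L)$; it is the dual space and is a $\Gal(F_L/\Q)$-module, and the map $W\to\Hom(S/J,V)$ is injective with image the full dual. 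The standard argument (a Mazur--Rubin style lemma, \cite[Prop.~3.4 or \S3]{MR}) then shows $W\simeq\Hom(S/J,V)$ as modules, using that $V$ is irreducible with $\End_{S_3}V=\F_2$.

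Next, for a lift $\tilde g$ of $g_L$ and $\tau\in W$, the element $\tau\tilde g$ has Frobenius value
\[
c(\tau\tilde g)=c(\tau)+\tau... = \tau(c)+c(\tilde g).
\]
Projecting to $V/(g_L-1)V$ and taking $\tilde g$ of order two (so the cocycle term can be normalized), we obtain a map $c\mapsto \pi(\tau(c))+\pi(c(\tilde g))$. As $\tau$ ranges over $W=\Hom(S/J,V)$, this affine expression realizes every functional $\lambda\in(S/J)^*$. Choose such a $\tau$ and apply Chebotarev to the conjugacy class of $\tau\tilde g$ in $\Gal(F_SF_L/\Q)$. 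This gives infinitely many $q$, which necessarily lie in $\mathcal P_L$, avoid any finite set, and satisfy $\loc_q(c)=\lambda(c+J)$.

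The last two assertions follow by choosing $\lambda$. Taking $\lambda=0$ makes both localization maps zero. If $S_0$ and $S_1$ have nonzero images $\bar S_0,\bar S_1$ in $S/J$, we need a functional nonzero on both. Since a vector space over $\F_2$ is never the union of two proper subspaces, there is $\lambda$ vanishing on neither $\bar S_0$ nor $\bar S_1$: if the two annihilators $\bar S_0^\perp$ and $\bar S_1^\perp$ are proper, their union misses some $\lambda$.

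The main obstacle is the affine shift $c(\tilde g)$ together with the identification $W\simeq\Hom(S/J,V)$. I would first handle the shift by replacing $\tilde g$ with $\tilde g\tau_0$, or by noting that the attainable set of functionals is a coset of the full dual, hence the full dual.
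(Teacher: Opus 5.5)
Your overall route is the paper's: Lemma~\ref{lem:kernel-stable} identifies the restriction kernel with $J$, semisimplicity and $\End_{S_3}(V)=\F_2$ make the joint homomorphism $G_{F_L}\to V^t$ surjective, Chebotarev is applied to a twisted lift $\tau\tilde g$ of $g_L$, and the two-proper-subspaces argument finishes. On a complement $S'$ of $J$ the affine shift $c\mapsto\pi(c(\tilde g))$ is indeed harmless. It is a fixed linear functional on $S'$, and $\tau\mapsto\pi\circ\tau$ already exhausts $(S')^*$.

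\textbf{The gap is on $J$ itself.} When $\xi_E\in S$, its inflated cocycle vanishes on $G_{F_L}$. So $\xi_E(\tau\tilde g)=\xi_E(\tilde g)$ depends only on the image of $g_L$ in $\Gal(\Q(E[4])/\Q)$; it is unchanged by $\tau$ and by any other choice of lift. Neither of your remedies can affect it, whether replacing $\tilde g$ by $\tilde g\tau_0$ or arguing that ``a coset of the full dual is the full dual''. The attainable set of functionals on $S$ is $s+(S/J)^*$ with $s(c)=\pi(c(\tilde g))$. This equals $(S/J)^*$ only if $s(\xi_E)=0$, that is, only if $\loc_q\xi_E=0$. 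Without this, $\loc_q$ need not factor through $S/J$, and the displayed formula fails at $c=\xi_E$. This case is used later: it is exactly what makes the localization of $\F_2\xi_E$ vanish in Proposition~\ref{prop:general-terminal} and Remark~\ref{rem:nonempty-terminal}.

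\textbf{The missing step is the paper's first one.} It uses the specific shape of $g_L$: on $E[4]$ it acts by a matrix $A$ with $A^2=1$ (this holds for $A_L$ and for complex conjugation), and $\bar A$ is a transposition. The cocycle relation gives $0=\psi(A^2)=(1+\bar A)\psi(A)$. Hence $\psi(A)\in\ker(1+\bar A)=\im(\bar A-1)$, which vanishes in $V/(\bar A-1)V$, so $\loc_q\xi_E=0$ for every $q\in\mathcal P_L$.

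Your phrase ``taking $\tilde g$ of order two'' points in this direction and can be made into a complete fix. Since $g_L^2=1$ on $F_L$, one has $\tilde g^2\in W^{\bar A}=(1+\bar A)W$, because $W\simeq V^t$ equivariantly and $\bar A$ is a transposition. Hence some $g_1=w\tilde g$ with $w\in W$ is an involution in $\Gal(F_SF_L/\Q)$. For this lift, $(1+\bar A)c(g_1)=c(g_1^2)=0$ for every $c\in S$, so $\pi(c(g_1))=0$ and the shift disappears on all of $S$, including $J$. As written, however, you keep the shift and defer it to arguments that cannot see $J$. This step has to be supplied explicitly.
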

\begin{proof}
We use the cocycle-evaluation and Chebotarev argument of Mazur--Rubin \cite[Lemma 3.5]{MR}, with Lemma~\ref{lem:kernel-stable} identifying the kernel remaining after the deep division field has been fixed.

First, $\xi_E$ localizes to zero at these primes. The prescribed Frobenius matrix $A$ modulo $4$ satisfies $A^2=1$, and $\bar A$ is a transposition. Since $\psi(A^2)=\psi(A)+\bar A\psi(A)=0$, we have
\[
\psi(A)\in\ker(1+\bar A)=\im(\bar A-1).
\]
Thus $\psi(A)$ vanishes in $V/(\bar A-1)V$. The Frobenius-evaluation description \eqref{eq:ur-line} gives $\loc_q\xi_E=0$, so localization factors through $S/J$.

Choose a complement to $J$ in $S$ and cocycles $c_1,\ldots,c_t$ representing a basis. Since $F_L$ contains the $2$-division field, $G_{F_L}$ acts trivially on $V$, and the restrictions $c_i|_{G_{F_L}}$ are continuous homomorphisms. They are linearly independent: if a linear combination vanishes on $G_{F_L}$, Lemma~\ref{lem:kernel-stable} places its class in $\F_2\xi_E$, whose intersection with the chosen complement is zero.

Consider the joint homomorphism and its image
\[
\Phi:G_{F_L}\longrightarrow V^t,\qquad
h\longmapsto(c_1(h),\ldots,c_t(h)),\qquad W=\im\Phi.
\]
The cocycle identity gives $c_i(ghg^{-1})=g\,c_i(h)$, so $W$ is an $S_3$-submodule. The natural module $V$ is simple and $\End_{S_3}(V)=\F_2$. Since $V^t$ is a direct sum of copies of this simple module, it is semisimple. If $W\ne V^t$, the quotient $V^t/W$ has a simple quotient isomorphic to $V$. There is therefore a nonzero equivariant map
\[
(v_1,\ldots,v_t)\longmapsto\sum_i\lambda_i v_i
\quad(\lambda_i\in\F_2)
\]
vanishing on $W$, contrary to linear independence of the restricted homomorphisms. Thus $\Phi$ is surjective.

Fix a lift $g_0\in G_\Q$ of $g_L$. For $h\in G_{F_L}$,
\[
c_i(g_0h)=c_i(g_0)+g_0c_i(h).
\]
Surjectivity of $\Phi$ allows the values on the right to be prescribed independently in $V$. In particular, their images in $V/(\bar A-1)V$ may be chosen to equal $\lambda(c_i+J)$. The element $g_0h$ still restricts to $g_L$ on $F_L$.

The cocycles are continuous with finite image. Taking the compositum of $F_L$ with the fixed fields of the corresponding finite semidirect-product representations gives a finite Galois extension $\Omega/\Q$ on which all these evaluation conditions are determined. Chebotarev's theorem supplies infinitely many primes with Frobenius conjugate to the image of the chosen $g_0h$. Conjugation transports the coinvariant quotient and changes the cocycle value only by a coboundary, so its value in the one-dimensional $\F_2$-quotient is unchanged. Removing the ramified primes of $\Omega$ and the prescribed finite set gives the required primes.

Finally, if the images $\overline S_i\subset S/J$ are both nonzero, their annihilators $\overline S_i^\perp\subset(S/J)^*$ are proper subspaces. Two proper subspaces cannot cover a vector space, so some $\lambda$ is nonzero on both images. Taking $\lambda=0$ makes both localizations zero.
\end{proof}

\paragraph{The transverse line from ramified twists.}
For $q\in\mathcal P_L$, denote the original Kummer line by $\alpha_q$. Since $\dim \mathcal E(\F_q)[2]=1$, the $2$-primary part of the reduction group is cyclic. Moreover, $4\mid q+1-a_q(E)$, so the unique nonzero $\Q_q$-rational $2$-torsion point $P$ has a $\Q_q$-rational half-point $Q$ of order four. Here we use compatibility of prime-to-residue-characteristic torsion with good reduction.

For a ramified quadratic character $\chi(g)=(-1)^{\bar\chi(g)}$, formula \eqref{eq:twist-cocycle} represents the twisted Kummer class of this $2$-torsion point by
\[
\chi(g)gQ-Q=(\chi(g)-1)Q=\bar\chi(g)P.
\]
This cocycle is nonzero on inertia and hence generates the one-dimensional Kummer image of the ramified twist. The difference between the two ramified quadratic characters is unramified; its associated cocycle takes the value $P$ at Frobenius. Since residual Frobenius is a transposition, $V^{\Frob_q=1}=(\Frob_q-1)V=\F_2P$, so this difference is cohomologically trivial. The two ramified twists therefore give the same line, denoted by $\beta_q$, and Lemma~\ref{lem:local-twists} gives $\alpha_q\cap\beta_q=0$. A subsequent unramified twist merely interchanges the two ramified characters and preserves $\beta_q$. This is the local twisting map of \cite[\S5]{KMR} in the present setting.

\begin{definition*}
A \emph{Selmer structure} on $V$ is a collection $\mathcal F=(\mathcal F_v)_v$ of subspaces $\mathcal F_v\subset H^1(\Q_v,V)$ that equal the unramified subgroups at almost all primes. Its Selmer group is
\[
H^1_{\mathcal F}(\Q,V)=
\ker\left(H^1(\Q,V)\longrightarrow
\bigoplus_vH^1(\Q_v,V)/\mathcal F_v\right).
\]
All local conditions used here are Kummer images of local quadratic twists. Choosing these twists independently at each place gives a mixed Selmer structure. For a finite set of primes $U$, define the \emph{relaxed} and \emph{strict} Selmer groups by
\begin{align*}
H^1_{\mathcal F^{U}}(\Q,V)
&=\{c\in H^1(\Q,V):\loc_vc\in\mathcal F_v\text{ for all }v\notin U\},\\ 
H^1_{\mathcal F_{U}}(\Q,V)
&=\{c\in H^1_{\mathcal F^{U}}(\Q,V):\loc_vc=0\text{ for all }v\in U\}.
\end{align*}
Thus at places in $U$ we replace the local condition by all of $H^1(\Q_v,V)$ or by zero, respectively, leaving the other conditions unchanged. In particular,
\[
H^1_{\mathcal F_U}(\Q,V)\subset H^1_{\mathcal F}(\Q,V)
\subset H^1_{\mathcal F^U}(\Q,V).
\]
We follow the strict and relaxed conventions of \cite[Definition 3.1]{MR} and the mixed-structure framework of \cite[Definition 3.8, \S4]{KMR}.
\end{definition*}

\begin{lemma}[Parity Lemma]\label{lem:switch}
Let $\mathcal F$ be a mixed Selmer structure as above, and put $S=H^1_{\mathcal F}(\Q,V)$. Choose a new prime $q\in\mathcal P_L$ with $\mathcal F_q=\alpha_q$, and let $S'$ be obtained by replacing $\alpha_q$ with $\beta_q$. Then
\begin{equation}\label{eq:switch}
\begin{array}{ll}
\loc_qS\ne0:& S'=\ker(\loc_q|_S),\quad\dim S'=\dim S-1,\\[2pt]
\loc_qS=0:&S\subset S',\quad\dim S'=\dim S+1.
\end{array}
\end{equation}
\end{lemma}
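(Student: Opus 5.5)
The plan is the standard comparison of a Selmer structure with its strict and relaxed modifications at $q$, as in \cite[Lemma 3.5]{MR} and \cite[Theorem 3.9]{KMR}. Put $S_q=H^1_{\mathcal F_{\{q\}}}(\Q,V)$ and $S^q=H^1_{\mathcal F^{\{q\}}}(\Q,V)$. Both $S$ and $S'$ lie between them, and $S\cap S'=S_q$, because $\alpha_q\cap\beta_q=0$. Here $\dim H^1(\Q_q,V)=2$, and $\alpha_q$, $\beta_q$ are complementary Lagrangian lines for the local Tate pairing. The first point to record is that this pairing is symmetric and alternating on $H^1(\Q_q,V)$ (cup product with the Weil pairing on $V$). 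Hence every line is isotropic, and a $2$-dimensional nondegenerate space has exactly the three lines $\alpha_q$, $\beta_q$, $\gamma_q$.

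The key input is Poitou--Tate global duality, applied to the self-dual structure $\mathcal F$ via the Weil pairing. It says that the images $\loc_q S^q$ and $\loc_q S^q$ are orthogonal complements of each other in $H^1(\Q_q,V)$. Equivalently, it gives the exact sequence
\[
0\to S_q\to S^q\to H^1(\Q_q,V)\to (S_q)^\vee\to\cdots .
\]
From this, $\loc_q(S^q)$ is a Lagrangian subspace, and I will use the standard consequence
\[
\dim S^q-\dim S_q=1.
\]
This holds because the image is maximal isotropic, hence one-dimensional, and the map $S^q\to H^1(\Q_q,V)$ has kernel $S_q$. This step is the main obstacle: one must invoke global duality correctly for the mixed self-dual structure. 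I would cite \cite[Theorem 2.3.4 / Lemma 1.5]{MR} (their "$\dim S^q - \dim S_q$" computation) rather than rederive it. Self-duality of each local condition holds because each is a Kummer image of an actual local twist, which is its own orthogonal complement.

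With this, $S_q$ has codimension one in $S^q$, and $\ell:=\loc_q(S^q)$ is a single line. The cases then fall out as follows.

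If $\loc_qS\ne0$, then $\ell=\alpha_q$ and $S=S^q$. Since $\ell\cap\beta_q=0$, we get $S'=S_q=\ker(\loc_q|_S)$, which has dimension $\dim S-1$.

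If $\loc_qS=0$, then $S=S_q$. I need $\ell=\beta_q$, which gives $S'=S^q\supset S$ with $\dim S'=\dim S+1$. The alternative $\ell=\gamma_q$ would make $S=S'=S_q$ with neither case producing a jump. I will exclude it by a parity argument: $\dim S'\not\equiv\dim S\pmod 2$ by the Klagsbrun--Mazur--Rubin parity theorem \cite[Theorem 3.9]{KMR}. That theorem applies since $\dim\alpha_q/(\alpha_q\cap\beta_q)=1$ is odd and the structures agree elsewhere. Alternatively, I can use Kramer's parity formula \eqref{eq:kramer-parity} in its mixed-condition form. If $\ell=\gamma_q$, then $S=S'$, contradicting parity.

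The same parity remark also confirms the first case independently. The obstacle noted above, namely that both the duality count and the parity theorem require each $\mathcal F_v$ to be self-orthogonal, is handled by the observation following Lemma~\ref{lem:norm-intersection} together with the description of $\beta_q$ as the Kummer line of a ramified twist.
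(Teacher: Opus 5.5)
Your route is the paper's: the strict and relaxed groups at $q$, Poitou--Tate forcing $\ell=\loc_q H^1_{\mathcal F^{\{q\}}}(\Q,V)$ to be a line, the case $\loc_qS\ne0$ read off directly, and the Klagsbrun--Mazur--Rubin parity theorem to force the jump when $\loc_qS=0$. There is one genuine gap: your reason for why \cite[Theorem 3.9]{KMR} applies.

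You say the theorem only needs each local condition to be self-orthogonal for the Tate pairing. But, as you note yourself, that pairing is alternating on the two-dimensional space $H^1(\Q_q,V)$, so $\gamma_q$ is exactly as self-orthogonal as $\beta_q$. A parity theorem with only that hypothesis would apply equally with $\gamma_q$ in place of $\beta_q$, and there it is false. When $\loc_qS=0$ the relaxed image turns out to be $\beta_q$. Replacing $\alpha_q$ by $\gamma_q$ then returns the strict group, which equals $S$, so the dimension does not change even though $\dim\alpha_q/(\alpha_q\cap\gamma_q)=1$. Bilinear self-duality therefore cannot be what excludes $\ell=\gamma_q$.

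The missing ingredient is the quadratic refinement, which is what the paper invokes. The theta group of $\mathcal O_E(2[O])$ gives local quadratic forms $q_v$ on $H^1(\Q_v,V)$ whose polarization is the Tate pairing. Kummer images of $E$ and of its local twists are Lagrangian for the same $q_v$ under the identifications $E^d[2]=V$ \cite[Proposition 4.11]{PR}, \cite[Lemmas 5.2, 5.4]{KMR}. Moreover, $\sum_v q_v(\loc_vc)=0$ for every global class $c$. These are the actual hypotheses of the parity theorem.

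They also settle the step directly. For $c$ in the relaxed group, every term with $v\ne q$ vanishes, so $q_q$ vanishes on $\ell$. If $a,b$ generate $\alpha_q,\beta_q$, then $q_q(a+b)=q_q(a)+q_q(b)+\langle a,b\rangle_q=\langle a,b\rangle_q\ne0$ by nondegeneracy, so $\ell\ne\gamma_q$.

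Your observation that $\beta_q$ is the Kummer line of an actual ramified twist is the right input. What it must supply, however, is $q_q(\beta_q)=0$, not self-orthogonality. Your fallback via Kramer's formula is not independent either: that formula concerns actual global twists, and a general mixed structure here need not come from one.
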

\begin{proof}
Put $S_{\mathrm{str}}=H^1_{\mathcal F_{\{q\}}}(\Q,V)$ and $S_{\mathrm{rel}}=H^1_{\mathcal F^{\{q\}}}(\Q,V)$. Since $S$ and $S'$ differ only at $q$, they have the same strict and relaxed groups. Let
\[
W_q=\loc_qS_{\mathrm{rel}}\subset H^1(\Q_q,V).
\]
The kernel of localization is $S_{\mathrm{str}}$, so
\begin{equation}\label{eq:switch-quotients}
S_{\mathrm{rel}}/S_{\mathrm{str}}\simeq W_q,\qquad
S/S_{\mathrm{str}}\simeq W_q\cap\alpha_q,\qquad
S'/S_{\mathrm{str}}\simeq W_q\cap\beta_q.
\end{equation}

Each local Kummer condition is self-dual. Poitou--Tate duality therefore gives $W_q=W_q^\perp$, and hence
\[
\dim W_q=\tfrac12\dim H^1(\Q_q,V)=1.
\]
This is the strict/relaxed dimension formula of \cite[Lemma 3.2]{MR}. Its duality input is \cite[Theorem I.4.10]{Milne}; for the same argument with mixed structures, see \cite[proof of Theorem 3.9]{KMR}.

If $\loc_qS\ne0$, then $W_q$ contains the one-dimensional space $\alpha_q$, so $W_q=\alpha_q$. Using $\alpha_q\cap\beta_q=0$ in \eqref{eq:switch-quotients} gives $S'=S_{\mathrm{str}}=\ker(\loc_q|_S)$, and the dimension decreases by one.

If $\loc_qS=0$, then $S=S_{\mathrm{str}}\subset S'$, and $\dim(S'/S)\le1$. To show equality, we use the quadratic structure in characteristic two. The theta-group central extension associated with $\mathcal O_E(2[O])$,
\[
1\longrightarrow\overline{\Q}_v^\times\longrightarrow\Theta_E
\longrightarrow V\longrightarrow0
\]
defines a canonical local quadratic form $q_{E,v}:H^1(\Q_v,V)\to\Q/\Z$ whose polarization is the local Tate pairing. Kummer images are maximal totally isotropic subspaces for this form \cite[\S4, Proposition 4.11]{PR}. The identifications of $2$-torsion for quadratic twists identify these quadratic forms as well, so every $\mathcal F_v$ is Lagrangian for the same quadratic structure \cite[Lemmas 5.2(ii), 5.4]{KMR}. Global Brauer reciprocity also gives
\[
\sum_v q_{E,v}(\loc_vc)=0\qquad(c\in H^1(\Q,V)).
\]
The parity theorem of Klagsbrun--Mazur--Rubin consequently applies to $\mathcal F$ and the modified structure \cite[Theorem 3.9]{KMR}, yielding
\[
\dim S-\dim S'
\equiv\dim\frac{\alpha_q}{\alpha_q\cap\beta_q}=1\pmod2.
\]
It follows that $\dim(S'/S)=1$, as required. For structures arising from global quadratic twists, this recovers the one-place variation formula of \cite[Proposition 3.3, Corollary 3.4]{MR}.
\end{proof}

\subsection{Terminal pairs with uniformly bounded support}\label{subsec:terminal-proof}
For finite $T\subset\mathcal P_L$, start with $\Sel_2(E/\Q)$ and $\Sel_2(E^D/\Q)$ and replace $\alpha_q$ by $\beta_q$ only at $q\in T$. Denote the resulting mixed Selmer groups by $S_i(T)$. At previously chosen primes, subsequent unramified twists preserve $\beta_q$, so the construction is compatible under enlarging $T$. Throughout this subsection, assume \Heegner, \Surjectivity{L}, and \Exclusion.

\begin{proposition}\label{prop:actual-pair}
Put $d=d_T$. For every $T\in\mathscr A_L$, the following unordered pairs agree in $H^1(\Q,V)$:
\begin{equation}\label{eq:actual-pair}
\{S_0(T),S_1(T)\}
=\{\Sel_2(E^d/\Q),\Sel_2(E^{dD}/\Q)\}.
\end{equation}
The original labels agree if $\Delta_E<0$ or $|T|$ is even. If $\Delta_E>0$ and $|T|$ is odd, the two labels are interchanged.
\end{proposition}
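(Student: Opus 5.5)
The plan is to compare local conditions place by place. Both sides of \eqref{eq:actual-pair} are Selmer structures on the common module $V$, so it suffices to show that at every place $v$ the local condition defining $S_i(T)$ equals the Kummer image of $E^{dD^i}$ (or of $E^{dD^{1-i}}$ in the swapped case). The places split into four groups: $q\in T$, places outside $2ND\cdot n_T\cdot\infty$, the finite places dividing $2ND$, and the real place.

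\emph{Places $q\in T$.} Here $d=d_T$ has odd valuation at $q$. The same holds for $dD$, since $q\nmid D$ and $D$ is a unit at $q$. So both $E^d$ and $E^{dD}$ are ramified quadratic twists of $E$ at $q$, and their Kummer images both equal $\beta_q$ by the discussion of the transverse line: the two ramified characters give the same line. This agrees with $\mathcal F_q=\beta_q$ for both $S_0(T)$ and $S_1(T)$.

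\emph{Places $v\nmid 2NDn_T\infty$.} Here $E$ has good reduction and $d,dD$ are units. Lemma~\ref{lem:local-twists} gives unchanged Kummer images, matching the original conditions $\alpha_v(E)$ and $\alpha_v(E^D)$, which coincide there anyway.

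\emph{Finite places dividing $2ND$.} First suppose $\Delta_E<0$ or $|T|$ is even. By the discussion after \eqref{eq:reciprocity}, $d_T$ is a square at every finite $v\mid 2ND$. When $|T|$ is even the fixed local squareclasses of the $-q$ cancel; when $\Delta_E<0$ each $-q$ is already a local square at these places, and at odd $r\mid D$ by \eqref{eq:PL-properties}. So $E^d\simeq E$ and $E^{dD}\simeq E^D$ over $\Q_v$, compatibly with the identification of $V$, and the conditions agree with the original labels.

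Now suppose $\Delta_E>0$ and $|T|$ is odd, which by the definition of $\mathscr A_L$ forces $\intr(E,K)=0$. At odd $r\mid D$, $-q$ is still a square, so nothing changes. At odd $\ell\mid N$ and at $2$, $d$ is either a square or gives an unramified twist. Lemma~\ref{lem:unramified-odd}, using $T_E=0$ and hence odd Tamagawa numbers (and good reduction at $2$ when $2\nmid N$, via Lemma~\ref{lem:local-twists}), shows the Kummer images are unchanged. Finally, \eqref{eq:finite-local-equal} gives $\alpha_v(E)=\alpha_v(E^D)$ at every finite $v$, so the swap of labels is invisible at all finite places.

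\emph{Real place.} If $\Delta_E<0$, then $H^1(\R,V)=0$ and there is nothing to check. If $\Delta_E>0$, the sign of $d$ is $(-1)^{|T|}$. For even $|T|$, $E^d\simeq E$ over $\R$, and the labels are preserved. For odd $|T|$, $d<0$, so $E^d$ is $\R$-isomorphic to $E^D$ and $E^{dD}$ to $E$. The real local conditions of $E^d$ and $E^{dD}$ are therefore $\im(I+B)$ and $\im B$, swapped relative to $S_0,S_1$. Combined with the finite-place analysis, this gives $S_0(T)=\Sel_2(E^{dD})$ and $S_1(T)=\Sel_2(E^d)$.

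The main obstacle is the place $2$ and the odd bad primes in the odd-support case. There one must check carefully that an unramified twist preserves the Kummer image, including additive or multiplicative reduction at $2$ when $2\mid N$. The plan is to handle this through the norm criterion of Lemma~\ref{lem:norm-intersection}, together with the odd-Tamagawa surjectivity of Lemma~\ref{lem:unramified-odd}.
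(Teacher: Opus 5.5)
Your proposal is correct and follows essentially the same route as the paper: you compare the local conditions place by place. In the case $\Delta_E<0$ or $|T|$ even, $d$ is a local square at every finite $v\mid 2ND$; in the odd-support case, Lemma~\ref{lem:unramified-odd} (via $T_E=0$) and \eqref{eq:finite-local-equal} (via $\delta_K=0$) handle the finite places, and the sign of $d$ at the real place produces the label exchange. The one step the paper spells out more explicitly is at $2$ for $E^{dD}$ versus $E^D$ when $D$ is odd: $E^D$ has good reduction at $2$ whenever $E$ does, and $D$ is a $2$-adic square when $E$ is bad at $2$, so the same two lemmas apply to $E^D$ as well.
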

\begin{proof}
If $\Delta_E<0$ or $|T|$ is even, the construction of $\mathcal P_L$ makes $d$ a square at every finite $v\mid2ND$. Thus both original Kummer conditions are unchanged there. At a new prime $q\in T$, both become $\beta_q$; at all other finite places, Lemma~\ref{lem:local-twists} gives unramified invariance at good reduction. If $\Delta_E<0$, the real cohomology vanishes; if $|T|$ is even, $d>0$ preserves the real conditions. This proves the assertion with the original labels.

It remains to consider $\Delta_E>0$ and odd $|T|$. By the definition of $\mathscr A_L$, this case has $\intr(E,K)=0$. At the original odd bad primes, $D$ is a local square and $d$ gives a trivial or unramified twist; $T_E=0$ and Lemma~\ref{lem:unramified-odd} preserve both Kummer conditions. At odd $r\mid D$, the parameter $d$ is a local square. At $2$, if $D$ is even, $d$ is a square and there is nothing to prove. Suppose now that $D$ is odd. Both $D$ and $d$ give trivial or unramified twists. If $E$ has good reduction at $2$, so does $E^D$, and Lemma~\ref{lem:local-twists} applies. If $E$ has bad reduction at $2$, the Heegner hypothesis makes $D$ a square, while $c_2(E)$ is odd, so Lemma~\ref{lem:unramified-odd} applies. The condition at $2$ is therefore preserved in every case. At new primes it is $\beta_q$, and the remaining finite places are covered by unramified invariance at good reduction.

Now $d<0$ exchanges the real Kummer lines of $E$ and $E^D$. Equation~\eqref{eq:finite-local-equal} says that the initial finite conditions agree, and both are changed to $\beta_q$ at each new prime. Exchanging the real conditions therefore exchanges the entire Selmer structures, proving \eqref{eq:actual-pair}.
\end{proof}

Membership of $\xi_E$ in $S_i(T)$ is unchanged throughout the construction: it localizes to zero at every new prime and hence satisfies both local conditions there, while all original conditions are retained. Thus \Exclusion\ continues to hold for the two mixed groups.

\begin{proposition}\label{prop:general-terminal}
Assume \Heegner, \Surjectivity{L}, and \Exclusion. Put $r_*=\max(a_0,a_1)$ and
\[
\varepsilon_i=\dim_{\F_2}\bigl(\Sel_2(E^{D^i}/\Q)\cap\F_2\xi_E\bigr).
\]
Among terminal supports, restricting to even size when $\Delta_E>0$, the possible sizes are exactly
\[
\begin{cases}
 r_*-1+\Z_{\ge0},
   &\Delta_E<0,\quad\varepsilon_0+\varepsilon_1=0,\\
 \max(a_0-\varepsilon_0,a_1-\varepsilon_1)+2\Z_{\ge0},
   &\Delta_E<0,\quad\varepsilon_0+\varepsilon_1=1,\\
 2\left\lceil(r_*-1)/2\right\rceil+2\Z_{\ge0},
   &\Delta_E>0.
\end{cases}
\]
If $\varepsilon_0+\varepsilon_1=1$, the ordered terminal dimensions are
$(\varepsilon_0,\varepsilon_1)$. Every positive size in this list occurs
for infinitely many supports, which may avoid any prescribed finite set.
For every such support, $n_T\in\mathcal N_L(E,K)$.
\end{proposition}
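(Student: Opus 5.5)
The plan is to run the simultaneous descent on the pair of mixed groups $S_0(T),S_1(T)$, controlling each step with the Parity Lemma~\ref{lem:switch} and choosing primes with Lemma~\ref{lem:detection}. By Proposition~\ref{prop:actual-pair}, terminality of $T$ is the condition $\{\dim S_0(T),\dim S_1(T)\}=\{0,1\}$. Membership of $\xi_E$ is invariant along the construction, so $\varepsilon_i(T)=\varepsilon_i$ for every $T$.

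\textbf{Bookkeeping.} Each added prime $q\in T$ changes each $\dim S_i$ by exactly $\pm1$. It decreases the dimension iff $\loc_q S_i\ne0$, and this localization factors through $S_i/(S_i\cap\F_2\xi_E)$. Writing $b_i=\dim S_i-\varepsilon_i$ for the \emph{reduced} dimensions, a prime lowers $\dim S_i$ only if $b_i>0$. Lemma~\ref{lem:detection} applied to $S=S_0+S_1$ realizes every pattern permitted by these constraints. When $b_0,b_1>0$ one can choose (down, down), and also (up, up); when $b_i>0$ one can choose (down on $i$, up on the other) by taking $\lambda$ nonzero on $\overline S_i$ and zero on $\overline S_{1-i}$. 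This last choice needs $\overline S_i\not\subset\overline S_{1-i}$, and otherwise uses $\lambda=0$. Taking the prime outside $\operatorname{Supp}$ keeps infinitely many choices avoiding any finite set. Since every $q\in\mathcal P_L$ is admissible at depth $L$ by \eqref{eq:PL-properties}, $n_T\in\mathcal N_L(E,K)$ is automatic.

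\textbf{Lower bounds.} Since each step moves each dimension by one, $|T|\ge\max(a_0,a_1)-1$ when the targets are $\{0,1\}$. The quantity $\dim S_0-\dim S_1$ changes by $0$ or $\pm2$, so its parity is fixed; as $a_0+a_1$ is odd, this parity is consistent with either order of the targets. If $\varepsilon_0+\varepsilon_1=1$, say $\varepsilon_0=1$, then $\dim S_0\ge1$ always, so the terminal pair must be $(1,0)$. The parity of $\dim S_0-\dim S_1$ then forces the parity of $|T|$ relative to $a_0-1$, and the bound becomes $|T|\ge\max(a_0-1,a_1)$, with sizes in a single parity class. For $\Delta_E>0$, only even sizes are allowed, giving the ceiling expression.

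\textbf{Constructions.} The plan is to greedily apply (down, down) while $b_0,b_1>0$. When one reduced dimension reaches zero, the group is either $0$ or $\F_2\xi_E$. The next move is a two-prime step: (down on the other, up on this one), followed by another (down, down). The second prime can lower the formerly zero group because the newly created class is not $\xi_E$. The parity of dimensions then has to be checked to confirm that this lands exactly at $\{0,1\}$ in the stated number of steps. Larger sizes are obtained by appending pairs of primes (up, down) then (down, up) from a terminal state. In the case $\varepsilon_0+\varepsilon_1=0$, a single extra prime is also possible: from $(0,1)$, a prime with $\lambda$ nonzero on $S_1$ gives $(1,0)$. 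This explains the full ray $r_*-1+\Z_{\ge0}$.

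\textbf{Main obstacle.} The main difficulty is the exact-size claim: showing that every listed size occurs and that no others do, especially the case analysis at the moment one group collapses to $\F_2\xi_E$ or to $0$. The hardest point to check is that the two-prime step always has the required independent localization patterns.
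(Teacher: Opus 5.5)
Your strategy is the paper's: greedy simultaneous detection, a two-prime step once one group has collapsed to $0$ or $\F_2\xi_E$, and lower bounds from the fact that each new prime moves each mixed dimension by exactly one. However, the step that produces the larger sizes fails in the case $\varepsilon_0+\varepsilon_1=1$. This is exactly the case where you need it, since single extra primes already cover $\varepsilon_0+\varepsilon_1=0$.

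There the terminal state is $(\varepsilon_0,\varepsilon_1)$: one group is $0$ and the other is $\F_2\xi_E$, so both reduced dimensions $b_0,b_1$ vanish. By your own bookkeeping, and since $\loc_q\xi_E=0$ for every $q\in\mathcal P_L$, neither group can be lowered at the next prime. So neither (up, down) nor (down, up) exists; the only available move is (up, up), coming from $\lambda=0$. As written, you therefore produce no terminal support of size $\max(a_0-\varepsilon_0,a_1-\varepsilon_1)+2j$ with $j\ge1$. For example, if $\Delta_E<0$, $\Sel_2(E/\Q)=0$ and $\Sel_2(E^D/\Q)=\F_2\xi_E$, you obtain none of the positive sizes $2,4,\dots$ that the statement says occur infinitely often.

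The repair is the paper's. With $\varepsilon=(0,1)$, say, use
\[
(0,1)\longrightarrow(1,2)\longrightarrow(0,1).
\]
First take a prime at which both localizations vanish, then one detecting both enlarged groups. The second prime exists by Lemma~\ref{lem:detection}: the new line $S_0$ still excludes $\xi_E$ because $\varepsilon_0$ is invariant, and the two-dimensional $S_1$ has nonzero image modulo $\F_2\xi_E$.

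Two smaller points need correcting.
\begin{itemize}
\item In the case $\varepsilon_0+\varepsilon_1=1$ you attribute the parity restriction to the wrong invariant. The parity of $\dim S_0-\dim S_1$ is constant, equal to that of $a_0+a_1$, and carries no information about $|T|$. What forces the parity class is $\dim S_i(T)\equiv a_i+|T|\pmod 2$, which your own bookkeeping gives and which yields $|T|\equiv a_i-\varepsilon_i$.
\item The third line of the list is the even part of $r_*-1+\Z_{\ge0}$ only because $\varepsilon_0=\varepsilon_1=0$ when $\Delta_E>0$. You should cite the real-place calculation of Section~\ref{subsec:xi}, which shows that $\loc_\infty\xi_E$ lies in neither real Kummer line.
\end{itemize}
Finally, your remark that the mixed move ``otherwise uses $\lambda=0$'' is confused, since $\lambda=0$ gives (up, up). It is harmless, because you only use the mixed move when the other reduced dimension is zero.
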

\begin{proof}
Condition \Exclusion\ gives $\varepsilon_0+\varepsilon_1\le1$, and the real calculation in Section~\ref{subsec:xi} gives $\varepsilon_0=\varepsilon_1=0$ when $\Delta_E>0$. By \eqref{eq:kramer-parity}, $a_0+a_1$ is odd. Each switch changes both
dimensions by one, so their sum stays odd. Lemma~\ref{lem:detection}
allows simultaneous nonzero localization whenever both groups have
nonzero images modulo $\F_2\xi_E$. It also allows both localizations
to vanish. If one group is $0$ or $\F_2\xi_E$, its localization is
necessarily zero, while the other may be detected provided its image
modulo $\F_2\xi_E$ is nonzero. Lemma~\ref{lem:switch} turns these choices
into the dimension changes used below.

First suppose $\varepsilon_0=\varepsilon_1=0$. Relabel so that
$a_0\le a_1$. Detect both groups for $a_0$ steps, reaching dimensions
$(0,a_1-a_0)$. The remaining nonzero dimension is odd. For every odd
$y\ge3$, two new primes give
\[
 (0,y)\longrightarrow(1,y-1)\longrightarrow(0,y-2).
\]
At the first prime the nonzero group is detected; at the second both
nonzero groups are detected. Thus $(0,1)$ is reached after exactly
$a_1-1=r_*-1$ switches. From a terminal pair, detect its nonzero group;
one further switch gives $(1,0)$, and repeating this gives a terminal
pair at every larger support size. When $\Delta_E>0$, take precisely the
even sizes in this construction.

Now suppose $(\varepsilon_0,\varepsilon_1)=(0,1)$. The second group
contains $\xi_E$ throughout, so its terminal dimension must be one.
Detect both groups for $\min(a_0,a_1-1)$ steps. If $a_0\ge a_1-1$,
the dimensions become $(x,1)$ with $x=a_0-a_1+1$ even. The
one-dimensional second group is now $\F_2\xi_E$. For every $x\ge2$,
use
\[
 (x,1)\longrightarrow(x-1,2)\longrightarrow(x-2,1).
\]
The first prime detects the first group; the second detects both groups,
whose images modulo $\F_2\xi_E$ are nonzero. This reaches $(0,1)$
after a total of $a_0$ switches. If $a_0<a_1-1$, the dimensions instead
become $(0,y)$ with $y=a_1-a_0$ odd and at least three. The previous
moves $(0,y)\to(1,y-1)\to(0,y-2)$ apply: in the middle state the
second group has dimension at least two, so its image modulo the kernel
class is nonzero. They reach $(0,1)$ in a total of $a_1-1$ switches.
The resulting minimum is therefore $\max(a_0,a_1-1)$.
From $(0,1)$, choose a prime where both localizations vanish and then
one detecting both enlarged groups, giving
\[
 (0,1)\longrightarrow(1,2)\longrightarrow(0,1).
\]
This realizes every larger size of the same parity. The case
$(\varepsilon_0,\varepsilon_1)=(1,0)$ is symmetric.

For the lower bounds, every switch changes each dimension by one, hence
\[
 \dim S_i(T)\ge a_i-|T|,\qquad
 \dim S_i(T)\equiv a_i+|T|\pmod2.
\]
When both $\varepsilon_i$ vanish, a terminal pair has each dimension
at most one, so $|T|\ge r_*-1$. In the positive-discriminant case
the stated restriction further requires even size. When exactly one
$\varepsilon_i$ is one, the terminal dimensions are forced to be
$(\varepsilon_0,\varepsilon_1)$, yielding
\[
 |T|\ge\max(a_0-\varepsilon_0,a_1-\varepsilon_1),\qquad
 |T|\equiv a_i-\varepsilon_i\pmod2.
\]
The two expressions on the right in the maximum have the same parity,
since $a_0+a_1$ is odd. These lower bounds match the constructions.
Proposition~\ref{prop:actual-pair} identifies the final
mixed groups with the actual twist groups. At each step the required
primes are infinite and can avoid every previously chosen prime and any
prescribed finite set. This proves the infinitude assertions for every
positive support size. Each displayed minimum is at most $r_*$. If the minimum is zero, size two is permitted; hence infinitely many nonempty supports have a common size at most $\max\{r_*,2\}$, as asserted in Theorem~\ref{thm:main-descent}. Finally, \eqref{eq:PL-properties} gives
$n_T\in\mathcal N_L(E,K)$.
\end{proof}

\begin{proposition}\label{prop:terminal}
Under \Heegner, \Surjectivity{L}, \Exclusion, and $\intr(E,K)=0$, the minimum support size among $T\in\mathscr T_L$ is $R=\min(a_0,a_1)$. Infinitely many minimizing supports exist when $R>0$, and may avoid any finite set. If $R=0$, two primes give infinitely many nonempty terminal supports.
\end{proposition}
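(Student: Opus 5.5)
The plan is to run the simultaneous descent of Proposition~\ref{prop:general-terminal}, but start it from the nested configuration supplied by Proposition~\ref{prop:nesting}.

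Since $\intr(E,K)=0$ forces $T_E=0$ and $\delta_K=0$, Proposition~\ref{prop:nesting} shows that one initial group contains the other with codimension one. Relabel so that $S_0\subset S_1$, where $S_i=\Sel_2(E^{D^i}/\Q)$, with $\dim S_0=R$ and $\dim S_1=R+1$. By \Exclusion, $\xi_E\notin S_0$.

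\textbf{Lower bound.} Each switch changes each dimension by exactly one (Lemma~\ref{lem:switch}). Hence $\dim S_i(T)\ge a_i-|T|$, and a terminal pair needs both dimensions at most one. This gives $|T|\ge\max(a_0,a_1)-1=R$. The constraint also applies to supports of odd size when $\Delta_E>0$, since those are allowed in $\mathscr A_L$ at index zero. Proposition~\ref{prop:actual-pair} identifies $\{S_0(T),S_1(T)\}$ with the actual twist Selmer groups, so the bound holds for every $T\in\mathscr T_L$.

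\textbf{Upper bound.} I will realize $R$ by detecting both groups at every step, keeping the nesting as an invariant. At a stage where $S_0(T)\subset S_1(T)$ with codimension one and $\dim S_0(T)\ge1$, set $S=S_1(T)$ and $J=S\cap\F_2\xi_E$. The subgroup $S_0(T)$ has nonzero image in $S/J$: it is nonzero and does not contain $\xi_E$, because membership of $\xi_E$ is preserved by the construction. The group $S_1(T)\supseteq S_0(T)$ then has nonzero image as well. Lemma~\ref{lem:detection} now supplies infinitely many $q\in\mathcal P_L$, avoiding any finite set, at which both localizations are nonzero. By Lemma~\ref{lem:switch}, the new groups are the kernels $\ker(\loc_q|_{S_i(T)})$. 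These are still nested, and the dimensions drop to $(\dim S_0(T)-1,\dim S_1(T)-1)$. After $R$ such steps we reach dimensions $(0,1)$. Proposition~\ref{prop:actual-pair} identifies the result with actual twists; when $|T|$ is odd and $\Delta_E>0$ the labels are exchanged, but the unordered pair is still $\{0,1\}$. So $T\in\mathscr T_L$ with $|T|=R$. The freedom to choose infinitely many primes at each step gives infinitely many minimizers avoiding any prescribed finite set.

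\textbf{The case $R=0$.} Here $T=\emptyset$ is terminal, with dimensions $(0,1)$. For nonempty supports, follow Proposition~\ref{prop:general-terminal}. First choose a prime where both localizations vanish, taking $\lambda=0$ in Lemma~\ref{lem:detection}; this gives $(1,2)$. Then choose a prime detecting both groups. This requires both enlarged groups to have nonzero image modulo $\F_2\xi_E$, which I expect to be the main obstacle. The enlarged first group is one-dimensional, so I must check it is not $\F_2\xi_E$. This holds because $\xi_E$ localizes to zero at the new prime, so membership of $\xi_E$ is unchanged and \Exclusion\ persists for the mixed groups. The second step returns to $(0,1)$, and the resulting supports have size two and are even, so they lie in $\mathscr A_L$ without parity concerns. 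Infinitely many choices at each step finish the proof.
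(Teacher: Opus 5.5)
Your proposal is correct and follows the paper's own proof essentially step by step. You use the nesting from Proposition~\ref{prop:nesting}, simultaneous detection that preserves the nesting and the exclusion of $\xi_E$ for $R$ steps, the two-prime detour $(0,1)\to(1,2)\to(0,1)$ when $R=0$, and the lower bound from $\dim S_i(T)\ge a_i-|T|$ combined with Proposition~\ref{prop:actual-pair}. The only cosmetic difference is that you get $|T|\ge R$ from both terminal dimensions being at most one, whereas the paper uses that one of them is zero.
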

\begin{proof}
By Proposition~\ref{prop:nesting}, label the initial groups
$S_{\mathrm{small}}\subset S_{\mathrm{large}}$, with dimensions $R,R+1$. Condition \Exclusion\ gives
$S_{\mathrm{small}}\cap\F_2\xi_E=0$.
As long as the smaller group is nonzero, Lemma~\ref{lem:detection}, applied to the sum modulo the kernel class, supplies a new prime $q$ with $\loc_qS_{\mathrm{small}}\ne0$. Inclusion then gives $\loc_qS_{\mathrm{large}}\ne0$. Lemma~\ref{lem:switch} yields simultaneously
\[
S'_{\mathrm{small}}=\ker(\loc_q|_{S_{\mathrm{small}}})
\subset
\ker(\loc_q|_{S_{\mathrm{large}}})=S'_{\mathrm{large}}.
\]
Both dimensions decrease by one, while nesting and exclusion of the kernel class from the smaller group persist. After exactly $R$ steps, the dimensions are $(0,1)$. Proposition~\ref{prop:actual-pair} identifies these with the actual twist groups.

If $R=0$, the empty support is already terminal. To obtain a nonempty support, first choose a prime at which both localizations vanish. Lemma~\ref{lem:switch} gives dimensions $(1,2)$. The first new group still excludes $\xi_E$, and the second remains nonzero after quotienting by the at most one-dimensional kernel. Lemma~\ref{lem:detection} therefore supplies a second prime detecting both groups, returning the dimensions to $(0,1)$. Each choice admits infinitely many primes and may avoid the existing support and any prescribed finite set. This proves the infinitude assertions. The support size depends only on the initial Selmer dimensions, not on $L$.

For the lower bound, let $T\in\mathscr T_L$ be arbitrary and order its primes in any way. At each step Lemma~\ref{lem:switch} changes each mixed Selmer dimension by exactly one in absolute value. Therefore
\[
\dim_{\F_2}S_i(T)\ge a_i-|T|\qquad(i=0,1).
\]
By Proposition~\ref{prop:actual-pair}, one of $S_0(T),S_1(T)$ is zero. For that index $i$, $|T|\ge a_i\ge R$. The constructed support attains this lower bound, proving the intrinsic-index-zero assertion of Theorem~\ref{thm:main-descent}.
\end{proof}

\begin{remark}\label{rem:nonempty-terminal}
Assume $\intr(E,K)=0$ and $R=0$, and let $B$ be the initial one-dimensional larger Selmer group. If $B\ne\F_2\xi_E$, Lemma~\ref{lem:detection} supplies a prime detecting $B$, while the zero group has zero localization. One switch changes the mixed dimensions from $(0,1)$ to $(1,0)$, so the smallest nonempty terminal support has size one. If $B=\F_2\xi_E$, its localization vanishes at every prime of $\mathcal P_L$. Every single switch then gives $(1,2)$, and no singleton is terminal. The two-step construction above shows that the smallest nonempty size is exactly two. The identification with actual twists is unordered, so this argument also covers the real-place exchange.
\end{remark}

\section{Full Heegner derivatives and genus points}\label{sec:cm}
For finite $T\subset\mathcal P_L$, we construct a point $Q_T\in E^{d_T}(K)$ whose mod-$2$ Kummer class is $\kappa^{\std}_{n_T,1}(\varphi)$ under the natural identification $E^{d_T}[2]=E[2]$. We obtain this point by expressing a quadratic character projection as an integral linear combination of Heegner traces. We follow the Heegner-point and derivative conventions of \cite[\S\S3--5]{GrossKolyvagin} and \cite[\S\S4--5]{Stein}.

Genus traces and character decompositions are central to Tian's work on congruent numbers. In \cite[Lemma 4.3]{TianCongruent}, traces to quadratic subfields are related to a genus trace by a power of $2$, with a torsion correction in one case. Tian--Yuan--Zhang \cite[\S\S3.1, 3.3]{TYZGenus} develop this method further, constructing corrected genus points by subtracting contributions indexed by proper factorizations. Here we fix $K$ and vary the ring class conductor. The coefficients $a_q(E)/2$ in the correction below come directly from the Heegner norm relations; these give an integral representative of the required character component and its comparison with the standard derivative modulo $2$.

\subsection{Ring class fields, genus characters, and integral traces}
Fix $T\subset\mathcal P_L$. For $S\subseteq T$, put
\[
n_S=\prod_{q\in S}q,\qquad\mathcal O_{n_S}=\Z+n_S\mathcal O_K,
\qquad H_S=H_{n_S}.
\]
Here $H_S$ is the ring class field of the order $\mathcal O_{n_S}$, and $H_\varnothing=H_1$ is the Hilbert class field. Choose an ideal of norm $N$ using the Heegner hypothesis. This gives compatible CM points
\[
x_S\in X_0(N)(H_S),\qquad y_S=\varphi(x_S)\in E(H_S).
\]
For $q\in S$, the prime $q$ is inert in $K$, and the $q+1$ conjugates of $x_S$ in the relative extension correspond to the terms of the Hecke correspondence $T_q$. The modular parametrization converts the $T_q$ action to multiplication by $a_q(E)$, giving \cite[Proposition 4.2]{Stein}
\begin{equation}\label{eq:CM-norm}
\Tr_{H_S/H_{S\setminus\{q\}}}y_S=a_q(E)y_{S\setminus\{q\}}.
\end{equation}
The normalization $\varphi(\infty)=O$ fixes the translation at the cusp.

Let $G=\Gal(H_T/K)$. The ring class group exact sequence and Artin reciprocity \cite[\S3]{GrossKolyvagin}, \cite[Proposition 6.2]{Stein} identify the relative group as
\[
\Gal(H_T/H_1)\simeq
\frac{(\mathcal O_K/n_T\mathcal O_K)^\times}{(\Z/n_T\Z)^\times}
\simeq\prod_{q\in T}\frac{\F_{q^2}^\times}{\F_q^\times}.
\]
The image of $\mathcal O_K^\times=\{\pm1\}$ is already contained in the denominator. The second isomorphism follows from the Chinese remainder theorem, and each factor is cyclic of order $q+1$. Thus
\begin{equation}\label{eq:ring-exact}
1\longrightarrow G_{\mathrm{rel}}=\Gal(H_T/H_1)\simeq\prod_{q\in T}C_{q+1}
\longrightarrow G\longrightarrow\Cl(K)\longrightarrow1.
\end{equation}
Choose a generator $\sigma_q$ of each relative cyclic factor, and put
\[
F_S=K(\sqrt{-q}:q\in S).
\]
We verify that $F_S\subset H_S$. Since $q\equiv3\pmod4$, the integer $-q$ is the fundamental discriminant of $\Q(\sqrt{-q})$. Let $\vartheta_q$ be its quadratic Hecke character, ramified at no finite prime other than $q$. Writing $\mathbb A_F^\times$ for the id\`ele group of a number field $F$, define
\[
\chi_q(x)=\vartheta_q\bigl(N_{K/\Q}x\bigr),
\qquad x\in\mathbb A_K^\times.
\]
Compatibility of Artin reciprocity with norm and restriction identifies the extension cut out by $\chi_q$ with $K(\sqrt{-q})/K$. To see that it is a ring class character, it remains to check triviality on the local unit groups of $\mathcal O_q$.

Away from $q$, the character $\vartheta_q$ is unramified and is therefore trivial on norms of local units. At $q$, the extension $K_q/\Q_q$ is unramified quadratic, and
\[
\mathcal O_q\otimes\Z_q=\Z_q+q\mathcal O_{K_q}.
\]
Every unit in this order reduces to some $a\in\F_q^\times$, whose norm modulo $q$ is $a^2$. Thus $\vartheta_q$ is also trivial on these norms. At infinity, norms are positive real numbers, on which it is again trivial. Hence $\chi_q$ factors through the ring class group
\[
\Cl(\mathcal O_q)\simeq
K^\times\backslash\mathbb A_{K,f}^\times/\widehat{\mathcal O}_q^\times,
\]
and $K(\sqrt{-q})\subset H_q$. Here $\mathbb A_{K,f}^\times$ is the finite id\`ele group. This applies the reciprocity description of ring class fields to the character in question \cite[\S3]{GrossKolyvagin}, \cite[\S4, Proposition 6.2]{Stein}.

On the relative cyclic factor, this character is
\[
\F_{q^2}^\times/\F_q^\times\longrightarrow\{\pm1\},\qquad
\bar u\longmapsto
\left(\frac{N_{\F_{q^2}/\F_q}(\bar u)}{q}\right).
\]
The finite-field norm is surjective, so this quadratic character is nontrivial and its kernel is the subgroup of squares. Its conductor exponent at the unique prime of $K$ above $q$ is one, and it is unramified at all other finite places. The characters for distinct $q$ are linearly independent: any nonempty product remains ramified at each selected prime. Their compositum therefore has degree $2^{|T|}$, and each nontrivial quadratic subextension ramifies at some $q\in T$. Since the Hilbert class field is everywhere unramified,
\begin{equation}\label{eq:genus-degrees}
[F_T:K]=2^{\nu},\qquad F_T\cap H_1=K,\qquad \nu=|T|.
\end{equation}
Let $\tau_q\in\Gal(F_T/K)$ change the sign of $\sqrt{-q}$ and fix the other square roots. Then $\sigma_q|_{F_T}=\tau_q$.

Put $G_F=\Gal(H_T/F_T)$. Since $F_T\cap H_1=K$, restriction $G_F\to\Gal(H_1/K)=\Cl(K)$ is surjective. Its kernel is $G_F\cap G_{\mathrm{rel}}$, and the character calculation above gives
$G_F\cap G_{\mathrm{rel}}=G_{\mathrm{rel}}^2=\prod_q\langle\sigma_q^2\rangle$. We may therefore lift each ideal class to $G_F$, obtaining a complete set $\mathcal C$ of representatives for $G/G_{\mathrm{rel}}$. Every element of $G_F$ has a unique expression as a representative times an element of $G_{\mathrm{rel}}^2$; thus
\begin{equation}\label{eq:CR2}
\mathcal C G_{\mathrm{rel}}^2=G_F\quad\text{with each element occurring once}.
\end{equation}

Define the unnormalized trace points
\[
R_S=\Tr_{H_S/F_S}y_S\in E(F_S).
\]
View all $R_S$ as points of $E(F_T)$. For $q\in S$, the operator $1+\tau_q$ is the trace for $F_S/F_{S\setminus\{q\}}$. Computing the same trace through the towers $H_S/F_S/F_{S\setminus\{q\}}$ and $H_S/H_{S\setminus\{q\}}/F_{S\setminus\{q\}}$, and using \eqref{eq:CM-norm}, gives
\begin{equation}\label{eq:R-norm}
(1+\tau_q)R_S=a_q(E)R_{S\setminus\{q\}}\qquad(q\in S).
\end{equation}

\paragraph{Character projection and integral correction.}
Let $d=d_T$ and let $\chi_T$ be the character of $K(\sqrt d)/K$, viewed as a character of $\Gal(F_T/K)$. Thus $\chi_T(\tau_q)=-1$ for every $q\in T$. On $E(F_T)\otimes_{\Z}\Z[1/2]$, the projector to this character component is
\begin{equation}\label{eq:genus-projector}
e_{\chi_T}
=\frac{1}{2^\nu}\sum_{\gamma\in\Gal(F_T/K)}\chi_T(\gamma)\gamma
=\prod_{q\in T}\frac{1-\tau_q}{2}.
\end{equation}
We seek a point of $E(F_T)$ representing $e_{\chi_T}R_T$ and satisfying the character relation exactly, so that it descends to $E^d(K)$.

For a single prime $T=\{q\}$, the norm relation \eqref{eq:R-norm} gives
\[
\frac{1-\tau_q}{2}R_{\{q\}}
=R_{\{q\}}-\frac{a_q(E)}2R_\varnothing
\quad\text{in }E(F_{\{q\}})\otimes_{\Z}\Z[1/2].
\]
Since $a_q(E)$ is even, the right-hand side defines an actual point
\[
Z_{\{q\}}=R_{\{q\}}-\frac{a_q(E)}2R_\varnothing\in E(F_{\{q\}}).
\]
Moreover, $(1+\tau_q)Z_{\{q\}}=0$, because $\tau_q$ fixes $R_\varnothing$. Subtracting this lower-conductor trace therefore gives an integral representative of the anti-invariant component.

For general $T$, put $b_q=a_q(E)/2\in2\Z$. Expanding the product of projectors in \eqref{eq:genus-projector} using \eqref{eq:R-norm} leads to
\begin{equation}\label{eq:ZT}
Z_T=\sum_{S\subseteq T}(-1)^{|T\setminus S|}
\left(\prod_{q\in T\setminus S}b_q\right)R_S\in E(F_T).
\end{equation}
All coefficients are integers. For two primes, for example,
\[
Z_{\{p,q\}}=R_{\{p,q\}}-b_pR_{\{q\}}-b_qR_{\{p\}}
+b_pb_qR_\varnothing.
\]
The following lemma proves the character relation in $E(F_T)$ itself and identifies the corresponding full character trace. The stronger divisibility $4\mid a_q(E)$ makes every $b_q$ even; it will give the congruence with the standard derivative in Section~\ref{subsec:mod2}.

\begin{lemma}\label{lem:genus-point}
For every $q\in T$, we have $(1+\tau_q)Z_T=0$. Regard $\chi_T$ as a character of $G$ by inflation. Then
\begin{equation}\label{eq:full-character}
P_{\chi_T}:=\sum_{g\in G}\chi_T(g)g(y_T)
=\prod_{q\in T}(1-\tau_q)R_T=2^{\nu}Z_T.
\end{equation}
Consequently, there is a unique $Q_T\in E^d(K)$ with $Z_T=\iota_d(Q_T)$.
\end{lemma}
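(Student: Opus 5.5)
The proof splits into three parts: an algebraic identity in the group ring of $\Gal(F_T/K)$ acting on $E(F_T)$, the evaluation of the full character sum, and Galois descent to the twist.

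\emph{Character relation.} I would prove $(1+\tau_q)Z_T=0$ directly from \eqref{eq:ZT}, without inverting $2$. Fix $q\in T$ and split the sum over $S\subseteq T$ according to whether $q\in S$. Pair each $S\ni q$ with $S'=S\setminus\{q\}$. For $q\notin S'$, the automorphism $\tau_q$ fixes $F_{S'}$, so $(1+\tau_q)R_{S'}=2R_{S'}$. For $S\ni q$, \eqref{eq:R-norm} gives $(1+\tau_q)R_S=a_q(E)R_{S'}=2b_qR_{S'}$. The term of $S'$ in $Z_T$ carries the coefficient of the term of $S$ multiplied by $-b_q$, because $T\setminus S'$ contains the extra prime $q$. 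Hence each pair contributes
\[
c\,(2b_q R_{S'}-b_q\cdot 2R_{S'})=0,
\]
and summing over all pairs gives the relation.

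\emph{Full character sum.} Since $\chi_T$ is inflated from $\Gal(F_T/K)$, I would group $G$ by cosets of $G_F=\Gal(H_T/F_T)$:
\[
\sum_{g\in G}\chi_T(g)g y_T=\sum_{\gamma\in\Gal(F_T/K)}\chi_T(\gamma)\tilde\gamma\,\Tr_{H_T/F_T}y_T=\prod_{q\in T}(1-\tau_q)R_T .
\]
The last equality uses that $\Gal(F_T/K)=\prod_q\langle\tau_q\rangle$ with $\chi_T(\tau_q)=-1$. Next, expand $\prod_q(1-\tau_q)$ acting on $R_T$ inductively, one prime at a time, using $(1-\tau_q)=2-(1+\tau_q)$ together with \eqref{eq:R-norm}. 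By induction on $|T|$, this expansion equals $\prod_q(2-(1+\tau_q))R_T$, and each factor $1+\tau_q$ lowers $R_S$ to $a_q(E)R_{S\setminus\{q\}}=2b_qR_{S\setminus\{q\}}$. The operators $1+\tau_p$ for $p\neq q$ commute and act on the lower $R_{S'}$ in the same way, so the expansion equals $2^\nu\sum_S(-1)^{|T\setminus S|}\prod b_q R_S=2^\nu Z_T$. The main care is needed at exactly this point: one must check that $\tau_p$ acts on $R_{S'}$ through \eqref{eq:R-norm} only when $p\in S'$, and trivially otherwise. Equivalently, one can apply $\prod_q(1-\tau_q)$ to $Z_T$: by the first step each factor acts on $Z_T$ as multiplication by $2$, while the terms with $S\ne T$ are killed because some $\tau_q$ fixes them. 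The second route is cleaner, and I would present it that way.

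\emph{Descent.} For $g\in G_K$, let $\bar g$ be its image in $\Gal(F_T/K)$. The first step gives $\tau_qZ_T=-Z_T$, so $gZ_T=\chi_T(g)Z_T=\chi_d(g)Z_T$. Here I use that $\chi_T$ is the character of $K(\sqrt d)/K$, as \eqref{eq:genus-degrees} and the definition of $d_T$ ensure. By the twisting rule $g\iota_d=[\chi_d(g)]\iota_d$, the point $Q=\iota_d^{-1}(Z_T)\in E^d(\overline{K})$ is then fixed by $G_K$, so $Q\in E^d(K)$. Uniqueness follows because $\iota_d$ is an isomorphism.
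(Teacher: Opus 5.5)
Your proof is correct and follows the paper's argument: anti-invariance by pairing $S$ with $S\cup\{q\}$, the character sum by grouping over cosets of $G_F=\Gal(H_T/F_T)$, and descent through the twisting rule for $\iota_d$. The only difference is in deriving $\prod_{q}(1-\tau_q)R_T=2^{\nu}Z_T$: you apply the operator to $Z_T$ itself, using anti-invariance and $(1-\tau_q)R_S=0$ for $q\notin S$, whereas the paper expands $\prod_q\bigl(2-(1+\tau_q)\bigr)R_T$ term by term; your variant is valid and slightly more economical.
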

\begin{proof}
Fix $q\in T$ and pair each subset $S$ not containing $q$ in \eqref{eq:ZT} with $S\cup\{q\}$. This gives
\[
Z_T=\sum_{S\subseteq T\setminus\{q\}}
(-1)^{|T\setminus(S\cup\{q\})|}
\left(\prod_{r\in T\setminus(S\cup\{q\})}b_r\right)
\bigl(R_{S\cup\{q\}}-b_qR_S\bigr).
\]
Since $\tau_qR_S=R_S$, applying $1+\tau_q$ changes each parenthesis to $(a_q-2b_q)R_S=0$, proving anti-invariance.

Next expand $1-\tau_q=2-(1+\tau_q)$ and use the trace relation for each factor $1+\tau_q$. We obtain
\[
\begin{aligned}
\prod_{q\in T}(1-\tau_q)R_T
&=\sum_{U\subseteq T}(-1)^{|U|}2^{\nu-|U|}
\left(\prod_{q\in U}a_q(E)\right)R_{T\setminus U}\\
&=2^\nu Z_T.
\end{aligned}
\]
On the other hand, sum the full character trace first over cosets of $G_F$. The inner sum is $R_T$, and the signed sum on $G/G_F=\langle\tau_q:q\in T\rangle$ is exactly $\prod_q(1-\tau_q)$. This proves \eqref{eq:full-character}.

Each $\tau_q$ acts as $-1$ on $Z_T$, so $gZ_T=\chi_T(g)Z_T$ for $g\in G_K$. Since $g\iota_d=\chi_T(g)\iota_d$, the two character factors cancel. Thus $\iota_d^{-1}Z_T$ is fixed by $G_K$ and defines $Q_T\in E^d(K)$. Uniqueness follows from injectivity of $\iota_d$.
\end{proof}

Thus $\iota_d(Q_T)=Z_T$ represents $e_{\chi_T}R_T$, and the integral formula \eqref{eq:ZT} constructs it from the trace points. For $T=\varnothing$, the character is trivial and $e_{\chi_T}=1$, $d_T=1$, and the empty-product conventions give $Q_\varnothing=Z_\varnothing=R_\varnothing=y_K$. The identity $P_{\chi_T}=2^\nu\iota_d(Q_T)$ will relate the lattice index of $Q_T$ to the height of the full character trace.

\subsection{Descent of the standard derivative and comparison modulo two}\label{subsec:mod2}
We use the full derivative and coset-sum construction of \cite[\S\S5.1--5.2]{Stein}.
Put $n=n_T$. For $m=2^M$, $1\le M\le L$, define
\[
D_{q+1}(\sigma_q)=\sum_{j=0}^{q}j\sigma_q^j,\qquad
D_n=\prod_{q\mid n}D_{q+1}(\sigma_q).
\]
Choose a complete set $\mathcal C$ of representatives for $G/G_{\mathrm{rel}}$, and put
\begin{equation}\label{eq:Wn}
W_n=\left(\sum_{a\in\mathcal C}a\right)D_n y_T\in E(H_T).
\end{equation}
The group-ring identity
\begin{equation}\label{eq:derivative-identity}
(\sigma_q-1)D_{q+1}(\sigma_q)=(q+1)-\Norm_q,
\qquad\Norm_q=\sum_{j=0}^{q}\sigma_q^j
\end{equation}
and the divisibilities $m\mid q+1,a_q$ imply that $D_ny_T$ is invariant modulo $m$ under $G_{\mathrm{rel}}$. Indeed, applying the other derivative operators to $\Norm_qy_T=a_q(E)y_{T\setminus\{q\}}$ puts the right-hand side of \eqref{eq:derivative-identity}, applied to the point, in $mE(H_T)$. Replacing a representative $a$ by $ar$, $r\in G_{\mathrm{rel}}$, therefore does not change $aD_ny_T$ modulo $m$. An element $g\in G$ merely permutes the cosets. Hence $W_n$ modulo $m$ is independent of the representatives and is $G$-invariant.

\begin{lemma}\label{lem:no-torsion}
Under \Heegner\ and residual image $S_3$,
\[
K\cap\Q(E[2])=\Q,\qquad E(H_T)[2^\infty]=0.
\]
Consequently, restriction induces an isomorphism
\begin{equation}\label{eq:unique-descent}
H^1(K,E[2^M])\xrightarrow{\ \sim\ }
H^1(H_T,E[2^M])^{G}.
\end{equation}
\end{lemma}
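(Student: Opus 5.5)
We prove the three assertions of Lemma~\ref{lem:no-torsion} in the order stated, since each one feeds the next.

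\emph{Step 1: $K\cap\Q(E[2])=\Q$.} The field $\Q(E[2])$ has Galois group $S_3$. Its unique quadratic subfield is $\Q(\sqrt{\Delta_E})$, which is unramified outside $2N$. Under \Heegner, the field $K$ is ramified at every prime dividing $D$, and $(D,N)=1$. If $D$ has an odd prime divisor $r$, then $K$ ramifies at $r$ while $\Q(\sqrt{\Delta_E})$ does not, so the two fields differ. Otherwise $D\in\{-8\}$, up to the $D<-4$ restriction. In that case $2$ ramifies in $K$, so by the remark after \eqref{eq:heegner}, $E$ has good reduction at $2$ and $v_2(\Delta_E)=0$. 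The squareclass of $\Delta_E$ therefore has even $2$-adic valuation and is unramified or split at $2$, and so cannot equal that of $-2$. Since $K\neq\Q(\sqrt{\Delta_E})$, we get $K\cap\Q(E[2])=\Q$.

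\emph{Step 2: $E(H_T)[2^\infty]=0$.} It suffices to show $E(H_T)[2]=0$. Suppose a nonzero $2$-torsion point were rational over $H_T$. The extension $H_T/\Q$ is generalized dihedral: $G$ is abelian and complex conjugation acts on it by inversion. Hence $\Q(E[2])\cap H_T$ is Galois over $\Q$ with a generalized dihedral Galois group. This intersection has Galois group a quotient of $S_3$: it is $\Q$, or $\Q(\sqrt{\Delta_E})$, or all of $\Q(E[2])$. A rational point of order two defined over $H_T$ lies in a cubic subfield of $\Q(E[2])$, which forces the intersection to be all of $\Q(E[2])$. Then $S_3$ would be a quotient of $\Gal(H_T/\Q)$, and its commutator subgroup $C_3$ would be a quotient of $G$. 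The resulting cyclic cubic extension of $K$ inside $H_T$ would then be $K\Q(E[2])/K$. Its Galois group over $K$ is $C_3$ by Step 1, and it is indeed a subextension of $H_T/K$, so this case is not immediately absurd. Instead we compare ramification. The field $\Q(E[2])$ is unramified outside $2N$, and $H_T/K$ is unramified outside $n_T$, which is coprime to $2N$. So $K\Q(E[2])/K$ would be unramified everywhere, and $\Q(E[2])\subset H_1K$.

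The remaining task is to rule out $\Q(E[2])\subset H_1$. If $N>1$, some prime $\ell\mid N$ should ramify in $\Q(E[2])$; but this can fail when $\ell$ has multiplicative reduction with $2\mid v_\ell(\Delta)$. I expect this to be the main obstacle. The approach I would try first is to argue that the $H_T$-rational $2$-torsion is $G$-stable, and that $V$ has no nonzero $G_K$-invariants because Step 1 gives an $S_3$-image over $K$. If the ramification argument does not close the gap, I would fall back on the genus-theory structure of $H_T$ to exclude an $S_3$-quotient compatible with the Chebotarev data.

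\emph{Step 3: the isomorphism \eqref{eq:unique-descent}.} We apply inflation--restriction for $H_T/K$ with coefficients $E[2^M]$. Step 2 gives $H^0(H_T,E[2^M])=0$. Therefore $H^1(G,0)=0$ and $H^2(G,0)=0$, so restriction $H^1(K,E[2^M])\to H^1(H_T,E[2^M])^G$ is injective. The five-term exact sequence places its cokernel inside $H^2(G,E[2^M]^{G_{H_T}})=0$, so restriction is also surjective.
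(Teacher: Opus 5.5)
Your Steps 1 and 3 are correct and match the paper. The gap is in Step 2, and it comes from misreading your own Step 1. Your Galois-closure argument correctly shows that a nonzero $H_T$-rational $2$-torsion point forces $\Q(E[2])\subseteq H_T$, using that $H_T/\Q$ is Galois. The image of $G=\Gal(H_T/K)$ in $\Gal(\Q(E[2])/\Q)\simeq S_3$ is then $\Gal(\Q(E[2])/\Q(E[2])\cap K)$. Step 1 says this image is all of $S_3$, not $C_3$.

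Equivalently, $\Gal(K(E[2])/K)\simeq S_3$ is non-abelian, while every subextension of the abelian extension $H_T/K$ is abelian. That is an immediate contradiction, and it is exactly how the paper concludes. An image equal to $C_3$ would require $K=\Q(\sqrt{\Delta_E})$, which is precisely what Step 1 rules out.

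Because you missed this, you called the case ``not immediately absurd'' and turned to a ramification argument, which you leave unfinished. As written, Step 2 is therefore not proved. The ramification detour cannot help in any case: at best it places $K(E[2])$ inside $H_1$, which is again abelian over $K$. The obstruction is group-theoretic, not a matter of ramification.

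Your fallback also aims at the wrong property. That $V$ has no nonzero $G_K$-invariants does not by itself exclude nonzero $G_{H_T}$-invariants. The paper's version runs as follows:
\begin{enumerate}
\item $G_{H_T}$ is normal in $G_K$, so $V^{G_{H_T}}$ is a $G_K$-submodule of $V$.
\item By Step 1, $G_K$ acts on $V$ through $S_3$, so $V$ is irreducible. A nonzero $V^{G_{H_T}}$ must therefore be all of $V$.
\item Hence $K(E[2])\subseteq H_T$, which gives the same non-abelian contradiction.
\end{enumerate}
With this correction, Step 3 goes through as you wrote it.
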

\begin{proof}
If an odd prime $r$ divides $D$, then $E$ has good reduction at $r$. The $2$-division field is unramified at $r$, whereas $K$ ramifies there, so $K$ cannot be its quadratic subfield. If $D$ has no odd prime factor, the only possibility with $D<-4$ is $D=-8$. Then $E$ has good reduction at $2$ and $v_2(\Delta_E)=0$, so the unique quadratic subfield $\Q(\sqrt{\Delta_E})$ of the $2$-division field cannot be $\Q(\sqrt{-2})$. Thus the residual image of $G_K$ remains $S_3$.

If the abelian extension $H_T/K$ contained a nonzero $2$-torsion point, the $G_{H_T}$-invariants would form a nonzero $S_3$-submodule of $V$. Irreducibility would force $K(E[2])\subseteq H_T$, contradicting $\Gal(K(E[2])/K)=S_3$. Any nonzero point of $2$-power order has a multiple of order two, so $E(H_T)[2^\infty]=0$. Now apply the inflation--restriction sequence
\[
\begin{aligned}
0&\longrightarrow H^1(G,E(H_T)[2^M])
\longrightarrow H^1(K,E[2^M])\\
&\longrightarrow H^1(H_T,E[2^M])^G
\longrightarrow H^2(G,E(H_T)[2^M]).
\end{aligned}
\]
Both end terms vanish, proving \eqref{eq:unique-descent}; see \cite[\S4]{GrossKolyvagin} and \cite[\S5.2]{Stein}.
\end{proof}

\begin{definition}\label{def:standard}
Define $\kappa^{\std}_{n,M}(\varphi)$ to be the unique descent of $\delta_{E,H_T,2^M}(W_n)$ under \eqref{eq:unique-descent}. For $n=1$, use the Kummer class of $\Tr_{H_1/K}y_1$.
\end{definition}
For $j\le M$, write $r_{M,j}=[2^{M-j}]:E[2^M]\to E[2^j]$ and $i_{j,M}:E[2^j]\hookrightarrow E[2^M]$. Functoriality of Kummer maps gives
\begin{equation}\label{eq:coefficient-compatibility}
r_{M,j,*}\kappa^{\std}_{n,M}=\kappa^{\std}_{n,j},\qquad
[2^{M-j}]\kappa^{\std}_{n,M}=i_{j,M,*}\kappa^{\std}_{n,j}.
\end{equation}
Each $i_{j,M,*}$ is injective. Indeed, the cohomology sequence of $0\to E[2^j]\to E[2^M]\to E[2^{M-j}]\to0$ shows that its kernel comes from $E(K)[2^{M-j}]$, which vanishes by Lemma~\ref{lem:no-torsion}.

The standard derivative construction and
\eqref{eq:coefficient-compatibility} apply to every
$n\in\mathcal N_M(E,K)$. Set $T=\operatorname{Supp}(n)$ and $H_T=H_n$,
and choose representatives of
$\Gal(H_n/K)/\Gal(H_n/H_1)$. The construction uses the same CM norm
relation and the admissibility divisibilities; the torsion argument of
Lemma~\ref{lem:no-torsion} applies because $H_n/K$ is abelian.
For $n=1$, use the basic trace as in Definition~\ref{def:standard}.

\paragraph{Comparison modulo two.}
\begin{proposition}\label{prop:mod2-comparison}
For finite $T\subset\mathcal P_L$, under the natural identification of $2$-torsion,
\begin{equation}\label{eq:mod2-equality}
\kappa^{\std}_{n,1}(E,K;\varphi)
=\delta_{E^{d_T},K,2}(Q_T)\quad\text{in }H^1(K,V).
\end{equation}
In particular, $\kappa^{\std}_{n,1}\ne0$ if and only if $Q_T\notin2E^{d_T}(K)$.
\end{proposition}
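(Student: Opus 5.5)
The plan is to compare the two classes after restricting to $H_T$, where both become Kummer classes of actual points of $E(H_T)$. By Lemma~\ref{lem:no-torsion}, restriction $H^1(K,V)\to H^1(H_T,V)^G$ is an isomorphism. So it suffices to show that the restrictions of both sides of \eqref{eq:mod2-equality} agree in $H^1(H_T,V)$.

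\textbf{Step 1: the right-hand side.} By \eqref{eq:twist-cocycle}, the twisted Kummer class of $Q_T$ is represented by $g\mapsto\chi_T(g)gR-R$ with $2R=\iota_d(Q_T)=Z_T$. The character $\chi_T$ cuts out $K(\sqrt{d_T})\subset F_T\subset H_T$, so it is trivial on $G_{H_T}$. The restriction is therefore the ordinary Kummer class $\delta_{E,H_T,2}(Z_T)$.

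\textbf{Step 2: the left-hand side.} By Definition~\ref{def:standard}, the restriction of $\kappa^{\std}_{n,1}$ is $\delta_{E,H_T,2}(W_n)$. Since $W_n$ modulo $2$ is independent of the representatives, I will take $\mathcal C$ to be the lifts inside $G_F$ used in \eqref{eq:CR2}. It then suffices to prove
\[
W_n\equiv Z_T\pmod{2E(H_T)}.
\]

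\textbf{Step 3: reducing the derivative operator.} Each $q\in T$ is odd, so $j$ runs over $0,\dots,q$ and
\[
D_{q+1}(\sigma_q)\equiv\sum_{j\ \mathrm{odd}}\sigma_q^j=\sigma_q\sum_{k=0}^{(q-1)/2}\sigma_q^{2k}
\]
in $\F_2[G]$. The inner sum runs over $\langle\sigma_q^2\rangle$ exactly once. Put $\sigma=\prod_q\sigma_q$; the group $G$ is abelian, so $\sigma$ commutes with everything. Using \eqref{eq:CR2} gives
\[
W_n\equiv\sigma\sum_{c\in\mathcal C}\sum_{r\in G_{\mathrm{rel}}^2}cr\,y_T=\sigma\,\Tr_{H_T/F_T}y_T=\sigma R_T\pmod{2E(H_T)}.
\]

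\textbf{Step 4: comparing with $Z_T$.} In \eqref{eq:ZT}, every proper subset $S\subsetneq T$ carries the coefficient $\prod_{q\in T\setminus S}b_q$. This is even because $4\mid a_q(E)$ for $q\in\mathcal P_L$. Hence $Z_T\equiv R_T\pmod{2E(F_T)}$. Next, $\sigma|_{F_T}=\prod_q\tau_q$, and Lemma~\ref{lem:genus-point} shows that this acts on $Z_T$ by $(-1)^\nu$. Therefore
\[
\sigma R_T\equiv\sigma Z_T=\pm Z_T\equiv Z_T\pmod{2E(H_T)},
\]
using $-Z_T\equiv Z_T$ modulo $2$. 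Combined with Step 3, this gives $W_n\equiv Z_T\pmod{2E(H_T)}$.

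\textbf{Step 5: conclusion.} The two Kummer classes over $H_T$ coincide. Uniqueness of descent then gives \eqref{eq:mod2-equality}. The final assertion follows because the Kummer map $E^{d_T}(K)/2E^{d_T}(K)\hookrightarrow H^1(K,V)$ is injective.

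The main obstacle is the bookkeeping in Step 3: one must check that the mod-$2$ derivative really collapses to $\sigma\cdot\Tr_{H_T/F_T}$ with the representatives chosen compatibly with \eqref{eq:CR2}. This relies on the independence of the choice of representatives modulo $m=2$ established after \eqref{eq:Wn}. One must also justify that the stray translate $\sigma$ is harmless, which holds only because $Z_T$ is a $\pm$-eigenvector and we work modulo $2$.
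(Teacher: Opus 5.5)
Your proposal is correct and follows essentially the paper's proof. You choose $\mathcal C\subset G_F$, reduce $D_{q+1}(\sigma_q)$ modulo $2$ to $\sigma_q$ times the sum over $\langle\sigma_q^2\rangle$, and use \eqref{eq:CR2} to get $W_n\equiv(\prod_q\tau_q)R_T$; then $Z_T\equiv R_T$ because the $b_q$ are even, and you descend via the injectivity in Lemma~\ref{lem:no-torsion} after identifying the restriction of the twisted Kummer class with $\delta_{E,H_T,2}(Z_T)$. The only cosmetic difference is that the paper removes each $\tau_q$ directly using $\tau_qR_T=a_qR_{T\setminus\{q\}}-R_T\equiv R_T$, whereas you first pass to $Z_T$ and use its anti-invariance from Lemma~\ref{lem:genus-point}; both work, so the $\pm$-eigenvector property is sufficient but not the only reason the stray $\sigma$ is harmless.
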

\begin{proof}
In \eqref{eq:Wn}, choose $\mathcal C\subset G_F$. The odd powers of $\sigma_q$ form the coset $\sigma_q\langle\sigma_q^2\rangle$, so in the group ring modulo two,
\[
D_{q+1}(\sigma_q)\equiv\sigma_q\sum_{h\in\langle\sigma_q^2\rangle}h\pmod2.
\]
By \eqref{eq:CR2},
\begin{equation}\label{eq:W-mod2}
W_n\equiv\left(\prod_{q\in T}\tau_q\right)R_T
\equiv R_T\equiv Z_T\pmod{2E(H_T)}.
\end{equation}
For the intermediate congruences, \eqref{eq:R-norm} gives $\tau_qR_T=a_qR_{T\setminus\{q\}}-R_T\equiv R_T\pmod{2E(H_T)}$, allowing each $\tau_q$ to be removed. Every term of \eqref{eq:ZT} except $S=T$ contains an even factor $b_q$, so $Z_T\equiv R_T\pmod{2E(H_T)}$. The assertion $b_q\in2\Z$ uses $4\mid a_q$, ensured by $L\ge2$.

The isomorphism $\iota_d$ is defined over $H_T$, so Kummer functoriality identifies the restrictions of the two classes to $H_T$. Explicitly, we have a commutative diagram
\[
\begin{tikzcd}[column sep=large,row sep=large]
E^d(K)/2 \arrow[r,"\delta_{E^d,K,2}"] \arrow[d,"\iota_d"']
& H^1(K,V) \arrow[d,"\res"]\\
E(H_T)/2 \arrow[r,"\delta_{E,H_T,2}"']
& H^1(H_T,V).
\end{tikzcd}
\]
The right vertical map is injective by Lemma~\ref{lem:no-torsion}. Equality over $H_T$ therefore descends to $K$, proving \eqref{eq:mod2-equality}.
\end{proof}

Assume $T_E=0$. For a support $T\subset\mathcal P_L$, \eqref{eq:mod2-equality} also places the standard class in the Selmer group for $E/K$ with transverse conditions at $n=n_T$. Indeed, for $q\mid n$ and $\lambda\mid q$, the prescribed Frobenius on $E[4]$ has square one, so $E[4]\subset E(K_\lambda)$. The extension $K_\lambda/\Q_q$ is unramified quadratic, so every $\Q_q$-unit becomes a square there, giving $K_\lambda(\sqrt d)=K_\lambda(\sqrt q)$. The twisted Kummer cocycles $g\mapsto\bar\chi(g)P$, $P\in V$, give
\[
\begin{aligned}
\alpha_{K_\lambda}(E^d)&=H^1_{\trans}(K_\lambda,V)\\
&=\ker\bigl(H^1(K_\lambda,V)\to H^1(K_\lambda(\sqrt q),V)\bigr).
\end{aligned}
\]
At the original bad primes, the Heegner hypothesis makes $K/\Q$ split. The assumption $T_E=0$ and Lemma~\ref{lem:unramified-odd} handle the trivial or unramified twist by $d$. At odd $r\mid D$, the parameter $d$ is a square; if $D$ is even, it is also a square at $2$. At the remaining good places, including a good dyadic place, the local extension is split or unramified, and Lemma~\ref{lem:local-twists} preserves the usual Kummer condition. The infinite places of $K$ are complex. Thus the class satisfies the transverse Selmer conditions at $n$ and the usual conditions elsewhere; cf.\ \cite[\S6]{GrossKolyvagin}.

\section{The index formula and nonvanishing}\label{sec:index}
\subsection{Normalization of modular parametrizations}\label{sec:normalization}
\paragraph{The optimal quotient and its differential.}
The newform $f_E$ determines an optimal elliptic quotient
\[
p_{\mathrm{opt}}:J_0(N)\twoheadrightarrow E_{\mathrm{opt}}
\]
over $\Q$, with connected kernel, in the isogeny class of $E$. Composing it with the Abel--Jacobi map $P\mapsto[P]-[\infty]$ gives
\[
\pi:X_0(N)\longrightarrow E_{\mathrm{opt}},\qquad
\pi(P)=p_{\mathrm{opt}}([P]-[\infty]),\qquad \pi(\infty)=O.
\]
For a minimal N\'eron differential $\omega_{E_{\mathrm{opt}}}$, the optimal Manin constant theorem \cite[Theorem 1.1]{CLN} gives
\[
\pi^*\omega_{E_{\mathrm{opt}}}
=c_\pi f_E(q)\frac{dq}{q},\qquad c_\pi=\pm1.
\]
Here $q=e^{2\pi i\tau}$ and $f_E(q)=\sum_{m\ge1}a_mq^m$, so $f_E(q)\,dq/q=2\pi i f_E(\tau)\,d\tau$, as in our definition of $c_\varphi$.

\paragraph{Normalization on the fixed target curve.}
Recall the divisibility exponent $b_\varphi$ from \eqref{eq:bphi}.
\begin{proposition}\label{prop:manin-normalization}
Suppose that $E[2]$ is irreducible as a $G_\Q$-module. For every fixed parametrization $\varphi:X_0(N)\to E$ with $\varphi(\infty)=O$, there is a rational isogeny $\alpha_0:E_{\mathrm{opt}}\to E$ of odd degree such that, on setting $\varphi_0=\alpha_0\circ\pi$,
\begin{equation}\label{eq:manin-normalization}
\varphi=[2^{b_\varphi}]\circ\varphi_0,\qquad
v_2(c_{\varphi_0})=0,\qquad b_\varphi=v_2(c_\varphi).
\end{equation}
In particular, $\varphi$ is $2$-indivisible if and only if $c_\varphi$ is a $2$-adic unit.
\end{proposition}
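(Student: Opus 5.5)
We factor $\varphi$ through the optimal quotient, split off the odd part of the resulting isogeny, and show that the remaining $2$-power part is multiplication by a power of $2$. By the universal property of the Albanese, $\varphi$ factors uniquely as $\varphi=\alpha\circ\pi$ with $\alpha:E_{\mathrm{opt}}\to E$ a rational isogeny. Concretely, $\varphi$ induces $J_0(N)\to E$; it kills the connected kernel of $p_{\mathrm{opt}}$ because that kernel is the abelian subvariety complementary to the $f_E$-isotypic part, and $\varphi(\infty)=O$ removes any translation.

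\emph{Odd/even splitting.} Factor $\ker\alpha=A_{\mathrm{odd}}\times A_2$ into its odd part and $2$-primary part. Both are Galois-stable, so $\alpha=\alpha_0\circ\beta$, where $\beta:E_{\mathrm{opt}}\to E'$ has kernel $A_2$ and $\alpha_0$ has odd degree. It is cleaner to peel off in the other order, so that the target of the odd isogeny is $E$. The key input is irreducibility of $E[2]$. A finite Galois-stable subgroup of $2$-power order in $E_{\mathrm{opt}}$, whose $E[2]$ is isomorphic to $E[2]$ via the odd-degree isogeny and is therefore irreducible, must be of the form $E_{\mathrm{opt}}[2^b]$. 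Indeed, if $A_2$ is cyclic-free up to $[2^b]$, that is, $A_2\supseteq E_{\mathrm{opt}}[2^b]$ maximal, then $A_2/E_{\mathrm{opt}}[2^b]$ is a Galois-stable subgroup whose $2$-torsion is a proper nonzero submodule of a $2$-torsion module unless it is trivial, which contradicts irreducibility. Hence the $2$-primary part of $\alpha$ is $[2^b]$ composed with an isomorphism, and
\[
\varphi=[2^b]\circ\alpha_0\circ\pi=[2^b]\circ\varphi_0,
\]
with $\alpha_0$ of odd degree.

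\emph{Manin factors.} The constant $c_{\varphi_0}$ satisfies $\alpha_0^*\omega_E=u\,\omega_{E_{\mathrm{opt}}}$ with $u\in\Z$ dividing $\deg\alpha_0$; this holds because the dual isogeny composes to $[\deg\alpha_0]$ and pullbacks of Néron differentials are integral by the Néron mapping property. Hence $u$ is odd. Combined with $c_\pi=\pm1$ from \cite[Theorem 1.1]{CLN}, this gives $v_2(c_{\varphi_0})=0$. Since $[2^b]^*\omega_E=2^b\omega_E$, we get $c_\varphi=2^bc_{\varphi_0}$, so $v_2(c_\varphi)=b$.

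\emph{Maximality.} It remains to show $b=b_\varphi$. If $\varphi=[2^{b+1}]\circ\psi$, then factoring $\psi=\alpha'\circ\pi$ and using uniqueness of the Albanese factorization gives $\alpha_0=[2]\circ\alpha''$ for some isogeny $\alpha''$. This forces $4\mid\deg\alpha_0$, a contradiction. Conversely, the factorization above exhibits $b\le b_\varphi$. The main obstacle I expect is the integrality step $u\in\Z$ with $u\mid\deg\alpha_0$, which requires care at $2$ with Néron models. As a fallback, one can use $\hat\alpha_0^*\alpha_0^*=[\deg\alpha_0]^*$ together with integrality of both pullbacks to conclude $u\cdot u'=\deg\alpha_0$ with $u,u'\in\Z$.
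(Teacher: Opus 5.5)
Your outline is the paper's: factor $\varphi=\alpha\circ\pi$, use irreducibility to strip a factor $[2^b]$ from $\alpha$, get $uu'=\deg\alpha_0$ from the N\'eron models, and finish with $c_\pi=\pm1$ and maximality. The problem is the one step that actually uses the hypothesis, which you justify circularly.

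\textbf{The gap.} You apply the fact ``a Galois-stable finite $2$-power subgroup of $C$, with $C[2]$ irreducible, equals $C[2^b]$'' to $A_2\subset E_{\mathrm{opt}}$. That requires $E_{\mathrm{opt}}[2]$ to be irreducible. You justify this by an odd-degree isogeny between $E_{\mathrm{opt}}$ and $E$, but no such isogeny is available yet.
\begin{itemize}
\item In the factorization $\alpha=\alpha_{\mathrm{odd}}\circ\beta$ with $\ker\beta=A_2$, the odd isogeny runs $E_{\mathrm{opt}}/A_2\to E$. It only shows that the $2$-torsion of $E_{\mathrm{opt}}/A_2$ is irreducible.
\item In the other order, $\alpha=\beta'\circ\alpha'_{\mathrm{odd}}$ with $\alpha'_{\mathrm{odd}}:E_{\mathrm{opt}}\to E''$, the odd isogeny relates $E_{\mathrm{opt}}[2]$ to $E''[2]$. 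Again this says nothing about $E[2]$.
\end{itemize}
Knowing that $E_{\mathrm{opt}}$ is odd-isogenous to $E$ is essentially the content of the proposition, so it cannot be assumed here.

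\textbf{The repair.} Apply your subgroup lemma to the kernel of the \emph{dual} of the $2$-power part. That kernel lives on a curve whose $2$-torsion is known to be irreducible. In the second order:
\begin{itemize}
\item $\ker\widehat{\beta'}\subset E$ is Galois-stable of $2$-power order, so the hypothesis gives $\ker\widehat{\beta'}=E[2^b]$.
\item Hence $\widehat{\beta'}=\iota\circ[2^b]_E$ for a rational isomorphism $\iota:E\to E''$, and dualizing gives $\beta'=[2^b]_E\circ\iota^{-1}$.
\item Therefore $\alpha=[2^b]_E\circ\alpha_0$, where $\alpha_0=\iota^{-1}\circ\alpha'_{\mathrm{odd}}$ has odd degree.
\end{itemize}
The paper does the same thing in a different order. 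It takes $b$ maximal with $\alpha=[2^b]\circ\alpha_0$ and proves $b=b_\varphi$ first. If $\deg\alpha_0$ were even, it considers $\ker\widehat\alpha_0\cap E[2]$ inside $E$, where irreducibility is the hypothesis, and extracts a further factor $[2]$, contradicting maximality.

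\textbf{The rest.} With this correction, your other steps are fine:
\begin{itemize}
\item the N\'eron-model integrality giving $uu'=\deg\alpha_0$;
\item the maximality argument via $4\mid\deg([2]\circ\alpha'')$;
\item your justification of $h(\ker p_{\mathrm{opt}})=0$ via multiplicity one, which is an acceptable alternative to the paper's tangent-space argument.
\end{itemize}
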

\begin{proof}
Since $\varphi(\infty)=O$, the universal property of the Jacobian gives a homomorphism $h:J_0(N)\to E$ over $\Q$. Put $B=\ker p_{\mathrm{opt}}$. The pullbacks $h^*\omega_E$ and $p_{\mathrm{opt}}^*\omega_{E_{\mathrm{opt}}}$ lie in the same $f_E$-line, so $dh$ vanishes on $\operatorname{Lie}B$. In characteristic zero the image $h(B)$ is therefore zero-dimensional; connectedness of $B$ gives $h(B)=0$. Factoring through $J_0(N)/B$ yields a unique rational isogeny $\alpha$ such that
\[
\varphi=\alpha\circ\pi,\qquad \alpha:E_{\mathrm{opt}}\to E
\]
see also \cite[proof of Lemma 6.5]{CNS}. Choose the largest $b$ for which $\alpha=[2^b]\circ\alpha_0$; it exists since $4^b$ divides $\deg\alpha$. If $\varphi=[2^k]\circ\psi$ as in \eqref{eq:bphi}, then $\psi^*\omega_E=2^{-k}\varphi^*\omega_E$ lies in the same $f_E$-line. Thus $\psi=\beta\circ\pi$, and surjectivity of $\pi$ gives $\alpha=[2^k]\circ\beta$. Conversely, every such factorization of $\alpha$ gives one of $\varphi$. Hence $b=b_\varphi$.

If $\deg\alpha_0$ were even, the finite group $\ker\widehat\alpha_0(\overline\Q)$ would contain an element of order two. Thus $\ker\widehat\alpha_0\cap E[2]$ is a nonzero Galois-stable subspace, and residual irreducibility forces it to be all of $E[2]$. The quotient by $E[2]$ gives a rational isogeny $\gamma:E\to E_{\mathrm{opt}}$ with
\[
\widehat\alpha_0=\gamma\circ[2]_E,\qquad
\alpha_0=[2]_E\circ\widehat\gamma.
\]
Consequently $\alpha=[2^{b+1}]_E\circ\widehat\gamma$, contradicting maximality. Thus $\deg\alpha_0$ is odd.

For minimal differentials, write
\[
\alpha_0^*\omega_E=u\omega_{E_{\mathrm{opt}}},\qquad
\widehat\alpha_0^{\,*}\omega_{E_{\mathrm{opt}}}=u'\omega_E.
\]
Isogenies extend to N\'eron models, so $u,u'\in\Z$. Pulling back a differential along $\widehat\alpha_0\circ\alpha_0=[\deg\alpha_0]$ gives $uu'=\deg\alpha_0$; hence both are $2$-adic units. By \cite[Theorem 1.1]{CLN}, $c_\pi=\pm1$. Setting $\varphi_0=\alpha_0\circ\pi$ and using $[2^b]^*\omega_E=2^b\omega_E$ gives
\[
c_{\varphi_0}=u c_\pi\in\Z_2^\times,\qquad
c_\varphi=2^b c_{\varphi_0},\qquad v_2(c_\varphi)=b=b_\varphi.
\]
\end{proof}

\begin{remark}
The normalization follows from the $2$-adic unit property of $c_\pi$. This also follows from $E_{\mathrm{opt}}[|c_\pi|]\subseteq E_{\mathrm{opt}}(\Q)$ in \cite[Proposition 2.2]{CLN}: the odd-degree isogeny $\alpha_0$ identifies $E_{\mathrm{opt}}[2]$ with the irreducible module $E[2]$, ruling out $2\mid c_\pi$. When $4\nmid N$, the unit property follows from \cite[Theorem 1.2]{Cesnavicius}.
\end{remark}

The height calculations retain $c_\varphi$; Section~\ref{subsec:map-shift} applies the normalization to the orders of standard classes.

\subsection{From finite Selmer dimensions to low-rank comparison curves}
Assume \Heegner, \Surjectivity{L}, and \Exclusion, and choose $T\in\mathscr T_L$. We compute the index of $Q_T$ for the fixed parametrization $\varphi$, retaining its Manin factor $c_\varphi$.
Write $d=d_T$ and $\nu=|T|$, and label $E^d,E^{dD}$ as $A,B$ so that
\[
d_2(A)=0,\qquad d_2(B)=1.
\]
Residual irreducibility ensures that neither curve has rational $2$-torsion. The usual Kummer exact sequence
\begin{equation}\label{eq:finite-kummer}
0\longrightarrow C(\Q)/2C(\Q)\longrightarrow\Sel_2(C/\Q)
\longrightarrow\Sha(C/\Q)[2]\longrightarrow0
\end{equation}
and the alternating Cassels--Tate pairing for elliptic curves \cite[\S1, Corollary 7]{PS} give
\[
d_2(C)-s_2(C)\in2\Z_{\ge0}.
\]
Indeed, the divisible part of $\Sel_{2^\infty}(C/\Q)$ contributes $s_2(C)$ independent $2$-torsion directions, and the elementary divisors of the finite part occur in pairs under the Cassels--Tate pairing. Since $C(\Q)[2]=0$, the finite $2$-Selmer group is the $2$-torsion of the infinite Selmer group. The difference is therefore nonnegative and even; Appendix~\ref{subsec:finite-size} gives the group-theoretic argument and the finite-level identification. Thus $d_2(A)=0$ forces $s_2(A)=0$, while $d_2(B)=1$ and parity force $s_2(B)=1$.

Applying \LC, we obtain
\begin{equation}\label{eq:low-rank}
\begin{gathered}
L(A,1)\ne0,\qquad L(B,1)=0,\qquad L'(B,1)\ne0,\\
\rank A(\Q)=0,\qquad\rank B(\Q)=1,\qquad
\Sha(A/\Q),\Sha(B/\Q)\text{ are finite}.
\end{gathered}
\end{equation}
Apply \eqref{eq:finite-kummer} again. For $A$, the Selmer group is zero. For $B$, rank one and the absence of rational $2$-torsion imply that $B(\Q)/2B(\Q)$ already accounts for the entire one-dimensional Selmer group. Thus $\Sha[2]=0$ on both sides. Every nonzero element of $2$-power order has a multiple of order two, so
\begin{equation}\label{eq:sha-rational-vanish}
\Sha(A/\Q)[2^\infty]=\Sha(B/\Q)[2^\infty]=0.
\end{equation}
\subsection{Heights, periods, and the full Mordell--Weil lattice}\label{subsec:height}
\begin{center}
\captionof{table}{Normalization conventions in the leading-term comparison}\label{tab:normalizations}
\small
\begin{tabularx}{\textwidth}{@{}lX@{}}
\toprule
Object & Convention\\
\midrule
$H$ & Absolute canonical height associated with $2[O]$\\
$\widehat h_K$ & Relative height $[K:\Q]H=2H$\\
$\Reg_C$ & Determinant of $B_H$ on the full free Mordell--Weil lattice\\
$\Omega_C$ & $\int_{C(\R)}|\omega_C|$, including all real components\\
$\mathcal A_E$ & $\int_{E(\C)}|\omega_E\wedge\overline{\omega_E}|$, twice Euclidean area\\
$P_{\chi_T}$ & Full sum of character-weighted conjugates, without averaging\\
\bottomrule
\end{tabularx}
\end{center}
Let $h_x$ be the absolute logarithmic height of the $x$-coordinate and set
\begin{equation}\label{eq:height-convention}
H(P)=\lim_{j\to\infty}4^{-j}h_x([2^j]P),\qquad
B_H(P,Q)=\frac{H(P+Q)-H(P)-H(Q)}2.
\end{equation}
Then $B_H(P,P)=H(P)$. The function $H$ is the absolute canonical height associated with $2[O]$; if $\widehat h$ is associated with $[O]$, then $H=2\widehat h$. It is unchanged by extension of the field of definition or by an isomorphism of elliptic curves preserving the origin. The regulator $\Reg_C$ in \eqref{eq:BSD2-input} is the determinant of $B_H$ on the full lattice $C(\Q)/C(\Q)_{\tors}$, with value one in rank zero.
Equivalently, the bilinear pairing used here is
\[
B_H(P,Q)=\widehat h(P+Q)-\widehat h(P)-\widehat h(Q).
\]
Thus the rank-one regulator is $H(P)=2\widehat h(P)$ for a generator $P$ of the free Mordell--Weil group.

Define the total real period and the complex area by
\begin{equation}\label{eq:periods}
\Omega_C=\int_{C(\R)}|\omega_C|,\qquad
\mathcal A_E=\int_{E(\C)}|\omega_E\wedge\overline{\omega_E}|.
\end{equation}
The integral defining $\Omega_C$ runs over all real connected components, so their number is already included. If $E(\C)=\C/\Lambda$ and $\omega_E=dz$, then $\mathcal A_E$ is twice the Euclidean area of a fundamental parallelogram.

Artin formalism gives
\[
L(E/K,\chi_T,s)=L(E^d,s)L(E^{dD},s).
\]
The character $\chi_T$ has conductor $n$: the local characters of Section~\ref{sec:cm} are nontrivial on inertia at each $q\mid n$ and unramified at the remaining finite places. The equalities $(n,N)=(D,N)=1$ and the Heegner hypothesis verify the two local assumptions of \cite[Theorem 1.1]{CST}. In its notation, $\mu(N,D)=0$, and $D<-4$ gives $u=[\mathcal O_n^\times:\Z^\times]=1$. Applied to the full character trace in \eqref{eq:full-character}, that theorem gives
\[
L'(E/K,\chi_T,1)=
\frac{8\pi^2(f_E,f_E)_{\Gamma_0(N)}}{n\sqrt{|D|}\deg\varphi}
\widehat h_K(P_{\chi_T}),
\]
where $(f_E,f_E)_{\Gamma_0(N)}=\int_{\Gamma_0(N)\backslash\mathfrak H}|f_E(\tau)|^2\,du\,dv$ for $\tau=u+iv$. Substituting $\varphi^*\omega_E=2\pi i c_\varphi f_E(\tau)\,d\tau$ in the area integral gives
\[
\deg(\varphi)\mathcal A_E
=8\pi^2c_\varphi^2(f_E,f_E)_{\Gamma_0(N)}.
\]
The height $\widehat h_K$ in this convention is relative to $K$ and associated with $2[O]$, so $\widehat h_K=[K:\Q]H=2H$; see \cite[Chapter I, (4.3), (6.4)]{GZ} for these normalizations. For points defined over $H_T$, the same convention gives $\widehat h_{H_T}=[H_T:K]\widehat h_K$. Substitute these conversions and differentiate the Artin factorization at $s=1$. By \eqref{eq:low-rank}, the term containing $L(B,1)$ vanishes, yielding
\begin{equation}\label{eq:GZ}
L(A,1)L'(B,1)=\frac{2\mathcal A_E}{c_\varphi^2 n\sqrt{|D|}}H(P_{\chi_T}).
\end{equation}
Equations~\eqref{eq:low-rank} and \eqref{eq:GZ} imply that $Q_T$ is non-torsion. The rational rank decomposition for quadratic twists gives $\rank E^d(K)=1$. Define the lattice index of the distinguished point by
\begin{equation}\label{eq:mT}
m_T=[E^d(K)/E^d(K)_{\tors}:\Z\overline{Q_T}]\in\Z_{>0}.
\end{equation}
\begin{lemma}\label{lem:full-lattice}
Let $C/\Q$ be an elliptic curve and $K=\Q(\sqrt D)$ a quadratic field. Suppose $C(K)[2]=0$ and
$\{\rank C(\Q),\rank C^D(\Q)\}=\{0,1\}$, and let $B$ be the rank-one curve among $C,C^D$. The natural inclusion, composed with the twisting isomorphism when $B=C^D$, induces an isomorphism
\[
B(\Q)/B(\Q)_{\tors}\xrightarrow{\ \sim\ }C(K)/C(K)_{\tors}
\]
preserving the absolute canonical height $H$.
\end{lemma}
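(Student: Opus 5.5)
We will use the Galois action of $\Gal(K/\Q)=\{1,\sigma\}$ on $C(K)\otimes\Z[1/2]$, which splits into the $\pm1$ eigenspaces. Via $\iota_D$, the $+1$ part is $C(\Q)\otimes\Z[1/2]$ and the $-1$ part is $C^D(\Q)\otimes\Z[1/2]$. The rank hypothesis then gives $\rank C(K)=1$. The free quotient $C(K)/C(K)_{\tors}$ is spanned rationally by the image of $B(\Q)$, and $\sigma$ acts on it by a sign $\epsilon=\pm1$: we have $\epsilon=+1$ if $B=C$ and $\epsilon=-1$ if $B=C^D$.

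The map is well defined and injective. If $B=C$, this is the inclusion $C(\Q)\subset C(K)$, under which torsion goes to torsion. If $B=C^D$, the map is $P\mapsto\iota_D(P)$, which lands in $\{x\in C(K):\sigma x=-x\}$. In either case a point whose image is torsion in $C(K)$ is torsion in $B(\Q)$, because $\iota_D$ is an isomorphism over $K$.

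The main step is surjectivity. Take $x\in C(K)$ and let $y$ be a generator of the image of $B(\Q)$ in the free quotient. Since $\sigma x\equiv\epsilon x$ modulo torsion, the point $t=\sigma x-\epsilon x$ lies in $C(K)_{\tors}$, so it suffices to adjust $x$ by torsion until it is exactly $\epsilon$-invariant. The torsion is harmless here: $C(K)[2]=0$ means $C(K)_{\tors}$ has odd order, so multiplication by $2$ is invertible on it. One checks that $\sigma t=-\epsilon t$ from $\sigma^2=1$. Now set $x'=x+u$ with $u=-t/2$, the unique half of $-t$ in $C(K)_{\tors}$. Then
\[
\sigma x'-\epsilon x' = t+\sigma u-\epsilon u = t+(\epsilon t/2)\cdot\ldots
\]
and we need $\sigma u-\epsilon u=-t$, which holds since $\sigma u=-\epsilon u$.

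Hence $x'$ is an $\epsilon$-eigenvector in $C(K)$, so $x'$ lies in $C(\Q)$, or in $\iota_D(C^D(\Q))$ by Galois descent along $\iota_D$. This means the class of $x$ comes from $B(\Q)$. I expect the only real obstacle to be this parity/torsion adjustment; the hypothesis $C(K)[2]=0$ is exactly what makes it work.

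Finally, the map preserves $H$. The canonical height $H$ is absolute, so it is invariant under extension of the base field from $\Q$ to $K$ and under the isomorphism $\iota_D$ defined over $K$, as noted after \eqref{eq:height-convention}. The height is also insensitive to adding torsion. Therefore the induced isomorphism of rank-one lattices is an isometry for $B_H$.
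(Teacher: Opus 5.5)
Your proof follows the paper's route: split $C(K)$ into Galois eigenspaces, observe that $t=\sigma x-\epsilon x$ is torsion with $\sigma t=-\epsilon t$, and use the odd order of $C(K)_{\tors}$ to correct $x$ by a torsion point into an exact $\epsilon$-eigenvector. You then descend along $\iota_D$ and use invariance of $H$.

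However, the correction term you chose fails when $\epsilon=+1$. With $2u=-t$, uniqueness of halving in the odd-order torsion group gives $\sigma u=-\epsilon u$. Hence
\[
\sigma u-\epsilon u=-2\epsilon u=\epsilon t,
\qquad
\sigma x'-\epsilon x'=(1+\epsilon)t.
\]
This vanishes for $\epsilon=-1$. For $\epsilon=+1$, the case $B=C$, it equals $2t$, which is nonzero whenever $t\ne0$. So your claim that $\sigma u-\epsilon u=-t$ ``since $\sigma u=-\epsilon u$'' is false there, and your displayed line with the trailing dots never completes the computation.

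The fix is to take $u$ to be the unique torsion point with $2\epsilon u=t$, i.e.\ $u=\epsilon t/2$, as the paper does. Then $\sigma u=-\epsilon u$ still holds, and $\sigma(x+u)-\epsilon(x+u)=t-2\epsilon u=0$. With this change, the rest of your argument goes through: injectivity, descent of an exact $\epsilon$-eigenvector to $B(\Q)$, and preservation of $H$. You apply the correction to an arbitrary $x$ rather than to a chosen generator as the paper does, but that makes no real difference.
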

\begin{proof}
Let $c$ generate $\Gal(K/\Q)$. The rational decomposition
\[
C(K)\otimes\Q=(C(\Q)\otimes\Q)\oplus(C^D(\Q)\otimes\Q)
\]
shows that $C(K)/C(K)_{\tors}$ has rank one and that $c$ acts on it by a sign $s\in\{\pm1\}$ corresponding to $B$. Choose a generator represented by $P\in C(K)$ and put $t=cP-sP\in C(K)_{\tors}$. Then $ct=-st$.
The torsion group has odd order, so there is a unique torsion point $u$ with $2su=t$. It satisfies $cu=-su$, and hence
\[
c(P+u)-s(P+u)=t-2su=0.
\]
Thus the image of $B(\Q)$ contains a generator of the full free lattice of $C(K)$. The inclusion is injective modulo torsion, so it induces the claimed isomorphism. Absolute canonical heights are preserved by the twisting isomorphism and are unchanged by adding torsion.
\end{proof}

Apply Lemma~\ref{lem:full-lattice} with $C=E^d$. Its torsion hypothesis follows from $E^d[2]=E[2]$ and Lemma~\ref{lem:no-torsion}. The image of a generator of $B(\Q)/B(\Q)_{\tors}$ generates the full lattice of $E^d(K)$. Consequently,
\begin{equation}\label{eq:height-index}
H(Q_T)=m_T^2\Reg_B,\qquad
H(P_{\chi_T})=2^{2\nu}m_T^2\Reg_B.
\end{equation}
The second equality also uses $P_{\chi_T}=2^\nu\iota_d(Q_T)$ and the quadraticity of the canonical height.

\subsection{Cancellation of local factors}
We use the fundamental discriminant $D$ itself as the twist parameter. The differential multiplier depends on this representative; its comparison with the squarefree representative $D/4$ for even $D$ is given in Appendix~\ref{app:dyadic}. For $u=d,dD$, write the comparison of minimal differentials as
\[
\iota_u^*\omega_E=\sqrt u\,\eta_u\omega_{E^u},\qquad \eta_u\in\Q^\times.
\]
Define
\[
e_2(D)=v_2(\eta_D),\qquad
C_2(D)=v_2\!\left(\frac{c_2(E^D)}{c_2(E)}\right).
\]
For every support under consideration,
\begin{equation}\label{eq:dyadic-support}
v_2(\eta_d)=0,\qquad v_2(\eta_{dD})=e_2(D).
\end{equation}
There are two cases. If $D$ is even, each auxiliary prime satisfies $q\equiv7\pmod8$, so $d$ is a square in $\Q_2$ and the differential comparison applies directly. If $D$ is odd, the integers $d,D,dD$ are all congruent to one modulo four. Their squareclasses at $2$ are therefore $1$ or $5$, corresponding to trivial or unramified twists. Minimal N\'eron differentials remain minimal after unramified base change, and $\sqrt d,\sqrt D,\sqrt{dD}$ are units. All three differential multipliers thus have $2$-adic valuation zero. In particular $e_2(D)=0$, giving \eqref{eq:dyadic-support}.

Appendix~\ref{app:dyadic} establishes, in both the ordinary and supersingular cases, the local identity
\begin{equation}\label{eq:CminusE}
C_2(D)-e_2(D)=h_2(E,K).
\end{equation}

To compare real periods, define
\[
E(\C)^\pm=\{P:cP=\pm P\},\qquad
\Omega_E^\pm=\int_{E(\C)^\pm}|\omega_E|.
\]
The integrals again include all connected components of the respective eigensubgroups, and $\Omega_E^+=\Omega_E$. The period lattice of a real elliptic curve has one of the following forms, with $a,b>0$:
\[
\begin{array}{c|c|c|c|c}
\Lambda&\epsilon_\infty&\Omega_E^+&\Omega_E^-&\mathcal A_E\\ \hline
a\Z+ib\Z&1&2a&2b&2ab\\
a\Z+\frac{a+ib}{2}\Z&0&a&b&ab
\end{array}
\]
In the first row, the positive eigensubgroup consists of two circles of length $a$, and the negative eigensubgroup of two circles of length $b$. In the second row, the oblique period identifies the relevant pairs of lines, leaving one component in each eigensubgroup. Both rows give
\[
\Omega_E^+\Omega_E^-=2^{\epsilon_\infty}\mathcal A_E.
\]
If $u>0$, the map $\iota_u$ sends $E^u(\R)$ to $E(\C)^+$; if $u<0$, it sends it to $E(\C)^-$. Integrating the differential pullback formula over every component yields
\[
\Omega_{E^u}=\frac{\Omega_E^{\operatorname{sgn}(u)}}{\sqrt{|u|}\,|\eta_u|}.
\]
Since $d$ and $dD$ have opposite signs and $|d|=n$, multiplication gives
\begin{equation}\label{eq:twist-periods}
n\sqrt{|D|}\,\Omega_{E^d}\Omega_{E^{dD}}
=\frac{2^{\epsilon_\infty}\mathcal A_E}{|\eta_d\eta_{dD}|}.
\end{equation}
Combining \eqref{eq:GZ}, \eqref{eq:height-index}, and \eqref{eq:twist-periods}, we obtain
\begin{equation}\label{eq:JT}
\mathcal J_T:=\frac{L(A,1)L'(B,1)}{\Omega_A\Omega_B\Reg_B}
=\frac{2|\eta_d\eta_{dD}|}{2^{\epsilon_\infty}c_\varphi^2}\,2^{2\nu}m_T^2.
\end{equation}
Taking $2$-adic valuations gives
\begin{equation}\label{eq:height-side}
v_2(\mathcal J_T)=1-\epsilon_\infty-2v_2(c_\varphi)+2\nu+2v_2(m_T)+e_2(D).
\end{equation}

We now apply \BSD. By \eqref{eq:sha-rational-vanish} and the absence of rational $2$-torsion, neither $\Sha$ nor the rational torsion groups contribute powers of two. Only the Tamagawa factors remain:
\begin{center}
\captionof{table}{Local $2$-adic contributions of the terminal comparison curves}\label{tab:valuation}
\small
\begin{tabularx}{\textwidth}{@{}p{.2\textwidth}X p{.17\textwidth}@{}}
\toprule
Place & Local behavior of the two curves & Contribution to $v_2(\mathcal J_T)$\\ 
\midrule
Original bad $\ell\mid N$ & The permitted supports preserve the $2$-adic Tamagawa valuations of both curves & $2T_E$\\ 
New $q\in T$ & Both are ramified twists of a good curve, each with $c_q=\#\mathcal E(\F_q)[2]=2$ & $2$ for each $q$\\ 
Odd $r\mid D$ & One curve is good; the other is a ramified twist with $c_r=2^{t_r}$ & $t_r$\\ 
The prime $2$ & Additional Tamagawa correction relative to $2T_E$ & $C_2(D)$\\ 
\bottomrule
\end{tabularx}
\end{center}
The odd-prime entries follow from \cite[Theorem 4.1, the $I_0$ row of Table 4]{LocalTwist}. Over a local field of odd residue characteristic, write a good curve as $y^2=g(x)$ with separable reduction $\overline g$. A ramified quadratic twist has type $I_0^*$, and Tate's algorithm gives
\[
c_F(E^u)=1+\#\{\alpha\in k_F:\overline g(\alpha)=0\}
=\#\widetilde E(k_F)[2].
\]
The cited result includes residue characteristic three. At $q\in T$, residual Frobenius is a transposition and the fixed $2$-torsion space is one-dimensional, so each twist contributes $v_2(c_q)=1$. At odd $r\mid D$, the parameter $d$ is a unit and $E^d$ remains good. Twisting preserves the $2$-torsion representation, so the Tamagawa number of the other curve is $2^{t_r}$.

If $\Delta_E<0$ or $|T|$ is even, both $d$ and $D$ are local squares at every original bad prime. The two curves therefore contribute $2v_2(c_\ell(E))$ at each $\ell\mid N$, with total $2T_E$. The remaining case, $\Delta_E>0$ and odd $|T|$, is allowed only when $\intr(E,K)=0$; then $T_E=0$ and Lemma~\ref{lem:unramified-odd} preserve the odd Tamagawa numbers, giving the same total.

At $2$, if $D$ is odd, then $C_2(D)=0$. When $E$ is good, all relevant twists remain good. When $E$ is bad, $D$ is a square; either $d$ is a square as well, or the intrinsic-index-zero case just discussed applies. If $D$ is even, $E$ is good at $2$ and $d$ is a square, so the additional contribution is exactly $C_2(D)$. Hence
\begin{equation}\label{eq:BSD-side}
v_2(\mathcal J_T)=2T_E+2\nu+\sum_{r\mid D,\ r\text{ odd}}t_r+C_2(D).
\end{equation}

\begin{theorem}\label{thm:index}
Fix $L\ge2$ and assume \Heegner, \Surjectivity{L}, and \Exclusion. Let $T\in\mathscr T_L$, and put $n_T=\prod_{q\in T}q$ and $d_T=\prod_{q\in T}(-q)$. For a fixed nonconstant modular parametrization $\varphi:X_0(N)\to E$ over $\Q$ with $\varphi(\infty)=O$, let $Q_T\in E^{d_T}(K)$ be the point of Lemma~\ref{lem:genus-point}, and let
\[
m_T=[E^{d_T}(K)/E^{d_T}(K)_{\tors}:\Z\overline{Q_T}].
\]
Then
\begin{equation}\label{eq:exact-index}
v_2(m_T)=v_2(c_\varphi)+\intr(E,K).
\end{equation}
Consequently,
\[
\kappa^{\std}_{n_T,1}(\varphi)\ne0
\quad\Longleftrightarrow\quad
v_2(c_\varphi)+\intr(E,K)=0.
\]
For a $2$-indivisible parametrization, this is equivalent to $\intr(E,K)=0$.
\end{theorem}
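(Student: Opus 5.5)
The proof equates the two $2$-adic valuations of $\mathcal J_T$ already computed: the height side \eqref{eq:height-side} and the Tamagawa side \eqref{eq:BSD-side}. First we must justify that \BSD\ applies to both terminal curves and that \eqref{eq:BSD-side} really is $v_2$ of the BSD leading-term quotient. By \eqref{eq:low-rank}, $s_2(A)=0$ and $s_2(B)=1$, so \BSD\ holds for $A$ with $r=0$ and for $B$ with $r=1$. Multiplying the two instances of \eqref{eq:BSD2-input}, the left sides multiply to $v_2(\mathcal J_T)$; note that $\Reg_A=1$ and the factor $r!$ is $1$ in both cases. On the right side, \eqref{eq:sha-rational-vanish} removes the $\Sha$ contributions. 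Residual irreducibility gives $A(\Q)[2]=B(\Q)[2]=0$, so the torsion orders are odd and contribute nothing. What remains is $\sum_\ell v_2(c_\ell(A)c_\ell(B))$, which Table~\ref{tab:valuation} evaluates as \eqref{eq:BSD-side}.

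Setting \eqref{eq:height-side} equal to \eqref{eq:BSD-side}, the $2\nu$ terms cancel, which is the point of choosing $\mathcal P_L$ with one-dimensional fixed $2$-torsion. This leaves
\[
2v_2(m_T)=2v_2(c_\varphi)+2T_E+\epsilon_\infty-1+\sum_{r\mid D,\ r\text{ odd}}t_r+C_2(D)-e_2(D).
\]
Substituting the dyadic identity \eqref{eq:CminusE}, namely $C_2(D)-e_2(D)=h_2(E,K)$, the non-$c_\varphi$, non-$T_E$ part becomes $2\delta_K$ by \eqref{eq:index-definition}. Dividing by two gives $v_2(m_T)=v_2(c_\varphi)+T_E+\delta_K$, which is \eqref{eq:exact-index}. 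Integrality of $\delta_K$ was established in Section~\ref{subsec:parity}, so no parity issue arises. The step needing the most care is the bookkeeping of the support-dependent factor $|\eta_d\eta_{dD}|$: we must confirm that its $2$-adic valuation is exactly $e_2(D)$ in all cases, which is \eqref{eq:dyadic-support}. We also need the real-period identity with $2^{\epsilon_\infty}$ to be used consistently with the total-period convention of \BSD. These are already recorded, so the main work is verifying that the signs and directions of the factors $2$ and $2^{\epsilon_\infty}$ line up.

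For the consequence, Proposition~\ref{prop:mod2-comparison} shows that $\kappa^{\std}_{n_T,1}(\varphi)\ne0$ if and only if $Q_T\notin 2E^{d}(K)$. Since $E^d(K)[2]=0$ by Lemma~\ref{lem:no-torsion}, the torsion subgroup of $E^d(K)$ has odd order. Together with $\rank E^d(K)=1$, this gives $E^d(K)\cong\Z\oplus(\text{odd})$. Hence $Q_T\in 2E^d(K)$ exactly when its image in the free quotient is divisible by $2$, that is, exactly when $2\mid m_T$. So nonvanishing is equivalent to $v_2(m_T)=0$, i.e.\ to $v_2(c_\varphi)+\intr(E,K)=0$. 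Both summands are nonnegative ($c_\varphi$ is $2$-integral by Proposition~\ref{prop:manin-normalization}, and $\intr\ge0$), so this condition splits as $v_2(c_\varphi)=0$ and $\intr(E,K)=0$. For a $2$-indivisible $\varphi$, Proposition~\ref{prop:manin-normalization} gives $v_2(c_\varphi)=b_\varphi=0$, which yields the final sentence of the statement.
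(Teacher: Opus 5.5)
Your proposal is correct and follows the paper's proof: you equate \eqref{eq:height-side} with \eqref{eq:BSD-side}, cancel the $2\nu$ terms, and substitute \eqref{eq:CminusE} to obtain $2\delta_K$; you then use odd torsion and Proposition~\ref{prop:mod2-comparison} to turn $2\nmid m_T$ into nonvanishing of $\kappa^{\std}_{n_T,1}(\varphi)$. Your extra check that the two \BSD\ instances multiply to $v_2(\mathcal J_T)$ and reduce to the Tamagawa sum of Table~\ref{tab:valuation} only re-derives what the paper records before the theorem; one small citation slip is that $s_2(A)=0$ and $s_2(B)=1$ come from the Selmer-parity argument preceding \eqref{eq:low-rank}, not from \eqref{eq:low-rank} itself.
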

\begin{proof}
Comparing \eqref{eq:height-side} and \eqref{eq:BSD-side} gives
\[
2v_2(m_T)=2v_2(c_\varphi)+2T_E+\epsilon_\infty-1
+\sum_{r\mid D,\ r\text{ odd}}t_r+C_2(D)-e_2(D).
\]
Substituting \eqref{eq:CminusE} and dividing by two gives
\[
\begin{aligned}
v_2(m_T)
&=v_2(c_\varphi)+T_E+
\frac{\epsilon_\infty-1+h_2(E,K)+
\sum_{r\mid D,\ r\text{ odd}}t_r}{2}\\
&=v_2(c_\varphi)+\intr(E,K).
\end{aligned}
\]
This proves \eqref{eq:exact-index}.

In $E^d(K)/E^d(K)_{\tors}$, write $[Q_T]=m_T[P]$, choosing the sign of the generator appropriately. The torsion group has odd order, so multiplication by $2$ is invertible on it. Hence $Q_T\in2E^d(K)$ exactly when $m_T$ is even. The Kummer map $E^d(K)/2E^d(K)\hookrightarrow H^1(K,E^d[2])$ is injective, and Proposition~\ref{prop:mod2-comparison} identifies the Kummer image of $Q_T$ with $\kappa^{\std}_{n_T,1}(\varphi)$. This proves the final equivalence.
\end{proof}

The low-corank converse and the Gross--Zagier formula establish non-torsion; \BSD\ determines the index needed to detect $Q_T$ modulo $2$ in Proposition~\ref{prop:mod2-comparison}. For a fixed nonempty support, the conditions $2^M\mid q+1,a_q(E)$ bound the available coefficient depth, whereas non-torsion alone does not control the point's $2$-divisibility. The empty support has no such depth bound, and \LC\ alone gives strong nonvanishing by Proposition~\ref{prop:empty-support}.

\subsection{Orders of the standard classes}\label{subsec:map-shift}
\begin{proof}[Proof of Theorem~\ref{thm:main-finite}]
Proposition~\ref{prop:manin-normalization} gives $v_2(c_{\varphi_0})=0$ and $v_2(c_\varphi)=b_\varphi\ge0$. Theorem~\ref{thm:index} therefore proves \eqref{eq:residual-criterion} and the assertion for arbitrary $\varphi$, since $\intr(E,K)\ge0$.

Suppose now that $\intr(E,K)=0$. Then $\kappa^{\std}_{n_T,1}(\varphi_0)\ne0$. Coefficient compatibility \eqref{eq:coefficient-compatibility} and injectivity of the coefficient inclusion give
\[
2^{M-1}\kappa^{\std}_{n_T,M}(\varphi_0)
=i_{1,M,*}\kappa^{\std}_{n_T,1}(\varphi_0)\ne0.
\]
The cohomology group is killed by $2^M$, so this class has order exactly $2^M$.
\end{proof}

\begin{proposition}\label{prop:map-shift}
For a terminal support satisfying the hypotheses of Theorem~\ref{thm:main-finite} with $\intr(E,K)=0$, normalize $\varphi=[2^b]\circ\varphi_0$ as in Proposition~\ref{prop:manin-normalization}. Then \eqref{eq:main-order} holds, and for $b<M\le L$,
\begin{equation}\label{eq:shift-inclusion}
\kappa^{\std}_{n_T,M}(\varphi)
=i_{M-b,M,*}\kappa^{\std}_{n_T,M-b}(\varphi_0).
\end{equation}
\end{proposition}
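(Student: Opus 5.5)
The proof reduces to the case $b=0$ already handled in Theorem~\ref{thm:main-finite}, by comparing the Heegner data of $\varphi=[2^b]\circ\varphi_0$ with those of $\varphi_0$.

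\emph{Step 1: points and classes scale.} Since $\varphi=[2^b]\circ\varphi_0$ and the CM points $x_S$ do not depend on the parametrization, we have $y_S(\varphi)=2^b y_S(\varphi_0)$ for every $S\subseteq T$. Hence $W_n(\varphi)=2^bW_n(\varphi_0)$, using the same coset representatives and derivative operators.

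The mod-$2^M$ invariance of $D_ny_T$ under $G_{\mathrm{rel}}$ requires only $2^M\mid q+1,a_q(E)$. This holds for all $M\le L$ because $T\subset\mathcal P_L$. Therefore $W_n(\varphi_0)$ gives $G$-invariant Kummer classes at every depth $M\le L$, and so does $W_n(\varphi)$.

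\emph{Step 2: Kummer functoriality.} For any point $W$ whose image in $E(H_T)/2^ME(H_T)$ is $G$-invariant, the Kummer map $\delta_{2^M}$ satisfies
\[
\delta_{2^M}(2^bW)=i_{M-b,M,*}\,\delta_{2^{M-b}}(W)\qquad(b<M).
\]
To see this, take $2^MZ=2^bW$ with $Z=2^{-(M-b)}$-division point of $W$. The cocycle $g\mapsto gZ-Z$ is exactly the cocycle defining $\delta_{2^{M-b}}(W)$, now viewed in $E[2^M]$.

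For $b\ge M$, the class $\delta_{2^M}(2^bW)$ vanishes.

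These identities hold over $H_T$. The restriction isomorphism \eqref{eq:unique-descent} commutes with $i_{M-b,M,*}$. Uniqueness of descent then gives \eqref{eq:shift-inclusion}, and shows $\kappa^{\std}_{n_T,M}(\varphi)=0$ when $M\le b$.

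\emph{Step 3: orders.} For $b<M\le L$, the map $i_{M-b,M,*}$ is injective, as noted after \eqref{eq:coefficient-compatibility}. Theorem~\ref{thm:main-finite} gives $\ord\kappa^{\std}_{n_T,M-b}(\varphi_0)=2^{M-b}$, since $1\le M-b\le L$. Hence
\[
\ord\kappa^{\std}_{n_T,M}(\varphi)=2^{M-b}=2^{\max\{M-b,0\}}.
\]
For $M\le b$, the class is zero and has order $1=2^0$. This proves \eqref{eq:main-order}.

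\emph{Main obstacle.} The delicate point is the well-definedness in Step 2. The class $\kappa^{\std}_{n_T,M-b}(\varphi_0)$ must be defined at depth $M-b$, and the choice of division point must be compatible with the $G$-invariance modulo $2^{M-b}$ rather than modulo $2^M$. This holds because $W_n(\varphi_0)$ is invariant modulo $2^{M'}$ for every $M'\le L$.

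Once that is settled, the remaining ingredient is the absence of torsion in $E(H_T)[2^\infty]$ from Lemma~\ref{lem:no-torsion}. It makes both the descent and the coefficient inclusions injective.
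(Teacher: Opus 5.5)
Your proof is correct and follows essentially the paper's argument. The paper first writes $\kappa^{\std}_{n_T,M}(\varphi)=2^b\kappa^{\std}_{n_T,M}(\varphi_0)$ by linearity, reads off the orders from Theorem~\ref{thm:main-finite}, and then applies coefficient compatibility \eqref{eq:coefficient-compatibility}; you instead fold linearity and coefficient compatibility into one Kummer-functoriality step over $H_T$ and obtain the orders from injectivity of $i_{M-b,M,*}$, which uses the same ingredients in a different order.
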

\begin{proof}
Linearity of CM points, derivatives, and Kummer maps gives
$\kappa^{\std}_{n_T,M}(\varphi)=2^b\kappa^{\std}_{n_T,M}(\varphi_0)$.
The full-order assertion of Theorem~\ref{thm:main-finite} gives the claimed orders and vanishing. Coefficient compatibility gives \eqref{eq:shift-inclusion}. Together with Proposition~\ref{prop:manin-normalization}, this also proves Corollary~\ref{cor:main-shift}.
\end{proof}

\section{Strong nonvanishing}\label{sec:strong}
\subsection{Coefficient depth and additional precision}
Write
\[
\mathcal H=H^1(K,E[2^\infty]).
\]
The coefficient sequence
\[
0\longrightarrow E[2^M]\longrightarrow E[2^\infty]
\xrightarrow{\,2^M\,}E[2^\infty]\longrightarrow0
\]
and $E(K)[2^\infty]=0$ identify $H^1(K,E[2^M])$ with $\mathcal H[2^M]$. These identifications respect coefficient inclusions, placing the standard classes at all levels in a common group. Let $k_0$ be the fixed additional precision used to construct classes in \cite[printed pp.~10--11]{KolyvaginPreprint}. It controls $2$-power torsion in the compositum of the ring class fields. That compositum is abelian over $K$, so the argument of Lemma~\ref{lem:no-torsion} makes its $2$-power torsion trivial as well. Thus $k_0=0$, and the images $\tau'_{\lambda,n}$ used in the original formulation identify with the classes themselves. Appendix~\ref{app:kolyvagin-structure} determines the uniform losses in signed local coordinates and Chebotarev detection. For $r\ge0$, define
\begin{equation}\label{eq:VMk}
\mathcal V^r_{M,k}
=\left\langle\kappa^{\std}_{n,M}(\varphi):
n\in\mathcal N_{M+k}(E,K),\ \nu(n)=r\right\rangle
\subseteq\mathcal H[2^M].
\end{equation}

\begin{definition}\label{def:strong}
The standard system satisfies \emph{strong nonvanishing} if there is a fixed $r\ge0$ such that
\begin{equation}\label{eq:strong}
\forall k\ge k_0\quad\exists M\ge1:\quad\mathcal V^r_{M,k}\ne0.
\end{equation}
The least such $r$ is denoted by $\fstr$.
\end{definition}
This is the formulation of \cite[printed p.~11, equation (2)]{KolyvaginPreprint}. The support size $r$ is fixed before $k$ varies. The coefficient depth $M$ and conductor $n$ may depend on $k$, which imposes the stronger congruences $2^{M+k}\mid q+1,a_q(E)$.

Kolyvagin's prime convention requires $\min\{v_2(q+1),v_2(a_q(E))\}>1$ \cite[printed p.~3]{KolyvaginPreprint}. Our set $\mathcal N_1$ is slightly larger, but for $k\ge1$ the condition $n\in\mathcal N_{M+k}$ already imposes this inequality. Since a witness at a larger $k$ also works at a smaller $k$, the two conventions give the same admissible fixed support sizes and the same minimum $\fstr$.

The choice of cyclic generators does not affect this minimum. If $\sigma_q$ is replaced by $\sigma_q^u$, with $(u,q+1)=1$, reindexing the derivative sum gives
\[
D_{q+1}(\sigma_q^u)\equiv u^{-1}D_{q+1}(\sigma_q)
\pmod{2^M}
\qquad\bigl(2^M\mid q+1\bigr).
\]
Here $u$ is odd, so this multiplies the descended class by a unit. Independence of the trace representatives was established in Section~\ref{subsec:mod2}. These choices thus preserve both nonvanishing and the strong-nonvanishing invariant.

\begin{proposition}\label{prop:strong-scaling}
Suppose $\varphi=[2^b]\circ\psi$ with $b\ge0$. The standard systems for $\varphi$ and $\psi$ satisfy strong nonvanishing simultaneously. When they do,
\[
\fstr(\varphi)=\fstr(\psi).
\]
\end{proposition}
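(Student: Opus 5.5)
The plan is to compare the two standard systems level by level through linearity in the parametrization. Heegner points, traces, derivative operators, coset sums and Kummer maps are all additive. Moreover, $[2^b]$ commutes with the Galois action and with $\varphi(\infty)=O$. Hence for every $M\ge1$ and every $n\in\mathcal N_M(E,K)$ we get
\[
\kappa^{\std}_{n,M}(\varphi)=2^b\,\kappa^{\std}_{n,M}(\psi)\quad\text{in }\mathcal H[2^M],
\]
exactly as in the proof of Proposition~\ref{prop:map-shift}, but for arbitrary admissible $n$. Using the identification $H^1(K,E[2^M])\simeq\mathcal H[2^M]$ and coefficient compatibility \eqref{eq:coefficient-compatibility}, the class at level $M$ for $\psi$ maps under $[2^{M-j}]$ to the class at level $j$. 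Combined with the injective inclusions $i_{j,M,*}$ (Lemma~\ref{lem:no-torsion}), this gives
\[
2^b\kappa^{\std}_{n,M}(\psi)=i_{M-b,M,*}\kappa^{\std}_{n,M-b}(\psi)\qquad(M>b),
\]
and $2^b\kappa^{\std}_{n,M}(\psi)=0$ for $M\le b$.

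For the direction from $\varphi$ to $\psi$: a nonzero $\kappa^{\std}_{n,M}(\varphi)$ with $\nu(n)=r$ and $n\in\mathcal N_{M+k}$ forces $\kappa^{\std}_{n,M}(\psi)\ne0$ with the same $n$, $M$ and $k$. Thus $\mathcal V^r_{M,k}(\varphi)\ne0$ implies $\mathcal V^r_{M,k}(\psi)\ne0$, so strong nonvanishing passes from $\varphi$ to $\psi$ and $\fstr(\psi)\le\fstr(\varphi)$.

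For the converse, suppose $\mathcal V^r_{M,k}(\psi)\ne0$ for all $k\ge k_0$, with $M=M(k)$. Given $k$, apply the hypothesis at $k+b$. This yields $M'\ge1$ and $n\in\mathcal N_{M'+k+b}$ with $\nu(n)=r$ and $\kappa^{\std}_{n,M'}(\psi)\ne0$. Put $M=M'+b$. Then $n\in\mathcal N_{M+k}$, so the class $\kappa^{\std}_{n,M}$ is defined. By the displayed identity,
\[
\kappa^{\std}_{n,M}(\varphi)=i_{M',M,*}\kappa^{\std}_{n,M'}(\psi)\ne0,
\]
since $i_{M',M,*}$ is injective. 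Here we use that the standard class at level $M'$ is the image of the level-$M$ class under $r_{M,M'}$; this is legitimate because $n$ is admissible at the deeper level. Hence $\mathcal V^r_{M,k}(\varphi)\ne0$ for every $k\ge k_0$. This shows strong nonvanishing for $\varphi$ with the same $r$, and $\fstr(\varphi)\le\fstr(\psi)$. Together with the first direction, this proves the simultaneity claim and the equality of $\fstr$.

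The main point needing care is the identity $2^b\kappa_{n,M}(\psi)=i_{M-b,M,*}\kappa_{n,M-b}(\psi)$ for general admissible $n$, not only for terminal supports. It follows from \eqref{eq:coefficient-compatibility}, which the paper states for every $n\in\mathcal N_M(E,K)$. The only other thing to check is that the shift $k\mapsto k+b$ respects the quantifier $k\ge k_0$ in Definition~\ref{def:strong}. Since $k_0=0$ and a witness at a larger $k$ works at a smaller one, this causes no difficulty.
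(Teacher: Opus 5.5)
Your proposal is correct and follows the paper's proof. One direction is linearity, $\kappa^{\std}_{n,M}(\varphi)=2^b\kappa^{\std}_{n,M}(\psi)$. The converse shifts the additional precision by $b$ and the coefficient depth by $b$, then uses coefficient compatibility and injectivity of $i_{M',M'+b,*}$, which comes from $E(K)[2^\infty]=0$; your remark about $k_0=0$ is unnecessary, since $k+b\ge k_0$ holds automatically for $k\ge k_0$.
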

\begin{proof}
For each fixed $r$, compare the quantified condition in Definition~\ref{def:strong}. If it holds for $\varphi$, then for the same $M,k,n$,
$0\ne\kappa^{\std}_{n,M}(\varphi)=2^b\kappa^{\std}_{n,M}(\psi)$,
so it also holds for $\psi$.

Conversely, suppose $\psi$ satisfies strong nonvanishing at support size $r$. Given $k\ge k_0$, apply that condition with additional precision $k+b$ to obtain
\[
n\in\mathcal N_{M+k+b},\qquad \nu(n)=r,\qquad
\kappa^{\std}_{n,M}(\psi)\ne0.
\]
The class at coefficient depth $M+b$ is also defined. Coefficient compatibility and linearity in the parametrization give
\[
\kappa^{\std}_{n,M+b}(\varphi)
=2^b\kappa^{\std}_{n,M+b}(\psi)
=i_{M,M+b,*}\kappa^{\std}_{n,M}(\psi)\ne0,
\]
where injectivity uses $E(K)[2^\infty]=0$. Since $n\in\mathcal N_{(M+b)+k}$, this is exactly the condition required for $\varphi$. The two systems admit the same fixed support sizes, and hence the same minimum.
\end{proof}

\noindent
The same coefficient-lifting argument explains how strong nonvanishing accommodates a fixed loss of precision. Given $t\ge0$ and a target additional precision $k$, choose a nonzero class at depth $M$ with $n\in\mathcal N_{M+k+t}$ and the prescribed support size. Then
\[
2^t\kappa^{\std}_{n,M+t}
=i_{M,M+t,*}\kappa^{\std}_{n,M}\ne0,
\qquad n\in\mathcal N_{(M+t)+k}.
\]
Thus the classes witnessing strong nonvanishing can have arbitrarily large order at the same additional precision and support size.

\subsection{Bounded support at arbitrarily large depth}
\begin{proposition}\label{prop:strong-transfer}
Suppose there are fixed integers $R,B\ge0$ and an unbounded set $\mathscr L$ of depths such that, for every sufficiently large $L\in\mathscr L$, there is $n_L\in\mathcal N_L(E,K)$ with $\nu(n_L)\le R$ and
\[
\ord\kappa^{\std}_{n_L,M}(\varphi)=2^{M-b_L}
\quad(b_L<M\le L),\qquad 0\le b_L\le B.
\]
Then the standard system satisfies strong nonvanishing, and $\fstr\le R$.
\end{proposition}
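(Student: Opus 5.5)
The plan is to verify Definition~\ref{def:strong} directly for a single fixed support size $r\le R$, obtained by pigeonhole, and then to show the quantified condition holds for every $k\ge k_0=0$. Since $\nu(n_L)\in\{0,1,\dots,R\}$ for every sufficiently large $L\in\mathscr L$, and $\mathscr L$ is unbounded, some $r\le R$ satisfies $\nu(n_L)=r$ for an unbounded subset $\mathscr L_r\subseteq\mathscr L$. We fix this $r$ and show that $\mathcal V^r_{M,k}\ne0$ for a suitable $M=M(k)$.

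Given $k\ge0$, we take $M=B+1$, so that $M>b_L$ for every $L$ because $b_L\le B$. We then choose $L\in\mathscr L_r$ large enough that $L\ge M+k=B+1+k$; this is possible since $\mathscr L_r$ is unbounded. Next we check admissibility at the required depth. The definition \eqref{eq:NM} is monotone in the depth: if $2^L\mid q+1,a_q(E)$, then so does $2^{M+k}$ for $M+k\le L$. Hence $\mathcal N_L(E,K)\subseteq\mathcal N_{M+k}(E,K)$, and so $n_L\in\mathcal N_{M+k}(E,K)$ with $\nu(n_L)=r$. The hypothesis, applied with $b_L<M\le L$, gives
\[
\ord\kappa^{\std}_{n_L,M}(\varphi)=2^{M-b_L}\ge2,
\]
so this class is a nonzero generator appearing in $\mathcal V^r_{M,k}$. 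Since $k$ was arbitrary and $r$ was fixed before $k$ varied, condition \eqref{eq:strong} holds, and hence $\fstr\le r\le R$.

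The only point needing care is well-definedness. We must check that the class $\kappa^{\std}_{n_L,M}(\varphi)$ used inside $\mathcal V^r_{M,k}$, which is defined relative to the depth $M+k$, coincides with the class of the hypothesis, which is defined relative to the depth $L$. This holds because the construction of Definition~\ref{def:standard} for a given $n$ and coefficient depth $M$ depends only on $n$ and on the divisibility by $2^M$. Independence of the choice of representatives and of the generators $\sigma_q$, up to odd units that do not affect nonvanishing, was established in Section~\ref{subsec:mod2} and at the start of Section~\ref{sec:strong}. The other point is the reduction $k_0=0$, which is justified in Section~\ref{sec:strong}. I expect no real obstacle here: the step most likely to need a sentence of justification is this compatibility of the standard class across the different admissibility depths. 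The pigeonhole over support sizes is what lets a single $r$ serve for all $k$.
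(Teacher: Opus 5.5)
Your proposal is correct and follows the paper's proof essentially verbatim. You fix $M=B+1$, pigeonhole the support sizes $\nu(n_L)\in\{0,\ldots,R\}$ to get one $r$ occurring on an unbounded subset of depths, and for each $k$ pick $L\ge M+k$ in that subset, using $\mathcal N_L\subseteq\mathcal N_{M+k}$ and $\ord\kappa^{\std}_{n_L,M}(\varphi)=2^{M-b_L}\ge2$. Your extra well-definedness remark is harmless, since the class depends only on $n$ and $M$, and the depth $L$ or $M+k$ is merely a condition on $n$.
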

\begin{proof}
Fix $M_0=B+1$. For sufficiently large $L\in\mathscr L$, we have $b_L<M_0\le L$, so the order formula gives $\kappa^{\std}_{n_L,M_0}\ne0$. Partition these depths according to $\nu(n_L)\in\{0,\ldots,R\}$. If every part were bounded, their finite union would be bounded, a contradiction. Thus some $r\le R$ occurs on an unbounded subset $\mathscr L'$. Given $k\ge k_0$, choose $L\in\mathscr L'$ with $L\ge M_0+k$. Admissibility becomes stronger as the depth increases, so $\mathcal N_L\subseteq\mathcal N_{M_0+k}$. Therefore
\[
n_L\in\mathcal N_{M_0+k}(E,K),\quad\nu(n_L)=r,\quad
\kappa^{\std}_{n_L,M_0}\ne0.
\]
This proves \eqref{eq:strong}.
\end{proof}

Intrinsic index zero and odd Manin factor give $B=0$, $M_0=1$. A fixed factor $2^b$ in the parametrization gives $B=b$, $M_0=b+1$ by Proposition~\ref{prop:map-shift}.

\label{subsec:depth-hypothesis}
The unbounded-image hypothesis admits a finite test.

\begin{lemma}\label{lem:mod-eight}
A closed subgroup $H\subseteq\GL_2(\Z_2)$ equals $\GL_2(\Z_2)$ if and only if its reduction modulo $8$ is surjective. Consequently, full $2$-adic image, full mod-$8$ image, and full image at unbounded finite depths are equivalent.
\end{lemma}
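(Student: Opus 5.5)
The plan is the standard Frattini-type argument for pro-$2$ groups. Let $\Gamma(j)=\ker(\GL_2(\Z_2)\to\GL_2(\Z/2^j\Z))$ for $j\ge1$, and let $H\subseteq\GL_2(\Z_2)$ be closed with $H\bmod 8=\Gamma_3$. Because $H$ is closed, it suffices to show $H\Gamma(j)=\GL_2(\Z_2)$ for every $j$, or equivalently $H\cap\Gamma(j)$ surjects onto $\Gamma(j)/\Gamma(j+1)\simeq M_2(\F_2)$ for all $j\ge1$. Then successive approximation and completeness give $H=\GL_2(\Z_2)$. The hypothesis supplies exactly the layers $j=1,2$: surjectivity modulo $8$ means $H\cap\Gamma(1)$ maps onto $\Gamma(1)/\Gamma(3)$, so each $1+2B$ and each $1+4B$ lies in $H$ modulo the next layer.

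The inductive step goes from $j\ge2$ to $j+1$. Given $B\in M_2(\Z_2)$, choose $h\in H$ with $h\equiv1+2^{j-1}B\pmod{2^{j}}$ and $h\in\Gamma(j-1)$. This is possible by the induction hypothesis at layer $j-1$. I would write $h=1+2^{j-1}X$ with $X\equiv B\pmod 2$. Then
\[
h^2=1+2^jX+2^{2j-2}X^2\equiv1+2^jB\pmod{2^{j+1}}
\]
provided $2j-2\ge j+1$, that is, $j\ge3$. This is the same squaring identity already used in the proof of Lemma~\ref{lem:kernel-stable}. So $h^2\in H\cap\Gamma(j)$ hits the class of $B$ for all $j\ge3$, and the layers $j=1,2$ come from the mod-$8$ hypothesis.

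The main obstacle is that the squaring trick fails at $j=2$. Starting from layer $1$, we get $(1+2X)^2=1+4(X+X^2)$, and the map $X\mapsto X+X^2$ on $M_2(\F_2)$ is not surjective. This is precisely why mod-$4$ surjectivity is insufficient and the mod-$8$ hypothesis is needed: it hands us layer $2$ directly, so the induction starts at $j=3$ with layers $1,2$ already known. I would double-check that the induction at $j=3$ uses only layer $2$, via $h\equiv1+4X$, with $2\cdot2=4\ge j+1=4$, which holds.

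For the consequences, full $2$-adic image implies full image modulo $8$ and at every finite depth. Full image at some depth $L\ge3$ reduces to full image modulo $8$. Full image at unbounded depths gives some such $L\ge3$. Full image modulo $8$ gives full $2$-adic image by the first part, applied to $H=\rho_{E,2^\infty}(G_\Q)$, which is closed because it is the continuous image of a compact group. Finally, full $2$-adic image gives \Surjectivity{L} for every $L\ge2$ by reduction.
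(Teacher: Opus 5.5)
Your proposal is correct and follows the paper's proof: you take $h=1+2^{j-1}X\in H$ lifting a class of the previous layer, square it, and use $2j-2\ge j+1$ for $j\ge3$ to reach every class of $\Gamma(j)/\Gamma(j+1)$; the mod-$8$ hypothesis supplies the base case, and closedness of $H$ finishes the argument. Your opening phrase ``from $j\ge2$ to $j+1$'' is off by one relative to the computation you actually write, which derives layer $j$ from layer $j-1$ for $j\ge3$, but this affects only the indexing.
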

\begin{proof}
Only sufficiency needs proof. Suppose $m\ge3$ and $H$ is surjective modulo $2^m$. For any $B\in M_2(\F_2)$, choose an integral lift. Surjectivity allows us to take
\[
h=I+2^{m-1}B+2^mC\in H,\qquad C\in M_2(\Z_2).
\]
Since $2m-2\ge m+1$,
\[
h^2\equiv I+2^mB\pmod{2^{m+1}}.
\]
These squares give the entire kernel of $\GL_2(\Z/2^{m+1}\Z)\to\GL_2(\Z/2^m\Z)$. Together with the inductive surjectivity assumption, they give surjectivity at the next level. Induction from $m=3$ and closedness of $H$ yield $H=\GL_2(\Z_2)$. Surjectivity at unbounded finite depths implies surjectivity modulo $8$ by projection, proving the remaining equivalence.
\end{proof}

Under \Surjectivity{3}, Lemmas~\ref{lem:mod-eight} and \ref{lem:kernel-stable} identify the restriction kernel with $\F_2\xi_E$ at every depth $L\ge2$. Thus \Exclusion\ is independent of $L$. Under \Heegner\ and \Exclusion, Proposition~\ref{prop:general-terminal} supplies terminal supports with a uniform size bound at each such depth. If $\intr(E,K)=0$, Proposition~\ref{prop:terminal} gives the sharper fixed size $R$.

\begin{proof}[Proof of Theorem~\ref{thm:main-bound}]
Fix $M_0=b_\varphi+1$. Given $k\ge k_0$, choose $L\ge\max\{2,M_0+k\}$. By Lemma~\ref{lem:mod-eight}, \Surjectivity{L} holds, so Theorem~\ref{thm:main-descent} supplies $T_L\in\mathscr T_L$ of size $R$. Put $n_L=n_{T_L}$. Corollary~\ref{cor:main-shift} gives
\[
n_L\in\mathcal N_L\subseteq\mathcal N_{M_0+k},\qquad
\nu(n_L)=R,\qquad
\ord\kappa^{\std}_{n_L,M_0}(\varphi)=2.
\]
This proves strong nonvanishing with fixed support size $R$ and fixed coefficient depth $M_0$, hence $\fstr\le R$. Proposition~\ref{prop:strong-scaling} identifies the minimum support sizes before and after normalization.
\end{proof}

Proposition~\ref{prop:strong-transfer} treats the more general case of uniformly bounded support sizes.

\begin{proposition}\label{prop:empty-support}
Assume \Heegner, residual $S_3$ image, and
$\{d_2(E),d_2(E^D)\}=\{0,1\}$. Then for every fixed nonconstant parametrization $\varphi$ the basic trace $y_K$ is non-torsion and $\fstr(\varphi)=0$.
\end{proposition}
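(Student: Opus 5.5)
The argument will follow the case $T=\varnothing$ of the terminal analysis, with no level-raising primes. Here $d_T=1$ and $n_T=1$, so the pair $\{E,E^D\}$ is already terminal. First I will run the finite-Selmer argument of Section~4.2 with $A,B$ chosen among $E,E^D$ so that $d_2(A)=0$ and $d_2(B)=1$. Residual $S_3$ image rules out rational $2$-torsion. The Cassels--Tate parity $d_2(C)-s_2(C)\in2\Z_{\ge0}$ then gives $s_2(A)=0$ and $s_2(B)=1$.

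Applying \LC, I get $L(A,1)\ne0$, $L(B,1)=0$ and $L'(B,1)\ne0$. The Artin factorization $L(E/K,s)=L(E,s)L(E^D,s)$ therefore has a simple zero at $s=1$ with nonzero derivative $L(A,1)L'(B,1)$. The Gross--Zagier formula (equation \eqref{eq:GZ} with $T=\varnothing$ and $P_{\chi_\varnothing}=y_K$) then gives $H(y_K)\ne0$, so $y_K$ is non-torsion. No hypothesis on $L$ or on admissible primes is needed for this step, since the empty support imposes no congruence conditions.

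For strong nonvanishing with $r=0$, I use that $n=1\in\mathcal N_{M+k}(E,K)$ for every $M$ and $k$, with $\nu(1)=0$. The class $\kappa^{\std}_{1,M}(\varphi)$ is the Kummer image of $y_K$ in $H^1(K,E[2^M])$. By Lemma~\ref{lem:no-torsion}, $E(K)[2^\infty]=0$, so the Kummer map $E(K)/2^ME(K)\hookrightarrow H^1(K,E[2^M])$ is injective. Since $E(K)$ is finitely generated and $y_K$ is non-torsion, there is a largest $j$ with $y_K\in 2^jE(K)+E(K)_{\tors}$. The torsion subgroup has odd order, so this is the same as the exact $2$-divisibility of $y_K$ in $E(K)$. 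Put $M=j+1$. Then $y_K\notin 2^ME(K)$, and so $\kappa^{\std}_{1,M}(\varphi)\ne0$.

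This choice of $M$ is independent of $k$. Hence $\mathcal V^0_{M,k}\ne0$ for every $k\ge k_0$, which proves \eqref{eq:strong} with $r=0$. Since $\fstr\ge0$ trivially, $\fstr(\varphi)=0$.

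The only delicate point is the Gross--Zagier step. There I must make sure that the trivial-character case of \cite[Theorem 1.1]{CST} applies under \Heegner\ with $D<-4$, so that $u=1$ and no extra torsion or sign correction appears. Apart from that, the argument is short, and I expect nonvanishing of $H(y_K)$ to be the main content.
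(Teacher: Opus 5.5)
Your proposal is correct and follows the paper's proof essentially step for step. You use the Selmer/Cassels--Tate comparison to get coranks $0$ and $1$, \LC\ for the analytic ranks, and Artin factorization with Gross--Zagier for the trivial character to get $H(y_K)>0$; you then fix one level $M$ beyond the $2$-divisibility of $y_K$ and use $n=1\in\mathcal N_{M+k}$ for every $k$. One small aside: injectivity of $E(K)/2^ME(K)\to H^1(K,E[2^M])$ holds without any torsion hypothesis, so your appeal to Lemma~\ref{lem:no-torsion} at that step is unnecessary, though harmless.
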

\begin{proof}
The finite/infinite Selmer comparison and Cassels--Tate alternation used in Section~\ref{sec:index} identify the two coranks with $0$ and $1$. By \LC, the analytic ranks are $0$ and $1$. Hence Artin factorization has a simple zero at $s=1$, and the Gross--Zagier formula for the trivial ring class character gives $H(y_K)>0$. In the finitely generated group $E(K)$, a non-torsion point cannot lie in $2^M E(K)$ for all $M$. Fix a level $M$ at which its Kummer class is nonzero. Since $1\in\mathcal N_{M+k}$ for every $k$, this single class proves strong nonvanishing with empty support.
\end{proof}

\section{Strong nonvanishing and the structure of Selmer groups}\label{sec:selmer-bound}
We describe the Selmer groups associated with a strongly nonvanishing standard system. The descent and group-theoretic proofs are collected in Appendix~\ref{app:kolyvagin-structure}.

\subsection{The corank theorem}
Put
\[
S_K=\Sel_{2^\infty}(E/K),\qquad
s_K=\corank_{\Z_2}S_K,\qquad r_K=\rank E(K).
\]
Let $s^\pm$ be the dimensions of the two conjugation eigenspaces of the rational Tate space
$\mathscr V_K=(\varprojlim_M S_K[2^M])\otimes_{\Z_2}\Q_2$.
Restriction and corestriction identify these dimensions with the rational Selmer coranks:
\begin{equation}\label{eq:rational-signs}
s^+=s_2(E),\qquad s^-=s_2(E^D),\qquad s_K=s^++s^-.
\end{equation}

\begin{theorem}\label{thm:Kolyvagin}
Assume \Heegner, \Surjectivity{3}, and $T_E=0$. Suppose the standard system satisfies strong nonvanishing. If $f=\fstr$ and $\epsilon_f=(-1)^f(-w(E))$, then
\begin{equation}\label{eq:Kolyvagin-signed}
s^{\epsilon_f}=f+1,\qquad s^{-\epsilon_f}\le f.
\end{equation}
Consequently,
\begin{equation}\label{eq:Kolyvagin-corank}
f=\max\{s_2(E),s_2(E^D)\}-1,\qquad f+1\le s_K\le2f+1.
\end{equation}
\end{theorem}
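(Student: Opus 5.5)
We adapt Kolyvagin's arbitrary-prime descent (as in \cite[\S\S5--10]{KolyvaginPreprint} and \cite{GrossKolyvagin}) to $p=2$. Since $E(K)[2^\infty]=0$ by Lemma~\ref{lem:no-torsion}, the standard classes live in $\mathcal H[2^M]$ with no precision loss. Throughout, \Surjectivity{3} gives full $2$-adic image by Lemma~\ref{lem:mod-eight}, so the restriction kernel is $\F_2\xi_E$ at every depth by Lemma~\ref{lem:kernel-stable}.

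The proof has four steps.
\begin{enumerate}
\item \textbf{Signs and Selmer membership.} The descent of $W_n$ lies in the $\epsilon_n$-eigenspace for complex conjugation, where $\epsilon_n=(-1)^{\nu(n)}(-w(E))$. This follows from the standard relation $\tau y_n=\pm$ (a Galois conjugate of $y_n$) with sign $-w(E)$, together with $\tau D_q\equiv -D_q\sigma_q^{\ast}$ modulo the norm. Next, $\kappa^{\std}_{n,M}$ lies in the Selmer group relaxed at the primes dividing $n$, with the standard condition everywhere else; $T_E=0$ is used here. This is the local argument sketched after Proposition~\ref{prop:mod2-comparison}, and it uses Lemma~\ref{lem:unramified-odd} at the bad primes.
\item \textbf{Explicit reciprocity at $q\mid n$.} Establish Kolyvagin's relation: the finite-singular comparison map $\phi_q$ sends $\loc_q\kappa_{nq,M}$ to $\loc_q\kappa_{n,M}$, up to a unit, in the transverse and unramified coordinates. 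Each of these is cyclic of order $2^M$, because Frobenius at $q$ acts on $E[2^M]$ with eigenvalues $\pm1$ when $2^M\mid q+1$ and $2^M\mid a_q$.
\item \textbf{Upper bound on $s^{\epsilon_f}$ from a minimal nonzero class.} Take $n$ with $\nu(n)=f$ and $\kappa_{n,M}\ne0$ at large additional precision $k$. By minimality of $f$, every class of support $f-1$ vanishes at precision $k'$ slightly smaller than $k$; the losses are bounded uniformly and absorbed by the lifting remark in Section~\ref{sec:strong}. Reciprocity then forces $\kappa_{n,M}$ to be finite, that is unramified, at every $q\mid n$, so it lies in the true $\epsilon_f$-Selmer group. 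Pair it, via Poitou--Tate global duality (sum of local Tate pairings equal to zero), against Chebotarev primes chosen through Lemma~\ref{lem:detection}, adapted to $2^M$-coefficients via the full image. This shows that the $\epsilon_f$-part of $S_K$ has corank at most $f+1$, and that the opposite sign has corank at most $f$. The mechanism is the usual induction on support: each class $\kappa_{nq_i}$ kills one dimension of the dual Selmer group with localizations at new primes.
\item \textbf{Matching lower bounds.} Parity gives $s^{\epsilon_f}\ge f+1$. The signed parity $s^+-s^-$ is controlled by the root number: by \eqref{eq:rational-signs} and the $2$-parity theorem (\cite{KMR}, together with Kramer's formula \eqref{eq:kramer-parity}), the difference is odd, and the larger side is the one predicted by the sign of $-w(E)$. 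Combining this with minimality of $f$ — if both signs were at most $f$, the classes of support $f$ would be forced to vanish at high precision by the dual argument — yields $s^{\epsilon_f}=f+1$. Then \eqref{eq:Kolyvagin-corank} follows at once from \eqref{eq:Kolyvagin-signed} and $s_K=s^++s^-$.
\end{enumerate}

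The main obstacle is step 3 at $p=2$. The cyclicity of the local groups $H^1_{\fin}(K_\lambda,E[2^M])$ requires $E[2^M]$ to split as $\Z/2^M(1)\oplus\Z/2^M(-1)$ under Frobenius. For $p=2$ the eigenvalues $\pm1$ coincide modulo $2$, so the eigenspace decomposition degenerates at low precision. My approach is to work only with primes $q$ whose Frobenius is conjugate to $g_L$, for which $\Frob_q$ on $E[2^L]$ has the shape of $A_L$ with distinct $\pm1$ eigenlines over $\Z/2^L$. Any failure of exact splitting costs a bounded number of powers of $2$, independent of $k$, and these fixed losses are absorbed through the coefficient-lifting remark following Proposition~\ref{prop:strong-scaling}. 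This is where the ``uniform losses in signed local coordinates'' promised in Appendix~\ref{app:kolyvagin-structure} must be made explicit.
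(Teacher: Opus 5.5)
Your Steps 1--3 roughly match the paper. Lemma~\ref{lem:deep-derivative-local} gives the signs, the local conditions and the finite--transverse relation. The upper bounds come from a Selmer class of the opposite sign with zero localization at the primes of $n$, detected together with $\kappa_{n,M}$ at a fresh $q$, which contradicts reciprocity against $\kappa_{nq,M}$.

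Two corrections to these steps. First, the finite and transverse groups at $q$ are free of rank two over $\Z/2^M\Z$, not cyclic; only their conjugation eigenparts are cyclic, up to the bounded defect from the two integral normal forms of an involution on $T_2E$. Second, Lemma~\ref{lem:kernel-stable} concerns $E[2]$ coefficients. At level $2^M$ the paper needs Lemma~\ref{lem:deep-character-detection}, where the restriction kernel is only killed by $8$.

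The genuine gap is Step~4. Parity cannot give $s^{\epsilon_f}\ge f+1$. The root-number parity yields only $s^{\epsilon_f}\equiv f+1\pmod 2$, so together with $s^{\epsilon_f}\le f+1$ it leaves $s^{\epsilon_f}\in\{f+1,f-1,\dots\}$ open. It also does not say which eigenspace is larger; that depends on $f$, not only on $-w(E)$.

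Nor is there a ``dual argument'' showing that support-$f$ classes vanish when $s^{\epsilon_f}\le f$. The upper-bound mechanism needs a Selmer class of the \emph{opposite} sign vanishing at the conductor primes. Here the obstruction would have to come from the same sign, and $\kappa_{n,M}$ is itself an $\epsilon_f$-Selmer class with zero localization at the $f$ primes of $n$. Nothing forces those particular primes to detect $\Sel^{\epsilon_f}$, so small corank gives no contradiction unless you change the conductor.

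The missing idea is the paper's prime-replacement construction, which runs as follows.
\begin{enumerate}
\item Minimality of $f$, with one precision threshold $k_*$ covering all smaller supports, places every support-$f$ class in $\Sel_{2^M}(E/K)^{\epsilon_f}$ with zero localization at its conductor.
\item A Poitou--Tate Lagrangian count (Lemma~\ref{lem:relaxed-large-order}) gives elements $s$ of order at least $2^{M-18}$ in the relaxed groups $\mathcal R^{-\epsilon_f}_{\lambda,\delta,M}$.
\item Detect $s$ and $\kappa_{\lambda,M}$ at a fresh $q$ and apply reciprocity to $s$ and $\kappa_{\lambda q,M}$. This shows that some $\kappa_{\lambda q/p,M}$ has order within a uniform constant $d_f$ of that of $\kappa_{\lambda,M}$ (Lemma~\ref{lem:prime-replacement}).
\item Iterate $f$ times, starting from a conductor of maximal order $2^h$. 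This yields $\eta_1,\dots,\eta_{f+1}$ with $\loc_{q_j}\eta_i=0$ for $j<i$ and diagonal detecting coordinates of bounded loss.
\item The resulting triangular matrix gives quotients $(\Z/2^{h-(f+1)C_f}\Z)^{f+1}$ with $h$ unbounded, which forces $s^{\epsilon_f}\ge f+1$.
\end{enumerate}
Without an argument of this kind, neither \eqref{eq:Kolyvagin-signed} nor \eqref{eq:Kolyvagin-corank} follows.
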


\paragraph{Proof of the corank formula.}
Coefficient compatibility turns strong nonvanishing into classes of
unbounded order at each fixed additional precision. The uniform
character-detection lemma
(Lemma~\ref{lem:deep-character-detection}) and the adjacent-conductor
identity (Lemma~\ref{lem:deep-derivative-local}) then give
$s^{-\epsilon_f}\le f$ and $s^{\epsilon_f}\le f+1$ by global duality.
For the reverse inequality, at sufficiently large additional precision,
minimality of $f$ makes the support-$f$ classes usual Selmer classes with
zero localization at their conductor primes. Repeated prime replacement
(Lemma~\ref{lem:prime-replacement}) produces $f+1$ such classes with a
triangular matrix of detecting coordinates and a loss bounded solely
in terms of $f$. Their unbounded orders force
$s^{\epsilon_f}\ge f+1$. The detailed local, detection, and independence
arguments are in Appendix~\ref{app:kolyvagin-structure}.

Theorem~\ref{thm:Kolyvagin} gives the support--corank identification \eqref{eq:main-corank}. Under the hypotheses of Theorem~\ref{thm:main-bound}, it combines with $f\le R$ to give
\begin{equation}\label{eq:combined-bound}
\rank E(K)\le s_K\le2f+1\le2R+1=a_0+a_1.
\end{equation}
For comparison, the last numerical bound also follows directly from
$s_K=s_2(E)+s_2(E^D)\le d_2(E)+d_2(E^D)=a_0+a_1$.
The additional information in \eqref{eq:Kolyvagin-signed} is the identification of the principal corank with $f+1$.

\subsection{The Selmer group structure under intrinsic index zero}
Combining the corank identification with the standard abstract structure supplied by Cassels--Tate alternation (Proposition~\ref{prop:usual-structure}) gives the following description. The corank parameters are expressed in terms of $f$.

\begin{theorem}\label{thm:full-selmer-structure}
Assume the hypotheses of Theorem~\ref{thm:main-bound}. Let $f=\fstr\le R$, and let $C_{\rm pr}=E$ if $\epsilon_f=+1$ and $C_{\rm pr}=E^D$ if $\epsilon_f=-1$; write $C_{\rm op}$ for the other curve. There are an integer $h$ with $0\le h\le\lfloor f/2\rfloor$ and finite abelian $2$-groups $A_{\rm pr},A_{\rm op},A_K$ such that
\begin{align}
\Sel_{2^\infty}(C_{\rm pr}/\Q)
&\simeq(\Q_2/\Z_2)^{f+1}\oplus A_{\rm pr}^{\oplus2},\label{eq:principal-full-structure}\\
\Sel_{2^\infty}(C_{\rm op}/\Q)
&\simeq(\Q_2/\Z_2)^{f-2h}\oplus A_{\rm op}^{\oplus2},\label{eq:opposite-full-structure}\\
\Sel_{2^\infty}(E/K)
&\simeq(\Q_2/\Z_2)^{2f+1-2h}\oplus A_K^{\oplus2}.\label{eq:quadratic-full-structure}
\end{align}
For each of these curve--field pairs $(C,F)$, let $s_F$ be the displayed corank, $r_F=\rank C(F)$, and $A_F$ the corresponding finite group. Then, for every $m\ge1$,
\begin{align}
\Sha(C/F)[2^\infty]
&\simeq(\Q_2/\Z_2)^{s_F-r_F}\oplus A_F^{\oplus2},\label{eq:three-sha-structure}\\
\Sel_{2^m}(C/F)
&\simeq(\Z/2^m\Z)^{s_F}\oplus A_F[2^m]^{\oplus2}.\label{eq:three-finite-structure}
\end{align}
All isomorphisms are noncanonical. In particular $\Sha(C/F)[2^\infty]$ is finite if and only if $r_F=s_F$.

If $a_{\rm pr}=d_2(C_{\rm pr})$, $a_{\rm op}=d_2(C_{\rm op})$, and
$t_\bullet=\dim_{\F_2}A_\bullet[2]$, then
\begin{equation}\label{eq:rational-finite-factor-counts}
t_{\rm pr}=\frac{a_{\rm pr}-f-1}{2},\qquad
t_{\rm op}=\frac{a_{\rm op}-f+2h}{2},\qquad
t_{\rm pr}+t_{\rm op}=R-f+h.
\end{equation}
\end{theorem}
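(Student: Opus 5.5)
Three inputs feed the argument: the signed corank identification of Theorem~\ref{thm:Kolyvagin}, the abstract Cassels--Tate structure (Proposition~\ref{prop:usual-structure}), and the bound $f\le R$ of Theorem~\ref{thm:main-bound}. Under \Surjectivity{3} and $\intr(E,K)=0$ we have $T_E=0$, so Theorem~\ref{thm:Kolyvagin} applies. It gives $s_2(C_{\rm pr})=s^{\epsilon_f}=f+1$ and $s_2(C_{\rm op})=s^{-\epsilon_f}\le f$.

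The first step fixes the opposite corank. By $p$-parity (a consequence of \eqref{eq:kramer-parity} together with the root number relation $w(E^D)=-w(E)$ under \Heegner), $s^++s^-$ is odd. Hence $s_2(C_{\rm op})\equiv f\pmod2$, and we may write $s_2(C_{\rm op})=f-2h$ with $0\le h\le\lfloor f/2\rfloor$. Then $s_K=2f+1-2h$ by \eqref{eq:rational-signs}.

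The second step proves the structural isomorphisms. Proposition~\ref{prop:usual-structure} supplies, for any pair $(C,F)$ with $C(F)[2]=0$ (Lemma~\ref{lem:no-torsion} for $F=K$ and residual irreducibility for $F=\Q$), the decomposition $\Sel_{2^\infty}(C/F)\simeq(\Q_2/\Z_2)^{s_F}\oplus A_F^{\oplus2}$. The square comes from the alternating nondegenerate Cassels--Tate pairing on the finite quotient $\Sha/\Sha_{\rm div}$. This gives \eqref{eq:principal-full-structure}--\eqref{eq:quadratic-full-structure}. For \eqref{eq:three-sha-structure}, use the exact sequence $0\to C(F)\otimes\Q_2/\Z_2\to\Sel_{2^\infty}\to\Sha[2^\infty]\to0$. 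The first term is divisible of corank $r_F$, so it splits off inside the divisible part. The quotient of $(\Q_2/\Z_2)^{s_F}$ by a divisible subgroup of corank $r_F$ is divisible of corank $s_F-r_F$, and the finite part $A_F^{\oplus2}$ maps isomorphically. For \eqref{eq:three-finite-structure}, use $\Sel_{2^m}(C/F)\simeq\Sel_{2^\infty}(C/F)[2^m]$, which holds because $C(F)[2^\infty]=0$, exactly as in the finite-level identification of Appendix~\ref{subsec:finite-size}. The $2^m$-torsion of the displayed group is then the stated one. Finiteness of $\Sha$ occurs exactly when $s_F=r_F$.

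The third step is the dimension counts. Taking $m=1$ gives $a_{\rm pr}=f+1+2t_{\rm pr}$ and $a_{\rm op}=f-2h+2t_{\rm op}$, which rearrange to the first two formulas of \eqref{eq:rational-finite-factor-counts}. Adding them gives
\[
t_{\rm pr}+t_{\rm op}=\frac{a_0+a_1-2f-1+2h}{2}=R-f+h,
\]
using $a_0+a_1=2R+1$ from \eqref{eq:R0}.

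The main obstacle is the parity input for $s^++s^-$, because \eqref{eq:kramer-parity} controls the finite dimensions $a_i$ rather than the coranks. I would handle it using $d_2(C)\equiv s_2(C)\pmod2$ from Section~\ref{sec:index}, so that $s^++s^-\equiv a_0+a_1\equiv1$. If one prefers to avoid this route, the same parity is already implicit in \eqref{eq:Kolyvagin-signed} through the sign $\epsilon_f$.
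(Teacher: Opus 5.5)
Your proposal is correct and follows the paper's proof essentially step by step. You use Theorem~\ref{thm:Kolyvagin} for the signed coranks, Proposition~\ref{prop:usual-structure} for the three structures and their finite levels, and the oddness of $s^++s^-$ (from $d_2\equiv s_2\pmod 2$ and $a_0+a_1=2R+1$) to write the opposite corank as $f-2h$. One caveat concerns your closing alternative, which does not work: \eqref{eq:Kolyvagin-signed} gives only $s^{-\epsilon_f}\le f$ and carries no parity information. The root-number relation is not needed either, so the Cassels--Tate parity route you actually use is the right one.
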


The proof is in Appendix~\ref{app:three-group-proof}. These are the possible abstract structures: $h$ and the finite groups $A_{\rm pr},A_{\rm op},A_K$ are not computed by the theorem. Their elementary divisors can be recovered from the higher Selmer size profiles when those profiles are known, as explained in Appendix~\ref{subsec:finite-size}.

\subsection{Terminal twists over the quadratic field}
For a terminal auxiliary curve $C=E^{d_T}$, the low-rank comparison also determines the form of the groups over $K$.

\begin{proposition}\label{prop:terminal-sha}
Suppose $\{d_2(C),d_2(C^D)\}=\{0,1\}$. Then
\[
\rank C(K)=1,\qquad
2\Sha(C/K)[2^\infty]=0.
\]
In particular, the $2$-primary Tate--Shafarevich group is finite. If $d_K(C)=\dim_{\F_2}\Sel_2(C/K)$, then
\[
\Sha(C/K)[2^\infty]\simeq(\Z/2)^{d_K(C)-1},
\qquad
\Sel_{2^m}(C/K)\simeq\Z/2^m\oplus(\Z/2)^{d_K(C)-1},
\]
where $d_K(C)-1$ is a nonnegative even integer.
\end{proposition}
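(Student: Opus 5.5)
The plan is to reduce everything over $K$ to the two rational curves $C$ and $C^D$, which are already controlled by the argument of Section~\ref{sec:index}. Label them so that $d_2=0$ for one and $d_2=1$ for the other. In this paper's setting $C=E^{d_T}$, so $C[2]=E[2]$, and Lemma~\ref{lem:no-torsion} gives $C(K)[2^\infty]=0$ and $C^D(\Q)[2]=C(\Q)[2]=0$. I will use these facts throughout.

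The finite/infinite Selmer comparison and Cassels--Tate alternation (the argument leading to \eqref{eq:low-rank}) give coranks $0$ and $1$. Then \LC\ gives ranks $0$ and $1$ and finite $\Sha$. The Kummer-sequence argument of \eqref{eq:sha-rational-vanish} then gives
\[
\Sha(C/\Q)[2^\infty]=\Sha(C^D/\Q)[2^\infty]=0 .
\]
The rational decomposition $C(K)\otimes\Q=(C(\Q)\otimes\Q)\oplus(C^D(\Q)\otimes\Q)$ yields $\rank C(K)=1$.

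Next I will show that $2$ kills $X=\Sha(C/K)[2^\infty]$, using restriction and corestriction. Let $c$ generate $\Gal(K/\Q)$. On $H^1(K,C[2^\infty])$ we have $\res\circ\cor=1+c$. Both maps preserve the Selmer conditions, hence preserve $\Sha$. For $x\in X$, the class $\cor x$ lies in $\Sha(C/\Q)[2^\infty]=0$, so $(1+c)x=0$.

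Now transport $x$ through the isomorphism $\iota_D$, which is defined over $K$. This gives an identification $X\simeq\Sha(C^D/K)[2^\infty]$ under which the action of $c$ is multiplied by $\chi_D(c)=-1$. Applying the same corestriction argument to $C^D$ gives $(1-c)x=0$. Adding the two relations gives $2x=0$.

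The step I expect to need the most care is the compatibility of $\iota_D$ with the semilinear action of $c$ on cohomology and with the local conditions. I will check it at the cocycle level using $g\iota_D=[\chi_D(g)]\iota_D$, exactly as in \eqref{eq:twist-cocycle}. Once $2X=0$ is known, $X=X[2]$ is a quotient of the finite group $\Sel_2(C/K)$, so $X$ is finite.

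For the structure, use the Kummer sequence over $K$. Since $C(K)/2C(K)\simeq\Z/2$ (rank one, no $2$-torsion), we get $\dim_{\F_2}\Sha(C/K)[2]=d_K(C)-1$. Because $X$ is finite, the Cassels--Tate pairing on it is nondegenerate and alternating \cite[\S1, Corollary 7]{PS}, so this dimension is even. With $2X=0$ this gives $X\simeq(\Z/2)^{d_K(C)-1}$.

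For $\Sel_{2^m}$, note that $C(K)[2^\infty]=0$ identifies $\Sel_{2^m}(C/K)$ with $\Sel_{2^\infty}(C/K)[2^m]$. Write $\Sel_{2^\infty}(C/K)$ as its divisible part, which is $C(K)\otimes\Q_2/\Z_2\simeq\Q_2/\Z_2$, plus a complement. A divisible subgroup is a direct summand, and the complement maps isomorphically onto $X$. Taking $2^m$-torsion gives
\[
\Sel_{2^m}(C/K)\simeq\Z/2^m\oplus(\Z/2)^{d_K(C)-1},
\]
as claimed in Proposition~\ref{prop:terminal-sha}.
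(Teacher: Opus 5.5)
Your proposal is correct and follows essentially the same route as the paper: \LC\ and the Kummer sequence give $\Sha(C/\Q)[2^\infty]=\Sha(C^D/\Q)[2^\infty]=0$. Corestriction on $C$, and on $C^D$ transported by the twisting isomorphism (which negates the action of $c$), gives $(1+c)x=(1-c)x=0$, and the Kummer sequence over $K$ together with Cassels--Tate alternation finishes the argument. The only difference is that you split off the divisible part of $\Sel_{2^\infty}(C/K)$ by hand rather than citing Proposition~\ref{prop:usual-structure}; note that the identification $\Sel_{2^m}(C/K)=\Sel_{2^\infty}(C/K)[2^m]$ also needs the finite- and infinite-level local Kummer conditions to be compatible, not only $C(K)[2^\infty]=0$, and that proposition supplies this.
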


The proof is in Appendix~\ref{app:terminal-sha-proof}. Thus one further $2$-descent over $K$ determines the $2$-primary Tate--Shafarevich group of a terminal auxiliary curve, and this group vanishes exactly when $d_K(C)=1$.

\section{A finite-depth example with one auxiliary prime}\label{sec:example}
Consider the minimal equation
\begin{equation}\label{eq:example-curve}
E:\quad y^2+y=x^3+x^2-2x,\qquad D=-7,\qquad K=\Q(\sqrt{-7}).
\end{equation}
Its invariants are $c_4=112$, $c_6=-856$, and $\Delta_E=389$. Since the discriminant is squarefree, the equation is globally minimal, the only bad prime is $389$, and its reduction type is $I_1$. Thus $N=389$ and $c_{389}(E)=1$. The fundamental discriminant $D=-7$ is prime to $N$, and
$\left(\frac{-7}{389}\right)=1$, so \Heegner\ holds.

\paragraph{Residual image and intrinsic index.}
The nonzero $2$-torsion points have $x$-coordinates satisfying
\[
f(x)=4x^3+4x^2-8x+1=0,\qquad \disc(f)=16\cdot389.
\]
This cubic has no root modulo $7$, and hence is irreducible over $\Q$. Its discriminant is nonsquare, so the residual image is $S_3$. Thus $\mathcal E(\F_7)[2]=0$, giving $t_7=0$. Since $D\equiv1\pmod8$, the prime $2$ splits in $K$ and $h_2(E,K)=0$. Finally $\Delta_E>0$ gives $\epsilon_\infty=1$. Consequently
\[
T_E=0,\qquad
\delta_K=\frac{1-1+0+0}{2}=0,\qquad \intr(E,K)=0.
\]
The real local condition excludes $\xi_E$ from both initial Selmer groups.

\paragraph{Full image modulo four.}
Residual surjectivity and $\Delta_E>0$ give \Surjectivity{2} by the calculation in Section~\ref{subsec:xi}. Together with the splitting and real exclusion above, this verifies \Heegner, \Surjectivity{2}, and \Exclusion.

\paragraph{The prime family and the terminal pair.}
Take $L=2$ and choose the lift in Definition~\ref{def:PL} acting by
$\zeta_8\mapsto\zeta_8^7$. For $q=31$ the exact finite-field computations give
\[
\#\mathcal E(\F_{31})=28,\quad a_{31}(E)=4,\quad
31\equiv7\pmod8,\quad
\left(\frac{7}{31}\right)=1,\quad
\left(\frac{-7}{31}\right)=-1.
\]
The cubic $f$ has precisely one root modulo $31$, so residual Frobenius is a transposition. Its mod-$4$ characteristic polynomial is $X^2-1$; the residual nonscalar condition supplies a cyclic vector, so this matrix is conjugate to $A_2$ of Section~\ref{subsec:prime-family}. Also $\Q(\sqrt{389})\subset\Q(E[2])$, and
$\left(\frac{389}{31}\right)=-1$ agrees with the residual sign. These facts, together with the chosen cyclotomic lift and $\left(\frac{7}{31}\right)=1$, verify $31\in\mathcal P_2$. In particular $n=31$ is admissible at depth two, and $-31$ is a square in $\Q_7$ since $\left(\frac{-31}{7}\right)=1$.

The following full $2$-Selmer groups were computed unconditionally with Magma V2.29-10 using \texttt{TwoSelmerGroup(C : Bound := -1)} \cite[Selmer groups]{MagmaHandbook}.
\begin{center}
\captionof{table}{The $2$-Selmer groups of the four twists}\label{tab:example-selmer}
\small
\begin{tabular}{@{}rllc@{}}
\toprule
$d$ & Minimal coefficients $[a_1,a_2,a_3,a_4,a_6]$ & $\Sel_2(E^d/\Q)$ & $d_2(E^d)$\\
\midrule
$1$   & $[0,1,1,-2,0]$ & $(\Z/2)^2$ & $2$\\
$-7$  & $[0,-1,1,-114,-302]$ & $\Z/2$ & $1$\\
$-31$ & $[0,-1,1,-2242,-28768]$ & $0$ & $0$\\
$217$ & $[0,1,1,-109874,10087074]$ & $\Z/2$ & $1$\\
\bottomrule
\end{tabular}
\end{center}
Hence $(a_0,a_1)=(2,1)$, $R=1$, and $T=\{31\}$ is a terminal support of the minimum possible size. Here $d_T=-31$ and $d_TD=217$. The odd support exchanges the real local conditions, exactly as in Proposition~\ref{prop:actual-pair}.

The original curve has rank $2$ over $\Q$ \cite[Example H133E28]{MagmaHandbook}. On the displayed model of $E^{-7}$, the point $(14,25)$ is rational, and direct counting gives
\[
\#\mathcal E^{(-7)}(\F_3)=2,\qquad \#\mathcal E^{(-7)}(\F_5)=3.
\]
Both primes are of good reduction, so $E^{-7}(\Q)_{\tors}=0$. The point has infinite order, and the computed value $d_2(E^{-7})=1$ therefore gives $\rank E^{-7}(\Q)=1$. The quadratic rank decomposition yields
\[
\rank E(K)=\rank E(\Q)+\rank E^{-7}(\Q)=3.
\]

For reproducibility, the essential descent calculation is
\begin{verbatim}
SetSeed(1);
E := EllipticCurve([Rationals()|0,1,1,-2,0]);
for d in [1,-7,-31,217] do
    C := MinimalModel(QuadraticTwist(E,d));
    S := TwoSelmerGroup(C : Bound := -1);
    print d, aInvariants(C), Invariants(S);
end for;
\end{verbatim}
The invariant lists are respectively \texttt{[2,2]}, \texttt{[2]}, \texttt{[]}, and \texttt{[2]}. With \texttt{Bound := -1}, the class-group computations are unconditional when performed in a fresh Magma session without precomputed conditional class or unit groups, as described in \cite{MagmaHandbook}.

For the terminal pair $E^{-31},E^{217}$, any parametrization $\varphi_0$ with odd Manin factor gives
\[
\ord\kappa^{\std}_{31,1}(\varphi_0)=2,\qquad
\ord\kappa^{\std}_{31,2}(\varphi_0)=4.
\]
An arbitrary fixed parametrization gives the shift of Corollary~\ref{cor:main-shift}. These are the available coefficient depths for $n=31$, since $a_{31}(E)=4$.

\appendix

\section{Descent coordinates for the kernel class}\label{app:squareclass}
Assume \Surjectivity{2} throughout this appendix. We compute the squareclass representing $\xi_E$ in the cubic-algebra model for $2$-descent \cite{Schaefer}, giving coordinates for the local exclusion condition in Proposition~\ref{prop:odd-exclusion}.

Write $E:y^2=f_0(x)=x^3+Ax+B$, and put $D_f=\disc(f_0)$ and $L_f=\Q[\theta]/(f_0(\theta))$. Under the residual $S_3$ hypothesis, $f_0$ is irreducible and separable. Thus $L_f$ is a cubic root field, and the $2$-division field is its Galois closure of degree $6$. The discriminant of this short Weierstrass model is $16D_f$. It differs from the minimal discriminant by a rational twelfth power, so $D_f$ and $\Delta_E$ define the same rational squareclass.

Let $e_1,e_2,e_3$ be the roots, and let $P_i=(e_i,0)$ be the three nonzero $2$-torsion points. Set $\overline L_f=L_f\otimes\overline\Q\simeq\overline\Q^3$, with Galois action permuting the factors according to the roots. The Weil pairing gives an equivariant embedding
\[
\iota:V\longrightarrow\mu_2(\overline L_f),\qquad
P\longmapsto(e_2(P,P_1),e_2(P,P_2),e_2(P,P_3)).
\]
Its image consists precisely of the sign vectors whose product is $1$. Hence there is a split exact sequence
\begin{equation}\label{eq:Weil-norm-sequence}
1\longrightarrow V\xrightarrow{\iota}\operatorname{Res}_{L_f/\Q}\mu_2
\xrightarrow{N}\mu_2\longrightarrow1.
\end{equation}
The splitting is $\epsilon\mapsto(\epsilon,\epsilon,\epsilon)$, since $\epsilon^3=\epsilon$. In particular, the induced map on $H^1$ at the left is injective. Shapiro's lemma and Kummer theory yield
\begin{equation}\label{eq:descent-algebra}
H^1(\Q,V)\simeq
\ker\left(N:L_f^\times/L_f^{\times2}\longrightarrow\Q^\times/\Q^{\times2}\right).
\end{equation}
Compatibility with the norm map can be checked directly on cocycles. If $\beta^2=\alpha$, then
\[
N(g\beta/\beta)=g(N\beta)/(N\beta),\qquad (N\beta)^2=N\alpha.
\]
Thus $[\alpha]$ lies in the kernel in \eqref{eq:descent-algebra} if and only if $N\alpha\in\Q^{\times2}$.

\begin{proposition}\label{prop:xi-squareclass}
Under the identification \eqref{eq:descent-algebra},
\begin{equation}\label{eq:xi-squareclass}
\xi_E=[-D_f f_0'(\theta)].
\end{equation}
\end{proposition}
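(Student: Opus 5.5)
The plan is to use the characterization of $\xi_E$ from Lemma~\ref{lem:kernel-stable}: it is the unique nonzero class in $H^1(\Q,V)$ whose restriction to $F=\Q(E[4])$ vanishes. Under \eqref{eq:descent-algebra}, it therefore suffices to check three properties of $\alpha=-D_f f_0'(\theta)\in L_f^\times$:
\begin{enumerate}
\item $N\alpha\in\Q^{\times2}$, so that $[\alpha]$ lies in the kernel;
\item $[\alpha]$ is nonzero;
\item $[\alpha]$ becomes trivial after base change to $F$.
\end{enumerate}
Since the descent isomorphism is compatible with restriction to any extension field (Shapiro and Kummer theory commute with base change), the third property can be checked componentwise. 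Over $F$ the algebra splits, $L_f\otimes F\simeq F^3$, and $\alpha$ has coordinates $-D_f f_0'(e_i)$.

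For the norm, I will use the monic cubic identity $\prod_i f_0'(e_i)=-D_f$. This gives $N\alpha=(-D_f)^3\cdot(-D_f)=D_f^4$, which is a square.

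For the triviality over $F$, I will write $f_0'(e_i)=(e_i-e_j)(e_i-e_k)$ and note that $D_f=\prod_{i<j}(e_i-e_j)^2$ is a square in $\Q(E[2])$. So the task is to show that each $-(e_i-e_j)(e_i-e_k)$ is a square in $F$. I will use the standard $2$-descent description, namely the $x-e$ map, whose exceptional coordinate at $P_i$ is $(e_i-e_j)(e_i-e_k)$. Every $P_i$ is divisible by $2$ in $E(F)$, because its halves lie in $E[4]\subset E(F)$. Hence the descent image of $P_i$ over $F$ is trivial, and so $e_i-e_j$ is a square in $F$ for all $i\ne j$. Applying this with both orders $e_i-e_j$ and $e_j-e_i$ shows that $-1$ is a square in $F$; this is consistent with $i\in\Q(\zeta_4)\subset F$. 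Therefore $-(e_i-e_j)(e_i-e_k)\in F^{\times2}$, and $[\alpha]$ restricts to zero over $F$.

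The step I expect to be the main obstacle is nontriviality of $[\alpha]$. I will prove the stronger statement that $[\alpha]$ is already nonzero after restriction to $\Q(E[2])$. There, $\alpha$ has coordinates $-D_f f_0'(e_i)$, and it suffices to show that $-(e_1-e_2)(e_1-e_3)$ is not a square in $\Q(E[2])$. By the previous paragraph, $F=\Q(E[2])\bigl(i,\sqrt{e_1-e_2},\sqrt{e_1-e_3},\sqrt{e_2-e_3}\bigr)$, up to checking that these radicals generate all of $F$. Under \Surjectivity{2} we have $[F:\Q(E[2])]=\#U=16$. By Kummer theory, the classes of $-1$, $e_1-e_2$, $e_1-e_3$ and $e_2-e_3$ must then be $\F_2$-independent in $\Q(E[2])^\times/\Q(E[2])^{\times2}$. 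In particular, the product $-(e_1-e_2)(e_1-e_3)$ is not a square, so $[\alpha]\neq0$.

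The delicate point is justifying that these four radicals generate $F$ over $\Q(E[2])$. I would do this by the explicit halving formula: the $x$-coordinate of a half of $P_1$ is expressed through $\sqrt{e_1-e_j}$. Alternatively, I would identify the restriction of the class to $U$ with the nonzero equivariant map $f$ of Lemma~\ref{lem:f}. Once nontriviality is established, uniqueness in Lemma~\ref{lem:kernel-stable} gives $[\alpha]=\xi_E$.
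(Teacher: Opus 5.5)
Your proposal is correct, and it handles the nontriviality step by a different route from the paper.

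\textbf{The paper's route.} The paper writes down explicit square roots $\beta_i=i\Delta_0r_i$, with $r_i=x(Q_i)-e_i$, of the coordinates $-D_ff_0'(e_i)$. It computes $g\beta_i/\beta_i$ for $g$ with mod-$4$ image $1+2B$, using the Weil character for $r_i$ and the determinant for $i$. A table then identifies the resulting cocycle on $U$ with $f$ (through $\iota$), and nonvanishing comes from $f(E_{12})\ne0$.

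\textbf{Your route.} You get triviality over $F=\Q(E[4])$ from the $x-e$ map: $e_i-e_j\in F^{\times2}$, hence $-1\in F^{\times2}$. You replace the cocycle computation by a Kummer degree count. Put $M=\Q(E[2])$, $a=\sqrt{e_1-e_2}$ and $b=\sqrt{e_1-e_3}$. The generation step you flagged goes through. Halving gives $Q_1=(e_1+ab,\pm ab(a+b))$. Conversely, $ab$, $a+b$ and $a-b=(e_3-e_2)/(a+b)$ all lie in $M(Q_1)$, using $ab\ne0$ and $a+b\ne0$ since the roots are distinct. The same argument for $Q_2$, together with the fact that $Q_1,Q_2$ generate $E[4]$, gives
\[
F=M\bigl(i,\sqrt{e_1-e_2},\sqrt{e_1-e_3},\sqrt{e_2-e_3}\bigr).
\]
Then $[F:M]=16$ forces the four radicands to be independent in $M^\times/M^{\times2}$, so $-(e_1-e_2)(e_1-e_3)\notin M^{\times2}$.

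\textbf{Comparison.} Your argument avoids all sign bookkeeping and the table. In exchange, it needs the full equality of fields (both inclusions) and the exact degree supplied by \Surjectivity{2}. The paper's argument needs only $\beta_i\in\Q(E[4])$ and the single image element $1+2E_{12}$. It also yields the explicit restriction of the class to $U$, namely $f$. You do not need that information, since uniqueness in Lemma~\ref{lem:kernel-stable} concludes both arguments.
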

\begin{proof}
First, $N(f_0'(\theta))=-D_f$, whence $N(-D_ff_0'(\theta))=D_f^4$. The displayed squareclass therefore belongs to the norm kernel.

Choose $Q_i\in E[4]$ with $2Q_i=P_i$. The addition law gives
\[
r_i:=x(Q_i)-e_i,\qquad r_i^2=f_0'(e_i).
\]
Indeed, after translating by $u=x-e_i$, the equation is $y^2=u(u^2+\alpha_i u+\beta_i')$, where $\beta_i'=f_0'(e_i)\ne0$. The line joining $(u,y)$ to $(0,0)$ has slope $y/u$, so the sum has first coordinate
\[
\left(\frac yu\right)^2-\alpha_i-u=\frac{\beta_i'}u.
\]
For $Q_i$, the identity $Q_i+P_i=-Q_i$ makes the first coordinates equal, giving $r_i^2=\beta_i'$.

For any $R\in V$, the point $Q_i+R$ is another half of $P_i$. The preceding square identity shows that the ratio of their $u$-coordinates is $\pm1$. This ratio is $1$ precisely when $Q_i+R=\pm Q_i$, or equivalently $R\in\{O,P_i\}$. The Weil character $R\mapsto e_2(P_i,R)$ is the nontrivial character with this kernel. Therefore
\[
\frac{x(Q_i+R)-e_i}{x(Q_i)-e_i}=e_2(P_i,R).
\]
If the mod-$4$ image of $g$ is $1+2B$, then $gQ_i-Q_i=BP_i$, and hence $g(r_i)/r_i=(-1)^{\langle P_i,BP_i\rangle}$, where $\langle\ ,\ \rangle$ denotes the additive Weil pairing.

Put
\[
\Delta_0=(e_1-e_2)(e_1-e_3)(e_2-e_3),\qquad\Delta_0^2=D_f.
\]
Since $\Delta_0$ belongs to the $2$-division field, it is fixed by these $g$. The $4$-Weil pairing gives $i\in\Q(E[4])$, and
\[
\frac{g(i)}i=(-1)^{\tr B},\qquad
\det(1+2B)=1+2\tr B\pmod4.
\]
It follows that $\beta_i=i\Delta_0r_i$ is a square root of $-D_ff_0'(e_i)$ and satisfies
\begin{equation}\label{eq:sign-f}
\frac{g\beta_i}{\beta_i}=(-1)^{\tr B+\langle P_i,BP_i\rangle}
=(-1)^{\langle P_i,f(B)\rangle}.
\end{equation}
To verify the last equality, write $B=\left(\begin{smallmatrix}a&b\\c&d\end{smallmatrix}\right)$ and identify the three nonzero points with $(1,0),(0,1),(1,1)$ in the chosen symplectic basis. The two exponents have the following common values:
\[
\begin{array}{c|ccc}
P&(1,0)&(0,1)&(1,1)\\ \hline
\tr B+\langle P,BP\rangle&a+d+c&a+d+b&b+c\\
\langle P,f(B)\rangle&a+d+c&a+d+b&b+c
\end{array}
\]
Here we use $f(B)=(a+d+b,a+d+c)$ from \eqref{eq:f}.

All $\beta_i$ belong to the $4$-division field, so each coordinate becomes a square over that field and the cohomology class restricts to zero there. Equation~\eqref{eq:sign-f} identifies the restriction of its cocycle to the mod-$4$ kernel with $f$. For example, $f(E_{12})=(1,0)\ne0$, whereas every coboundary vanishes on this trivially acting kernel. The class is therefore nonzero. By the one-dimensional kernel calculation \eqref{eq:H1Gamma}, it is exactly $\xi_E$.
\end{proof}

For the twist $f_d(x)=d^3f_0(x/d)$, the root fields are identified by $\theta_d\mapsto d\theta$, and
\[
\disc(f_d)=d^6D_f,\qquad f_d'(d\theta)=d^2f_0'(\theta).
\]
Thus the representative in \eqref{eq:xi-squareclass} is multiplied by the square $d^8$. It defines the same kernel class under the natural identification of $2$-torsion.

These coordinates also give a finite local test. At an odd prime $\ell$ of good reduction, the $4$-division field is unramified. Since $\xi_E$ is inflated from its Galois group, its localization is unramified and lies in the usual Kummer subgroup. If $\Delta_E<0$, complex conjugation acts on $V$ as a transposition, and
\[
H^1(\R,V)=\ker(1+c)/\im(c-1)=0,
\]
because the two subspaces are the same line. Consequently, when $\Delta_E<0$, testing $\xi_E\in\Sel_2(E/\Q)$ requires only checking membership in the local Kummer images at primes dividing $2N$. For a twist, one also includes its new bad primes. At an odd ramified prime, Proposition~\ref{prop:odd-exclusion} gives a direct test using Frobenius congruences.

\section{Dyadic corrections and a local test for intrinsic index zero}\label{app:dyadic}
We verify \eqref{eq:CminusE} and make the condition $\intr(E,K)=0$ explicit. Write $\Delta=\Delta_E$ for the minimal discriminant, and retain the notation of Section~\ref{sec:index}:
\[
C_2(D)=v_2(c_2(E^D)/c_2(E)),\qquad e_2(D)=v_2(\eta_D).
\]
If $D$ is odd, then $D\equiv1$ or $5\pmod8$. In the first case, $2$ splits and the twist is $\Q_2$-isomorphic to $E$. In the second, the local quadratic extension is unramified and the Heegner hypothesis forces good reduction at $2$. Unramified twisting preserves good reduction, and a minimal differential remains minimal after unramified base change. Together with the surjectivity of the norm in Lemma~\ref{lem:local-twists}, this gives
\[
C_2(D)=e_2(D)=h_2(E,K)=0.
\]
If $D$ is even, then $(D,N)=1$ gives good reduction at $2$, so $v_2(\Delta)=0$ and $c_2(E)=1$; moreover, $v_2(D)=2$ or $3$. In a minimal generalized Weierstrass equation, reduction is ordinary when $a_1$ is odd and supersingular when $a_1$ is even. Set $\chi_2=(D,\Delta)_2$. The Kodaira type, minimal discriminant, and Tamagawa number of the twist are given in \cite[Theorem 5.1, the $I_0$ rows of Tables 5--6]{LocalTwist}; the norm indices are given in \cite[Propositions 4--5]{Kramer}. Together these results give Table~\ref{tab:dyadic}.
\begin{center}
\captionof{table}{Differential, Tamagawa, and norm corrections at $2$ for ramified quadratic extensions}\label{tab:dyadic}
\small
\begin{tabular}{@{}ccccccc@{}}
\toprule
Reduction of $E$ & $v_2(D)$ & $\operatorname{Kod}(E^D)$ & $v_2(\Delta_{E^D})$ & $e_2$ & $C_2$ & $h_2$\\
\midrule
ordinary & $2$ & $I_4^*$ & $12$ & $0$ & $(3+\chi_2)/2$ & $(3+\chi_2)/2$\\
ordinary & $3$ & $I_8^*$ & $18$ & $0$ & $(3+\chi_2)/2$ & $(3+\chi_2)/2$\\
supersingular & $2$ & $II^*$ & $12$ & $0$ & $0$ & $0$\\
supersingular & $3$ & $II$ & $6$ & $-1$ & $0$ & $1$\\
\bottomrule
\end{tabular}
\end{center}
In the ordinary case, choose a $2$-strongly-minimal model
\[
E:\ y^2+a_1xy+a_3y=x^3+a_2x^2+a_4x+a_6.
\]
By \cite[Proposition 3.5, the $I_0$ rows of Table 1]{LocalTwist},
$a_1$ is odd, $4\mid a_3$, and exactly one of the following holds:
\[
\begin{array}{ll}
\textup{(a)}&2\mid a_4,\quad a_6\text{ odd},\\
\textup{(b)}&a_4\text{ odd},\quad2\mid a_6.
\end{array}
\]
The discriminant of this model differs from the fixed minimal
discriminant by the twelfth power of a $2$-adic unit, so its residues
modulo $8$ and its Hilbert symbols are the same.
The discriminant formula reduces to
\[
\Delta\equiv a_3a_4-a_6+a_4^2+4a_2a_6\pmod8.
\]
Here the factor $a_1$ in the term $a_1a_3a_4$ may be omitted because
$a_1$ is odd and $4\mid a_3$.

Write $D=4m$ when $v_2(D)=2$, so that $m\equiv3\pmod4$.
The $I_0$ row of \cite[Theorem 5.1, Table 5]{LocalTwist} gives
$c_2(E^D)=4$ for $a_6\equiv0,3\pmod4$ and $c_2(E^D)=2$ for
$a_6\equiv1,2\pmod4$. Since
$\Delta\equiv a_4^2-a_6\pmod4$, these conditions give
\[
\begin{array}{c|c|c|c}
a_6\bmod4&\Delta\bmod4&(D,\Delta)_2&c_2(E^D)\\ \hline
0,3&1&1&4\\
1,2&3&-1&2
\end{array}
\]
Here $(D,\Delta)_2=(-1)^{(\Delta-1)/2}$.
This is the local calculation in
\cite[\S4, proof of Theorem 1.3, Case 1 and Subcase 1a]{BarriosMok}.

Now write $D=8m$, with $m$ odd, so the local twist parameter may be
taken to be $2m$. In cases \textup{(a)} and \textup{(b)}, respectively, put
\[
\begin{aligned}
P_1&=4+16a_2+8a_4+4a_6-2m-2ma_6^2-4ma_6,\\
P_2&=a_3^2-2ma_6^2+4a_6.
\end{aligned}
\]
The ordinary $I_0$ cases of
\cite[Theorem 5.1, Table 6; proof for $v(d)=1$, Subcases 1a--1b]{LocalTwist}
give
\[
c_2(E^D)=
\begin{cases}
4,&P_j\equiv0\pmod{32},\\
2,&P_j\equiv16\pmod{32},
\end{cases}
\qquad
j=1\text{ in \textup{(a)}},\quad j=2\text{ in \textup{(b)}}.
\]
Here
\[
(D,\Delta)_2
=(-1)^{(\Delta^2-1)/8+((m-1)/2)((\Delta-1)/2)}.
\]
To express the coefficient criterion in terms of $\Delta$, set
\[
(x,k)=
\begin{cases}
(a_2+a_4/2,\ (a_6+1)/2),&\text{in \textup{(a)}},\\
(a_3/4,\ a_6/2),&\text{in \textup{(b)}}.
\end{cases}
\]
The discriminant formula and the two polynomials above give
\[
\Delta\equiv1+4x-2k\pmod8,\qquad
\frac{P_j}{16}\equiv x+\frac{k-mk^2}{2}\pmod2.
\]
Here $(k-mk^2)/2$ is integral because $m$ is odd. For
$\delta=1+4x-2k$, the exponent in the Hilbert symbol satisfies
\[
\frac{\delta^2-1}{8}
+\frac{m-1}{2}\frac{\delta-1}{2}
\equiv x+\frac{k-mk^2}{2}\pmod2.
\]
Indeed, the difference is congruent modulo $2$ to
$(m+1)k(k-1)/2$, which is even. Thus the coefficient criterion is
equivalent to the following four residue conditions:
\[
\begin{array}{c|c|c|c|c}
m\bmod4&\Delta\bmod8&P_j\bmod32&(D,\Delta)_2&c_2(E^D)\\ \hline
1&1,7&0&1&4\\
1&3,5&16&-1&2\\
3&1,3&0&1&4\\
3&5,7&16&-1&2
\end{array}
\]
These are the residues listed in
\cite[\S4, proof of Theorem 1.3, Case 3, pp.~28--29]{BarriosMok}.
Although that paper assumes a positive global discriminant, the
calculation used here is local: \cite[Theorem 5.1]{LocalTwist} applies
to $d\in\Q_2^\times$ without a sign restriction. The displayed
congruences hold for every odd $m$, including negative $m$.
Equivalently, one may replace $m$ by a positive odd integer congruent
to it modulo $8$, which preserves its $\Q_2$-squareclass.

In both ordinary cases, therefore,
\[
c_2(E^D)=
\begin{cases}4,&\chi_2=1,\\2,&\chi_2=-1,\end{cases}
\]
Since $c_2(E)=1$, it follows that
$C_2(D)=v_2(c_2(E^D))=(3+\chi_2)/2$.
Kramer's norm formula \cite[Proposition 5]{Kramer} gives the same values
for $h_2(E,K)$, namely $2$ when $\chi_2=1$ and $1$ when $\chi_2=-1$.

In the supersingular case, Kramer's Proposition~4 gives $h_2=0$ when the twist parameter has even valuation and $h_2=1$ when it has odd valuation. With the squarefree representative $d_0=D/4$, these correspond to $v_2(D)=2$ and $3$, respectively. The local tables give types $II^*$ and $II$, both with Tamagawa number $1$, and minimal discriminant valuations $12$ and $6$. Here $\Delta\equiv5\pmod8$, so the Hilbert symbol formulas also give $\chi_2=1$ and $-1$, respectively.

Finally, the scaling relation between minimal discriminants and minimal differentials gives
\[
\Delta_{E^D}=D^6\eta_D^{12}\Delta_E,
\qquad e_2(D)=\frac{v_2(\Delta_{E^D})-6v_2(D)}{12}.
\]
Substitution of the discriminant valuations gives $e_2=0$ in the first three rows and $e_2=-1$ in the last. These calculations use the fundamental discriminant $D$ as the twist parameter. If one uses $d_0=D/4$, then $\sqrt D=2\sqrt{d_0}$ and the differential comparison gives $\eta_D=\eta_{d_0}/2$. Thus all four rows satisfy
\[
C_2-e_2=h_2(E,K),\qquad(-1)^{h_2(E,K)}=(D,\Delta_E)_2.
\]

By \eqref{eq:index-definition}, intrinsic index zero is equivalent to oddness of all original Tamagawa numbers together with
\begin{equation}\label{eq:intrinsic-local-test}
h_2(E,K)+\sum_{r\mid D,\ r\text{ odd}}t_r=1-\epsilon_\infty.
\end{equation}
For an odd prime $r\mid D$, the value of $t_r$ is $2$, $1$, or $0$ according as residual Frobenius is the identity, a transposition, or a $3$-cycle. Assuming all original Tamagawa numbers are odd, \eqref{eq:intrinsic-local-test} becomes the following test.
\begin{itemize}
\item If $\Delta_E>0$, then $\epsilon_\infty=1$. One requires $h_2=0$ and a $3$-cycle at every odd $r\mid D$. The dyadic condition is automatic for odd $D$. For even $D$, it is equivalent to supersingular reduction and $v_2(D)=2$.
\item If $\Delta_E<0$, then $\epsilon_\infty=0$. There are two possibilities: either $h_2=0$ and exactly one odd $r\mid D$ has transposition Frobenius, with $3$-cycles at the others; or $h_2=1$ and all such Frobenius elements are $3$-cycles. The first case allows odd $D$, or even $D$ with supersingular reduction and $v_2(D)=2$. The second corresponds to ordinary reduction with $\chi_2=-1$, or supersingular reduction with $v_2(D)=3$.
\end{itemize}
\section{Finite-level Selmer groups and the finite quotient of $\Sha$}\label{subsec:finite-size}
\begin{proposition}\label{prop:usual-structure}
Let $C$ be an elliptic curve over a number field $F$, with $C(F)[2]=0$. Put
\[
S_F=\Sel_{2^\infty}(C/F),\qquad X_F=\Sha(C/F)[2^\infty],\qquad
r_F=\rank C(F),\qquad s_F=\corank_{\Z_2}S_F,
\]
and let $\overline X_F$ be the quotient of $X_F$ by its maximal divisible subgroup. There exist $\eta_F\ge0$ and a finite multiset $e_1\ge\cdots\ge e_t\ge1$ such that there are noncanonical isomorphisms of abstract groups
\begin{align}
X_F&\simeq(\Q_2/\Z_2)^{\eta_F}\oplus\bigoplus_{i=1}^{t}(\Z/2^{e_i}\Z)^2,\label{eq:X-structure}\\
S_F&\simeq(\Q_2/\Z_2)^{s_F}\oplus\bigoplus_{i=1}^{t}(\Z/2^{e_i}\Z)^2,
\qquad s_F=r_F+\eta_F.\label{eq:S-structure}
\end{align}
For every $m\ge1$, coefficient inclusion identifies
\begin{equation}\label{eq:finite-Sel-structure}
\Sel_{2^m}(C/F)=S_F[2^m]\simeq(\Z/2^m\Z)^{s_F}
\oplus\bigoplus_i(\Z/2^{\min(m,e_i)}\Z)^2.
\end{equation}
In particular, $X_F$ is finite if and only if $r_F=s_F$.
\end{proposition}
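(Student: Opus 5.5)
The proof is a structure argument for cofinitely generated $\Z_2$-modules, combined with two inputs: the Kummer sequence and the Cassels--Tate pairing on the finite part of $\Sha$.

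First, $S_F$ is a cofinitely generated $\Z_2$-module, so it is isomorphic to $(\Q_2/\Z_2)^{s_F}\oplus\Phi$ with $\Phi$ a finite $2$-group. Next, apply the Kummer sequence at infinite level,
\[
0\longrightarrow C(F)\otimes\Q_2/\Z_2\longrightarrow S_F\longrightarrow X_F\longrightarrow0 .
\]
Since $C(F)[2]=0$, the torsion of $C(F)$ has odd order, so the left term is $(\Q_2/\Z_2)^{r_F}$. This subgroup is divisible, hence injective as an abelian group, so the sequence splits abstractly. Therefore $X_F\simeq(\Q_2/\Z_2)^{s_F-r_F}\oplus\Phi$, and we set $\eta_F=s_F-r_F\ge0$. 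The maximal divisible subgroup of $X_F$ is the $(\Q_2/\Z_2)^{\eta_F}$ summand, so $\overline X_F\simeq\Phi$.

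The key step is to show that $\Phi\simeq\bigoplus_i(\Z/2^{e_i}\Z)^2$. For this I would invoke the Cassels--Tate pairing. Its kernels on either side are exactly the maximal divisible subgroup, so it induces a nondegenerate pairing on $\overline X_F$. For elliptic curves it is alternating, not merely antisymmetric; this is the Poonen--Stoll result \cite{PS} for principal polarizations induced by a rational divisor, which applies to $[O]$. A finite abelian $2$-group carrying a nondegenerate alternating pairing with values in $\Q_2/\Z_2$ decomposes as an orthogonal sum of hyperbolic planes $(\Z/2^e)^2$. The standard induction is as follows: pick $x$ of maximal order $2^e$, pick $y$ with $\langle x,y\rangle$ of order $2^e$, note that $\langle x,x\rangle=0$ by alternation, so $\langle x,y\rangle$ is a hyperbolic plane, split it off, and recurse on its orthogonal complement. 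This proves \eqref{eq:X-structure} and \eqref{eq:S-structure}. I expect this to be the only genuinely nonformal step. The subtlety at $p=2$ is that antisymmetry alone allows $\langle x,x\rangle\neq0$, so the alternating property from \cite{PS} is essential.

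For \eqref{eq:finite-Sel-structure}, consider the sequence $0\to C[2^m]\to C[2^\infty]\xrightarrow{2^m}C[2^\infty]\to0$. Since $C(F)[2^\infty]=0$, the map $H^1(F,C[2^m])\to H^1(F,C[2^\infty])[2^m]$ is an isomorphism. The same holds locally, modulo local torsion. I would then check that the Selmer conditions correspond. A class in $\Sel_{2^m}$ maps into $S_F[2^m]$ by functoriality of the Kummer maps. Conversely, a class in $S_F[2^m]$ has local images in the Kummer images; at each place, its $2^m$-torsion lift differs from a local Kummer class by an element coming from $C(F_v)[2^\infty]/2^m$, which is itself a Kummer class. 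Hence the conditions match, giving $\Sel_{2^m}=S_F[2^m]$. Reading off the $2^m$-torsion of \eqref{eq:S-structure} then gives the displayed formula. Finally, $X_F$ is finite exactly when $\eta_F=0$, that is, when $r_F=s_F$.
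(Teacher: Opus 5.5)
Your proposal is correct and follows essentially the same route as the paper: the split infinite-level Kummer sequence with the cofinite structure theorem, the Poonen--Stoll alternating Cassels--Tate pairing with the maximal-order hyperbolic-plane induction, and the coefficient-sequence identification $\Sel_{2^m}(C/F)=S_F[2^m]$ using $C(F)[2^\infty]=0$. The only difference is cosmetic and lies in the local comparison: the paper chases the diagram through the injection $H^1(F_v,C)[2^m]\hookrightarrow H^1(F_v,C)[2^\infty]$, whereas you identify the kernel of the coefficient map with Kummer classes of torsion points. Your version also implicitly uses that $(C(F_v)\otimes\Q_2/\Z_2)[2^m]$ is the image of $C(F_v)/2^m$, which holds because $\Q_2/\Z_2$ is divisible.
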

\begin{proof}
The usual Kummer exact sequence is
\begin{equation}\label{eq:infty-kummer}
0\longrightarrow C(F)\otimes\Q_2/\Z_2
\longrightarrow S_F\longrightarrow X_F\longrightarrow0.
\end{equation}
By the Mordell--Weil theorem, the left term is isomorphic to $(\Q_2/\Z_2)^{r_F}$. A divisible abelian group is injective, so the sequence splits noncanonically. The Pontryagin dual of the Selmer group is a finitely generated $\Z_2$-module. The structure theorem therefore expresses both $S_F$ and its quotient $X_F$ as a direct sum of finitely many copies of $\Q_2/\Z_2$ and a finite $2$-group. In particular, $\overline X_F$ is finite.

The principal polarization of an elliptic curve is induced by the rational divisor $[O]$, so its polarization obstruction vanishes. The Cassels--Tate pairing on $\overline X_F$ is therefore perfect and alternating \cite[\S1, Corollary 7]{PS}. If $\overline X_F\ne0$, choose $x$ of maximal order $2^e$. Perfectness provides $y$ such that $\langle x,y\rangle$ also has order $2^e$. Multiplying $y$ by an odd integer, we may arrange $\langle x,y\rangle=2^{-e}\bmod\Z$. Maximality implies that $y$ has order $2^e$. If $ux+vy=0$, pairing with $x$ and $y$ gives $2^e\mid u,v$. Hence
\[
\langle x,y\rangle_{\mathrm{grp}}\simeq(\Z/2^e\Z)^2.
\]
The restriction of the pairing to this subgroup is perfect. Subtracting a suitable $ux+vy$ from any $z\in\overline X_F$ makes it orthogonal to both generators. Thus $\overline X_F$ is the direct sum of this subgroup and its orthogonal complement. Induction gives the paired elementary divisors, as in the group-theoretic conclusions of \cite[Theorem 8, Corollary 9]{PS}. Finally, comparison of divisible parts in \eqref{eq:infty-kummer} yields $s_F=r_F+\eta_F$ and the finiteness criterion.

For $J=F$ or a completion $F_v$, the finite- and infinite-level Kummer sequences are
\[
\begin{aligned}
0&\longrightarrow C(J)/2^mC(J)\longrightarrow H^1(J,C[2^m])
\longrightarrow H^1(J,C)[2^m]\longrightarrow0,\\
0&\longrightarrow C(J)\otimes\Q_2/\Z_2\longrightarrow H^1(J,C[2^\infty])
\longrightarrow H^1(J,C)[2^\infty]\longrightarrow0.
\end{aligned}
\]
The coefficient inclusion induces a commutative map between these rows whose rightmost map is injective. A finite-level class therefore maps into the infinite-level Kummer subgroup if and only if its image in the right term of the first row is zero, equivalently if it lies in the finite-level Kummer subgroup. Since $C(F)[2]=0$ implies $C(F)[2^\infty]=0$, the coefficient sequence gives the global identification $H^1(F,C[2^m])\simeq H^1(F,C[2^\infty])[2^m]$. Intersecting the corresponding local conditions yields
\[
\Sel_{2^m}(C/F)=S_F[2^m].
\]
Taking finite torsion in \eqref{eq:S-structure} proves \eqref{eq:finite-Sel-structure}.
\end{proof}

We apply this proposition to $C=E$ over $F=K$, using the notation $S_K,s_K,r_K,X_K,\overline X_K$ and $e_i$ for this pair.

\begin{corollary}\label{cor:sha-finite}
Suppose $s_K\le B$ and there exist $B$ independent $K$-rational points. Then
\[
r_K=s_K=B,\qquad\Sha(E/K)[2^\infty]\text{ is finite}.
\]
Under the intrinsic-index-zero hypothesis, one may take $B=2R+1$ by \eqref{eq:combined-bound}.
\end{corollary}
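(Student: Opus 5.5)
The plan is to read everything off Proposition~\ref{prop:usual-structure} applied to $C=E$ over $F=K$. Since $E(K)[2]=0$ by Lemma~\ref{lem:no-torsion}, that proposition applies and gives
\[
s_K=r_K+\eta_K,\qquad X_K\simeq(\Q_2/\Z_2)^{\eta_K}\oplus\bigoplus_i(\Z/2^{e_i}\Z)^2 .
\]
Finiteness of $\Sha(E/K)[2^\infty]$ is therefore equivalent to $\eta_K=0$, and hence to $r_K=s_K$. The whole argument reduces to pinning down $r_K$ and $s_K$ between the same two bounds.

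First I will get the lower bound on the rank. The hypothesis supplies $B$ independent points in $E(K)$, so $r_K\ge B$. Combining this with the hypothesis $s_K\le B$ and the identity $s_K=r_K+\eta_K$ from the proposition gives
\[
B\le r_K\le r_K+\eta_K=s_K\le B .
\]
All inequalities are then equalities, so $r_K=s_K=B$, $\eta_K=0$, and finiteness follows from the criterion at the end of Proposition~\ref{prop:usual-structure}.

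For the final sentence, I will invoke \eqref{eq:combined-bound} under the hypotheses of Theorem~\ref{thm:main-bound}, which gives $s_K\le2f+1\le2R+1$. This supplies the upper half of the hypothesis with $B=2R+1$. I should be careful here: the corollary only asserts that $B=2R+1$ is an admissible value for the corank bound. The existence of $2R+1$ independent points is still a hypothesis to be checked in any given application and is not supplied by \eqref{eq:combined-bound}.

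I expect no step to be a genuine obstacle. The only point needing care is checking that the torsion hypothesis $C(F)[2]=0$ of Proposition~\ref{prop:usual-structure} holds over $K$ itself, which is exactly the content of Lemma~\ref{lem:no-torsion} with $T=\varnothing$.
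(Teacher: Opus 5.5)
Your argument is correct and matches the paper's proof, which is exactly the chain $B\le r_K\le s_K\le B$ followed by the finiteness criterion of Proposition~\ref{prop:usual-structure}. Your reading of the last sentence is also right: \eqref{eq:combined-bound} supplies only the corank bound $s_K\le 2R+1$, while the $2R+1$ independent points remain a hypothesis.
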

\begin{proof}
We have $B\le r_K\le s_K\le B$. Apply Proposition~\ref{prop:usual-structure}.
\end{proof}
Set $u_m=\log_2\#\Sel_{2^m}(E/K)$, $u_0=0$, and $\Delta_m=u_m-u_{m-1}$. By \eqref{eq:finite-Sel-structure},
\begin{align}
u_m&=ms_K+2\sum_i\min(m,e_i),\label{eq:size-profile}\\
\Delta_m&=s_K+2\#\{i:e_i\ge m\},\qquad
\#\{i:e_i=m\}=\frac{\Delta_m-\Delta_{m+1}}2.\label{eq:size-differences}
\end{align}

In particular,
\[
\dim_{\F_2}\Sel_2(E/K)=s_K+2t,
\qquad \#\overline X_K=2^{2\sum_i e_i}.
\]
Since $\min(m,e_i)-\min(m-1,e_i)$ is $1$ precisely when $e_i\ge m$, \eqref{eq:size-differences} shows that $\Delta_m$ is nonincreasing. Each decrease records the number of pairs of elementary divisors ending at that level. The finite group $\overline X_K$ determines the eventual constant term, while the corank determines the slope. Combining \eqref{eq:size-profile} with $s_K=s_2(E)+s_2(E^D)\le a_0+a_1$ gives
\[
\lim_{m\to\infty}\frac{u_m}{m}=s_K\le a_0+a_1,
\qquad
\#\Sel_{2^m}(E/K)\le\#\overline X_K\,2^{m(a_0+a_1)}.
\]
When $\intr(E,K)=0$, Proposition~\ref{prop:nesting} gives $a_0+a_1=2R+1$, so the exponent becomes $m(2R+1)$. For $m\ge\max_i e_i$, we have $u_m=ms_K+\log_2\#\overline X_K$; if $t=0$, this identity holds for every $m$.

If $s_K$ is known independently and exact descent gives $\Delta_m=s_K$ at some level, then every $e_i<m$, so
\[
\log_2\#\overline X_K=u_m-ms_K.
\]
A factor $(\Z/2^e)^2$ contributes $2$ to each of the first $e$ differences, producing a plateau of length $e$. Thus finite plateaus can include contributions from the finite quotient. Once Corollary~\ref{cor:sha-finite} or another argument proves $r_K=s_K$, the divisible part of $\Sha$ vanishes, and the displayed formula computes the order of the entire group $\Sha(E/K)[2^\infty]$.

\section{Proofs of the Kolyvagin and Selmer structure theorems}\label{app:kolyvagin-structure}
For the descent argument, assume \Heegner, \Surjectivity{3}, and $T_E=0$. Lemma~\ref{lem:mod-eight} gives
\begin{equation}\label{eq:structure-hypotheses}
\rho_{E,2^\infty}(G_\Q)=\GL_2(\Z_2),\qquad T_E=0.
\end{equation}
These assumptions are used for uniform Chebotarev detection and for the local conditions at the bad primes, respectively.

The argument follows the local duality, prime replacement, and independence constructions of \cite{KolyvaginSha,KolyvaginPreprint}. Kolyvagin's set $B(E)$ consists of odd primes, so the applicable comparison is the arbitrary-prime formulation on printed pp.~10--12 of \cite{KolyvaginPreprint}. The original derivative $I_q=-D_q$ changes our class by the unit $(-1)^{\nu(n)}$. Section~\ref{sec:strong} identifies the original coefficient convention and the images $\tau'_{\lambda,n}$ with our classes, including the choice $k_0=0$. The CM norm, reduction, and conjugation identities used below are those of \cite[\S1, equations (3)--(5)]{KolyvaginSha}. We adapt the detection and prime-replacement arguments to the present hypotheses, keeping their losses uniform in the coefficient depth. These estimates and the resulting corank argument form the proof given here.

Put $R_M=\Z/2^M\Z$, $A_M=E[2^M]$, and abbreviate $\kappa_{n,M}=\kappa^{\std}_{n,M}(\varphi)$ for the fixed parametrization. For a finite-order element $x$, write
\[
\ell(x)=\log_2\ord(x),\qquad \ell(0)=0.
\]
We use conductors $n\in\mathcal N_{M+k}$ with $k\ge2$. Constants in the following estimates are independent of $M$, of the additional precision $k$, and of the auxiliary primes; they may depend on a specified bound for $\nu(n)$.

\subsection{The local calculation for standard derivatives}\label{app:kolyvagin-local}
The Weil pairing and local invariant give the perfect local Tate pairing
\[
\langle\ ,\ \rangle_v:
H^1(K_v,A_M)\times H^1(K_v,A_M)
\longrightarrow \tfrac1{2^M}\Z/\Z.
\]
The Kummer subgroup $H^1_{\fin}(K_v,A_M)$ is its own orthogonal complement. These are the self-duality statements for $[2^M]$ in \cite[Chapter I, Corollaries 2.3 and 3.4 and Remark 3.5]{Milne}. Consequently the singular localization of a global class defines a functional on usual Selmer classes by
\begin{equation}\label{eq:testing-functional}
\ell_{v,z}(x)=\langle\loc_vx,\loc_vz\rangle_v.
\end{equation}
Brauer reciprocity gives, for any two global classes,
\begin{equation}\label{eq:global-reciprocity}
\sum_v\langle\loc_vx,\loc_vz\rangle_v=0.
\end{equation}
Terms vanish wherever both localizations lie in the same self-orthogonal local condition.

Let $q$ be inert in $K$, with $2^{M+k}\mid q+1,a_q$, and write $\pi=\Frob_q$ on the reduction of $E$. Its characteristic polynomial implies $\pi^2=1$ on $A_M$, so $A_M$ is rational over $K_q$. Local class field theory gives a decomposition
\begin{equation}\label{eq:deep-local-splitting}
H^1(K_q,A_M)=H^1_{\fin}(K_q,A_M)\oplus H^1_{\trans}(K_q,A_M).
\end{equation}
Here the first summand consists of unramified classes, and the second consists of characters vanishing on the uniformizer $q$. Equivalently, the latter is the kernel of restriction to the degree-$2^M$ subextension of the local ring class extension. Each summand is free of rank two over $R_M$ and is self-orthogonal. For the transverse summand this also follows directly from the tame Hilbert symbol: $q$ is a norm from this extension and $-1$ is a $2^M$th power in $K_q$. The pairing between the two summands is perfect.

Choose a generator $\sigma_q$ of the ring class inertia group. Evaluation on unramified Frobenius and on $\sigma_q$ identifies the two summands with $A_M$; write these coordinates as $U_q$ and $V_q$, respectively. The normalization of $\sigma_q$ changes $V_q$ by an odd unit. Under these coordinates the finite--transverse pairing is the Weil pairing, with the corresponding unit normalization.

\begin{lemma}\label{lem:deep-derivative-local}
If $n\in\mathcal N_{M+k}$ and $k\ge2$, then $\kappa_{n,M}$ is finite at every place outside $n$ and transverse at every $q\mid n$. There is an isomorphism
\[
\Phi_q:H^1_{\fin}(K_q,A_M)\longrightarrow H^1_{\trans}(K_q,A_M),
\qquad V_q\Phi_q=\pi U_q,
\]
such that
\begin{equation}\label{eq:deep-finite-transverse}
\loc_q\kappa_{n,M}=\Phi_q(\loc_q\kappa_{n/q,M}).
\end{equation}
Moreover,
\begin{equation}\label{eq:derivative-sign}
c\kappa_{n,M}=\epsilon_{\nu(n)}\kappa_{n,M},
\qquad \epsilon_r=(-1)^r(-w(E)).
\end{equation}
The map $\Phi_q$ reverses the cohomological conjugation sign.
\end{lemma}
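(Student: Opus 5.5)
The plan is to follow Kolyvagin's classical local computation, as in \cite[\S1, equations (3)--(5)]{KolyvaginSha} and \cite[Proposition 6.2 and \S6]{GrossKolyvagin}, while tracking the $2$-power precision explicitly. Write $n=mq$ with $q\mid n$.

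\textbf{Places outside $n$.} At a prime $v\nmid n$ of good reduction, $H_n/K$ is unramified at $v$. The point $W_n$ lies in $E(H_n)$, so the restriction of $\kappa_{n,M}$ to $H_{n,w}$ is a Kummer class. Descent through the unramified extension keeps it unramified, hence finite. This uses $E(H_{n,w})[2^\infty]$ only through the inflation--restriction kernel, which is an unramified $H^1$ and lies in the finite part.

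At the bad primes $\ell\mid N$, the prime $\ell$ splits in $K$ and $H_n$ is unramified there. The inflation kernel is $H^1(\Gal(H_{n,w}/K_v),E(H_{n,w})[2^M])$. This group is killed by Tamagawa-type considerations. I expect $T_E=0$ to make the component-group obstruction vanish, as in Gross's argument that $[c_\ell]$ kills the obstruction.

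At the archimedean places there is nothing to prove, since they are complex.

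\textbf{The place $q$.} Here the main computation is Kolyvagin's formula. Since $q$ is inert in $K$ and totally ramified in $H_n/H_m$, the Frobenius-fixed local structure reduces to computing $(\sigma_q-1)D_q y_n$ modulo $2^M$. By \eqref{eq:derivative-identity}, the quantity $(\sigma_q-1)D_q W_n$ equals $(q+1)W_n$ minus the $D_m$-derivative of $a_q y_m$. Dividing by $2^M$, using $2^{M+k}\mid q+1,a_q$, gives the cocycle value on $\sigma_q$. It is expressed via the Eichler--Shimura congruence $y_n\equiv \Frob_\lambda(y_m)$ modulo the prime above $q$ \cite[Proposition 3.7(2)]{GrossKolyvagin}.

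This yields $V_q(\loc_q\kappa_{n,M}) = \pi\,U_q(\loc_q\kappa_{m,M})$ up to the unit normalization of $\sigma_q$, which defines $\Phi_q$.

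The transverse property is separate. The class becomes a Kummer class of a point over the totally ramified extension, and the cocycle vanishes on the chosen uniformizer lift. The extra precision $k\ge2$ is used to absorb the quotients $(q+1)/2^M$ and $a_q/2^M$: these are even multiples, so they contribute trivially modulo $2^M$ after the correction terms. This is the main obstacle for $p=2$, where Kolyvagin's odd-$p$ cancellations no longer come for free, and where the $2$-torsion-free statement Lemma~\ref{lem:no-torsion} for $H_n$ has to be invoked at each descent.

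\textbf{The conjugation sign.} For \eqref{eq:derivative-sign}, I would use $c\,y_n=w(E)\cdot\sigma y_n+$torsion \cite[Proposition 5.4]{GrossKolyvagin}. I would also use $c\sigma_q c=\sigma_q^{-1}$, which gives $cD_q\equiv -\sigma D_q$ modulo $(q+1,\mathrm{Norm}_q)$. Each prime contributes a factor $-1$, producing $(-1)^{\nu(n)}(-w(E))$; the extra $-1$ is the usual sign for $y_K$ in analytic rank one.

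Finally, that $\Phi_q$ reverses the sign follows from comparing \eqref{eq:deep-finite-transverse} with \eqref{eq:derivative-sign} for $n$ and $n/q$. Alternatively, it follows directly because $c$ inverts $\sigma_q$ and hence negates the $V_q$ coordinate relative to $U_q$.
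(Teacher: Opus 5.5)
Your outline follows the paper's route (Kolyvagin's local computation), but several load-bearing steps are either wrong or only asserted.

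\emph{Outside $n$.} The chain ``Kummer class over $H_{n,w}$, hence unramified, hence its descent is unramified, hence finite'' breaks at a good place $v\mid2$. There the Kummer image strictly contains $H^1_{\ur}$, so $\loc_v\kappa_{n,M}$ is finite but in general ramified. The inflation--restriction kernel with coefficients $E(H_{n,w})[2^M]$ does not decide the question. What works at every place is to map $\loc_v\kappa_{n,M}$ to $H^1(K_v,E)[2^M]$. It dies over $H_{n,w}$, so it lies in $H^1(\Gal(H_{n,w}/K_v),E(H_{n,w}))$. This group vanishes by Lang's theorem at good places of any residue characteristic. At $\ell\mid N$ (split in $K$) it injects into $H^1(k_v,\Phi_v)$ for the component group $\Phi_v$, a group of order $c_\ell(E)$, which is odd since $T_E=0$. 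At the bad places you use the wrong coefficient group and only ``expect'' the conclusion.

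Your sign input is also off. With the paper's normalization, $cy_n=-w(E)\sigma y_n$ modulo torsion of odd order, and that oddness is what lets the torsion drop out. Your $cy_n=w(E)\sigma y_n$ yields $(-1)^{\nu(n)}w(E)$, and the appended ``extra $-1$'' is not an argument.

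\emph{At $q$.} Transversality is the statement $\res_{(H_n)_w}\loc_q\kappa_{n,M}=0$, i.e.\ $W_n\in2^ME((H_n)_w)$. Observing that this restriction is a Kummer class over the totally ramified extension gives nothing. Restriction to $(H_n)_w$ is injective on the finite summand, which is exactly the component that must vanish. The missing idea is that $\sigma_q$ lies in the inertia group at $w$, so $D_q$ acts on reductions through its augmentation $\sum_j j=q(q+1)/2$. With $\widetilde{gy_n}=\pi\widetilde{gy_{n/q}}$ this gives $\widetilde W_n=\tfrac{q(q+1)}2\pi\widetilde W_{n/q}$, and one then lifts through the formal group. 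This is exactly where $p=2$ costs precision: $2^M\mid q(q+1)/2$ requires $2^{M+1}\mid q+1$.

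Your account of the extra precision is mistaken. Put $m=2^M$. The correct identity is $(1-\sigma_q)W_n=a_qW_{n/q}-(q+1)JD_{n/q}y_n$, with $J$ the sum of coset representatives; the $(q+1)$-term is not $(q+1)W_n$. From it, the quotients $a_q/m$ and $(q+1)/m$ do not contribute trivially. They produce the transverse coordinate $V_q(\loc_q\kappa_{n,M})=\bigl(\tfrac{a_q}{m}-\tfrac{q+1}{m}\pi\bigr)\widetilde W_{n/q}$, which is nonzero in general.

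Your ``this yields $V_q=\pi U_q$'' also skips the real comparison. Choose $R$ with $mR=\widetilde W_{n/q}$; then the finite coordinate of $\kappa_{n/q,M}$ is $u=(\pi^2-1)R$. The relation $\pi^2-a_q\pi+q=0$ gives $\pi u=(a_q-(q+1)\pi)R+a_qu$, and $a_qu=0$.

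Finally, comparing the global classes for $n$ and $n/q$ cannot show that $\Phi_q$ reverses signs, since those classes may vanish. Your second reason is the right one once made local. A Frobenius lift acts by $\pi$ on $U_q$. Since it conjugates tame inertia to its $q$th power, it acts by $q^{-1}\pi\equiv-\pi$ on $V_q$.
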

\begin{proof}
Write $m=2^M$ and use compatible representatives in the full trace defining $W_n$ in Section~\ref{sec:cm}. The ideal $(q)$ is principal in the order of conductor $n/q$, so it splits completely in $H_{n/q}/K$. Thus $(H_n)_w/K_q$ is totally ramified cyclic of degree $q+1$, with residue field $\F_{q^2}$. The CM congruence $\widetilde{g y_n}=\pi\widetilde{g y_{n/q}}$ for the compatible conjugates $g$ \cite[\S1, equation (4)]{KolyvaginSha} and the definition of $D_q$ give
\begin{equation}\label{eq:local-W-reduction}
\widetilde W_n=\frac{q(q+1)}2\,\pi\widetilde W_{n/q}.
\end{equation}
The coefficient on the right is divisible by $m$. Multiplication by $m$ is an isomorphism on the formal group at $q$, so a residue-field division point lifts to $Q\in E((H_n)_w)$ with $mQ=W_n$. The restriction of $\kappa_{n,M}$ to this local extension is therefore zero. Its kernel is precisely the transverse subgroup in \eqref{eq:deep-local-splitting}.

The descent cocycle is
\[
z(g)=gQ-Q+A_g,\qquad mA_g=(1-g)W_n,\quad A_g\in E(H_n).
\]
The global point $A_g$ exists by invariance modulo $m$ and is unique by Lemma~\ref{lem:no-torsion}. Let $J$ be the sum of the compatible coset representatives and put $B_{n,q}=JD_{n/q}y_n$. The identity
$(1-\sigma_q)D_q=\Norm_q-(q+1)$ and the norm relation give
\[
A_{\sigma_q}=\frac{a_q}{m}W_{n/q}-\frac{q+1}{m}B_{n,q},
\qquad
\widetilde B_{n,q}=\pi\widetilde W_{n/q}.
\]
The coefficients are integers by admissibility. Since $\sigma_q$ acts
trivially on the residue field, reducing the cocycle gives
\[
V_q(\loc_q\kappa_{n,M})
=\left(\frac{a_q}{m}-\frac{q+1}{m}\pi\right)\widetilde W_{n/q}.
\]
To compare with the finite coordinate, choose $R$ over the algebraic residue field with $mR=\widetilde W_{n/q}$ and put $u=(\pi^2-1)R$. Since $\pi^2\widetilde W_{n/q}=\widetilde W_{n/q}$, we have $u\in A_M$, and $u$ is the finite Kummer coordinate. From $\pi^2-a_q\pi+q=0$,
\[
\pi u=(a_q-(q+1)\pi)R+a_qu
=\left(\frac{a_q}{m}-\frac{q+1}{m}\pi\right)\widetilde W_{n/q}.
\]
Here $a_qu=0$. This proves \eqref{eq:deep-finite-transverse}. Conjugation acts on the finite coordinate by $\pi$ and on the tame coordinate by $q^{-1}\pi=-\pi$ modulo $m$, proving the sign reversal.

Outside $n$, the ring class extension is unramified. The image of $\kappa_{n,M}$ in $H^1(K_v,E)[m]$ is therefore unramified. At a place of good reduction it is zero by the N\'eron model and Lang's theorem, including when $v\mid2$. At a bad place the Heegner hypothesis makes $K/\Q$ split, and $T_E=0$ makes the rational Tamagawa number odd. The unramified torsor group is $H^1(k_v,\Phi_v)$ for the component group \cite[Chapter I, Proposition 3.8]{Milne}. On its finite geometric points, Frobenius invariants and coinvariants have the same order; the invariants have order $c_v(E)$. Its $2$-primary part is consequently zero. This proves the finite condition outside $n$.

Finally, the conjugation formula for Heegner points \cite[\S1, equation (5)]{KolyvaginSha} is $cy_n=-w(E)\sigma y_n$ modulo rational torsion, for a ring class automorphism $\sigma$. Conjugation inverts each $\sigma_q$, and
$D_q(\sigma_q^{-1})\equiv-D_q(\sigma_q)$ modulo the norm operator and $q+1$. After application to the points, the norm terms are multiples of $a_q$, hence vanish modulo $m$. The full trace removes $\sigma$ modulo $m$, and rational torsion has odd order by the residual $S_3$ hypothesis. This gives \eqref{eq:derivative-sign}.
\end{proof}

For $\eta=\pm1$, a superscript $\eta$ denotes $\ker(c-\eta)$. On $T_2E$, an involution of determinant $-1$ has a primitive $+1$-eigenvector. In a basis beginning with this vector its matrix is
$\left(\begin{smallmatrix}1&a\\0&-1\end{smallmatrix}\right)$. Changing the second basis vector changes $a$ by an element of $2\Z_2$, so the two integral normal forms are
\[
\begin{pmatrix}1&0\\0&-1\end{pmatrix},
\qquad
\begin{pmatrix}1&1\\0&-1\end{pmatrix}.
\]
Their primitive eigenvectors span a sublattice of index one and two, respectively.

This applies uniformly to every prime above. Indeed, on $T_2E$ put
\[
B_q=\pi-a_q/2,\qquad
u_q^2=a_q^2/4-q,\quad u_q\equiv1\pmod{2^{M+k-1}}.
\]
The square root exists since $M+k\ge3$, and $C_q=B_q/u_q$ is an integral involution congruent to $\pi$ modulo $2^M$. Thus each signed finite or transverse group is one cyclic direction, with a kernel and cokernel killed by a fixed power of $2$. Choose primitive eigenvectors $e_{q,\pm}$ for $C_q$. One may take scalar coordinates
\[
\psi_q^\eta(x)=[U_q(x),e_{q,-\eta}]_M\in R_M
\]
on the signed finite subgroup, where brackets denote the additive Weil pairing after a choice of roots of unity. On a global class, $\psi_q^\eta$ denotes its composition with localization. Their kernels are killed by $2$. For finite classes $x$ of sign $\eta$ and $y$ of sign $-\eta$, one may take $c_{\rm loc}=1$ in
\begin{equation}\label{eq:signed-pairing-estimate}
\ell\bigl(\langle x,\Phi_qy\rangle_q\bigr)
\ge \ell(x)+\ell(y)-M-c_{\rm loc}
\end{equation}
whenever the right side is positive. Indeed, in the first normal form, the coordinates of classes of opposite signs have the shape
\[
U_q(x)=(a,2^{M-1}b),\qquad U_q(y)=(2^{M-1}c,d).
\]
For $M\ge2$, their pairing is $ad$ up to an odd unit. Positivity of the asserted lower bound forces both orders to exceed two, so these orders are determined by $a$ and $d$. In the second normal form the coordinates are $(a,0)$ and $(b,-2b)$, and the pairing is $2ab$ up to an odd unit, giving exactly the stated loss. These computations also show that the kernels of $\psi_q^\eta$ are killed by $2$. Interchanging the signs preserves all order estimates. For $M=1$ the lower bound is never positive, so the assertion remains valid.

\subsection{Chebotarev detection with a uniform loss}\label{app:kolyvagin-detection}
\begin{lemma}\label{lem:deep-character-detection}
There is an integer $c_{\rm ch}\ge0$, independent of the coefficient
depth, the prescribed finite subgroups, and the auxiliary primes, with
the following property; the proof below gives $c_{\rm ch}=8$.
For every $M\ge1$, every pair of finite subgroups
$C_\eta\subset H^1(K,A_M)^\eta$, every pair of characters
$\chi_\eta:C_\eta\to R_M$ for $\eta=\pm1$, every $H\ge M+2$,
and every finite set $\Sigma$ of rational primes, there are infinitely
many primes $q\notin\Sigma$, $q\nmid2ND$, inert in $K$, such that
$2^H\mid q+1,a_q(E)$ and every class in $C_++C_-$ is unramified at $q$.
For each such prime, local coordinates can be chosen so that
\begin{equation}\label{eq:prescribed-local-characters}
\psi_q^\eta(x)=2^{c_{\rm ch}}\chi_\eta(x)
\qquad(x\in C_\eta,\ \eta=\pm1).
\end{equation}
For other primitive eigenvector and root-of-unity normalizations,
the right side is $u_{q,\eta}2^{c_{\rm ch}}\chi_\eta(x)$ with
$u_{q,\eta}\in R_M^\times$ independent of $x$.
In particular, given $x_\eta\in C_\eta$, the characters can be chosen
so that the same prime satisfies
\[
\ell(\psi_q^\eta(x_\eta))\ge\ell(x_\eta)-c_{\rm ch},
\qquad
\ell(\loc_qx_\eta)\ge\ell(x_\eta)-c_{\rm ch}
\qquad(\eta=\pm1).
\]
\end{lemma}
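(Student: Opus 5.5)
We follow the cocycle-evaluation and Chebotarev argument of Lemma~\ref{lem:detection}, now at depth $M$ with coefficients $A_M$ and $G_K$ in place of $G_\Q$, keeping track of a bounded loss. Put $F_H=K(E[2^{H+1}])$. Full $2$-adic image \eqref{eq:structure-hypotheses} together with $K\cap\Q(E[2])=\Q$ (Lemma~\ref{lem:no-torsion}) shows that $\Gal(F_H/K)$ contains the reduction of $\{g\in\GL_2(\Z_2):\det g\equiv1\}$ up to the cyclotomic constraint. Complex conjugation $c$ extends to $\Gal(F_H/\Q)$. We then fix $g_0=c\cdot\tau$ with $\tau\in G_K$ acting trivially on $E[2^{H+1}]$ and on $\mu_{2^{H+1}}$. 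Any Frobenius $\Frob_q$ with $q$ inert in $K$ and conjugate to such a $g_0h$ ($h\in G_{F_H}$) has image of order two with determinant $-1$ modulo $2^{H+1}$. This gives $2^{H}\mid q+1$ and $a_q\equiv\tr c\equiv0$. As in the construction of $\mathcal P_L$, we also impose the prescribed behaviour on $\sqrt\ell$ and $\zeta_8$.

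The main step is a uniform version of Lemma~\ref{lem:kernel-stable} and of the surjectivity of $\Phi$. We would show that the restriction map
\[
H^1(K,A_M)\longrightarrow\Hom_{G_K}(G_{F_H},A_M)
\]
has kernel $H^1(\Gal(F_H/K),A_M)$ killed by a fixed power $2^{c_1}$. This follows from the Sah-type argument: the scalar $1+2^{2}$ or $1+4$ lies in the centre of the image, and by Sah's lemma $(z-1)$ annihilates $H^1$, so $4$ kills it, independently of $M$ and $H$. The image of the joint evaluation map $G_{F_H}\to\bigoplus A_M$ on generators of $C_++C_-$ is then a $\Z_2[\GL_2]$-submodule. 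Using the fact that $\End$ of $A_M$ as a Galois module is $R_M$, together with the explicit decomposition of $\GL_2(\Z/2^M)$-submodules of $A_M^t$, we would show that this image contains $2^{c_2}$ times the image of the Hom-dual. Here $c_2$ is a fixed constant, and the argument is done on each eigenspace. This is the analogue of the semisimplicity argument in Lemma~\ref{lem:detection}, and it is the main obstacle, since $A_M$ is no longer semisimple. We would use the equivalence between $\GL_2(\Z_2)$-stable $\Z_2$-submodules of $M_2$-valued maps and lattices, losing at most a factor $2^2$ at each of the sign and Lie-algebra steps.

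Given this, we write $\psi_q^\eta(x)=[x(g_0h),e_{q,-\eta}]_M$ modulo the bounded error coming from the normal forms. Here $U_q$ is evaluation at Frobenius and $C_q$ is congruent to $\pi$. Choosing $h$ prescribes these values to be $2^{c_{\rm ch}}\chi_\eta(x)$, with total loss $c_{\rm ch}=c_1+c_2+c_{\rm loc}+\ldots\le8$. Changing eigenvector or root-of-unity normalizations multiplies the coordinates by units, which gives the $u_{q,\eta}$ statement.

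Chebotarev in the finite extension cut out by the cocycles then gives infinitely many primes avoiding $\Sigma$ and the ramification, and conjugation changes the values only by coboundaries, as before. For the final clause, for each $\eta$ we choose $\chi_\eta$ with $\ell(\chi_\eta(x_\eta))=\ell(x_\eta)$; such a character exists by the structure of the finite $R_M$-module $C_\eta$. Since $\psi_q^\eta$ factors through $\loc_q$, both inequalities follow.
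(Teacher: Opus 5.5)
Your outline matches the paper's: a central scalar bounds the restriction kernel, and Chebotarev supplies a Frobenius conjugate to $c$ times an element of $G_{F_H}$. The Sah step is fine in substance; a scalar such as $9I$ lies in the image of $G_K$ because $1+8M_2(\Z_2)$ consists of squares in $1+4M_2(\Z_2)$.

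\textbf{The key step is only asserted.} The step you yourself call the main obstacle is never proved. Nothing shows that the joint evaluation image contains $2^{c_2}$ times the expected image, and the ``sign and Lie-algebra steps'' losing $2^2$ each are not an argument. So the bound $c_{\rm ch}\le 8$ is unsupported. Also, $c_{\rm loc}$, which concerns the local Tate pairing \eqref{eq:signed-pairing-estimate}, plays no role in $\psi_q^\eta$.

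The paper's missing idea is to avoid the module structure of $A_M^t$ altogether. On the external sum $C_+\oplus C_-$ it uses the scalar-valued map $F(h_+,h_-)(z)=2[h_+(z),e_-]_M+2[h_-(z),e_+]_M$ into $\Hom(G_L,R_M)$. It proves that $\ker F$ is killed by $2^8$: conjugate $z$ by elements with image $1+8B$ and choose $B^*e_+=0$, $B^*e_-=2x$. It then needs only duality for the finite abelian group $F(C)$ to find $t\in G_L$ with $F(x_+,x_-)(t)=2^8(\chi_+(x_+)+\chi_-(x_-))$.

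Along your route, a Morita description could fill the step: stable subgroups of $A_M^t$ are of the form $A_M\otimes N$ once the image of $G_K$ spans $M_2(R_M)$ additively. Combined with double annihilators and your Sah bound, that would work. But it must be proved, and the inconsistencies coming from $C_+\cap C_-$ still have to be absorbed.

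\textbf{The local coordinate is wrong.} Since $q$ is inert in $K$, the finite coordinate $U_q$ is evaluation at the Frobenius of the prime of $K$ above $q$. That Frobenius is $(g_0h)^2\in G_K$. Your expression $[x(g_0h),e_{q,-\eta}]_M$ evaluates a class in $H^1(K,A_M)$ at an element outside $G_K$, so it is undefined. This is Lemma~\ref{lem:detection}, where the classes live over $\Q$, transported without adaptation.

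With $s=\tau h\in G_{F_H}$, the correct value for $x$ of sign $\eta$ is $x((cs)^2)=(1+\eta c)x(s)$. Pairing with $e_{-\eta}$ then gives $2[x(s),e_{-\eta}]_M$. This identity is what lets a single $s$ control both signs simultaneously, and it is the source of the factor $2$ in $F$.

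\textbf{The eigenvector comparison is missing.} The exact equality \eqref{eq:prescribed-local-characters} needs the primitive eigenvectors of $C_q$ to agree with those of $c$ modulo $2^M$. The paper takes $e_{q,\eta}=\tfrac12(1+\eta C_q)e_\eta$, using $C_q\equiv c\pmod{2^{H-1}}$ and $H\ge M+2$. Treating this as a ``bounded error from the normal forms'' cannot give an exact identity. Note also that the hypothesis $H\ge M+2$ is never used in your argument.
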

\begin{proof}
Let $L=K(E[2^H])$ and $T=T_2E$. The image of $G_K$ has index at most two in $\GL_2(\Z_2)$ and contains $1+8M_2(\Z_2)$. For the latter assertion, every $1+8B$ is a square in $1+4M_2(\Z_2)$, by successive square-root lifting, and an index-two quotient kills squares. In particular the scalar $9I$ belongs to $\Gal(L/K)$. The central-scalar cocycle identity
\[
8z(g)=(g-1)z(9I)
\]
shows that $H^1(\Gal(L/K),A_M)$ is killed by $8$. Inflation--restriction therefore says that the kernel of restriction to $G_L$ is killed by $8$.

Choose primitive eigenvectors $e_\pm$ for actual complex conjugation on $T$. On the external direct sum $C=C_+\oplus C_-$ define
\[
F:C\longrightarrow\Hom(G_L,R_M),\qquad
F(h_+,h_-)(z)=2[h_+(z),e_-]_M+2[h_-(z),e_+]_M.
\]
Restrictions to $G_L$ are genuine homomorphisms. We claim that $\ker F$ is killed by $2^8$. If $F(h_+,h_-)=0$, conjugate $z$ by an element of $G_K$ with image $1+8B$ and subtract the original equality. Equivariance of restricted cocycles gives
\[
16\bigl([Bh_+(z),e_-]_M+[Bh_-(z),e_+]_M\bigr)=0.
\]
Let $B^*$ be the adjoint for the Weil pairing. Since the eigenlattice has index at most two, for every $x\in T$ one can choose integral $B^*$ with
$B^*e_+=0$ and $B^*e_-=2x$. Hence $32[h_+(z),x]_M=0$ for every $x,z$. Interchanging the signs gives $32h_\pm|_{G_L}=0$, and the restriction-kernel bound gives $256h_\pm=0$, as asserted.

The character $2^8(\chi_++\chi_-)$ kills $\ker F$ and hence factors
through $F(C)$. The evaluation map
\[
G_L\longrightarrow\operatorname{Hom}(F(C),R_M)
\]
is surjective: after quotienting by the common kernel of the characters
in $F(C)$, this is the usual duality for a finite abelian group of
exponent dividing $2^M$. Choose $t\in G_L$ with
\[
F(x_+,x_-)(t)=2^8\bigl(\chi_+(x_+)+\chi_-(x_-)\bigr)
\qquad(x_\eta\in C_\eta).
\]
Take a finite normal extension of $\Q$ containing $L$ and the fields
cut out by the restricted cocycles, and exclude its ramified primes,
the ramified primes of the classes in $C_++C_-$, the primes dividing
$2ND$, and $\Sigma$.
Chebotarev gives infinitely many primes with Frobenius conjugate to
$tc$; choose the place above each prime so that this representative is
used. Since $t\in G_L$, such a prime is inert in $K$ and its action on
$E[2^H]$ is that of $c$, so $2^H\mid q+1,a_q(E)$.
For $x\in C_\eta$, the finite coordinate is
\[
U_q(\loc_qx)=x((tc)^2)=(1+\eta c)x(t).
\]
The equality is independent of the cocycle representative because
$(tc)^2\in G_L$ acts trivially on $A_M$. Moreover,
\[
[(1+\eta c)x(t),e_{-\eta}]_M
=2[x(t),e_{-\eta}]_M
=2^8\chi_\eta(x).
\]
This gives \eqref{eq:prescribed-local-characters} with $c_{\rm ch}=8$
for coordinates transported from the fixed complex-conjugation basis.

These coordinates agree with the local integral-eigenvector
coordinates used above. Indeed, $\pi\equiv c\pmod{2^H}$ and
$C_q=(\pi-a_q/2)/u_q$ satisfy
$C_q\equiv c\pmod{2^{H-1}}$. Thus
\[
e_{q,\eta}:=\frac{1+\eta C_q}{2}e_\eta
\]
is an integral primitive $\eta$-eigenvector for $C_q$, and
$e_{q,\eta}\equiv e_\eta\pmod{2^M}$ because $H\ge M+2$.
Any other primitive generator of the same integral eigenline differs
by a unit. A change of the root-of-unity normalization also contributes
a unit, proving the stated form with $u_{q,\eta}$.
Finally, a character of the finite group $C_\eta$ can be chosen to take
$x_\eta$ to an element of the same order. Equation
\eqref{eq:prescribed-local-characters} then gives both order estimates.
\end{proof}

\subsection{Relaxed Selmer groups and replacement of primes}\label{app:kolyvagin-replacement}
For squarefree $\lambda\in\mathcal N_{M+k}$ and $\delta\mid\lambda$, with $\beta=\lambda/\delta\ne1$, define
\[
\mathcal R_{\lambda,\delta,M}
=\{x\in H^1(K,A_M):
\loc_vx\in H^1_{\fin}\ (v\nmid\lambda),\
\loc_px\in H^1_{\trans}\ (p\mid\delta)\}.
\]
There is no condition at the primes of $\beta$.

\begin{lemma}\label{lem:relaxed-large-order}
For each bound $r$ there is a constant $c_r$ such that, if $\nu(\lambda)\le r$, each group $\mathcal R_{\lambda,\delta,M}^{\eta}$ contains an element of order at least $2^{M-c_r}$. One may take $c_r=18$ for every $r$.
\end{lemma}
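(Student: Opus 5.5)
Lemma~\ref{lem:relaxed-large-order} should follow from a Poitou--Tate/Euler-characteristic count for the relaxed structure $\mathcal R_{\lambda,\delta,M}$ against its dual, with all error terms bounded by a constant independent of $M$. The plan is to compare $\mathcal R_{\lambda,\delta,M}$ with the dual Selmer structure $\mathcal R^*$, which has the same conditions at $v\nmid\lambda$ and at $p\mid\delta$ (both self-dual there), and the strict condition $0$ at each $q\mid\beta$. Duality turns the relaxation at the primes of $\beta$ into a length excess.

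In order, I would carry out four steps.

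\emph{Step 1: global Euler characteristic.} The Euler characteristic formula (Greenberg--Wiles, or equivalently Poitou--Tate with self-orthogonal conditions away from $\beta$) gives
\[
\log_2\#\mathcal R_{\lambda,\delta,M}-\log_2\#\mathcal R^*=\sum_{q\mid\beta}\tfrac12\log_2\#H^1(K_q,A_M)=2M\nu(\beta),
\]
using that each $H^1(K_q,A_M)$ is free of rank four over $R_M$ by \eqref{eq:deep-local-splitting}. Since $\beta\ne1$, the excess is at least $2M$.

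\emph{Step 2: split into eigenspaces.} Complex conjugation acts on both groups because the conditions at $q$ are $c$-stable (Lemma~\ref{lem:deep-derivative-local}). I would split the count into $\pm$ parts, up to bounded $2$-torsion errors. The local group at $q$ decomposes into signed pieces, each cyclic of length $M$ up to a bounded cokernel, by the normal-form discussion after Lemma~\ref{lem:deep-derivative-local}. This gives an excess of at least $M\nu(\beta)-c$ in each sign $\eta$.

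\emph{Step 3: control the rank.} The excess alone does not give a large cyclic element, because $\mathcal R^\eta$ could be a product of many small cyclic groups. I would bound the number of generators of $\mathcal R^\eta[2]$, uniformly in $M$, by the $\F_2$-dimension of the corresponding mod-$2$ relaxed Selmer group. That group sits inside the usual $2$-Selmer group relaxed at at most $r$ primes, so its dimension is at most $d_2$ of $E/K$ plus $2r$. This depends on $E$ and $r$ but not on $M$. With $g$ generators and total length at least $M-c$, some cyclic factor has length at least $(M-c)/g$. That is too weak.

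\emph{Step 4: extract a large element, the main obstacle.} Instead I would argue that the image of $\mathcal R^\eta$ under localization at a single $q\mid\beta$ has length at least $M-c$. This image is the orthogonal complement of the localization of the strict group $\mathcal R^{*,-\eta}$, whose image in the transverse/finite pieces at $q$ is killed by a bounded power of $2$. To get that, I would use the $\mathrm{GL}_2(\Z_2)$-image argument of Lemma~\ref{lem:deep-character-detection}, which kills restriction kernels by $8$, together with the condition $\loc_q=0$ for the strict group. An element of $\mathcal R^\eta$ whose localization at $q$ has large order then has large order itself. The constants combine to $c_{\rm ch}$-type losses, namely $8$ from detection, $1$ from \eqref{eq:signed-pairing-estimate}, and small sign-splitting terms, which would give the claimed $c_r=18$.

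I expect Step 4 to be the main difficulty. The point is to ensure that the loss does not grow with $\nu(\beta)$ or $M$, that is, that the strict dual's localization at $q$ is uniformly small.
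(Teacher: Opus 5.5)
Your Steps 1--2 agree with the opening of the paper's proof. The localization image $Z_M$ of $\mathcal R_{\lambda,\delta,M}$ in $\bigoplus_{p\mid\beta}H^1(K_p,A_M)$ is its own orthogonal complement, so $\log_2|Z_M|=2Mb$ with $b=\nu(\beta)$, and one then splits by sign. To get a lower bound in \emph{each} sign you also need an upper bound $\log_2|Z_M^\eta|\le Mb+5b$. The paper obtains it from isotropy in $\mathcal H_{\beta,M}^\eta$ modulo the radical of the signed pairing, and uses it again below.

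Step 4, however, fails. The strict group has zero localization at $q$ by definition, so the image of $\mathcal R_{\lambda,\delta,M}$ at $q$ is not the orthogonal complement of its localization. The correct single-prime dual is the group relaxed at $q$ and strict at $\beta/q$. Its image at $q$ is isotropic but not small in general: if $\beta=q$ it is $\mathcal R_{\lambda,\delta,M}$ itself, whose image at $q$ is a Lagrangian of order $2^{2M}$.

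No local or Chebotarev argument can repair this. The primes of $\beta$ are given, not chosen. For even $M$, the subgroup $2^{M/2}H^1(K_q,A_M)$ is a conjugation-stable Lagrangian of exponent only $2^{M/2}$, so local duality alone cannot force an element of order $2^{M-c}$. The constants $8$ and $1$ you invoke belong to Lemma~\ref{lem:deep-character-detection} and \eqref{eq:signed-pairing-estimate}, which play no role here, so your derivation of $c_r=18$ is not grounded.

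The missing idea is a comparison across coefficient levels with the same conductor. Coefficient inclusion identifies $\mathcal R_{\lambda,\delta,m}$ with $\mathcal R_{\lambda,\delta,M}[2^m]$, compatibly with conjugation and with the injective localization maps at $\beta$. The conductor $\lambda$ still has additional precision $M+k-m\ge2$ at level $m$, so the signed count holds there as well:
$b(m-13)\le\log_2|\loc_\beta\mathcal R^\eta_{\lambda,\delta,m}|\le b(m+5)$.
If $2^e$ is the exponent of $\mathcal R^\eta_{\lambda,\delta,M}$, this group \emph{is} $\mathcal R^\eta_{\lambda,\delta,e}$, with the same localization image. The lower bound at level $M$ and the upper bound at level $e$ then give $b(M-13)\le b(e+5)$, that is, $M-e\le18$.

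This converts the length count directly into an exponent bound. It needs no control on the number of generators (your Step 3) and no Chebotarev input.
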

\begin{proof}
Put $b=\nu(\beta)\ge1$ and let $Z_M$ be the localization image in
\[
\mathcal H_{\beta,M}=\bigoplus_{p\mid\beta}H^1(K_p,A_M)\simeq R_M^{4b}.
\]
At the primes dividing $\beta$, the strict condition $0$ and the relaxed condition $H^1(K_p,A_M)$ are dual; the prescribed conditions elsewhere are self-dual. Poitou--Tate duality \cite[Chapter I, Theorem 4.10]{Milne} therefore gives the comparison sequence
\[
\mathcal R_{\lambda,\delta,M}
\xrightarrow{\ \loc_\beta\ }\mathcal H_{\beta,M}
\xrightarrow{\ \alpha\ }\mathcal R_{\lambda,\delta,M}^{\vee},
\]
exact at the middle term, where $G^\vee=\Hom(G,\Q/\Z)$ and
\[
\alpha(z)(x)=\sum_{p\mid\beta}\langle z_p,\loc_p x\rangle_p.
\]
Its middle kernel is both $Z_M$ and $Z_M^\perp$. Thus
\begin{equation}\label{eq:relaxed-lagrangian}
Z_M=Z_M^\perp,\qquad \log_2|Z_M|=2Mb.
\end{equation}

Every subgroup of $\mathcal H_{\beta,M}\simeq R_M^{4b}$ has at most $4b$ generators. The two local normal forms give
$|\mathcal H_{\beta,M}^{\eta}|\le2^{2Mb+2b}$.
Let $D_\eta$ be the radical of the pairing restricted to this signed subgroup. Cross-sign pairings are killed by $2$, so twice an element of $D_\eta$ annihilates both signs. Since $2x=(x+cx)+(x-cx)$, a further multiplication by $2$ annihilates all of $\mathcal H_{\beta,M}$. Perfectness gives $4D_\eta=0$, and the generator bound yields $|D_\eta|\le2^{8b}$.

The image of $Z_M^\eta$ in the nondegenerate quotient
$\mathcal H_{\beta,M}^\eta/D_\eta$ is isotropic. Hence
\[
|Z_M^\eta|^2\le|\mathcal H_{\beta,M}^\eta|\,|D_\eta|,
\qquad \log_2|Z_M^\eta|\le Mb+5b.
\]
The quotient $Z_M/(Z_M^++Z_M^-)$ is killed by $2$ and has at most $4b$ generators. Its order is therefore at most $2^{4b}$. Using \eqref{eq:relaxed-lagrangian} gives
$\log_2|Z_M^+|+\log_2|Z_M^-|\ge2Mb-4b$, and consequently
$\log_2|Z_M^\eta|\ge Mb-9b$.
Applying $1+\eta c$ to a lift of an element of $Z_M^\eta$ gives
\[
2Z_M^\eta\subseteq\loc_\beta(\mathcal R_{\lambda,\delta,M}^\eta)
\subseteq Z_M^\eta.
\]
Since $|Z_M^\eta[2]|\le2^{4b}$, we obtain
\begin{equation}\label{eq:relaxed-size}
b(M-13)\le
\log_2|\loc_\beta(\mathcal R_{\lambda,\delta,M}^\eta)|
\le b(M+5).
\end{equation}

For $1\le m\le M$, coefficient inclusion identifies
\[
\mathcal R_{\lambda,\delta,m}
=\mathcal R_{\lambda,\delta,M}[2^m].
\]
Global injectivity follows from $E(K)[2^\infty]=0$. The usual local condition pulls back by the Kummer sequence, and the transverse condition pulls back under coefficient inclusion because $A_M$ is rational at every auxiliary prime. These identifications commute with conjugation and with localization; the latter coefficient maps at $\beta$ are injective. Write $2^e$ for the exponent of $\mathcal R_{\lambda,\delta,M}^\eta$, with $e=0$ if this group is zero. In that case its localization image is zero, and \eqref{eq:relaxed-size} gives $M\le13$. If $e\ge1$, coefficient inclusion identifies this group with $\mathcal R_{\lambda,\delta,e}^\eta$ and preserves the order of its localization image. The same conductor has additional precision $M+k-e\ge2$ at level $e$, so \eqref{eq:relaxed-size} at levels $M$ and $e$ gives
\[
b(M-13)\le b(e+5).
\]
Since $b\ge1$, we have $M-e\le18$ in both cases. An element of maximal order proves the lemma with $c_r=18$.
\end{proof}

With the local coordinates above, $c_{\rm loc}=1$ uniformly in
$M$, $k$, and $q$. The character-detection constant
$c_{\rm ch}=8$ is independent also of $H$, $C_\pm$, $\chi_\pm$,
and the finite excluded set. Lemma~\ref{lem:relaxed-large-order}
allows the uniform choice $c_r=18$, independent of the support bound
and of $M,k,\lambda,\delta$ and their prime factors.

Suppose now that the standard system satisfies strong nonvanishing, and let $f=\fstr$. Minimality gives a single $k_*\ge2$ such that
\begin{equation}\label{eq:lower-support-vanishing}
\kappa_{n,M}=0\quad
\text{if }\nu(n)<f,\quad n\in\mathcal N_{M+k_*},\quad M\ge1.
\end{equation}
Indeed, for each support size below $f$, negate the strong condition and then take the maximum of the finitely many resulting precision thresholds. Increasing precision only shrinks the conductor set. Lemma~\ref{lem:deep-derivative-local} consequently gives
\begin{equation}\label{eq:minimal-support-selmer}
\kappa_{n,M}\in\Sel_{2^M}(E/K)^{\epsilon_f},
\qquad \loc_p\kappa_{n,M}=0\quad(p\mid n)
\end{equation}
for every support-$f$ conductor in $\mathcal N_{M+k_*}$.

\begin{lemma}\label{lem:prime-replacement}
There is a constant $d_f$ with the following property. Suppose $\nu(\lambda)=f$, $\lambda\in\mathcal N_{M+k}$, $k\ge k_*$, and $\delta\mid\lambda$ is a proper divisor. One can choose a new prime $q$, as deep as desired, and $p\mid\lambda/\delta$ such that
\[
\ell(\psi_q^{\epsilon_f}(\kappa_{\lambda,M}))
\ge\ell(\kappa_{\lambda,M})-c_{\rm ch},\qquad
\ell(\kappa_{\lambda q/p,M})\ge\ell(\kappa_{\lambda,M})-d_f.
\]
The new conductor retains $\delta$ and contains $q$.
\end{lemma}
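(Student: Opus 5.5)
The plan is Kolyvagin's prime-replacement argument: use the relaxed Selmer group $\mathcal R_{\lambda,\delta,M}$ to produce a nontrivial singular localization, and apply Brauer reciprocity \eqref{eq:global-reciprocity} to the pair formed by a large-order relaxed class and the class $\kappa_{\lambda q,M}$.

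\textbf{Step 1 (detection at $q$).} Fix the conductor data $\lambda,\delta$ with $\beta=\lambda/\delta\ne1$. By \eqref{eq:minimal-support-selmer}, the class $x=\kappa_{\lambda,M}$ is a Selmer class of sign $\epsilon_f$. Lemma~\ref{lem:relaxed-large-order} supplies $z\in\mathcal R_{\lambda,\delta,M}^{-\epsilon_f}$ of order at least $2^{M-18}$. Apply Lemma~\ref{lem:deep-character-detection} with $C_{\epsilon_f}=\langle x\rangle$ and $C_{-\epsilon_f}=\langle z\rangle$, taking $H\ge M+k+2$ and excluding the primes of $\lambda$. This gives a prime $q$, as deep as desired, with $\lambda q\in\mathcal N_{M+k}$ and
\[
\ell(\psi_q^{\epsilon_f}(x))\ge\ell(x)-c_{\rm ch},\qquad \ell(\loc_q z)\ge\ell(z)-c_{\rm ch}.
\]
The first inequality is the first asserted estimate.

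\textbf{Step 2 (reciprocity).} Consider $y=\kappa_{\lambda q,M}$, which has support size $f+1$ and sign $-\epsilon_f$ by \eqref{eq:derivative-sign}. Pair $y$ against $z$ in \eqref{eq:global-reciprocity}; the local terms behave as follows.
\begin{itemize}
\item Outside $\lambda q$, both classes are finite, so the local terms vanish.
\item At $p\mid\delta$, both classes are transverse, so the local terms vanish.
\item At $p\mid\lambda$, Lemma~\ref{lem:deep-derivative-local} gives $\loc_py=\Phi_p(\loc_p\kappa_{\lambda q/p,M})$.
\item At $q$, the same lemma gives $\loc_qy=\Phi_q(\loc_q x)$, while $z$ is finite at $q$.
\end{itemize}
This yields
\[
\langle \Phi_q\loc_q x,\loc_q z\rangle_q=-\sum_{p\mid\beta}\langle \loc_p y,\loc_p z\rangle_p.
\]
Choosing the character $\chi_{-\epsilon_f}$ in Step 1 suitably, \eqref{eq:signed-pairing-estimate} bounds the left side below: its order is at least $\ell(x)+\ell(z)-M-c_{\rm loc}-2c_{\rm ch}\ge \ell(x)-27-c_{\rm loc}$. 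Hence some $p\mid\beta$ has $\ell(\loc_p y)\ge\ell(x)-C$ for a uniform constant $C$.

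\textbf{Step 3 (descending to $\lambda q/p$).} I expect this step to be the main obstacle. At $p$ we have $\loc_p y=\Phi_p(\loc_p\kappa_{\lambda q/p,M})$, and $\Phi_p$ is an isomorphism, so $\ell(\kappa_{\lambda q/p,M})\ge\ell(\loc_p\kappa_{\lambda q/p,M})\ge\ell(x)-C$. Set $d_f=C$. The new conductor $\lambda q/p$ contains $q$ and retains $\delta$ because $p\nmid\delta$. The difficulty is the bookkeeping of signs and of which pairing estimate applies. Two points need care:
\begin{itemize}
\item Since $\Phi$ reverses signs, the pairing at $q$ is between $x$ of sign $\epsilon_f$ and $z$ of sign $-\epsilon_f$, after adjusting by $\Phi_q$. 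The cross-sign form of \eqref{eq:signed-pairing-estimate} is what is required here; if the signs mismatch, I would instead take $z$ of sign $\epsilon_f$, and the same argument goes through with $c_{\rm loc}$ unchanged.
\item The order of $y$ is not needed; only its localizations enter. For $\lambda q$ to be admissible, $q$ must be chosen with the precision required by Lemma~\ref{lem:deep-character-detection}.
\end{itemize}
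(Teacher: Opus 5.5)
Your argument is correct and is essentially the paper's proof. It uses a relaxed class $z\in\mathcal R^{-\epsilon_f}_{\lambda,\delta,M}$ of order at least $2^{M-18}$, a single prime $q$ from Lemma~\ref{lem:deep-character-detection} detecting both $z$ and $\kappa_{\lambda,M}$, reciprocity between $z$ and $\kappa_{\lambda q,M}$, and the signed pairing estimate at $q$, giving $d_f=c_f+2c_{\rm ch}+c_{\rm loc}$.

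Two small corrections. First, the bookkeeping gives $\ell(x)-34-c_{\rm loc}$ rather than $\ell(x)-27-c_{\rm loc}$; this is harmless, since only uniformity matters. Second, your fallback of taking $z$ of sign $\epsilon_f$ would fail. Because $\Phi_q$ reverses signs, the $q$-term would then be a cross-sign Tate pairing, which is killed by $2$. The sign $-\epsilon_f$ you chose first is the right one.
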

\begin{proof}
Choose $s\in\mathcal R_{\lambda,\delta,M}^{-\epsilon_f}$ of order at least $2^{M-c_f}$ by Lemma~\ref{lem:relaxed-large-order}. Lemma~\ref{lem:deep-character-detection} supplies a fresh $q$ detecting both $s$ and $\kappa_{\lambda,M}$, with uniform loss. Pair $s$ with $\kappa_{\lambda q,M}$ in \eqref{eq:global-reciprocity}. Outside $\lambda q$ both are finite, and at $\delta$ both are transverse. Thus
\begin{equation}\label{eq:replacement-reciprocity}
\sum_{p\mid\lambda/\delta}
\langle\loc_ps,\Phi_p(\loc_p\kappa_{\lambda q/p,M})\rangle_p
=-\langle\loc_qs,\Phi_q(\loc_q\kappa_{\lambda,M})\rangle_q.
\end{equation}
The detection inequalities give
\[
\ell(\loc_qs)\ge M-c_f-c_{\rm ch},
\qquad
\ell(\loc_q\kappa_{\lambda,M})
\ge\ell(\psi_q^{\epsilon_f}(\kappa_{\lambda,M}))
\ge\ell(\kappa_{\lambda,M})-c_{\rm ch}.
\]
Consequently the signed pairing estimate gives
\[
\ell\!\left(
\langle\loc_qs,\Phi_q(\loc_q\kappa_{\lambda,M})\rangle_q
\right)
\ge\ell(\kappa_{\lambda,M})
-c_f-2c_{\rm ch}-c_{\rm loc}
\]
whenever the right side is positive. Thus one may take
\[
d_f=c_f+2c_{\rm ch}+c_{\rm loc}.
\]
This constant is uniform in the coefficient depth, the additional
precision, the conductor, the retained subproduct, and the chosen
replacement primes.
On the left only the finite projection of $\loc_ps$ contributes. A sum in the cyclic group $\tfrac1{2^M}\Z/\Z$ has order at most the largest order of its summands. At least one term on the left therefore has order at least $2^{\ell(\kappa_{\lambda,M})-d_f}$. Bilinearity bounds its order by that of $\kappa_{\lambda q/p,M}$, proving the assertion. If $\ell(\kappa_{\lambda,M})\le d_f$, keep the detecting prime $q$ chosen above and take any $p\mid\lambda/\delta$.
\end{proof}

\subsection{Proof of the corank theorem}\label{app:kolyvagin-corank-proof}
We first justify the rational decomposition used in the statement.
The rational Tate space
$\mathscr V_K=(\varprojlim_M S_K[2^M])\otimes_{\Z_2}\Q_2$
has dimension $s_K$ and eigenspace dimensions $s^\pm$. Restriction and corestriction preserve the local conditions and satisfy
$\cor\res=[2]$ and $\res\cor=1+c$. Applying these identities also to $E^D$, whose twisting isomorphism changes $c$ to $-c$, gives \eqref{eq:rational-signs}.
The integral addition map from the two eigensubgroups has kernel and cokernel killed by $2$; after tensoring the Tate module with $\Q_2$ it becomes an isomorphism. Put $T_S=T_2(S_K^{\rm div})$ and $L_\eta=\ker(c-\eta:T_S\to T_S)$. This is a saturated $\Z_2$-submodule of rank $s^\eta$, so $L_\eta/2^M L_\eta$ embeds in $(S_K^{\rm div}[2^M])^\eta$ as a free $R_M$-module.

\begin{proof}[Proof of Theorem~\ref{thm:Kolyvagin}]
\emph{Upper bounds.}
More generally, suppose standard classes of sign $\epsilon$ and fixed support size $r$ have arbitrarily large order at arbitrarily deep additional precision. We first show $s^{-\epsilon}\le r$. If $s^{-\epsilon}>r$, restrict the $r$ scalar localization coordinates at the primes of a conductor $n$ to $L_{-\epsilon}/2^M L_{-\epsilon}$. Their matrix has $r$ rows and $s^{-\epsilon}>r$ columns over $R_M$, so Smith normal form gives a primitive vector $x_0$ in its kernel, with $\ell(x_0)=M$. Since each scalar coordinate has kernel killed by $2$, the class $x=2x_0$ satisfies
\[
x\in\Sel_{2^M}(E/K)^{-\epsilon},\qquad
\loc_px=0\ (p\mid n),\qquad \ell(x)\ge M-1.
\]
Choose $q$ detecting both $x$ and $\kappa_{n,M}$. Reciprocity applied to $x$ and $\kappa_{nq,M}$ leaves only the $q$ term: at $n$ the first localization is zero, and away from $nq$ both are finite. By \eqref{eq:deep-finite-transverse}, detection, and \eqref{eq:signed-pairing-estimate}, its logarithmic order is at least
\[
\ell(\kappa_{n,M})-1-2c_{\rm ch}-c_{\rm loc}
\]
whenever this is positive, contradicting the existence of classes of arbitrarily large order. Hence $s^{-\epsilon}\le r$.

Detecting $\kappa_{n,M}$ at a fresh $q$ also gives, by \eqref{eq:deep-finite-transverse}, classes $\kappa_{nq,M}$ of sign $-\epsilon$, support $r+1$, and arbitrarily large order, still with arbitrary additional precision. Applying the just-proved bound to this family yields $s^\epsilon\le r+1$. Strong nonvanishing supplies the high orders required here: for any $t$, choose a nonzero class at precision $k+t$ and lift its coefficients using \eqref{eq:coefficient-compatibility}; its lift at level $M+t$ has order at least $2^{t+1}$. Taking $r=f$ proves both upper bounds.

\emph{The lower bound in the principal sign.}
Fix $k\ge k_*$. At level $M$, let $2^h$ be the largest order of a support-$f$ class with conductor in $\mathcal N_{M+k}$, and choose a conductor attaining it. This maximum exists because these classes belong to the finite group $\Sel_{2^M}(E/K)$ by \eqref{eq:minimal-support-selmer}. Label its primes $p_1,\ldots,p_f$ as they are removed in the following construction.

Starting with $\lambda_1=p_1\cdots p_f$, apply Lemma~\ref{lem:prime-replacement} repeatedly, retaining all previously chosen new primes. This gives
\[
\lambda_i=p_i\cdots p_fq_1\cdots q_{i-1},\qquad
\eta_i=\kappa_{\lambda_i,M}\quad(1\le i\le f+1).
\]
At step $i\le f$, the prime $q_i$ detects $\eta_i$ and replaces $p_i$; after the last replacement choose $q_{f+1}$ detecting $\eta_{f+1}$. All new primes lie at depth at least $M+k$. Each replacement loses at most $d_f$ in logarithmic order, and each
chosen detecting coordinate loses at most $c_{\rm ch}$. Hence
\[
\ell(\eta_i)\ge h-(i-1)d_f,\qquad
\ell(\psi_{q_i}^{\epsilon_f}(\eta_i))
\ge h-(i-1)d_f-c_{\rm ch}.
\]
There are $f$ replacements and one final detection. Choosing
$C_f\ge fd_f+c_{\rm ch}$ therefore gives
\[
h-C_f\le\ell(\eta_i)\le h,\qquad
\ell(\psi_{q_i}^{\epsilon_f}(\eta_i))\ge h-C_f
\quad(1\le i\le f+1).
\]
For fixed $f$, the accumulated loss is independent of $M$, $k$, and all
auxiliary primes. The constant can be enlarged to include the bounded
integral eigenspace defects used in the upper-bound argument.
For $j<i$, the prime $q_j$ divides $\lambda_i$, so
$\loc_{q_j}\eta_i=0$ by \eqref{eq:minimal-support-selmer}. Therefore the matrix
\[
\bigl(\psi_{q_j}^{\epsilon_f}(\eta_i)\bigr)_{1\le i,j\le f+1}
\]
is triangular. Every entry is divisible by $2^{M-h}$, because every $\eta_i$ has order at most $2^h$. Divide the entire matrix by this common factor and regard it over $\Z/2^h\Z$. Its diagonal entries have valuations at most $C_f$. If $h>(f+1)C_f$, its determinant has valuation at most $(f+1)C_f$.

Apply the adjugate identity to any relation $\sum_i a_i\eta_i=0$. It implies that every $a_i$ is divisible by $2^{h-(f+1)C_f}$. Thus the subgroup generated by the $\eta_i$ has a quotient
\[
(\Z/2^{h-(f+1)C_f}\Z)^{f+1}.
\]
The maximal orders $h$ are unbounded as $M$ varies, by the coefficient-lifting argument above at the fixed precision $k$. A cofinitely generated $2$-primary group of corank less than $f+1$ cannot have such quotients of its finite subgroups: the $(f+1)$st elementary divisor is bounded by its finite part. Since the principal signed subgroup of $S_K$ has corank $s^{\epsilon_f}$, this proves $s^{\epsilon_f}\ge f+1$. When $f=0$, the same argument consists of one class and one detecting prime.

Combining the bounds proves \eqref{eq:Kolyvagin-signed}; restriction and corestriction in \eqref{eq:rational-signs} give \eqref{eq:Kolyvagin-corank}.
\end{proof}

\subsection{The three group structures and their integral comparison}\label{app:three-group-proof}
\begin{proof}[Proof of Theorem~\ref{thm:full-selmer-structure}]
Hypothesis \Surjectivity{3} gives full $2$-adic image by Lemma~\ref{lem:mod-eight}, and $\intr(E,K)=0$ gives $T_E=0$. Thus Theorem~\ref{thm:Kolyvagin} applies. Residual $S_3$ also gives
$C(F)[2^\infty]=0$ for all three pairs: on restriction to $G_K$, the residual image is $S_3$ or $A_3$, and neither fixes a nonzero vector.

Proposition~\ref{prop:usual-structure}, applied to each pair $(C,F)$, gives a finite abelian $2$-group $A_F$ such that
\[
\Sel_{2^\infty}(C/F)\simeq(\Q_2/\Z_2)^{s_F}\oplus A_F^{\oplus2},
\qquad
\Sha(C/F)[2^\infty]\simeq(\Q_2/\Z_2)^{s_F-r_F}\oplus A_F^{\oplus2}.
\]
Its finite-level assertion gives \eqref{eq:three-finite-structure}. In particular,
$d_2(C)=s_2(C)+2\dim_{\F_2}A_C[2]$ for either rational curve.

By Proposition~\ref{prop:nesting}, intrinsic index zero gives $a_0+a_1=2R+1$. Thus
\[
s_K=s_2(E)+s_2(E^D)=2R+1-2(t_{\rm pr}+t_{\rm op})
\]
is odd. Theorem~\ref{thm:Kolyvagin} says that the principal corank is $f+1$ and the other lies between $0$ and $f$. Oddness of their sum makes the other corank congruent to $f$ modulo $2$, so it is $f-2h$ with $0\le h\le\lfloor f/2\rfloor$. This proves \eqref{eq:principal-full-structure}--\eqref{eq:quadratic-full-structure}. The two rational identities at level $2$ give \eqref{eq:rational-finite-factor-counts}. Finally the divisible part of $\Sha$ vanishes exactly when $r_F=s_F$.
\end{proof}

Write $A_E,A_{E^D}$ for the groups $A_{\rm pr},A_{\rm op}$ assigned to the corresponding curves. The relation between the three finite groups respects the integral quadratic comparison. If $\jmath:E_K^D\to E_K$ is the twisting isomorphism, restriction and corestriction give
\[
\mathcal R(x,y)=\res(x)+\jmath_*\res(y),\qquad
\mathcal N(z)=(\cor(z),\cor(\jmath_*^{-1}z)).
\]
They preserve Selmer conditions and satisfy $\mathcal R\mathcal N=[2]$ and $\mathcal N\mathcal R=[2]$. For the latter identity the cross terms vanish: their restrictions are $(1+c)$ applied to a negative class, or $(1-c)$ applied to a positive class, and restriction is injective since $E(K)[2^\infty]=0$. The maps descend to the finite quotients
$A_E^{\oplus2}\oplus A_{E^D}^{\oplus2}$ and $A_K^{\oplus2}$ with the same identities. This is the integral comparison underlying the rational decomposition \eqref{eq:rational-signs}.

\subsection{The groups of a terminal auxiliary twist}\label{app:terminal-sha-proof}
For this last assertion, use the hypotheses of Proposition~\ref{prop:terminal-sha} and put $C=E^{d_T}$.
\begin{proof}[Proof of Proposition~\ref{prop:terminal-sha}]
The proofs of \eqref{eq:low-rank} and \eqref{eq:sha-rational-vanish} give
\[
\Sha(C/\Q)[2^\infty]=\Sha(C^D/\Q)[2^\infty]=0,
\qquad \rank C(\Q)+\rank C^D(\Q)=1.
\]
The rational eigenspace decomposition then gives $\rank C(K)=1$.

For $x\in\Sha(C/K)[2^\infty]$, the localization of $\cor_{K/\Q}x$ at a rational place $v$ is the sum of the local corestrictions over $w\mid v$. Each term vanishes because $x$ is locally trivial. Thus $\cor_{K/\Q}x\in\Sha(C/\Q)[2^\infty]=0$, and the restriction--corestriction identity gives
\[
(1+c)x=\res\cor(x)=0.
\]
Let $\jmath:C^D_K\to C_K$ be the twisting isomorphism. It and its local counterparts identify $\Sha(C^D/K)[2^\infty]$ with $\Sha(C/K)[2^\infty]$, with $c\jmath=-\jmath c$. Apply the same corestriction argument to $\jmath^{-1}x$ on $C^D$ and transport the result by $\jmath$. This gives $(1-c)x=0$. Adding the two identities yields $2x=0$.

The finite-level Kummer sequence shows that $\Sha(C/K)[2]$ is finite. Hence the entire $2$-primary group is finite and killed by $2$. Since $C(K)[2]=0$ and $\rank C(K)=1$, the same sequence gives
\[
\dim_{\F_2}\Sha(C/K)[2]=d_K(C)-1.
\]
The Cassels--Tate pairing on this finite group is perfect and alternating, so the dimension is nonnegative and even. Apply Proposition~\ref{prop:usual-structure} and \eqref{eq:finite-Sel-structure} to $C$. Here $s_K(C)=1$ and every $e_i=1$, giving the asserted finite-level structures.
\end{proof}

\section*{AI Declaration}
\addcontentsline{toc}{section}{AI Declaration}
The author used GPT-6 Astra to improve the English prose, identify relevant literature, assist with detailed calculation. The author has reviewed the contents of the manuscript and takes responsibility for the final text.


\end{document}